\numberwithin{equation}{section}
\renewcommand{\chaptermark}[1]{\markboth{\textsc{\@chapapp}\ \thechapter:\ #1}{}}
\newtheorem{theorem}{Theorem}[section]
\newtheorem{lemma}[theorem]{Lemma}
\newtheorem{corollary}[theorem]{Corollary}
\theoremstyle{definition}
\newtheorem{definition}[theorem]{Definition}
\newtheorem{example}[theorem]{Example}
\newtheorem{remark}[theorem]{Remark}
\def\notation{\\ \vspace{1mm}\\{\bf{Notation}}\\ \vspace{-3mm}\\}
\newcommand\org@hypertarget{}
\let\org@hypertarget\hypertarget
\renewcommand\hypertarget[2]{%
  \Hy@raisedlink{\org@hypertarget{#1}{}}#2%
} 
\begin{document}
	
	% Titlepage
	\title{
		\huge{\textbf{Uniform Algebras Over Complete\\
		Valued Fields}} \\[1.2cm]
		\Large{Jonathan W. Mason, MMath.} \\[1.2cm]
		\Large{Thesis submitted to The University of Nottingham \\ 
		for the degree of Doctor of Philosophy} \\ \vspace{1cm}
		\Large{March 2012} 
	} 
	\author{} \date{}
	\pdfbookmark[0]{Titlepage}{title}
	\maketitle
	
	% Dedication
 	\newpage \vspace*{8cm} 
	\pdfbookmark[0]{Dedication}{dedication}
	\begin{center} 
		\large For Olesya \\
		\emph{Follow the Romany patteran\\
		West to the sinking sun,\\
		Till the junk-sails lift through the houseless drift.\\
		And the east and west are one.\footnote{From Rudyard Kipling's poem The Gipsy Trail.}}
 	\end{center}

	% Abstract
	\newpage 
	\pdfbookmark[0]{Abstract}{abstract}
	\chapter*{Abstract}
	\textsc{Uniform} algebras have been extensively investigated because of their importance in the theory of uniform approximation and as examples of complex Banach algebras. An interesting question is whether analogous algebras exist when a complete valued field other than the complex numbers is used as the underlying field of the algebra. In the Archimedean setting, this generalisation is given by the theory of real function algebras introduced by S. H. Kulkarni and B. V. Limaye in the 1980s. This thesis establishes a broader theory accommodating any complete valued field as the underlying field by involving Galois automorphisms and using non-Archimedean analysis. The approach taken keeps close to the original definitions from the Archimedean setting.
\vspace{3mm}\\
Basic function algebras are defined and generalise real function algebras to all complete valued fields whilst retaining the obligatory properties of uniform algebras.\\
Several examples are provided. A basic function algebra is constructed in the non-Archimedean setting on a $p$-adic ball such that the only globally analytic elements of the algebra are constants.\\
Each basic function algebra is shown to have a lattice of basic extensions related to the field structure. In the non-Archimedean setting it is shown that certain basic function algebras have residue algebras that are also basic function algebras.\\
A representation theorem is established. Commutative unital Banach $F$-algebras with square preserving norm and finite basic dimension are shown to be isometrically $F$-isomorphic to some subalgebra of a Basic function algebra. The condition of finite basic dimension is always satisfied in the Archimedean setting by the Gel'fand-Mazur Theorem. The spectrum of an element is considered.\\
The theory of non-commutative real function algebras was established by K. Jarosz in 2008. The possibility of their generalisation to the non-Archimedean setting is established in this thesis and also appeared in a paper by J. W. Mason in 2011.\\
In the context of complex uniform algebras, a new proof is given using transfinite induction of the Feinstein-Heath Swiss cheese ``Classicalisation'' theorem. This new proof also appeared in a paper by J. W. Mason in 2010.

	% Acknowledgements
 	\chapter*{Acknowledgements}
	\pdfbookmark[0]{Acknowledgements}{acknowledgements}
	I would particular like to thank my supervisor J. F. Feinstein for his guidance and enthusiasm over the last four years. Through his expert knowledge of Banach algebra theory he has helped me to identify several productive lines of research whilst always allowing me the freedom required to make the work my own.\\
	In addition to my supervisor, I. B. Fesenko also positively influenced the direction of my research. During my doctoral training I undertook a postgraduate training module on the theory of local fields given by I. B. Fesenko. With extra reading, this enabled me to work both in the Archimedean and non-Archimedean settings as implicitly suggested by my thesis title.\\
	It was a pleasure to know my friends in the algebra and analysis group at Nottingham and I thank them for their interest in my work and hospitality.\\
	I am grateful to the School of Mathematical sciences at the University of Nottingham for providing funds in support of my conference participation and visits.\\
	Similarly I appreciate the support given to me by the EPSRC through a Doctoral Training Grant.\\
	This PhD thesis was examined by A. G. O'Farrell and J. Zacharias who I thank for their time and interest in my work.
	
	\tableofcontents 
	\newpage	
	\pagenumbering{arabic}
 	
	% Include all chapter files
	\chapter[Introduction]{Introduction}
\label{cha:IN}
This short chapter provides an informal overview of the material in this thesis. Justification of the statements made in this chapter can therefore be found in the main body of the thesis which starts at Chapter \ref{cha:CVF}.
\section[Background and Overview]{Background and Overview}
\label{sec:INBO}
Complex uniform algebras have been extensively investigated because of their importance in the theory of uniform approximation and as examples of complex Banach algebras. Let $C_{\mathbb{C}}(X)$ denote the complex Banach algebra of all continuous complex-valued functions defined on a compact Hausdorff space $X$. A complex uniform algebra $A$ is a subalgebra of $C_{\mathbb{C}}(X)$ that is complete with respect to the sup norm, contains the constant functions making it a unital complex Banach algebra and separates the points of X in the sense that for all $x_{1},x_{2}\in X$ with $x_{1}\not=x_{2}$ there is $f\in A$ satisfying $f(x_{1})\not=f(x_{2})$. Attempting to generalise this definition to other complete valued fields simply by replacing $\mathbb{C}$ with some other complete valued field $L$ produces very limited results. This is because the various versions of the Stone-Weierstrass theorem restricts our attention to $C_{L}(X)$ in this case.\\
However the theory of real function algebras introduced by S. H. Kulkarni and B. V. Limaye in the 1980s does provide an interesting generalisation of complex uniform algebras. One important departure in the definition of these algebras from that of complex uniform algebras is that they are real Banach algebras of continuous complex-valued functions. Similarly the elements of the algebras introduced in this thesis are also continuous functions that take values in some complete valued field or division ring extending the field of scalars over which the algebra is a vector space.\\
A prominent aspect of the emerging theory is that it has a lot to do with representation. As a very simple example the field of complex numbers itself is isometrically isomorphic to a real function algebra, all be it on a two point space.\\
When considering the generalisation of complex uniform algebras over all complete valued fields I naturally wanted the complex uniform algebras and real function algebras to appear directly as instances of the new theory. This resulted in the definition of basic function algebras involving the use of a Galois automorphism and homeomorphic endofunction that interact in a useful way, see Definition \ref{def:CGBFA}. In retrospect these particular algebras should more appropriately be referred to as cyclic basic function algebras since the functions involved take values in some cyclic extension of the underlying field of scalars of the algebra.\\
Necessarily this thesis starts by surveying complete valued fields and their properties. The transition from the Archimedean setting to the non-Archimedean setting preserves in places several of the nice properties that complete Archimedean fields have. However all complete non-Archimedean fields are totally disconnected, some of them are not locally compact and there is no non-Archimedean analog of the Gel'fand-Mazur Theorem.\\
On the other hand some complete non-Archimedean fields have interesting properties that only appear in the non-Archimedean setting. Consider for example the closed unit disc of the complex plane. It is closed under multiplication but not with respect to addition. In the non-Archimedean setting the closed unit ball $\mathcal{O}_{F}$, of a complete valued field $F$, is a ring since in this case the valuation involved observes the strong version of the triangle inequality, see Definition \ref{def:CVFV}. The set $\mathcal{M}_{F}=\{a\in F:|a|_{F}<1\}$ is a maximal ideal of $\mathcal{O}_{F}$ from which the residue field $\overline{F}=\mathcal{O}_{F}/\mathcal{M}_{F}$ is obtained. The residue field is of great importance in the study of such fields.\\
Similarly in the non-Archimedean setting we will see that certain basic function algebras have residue algebras that are also basic function algebras. In the process of proving this result an interesting fact is shown concerning a large class of complete non-Archimedean fields. For such a field $F$ and every finite extension $L$ of $F$, extending $F$ as a valued field, it is shown that for each Galois automorphism $g\in\mbox{Gal}(^{L}/_{F})$ there exists a set $\mathcal{R}_{L,g}\subseteq\mathcal{O}_{L}$ of residue class representatives such that the restriction of $g$ to $\mathcal{R}_{L,g}$ is an endofunction, i.e. a self map, on $\mathcal{R}_{L,g}$. This fact is probably known to certain number theorists.\\
This thesis also includes several examples of basic function algebras and these are considered at depth. A new proof of an existing theorem in the setting of complex uniform algebras is given and theory in the non-commutative setting is also considered.\\
With respect to commutative Banach algebra theory, Chapter \ref{cha:RT} presents an interesting new Gel'fand representation result extending those of the Archimedean setting. In particular we have the following theorem where the condition of finite basic dimension is automatically satisfied in the Archimedean setting and compensates for the lack of a Gel'fand-Mazur Theorem in the non-Archimedean setting. See Chapter \ref{cha:RT} for full details.
\begin{theorem}
\label{thr:INREPCRLF}
Let $F$ be a locally compact complete valued field with nontrivial valuation. Let $A$ be a commutative unital Banach $F$-algebra with $\|a^{2}\|_{A}=\|a\|_{A}^{2}$ for all $a\in A$ and finite basic dimension. Then:
\begin{enumerate}
\item[(i)]
if $F$ is the field of complex numbers then $A$ is isometrically $F$-isomorphic to a complex uniform algebra on some compact Hausdorff space $X$;
\item[(ii)]
if $F$ is the field of real numbers then $A$ is isometrically $F$-isomorphic to a real function algebra on some compact Hausdorff space $X$;
\item[(iii)]
if $F$ is non-Archimedean then $A$ is isometrically $F$-isomorphic to a non-Archimedean analog of the real function algebras on some Stone space $X$ where by a Stone space we mean a totally disconnected compact Hausdorff space.
\end{enumerate}
In particular $A$ is isometrically $F$-isomorphic to some subalgebra $\hat{A}$ of a basic function algebra and $\hat{A}$ separates the points of $X$.
\end{theorem}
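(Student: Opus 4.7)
The plan is to construct a Gel'fand-style representation in each of the three cases and verify the required isometric $F$-isomorphism. First I would let $X$ denote a character space for $A$, consisting of nonzero continuous $F$-algebra homomorphisms $\varphi\colon A \to L$ where $L$ is a suitable complete valued field extension of $F$, topologised by pointwise convergence. The uniform-norm condition $\|a^{2}\|_{A} = \|a\|_{A}^{2}$ iterates to $\|a^{2^{n}}\|_{A} = \|a\|_{A}^{2^{n}}$, so the spectral radius of every element equals its norm; consequently the Gel'fand transform $a \mapsto \hat{a}$, $\hat{a}(\varphi) := \varphi(a)$, is isometric provided $X$ is rich enough that $\|a\|_{A} = \sup_{\varphi \in X}|\varphi(a)|_{L}$. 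Standard maximal-ideal arguments deliver this, once each quotient $A/M$ is identified with a complete valued field extension of $F$.

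For case (i) the Gel'fand-Mazur theorem gives $A/M \cong \mathbb{C}$ for every maximal ideal $M$, $X$ is compact Hausdorff in the Gel'fand topology, and the transform realises $A$ as a complex uniform algebra in the classical manner. For case (ii) the real Gel'fand-Mazur theorem combined with commutativity forces every quotient to be isomorphic to $\mathbb{R}$ or $\mathbb{C}$ (the quaternionic case being excluded). Taking $X$ to be the space of characters into $\mathbb{C}$, letting $\sigma$ be complex conjugation, and defining $\tau\colon X\to X$ by $\tau(\varphi) := \sigma\circ\varphi$, the image of the Gel'fand transform lies inside the real function algebra $\{f\in C_{\mathbb{C}}(X): f\circ\tau = \sigma\circ f\}$, which is a basic function algebra in the sense of Definition \ref{def:CGBFA}.

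For case (iii) Gel'fand-Mazur fails, so in principle the quotients $A/M$ may be arbitrary complete valued algebraic extensions of $F$. The finite basic dimension hypothesis is what rescues the argument: it forces the collection of extensions arising as quotients to embed in a single finite cyclic extension $L/F$. Characters then take values in $L$; the space $X$ is compact Hausdorff and, since $L$ is totally disconnected, $X$ is a Stone space. A generator $\sigma$ of $\mathrm{Gal}({}^{L}/_{F})$ induces a self-homeomorphism $\tau(\varphi) := \sigma\circ\varphi$ on $X$, and the Gel'fand transform lands in the non-Archimedean basic function algebra $\{f\in C_{L}(X): f\circ\tau = \sigma\circ f\}$. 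That $\hat{A}$ separates the points of $X$ is immediate from the definition of $X$ as a character space.

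The main obstacle is case (iii). The two delicate points are (a) showing that finite basic dimension really forces all characters to land in a single cyclic extension $L/F$, which will require careful use of the structure of finite extensions of a locally compact non-Archimedean $F$ together with a clean formulation of what the numerical invariant ``basic dimension'' measures; and (b) proving that the spectral radius is realised by characters into $L$ — equivalently, non-emptiness and a suitable boundedness of spectra in the non-Archimedean Banach $F$-algebra setting, where the usual Liouville-style argument is not available. Once these are in hand the passage from the Gel'fand transform to a subalgebra of a basic function algebra is largely formal.
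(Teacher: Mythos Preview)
Your outline is essentially the paper's approach: build an $L$-valued character space, equip it with the Gel'fand topology, let a generator $g\in\mbox{Gal}(^{L}/_{F})$ act by post-composition, and show the Gel'fand transform is an isometric $F$-embedding into $C(\mathcal{M}(A),g,g)$. Two points deserve correction or sharpening.

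\textbf{On your obstacle (a).} You treat ``finite basic dimension'' as a numerical invariant that must be unwound to produce the single extension $L$. In the paper it is not: Definition~\ref{def:RTFBD} \emph{is} the hypothesis that there exists a finite extension $L/F$ with $L^{g}=F$ for some $g\in\mbox{Gal}(^{L}/_{F})$ such that, for every proper closed prime ideal $J$ that is the kernel of a bounded multiplicative seminorm, $\mbox{Frac}(A/J)$ embeds $F$-isomorphically into $L$. So (a) is not an obstacle; the single cyclic extension is handed to you. What the paper does build from this is the character space $\mathcal{M}(A)$ as the set of pairs $(x_{0},\varphi)$ with $x_{0}\in\mathcal{M}_{0}(A)$ and $\varphi:A/x_{0}\hookrightarrow L$ an $F$-embedding; different embeddings of the same quotient give distinct characters, and $g$ permutes them.

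\textbf{On your obstacle (b).} This is the real work, and your sketch (``standard maximal-ideal arguments'') is not enough in the non-Archimedean case. The paper's route, following Berkovich, is as follows. Work with bounded multiplicative \emph{seminorms} rather than characters directly (Definition~\ref{def:RTBMS}); Lemma~\ref{lem:RTNMT} shows that every maximal ideal is such a kernel and that invertibility in $A$ is detected by nonvanishing in every quotient $A/x_{0}$. For the isometry, fix $a\in A$, set $r:=\|\hat{a}\|_{\infty}+\varepsilon$, and pass to the auxiliary Banach $F$-algebra $A\langle rT\rangle$ of Lemma~\ref{lem:RTRRT}. Any bounded multiplicative seminorm on $A\langle rT\rangle$ restricts to one on $A$ with kernel $m_{0}\in\mathcal{M}_{0}(A)$; the finite-basic-dimension hypothesis forces $|a|=|a+m_{0}|_{A/m_{0}}<r$, while boundedness gives $|T|\le r^{-1}$, so $|1-aT|>0$. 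Thus $1-aT$ lies in no such kernel and is therefore invertible (Lemma~\ref{lem:RTNMT}), whence $\sum_{i}\|a^{i}\|_{A}r^{-i}<\infty$ by Lemma~\ref{lem:RTRRT}. The square-preserving hypothesis then yields $\|a\|_{A}\le r$, and letting $\varepsilon\to 0$ gives the isometry. This replaces the Liouville-based Archimedean argument that you correctly note is unavailable.
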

Note that (i) and (ii) of Theorem \ref{thr:INREPCRLF} are the well known results from the Archimedean setting. This brings us to the following summary.
\section[Summary]{Summary}
\label{sec:INSU}
\begin{enumerate}
\item[Chapter 2:]
The relevant background concerning complete valued fields is provided. Several examples are given and the topological properties of complete valued fields are compared and discussed. A particularly useful and well known way of expressing the extension of a valuation is considered and the relevant Galois theory is introduced.
\item[Chapter 3:]
Some background concerning functional analysis over complete valued fields is given. Analytic functions are discussed. Banach $F$-algebras are introduced and the spectrum of an element is considered.
\item[Chapter 4:]
Complex uniform algebras are introduced. In the context of complex uniform algebras, a new proof is given using transfinite induction of the Feinstein-Heath Swiss cheese ``Classicalisation'' theorem. This new proof also appeared in a paper by J. W. Mason in 2010. This is followed by a preliminary discussion concerning non-complex analogs of uniform algebras. Real function algebras are introduced.
\item[Chapter 5:]
Basic function algebras are defined providing the required generalisation of real function algebras to all complete valued fields. A generalisation theorem proves that Basic function algebras have the obligatory properties of uniform algebras.\\
Several examples are provided. Complex uniform algebras and real function algebras now appear as instances of the new theory. A basic function algebra is constructed in the non-Archimedean setting on a $p$-adic ball such that the only globally analytic elements of the algebra are constants.\\
Each basic function algebra is shown to have a lattice of basic extensions related to the field structure. Further, in the non-Archimedean setting it is shown that certain basic function algebras have residue algebras that are also basic function algebras. To prove this each Galois automorphism, for certain field extensions, is shown to restrict to an endofunction on some set of residue class representatives.
\item[Chapter 6:]
A representation theorem is established in the context of locally compact complete fields with nontrivial valuation. For such a field $F$, commutative unital Banach $F$-algebras with square preserving norm and finite basic dimension are shown to be isometrically $F$-isomorphic to some subalgebra of a Basic function algebra.  The condition of finite basic dimension is automatically satisfied in the Archimedean setting by the Gel'fand-Mazur Theorem.
\item[Chapter 7:]
The theory of non-commutative real function algebras was established by K. Jarosz in 2008. The possibility of their generalisation to the non-Archimedean setting is established in this thesis having been originally pointed out in a paper by J. W. Mason in 2011. The thesis concludes with a list of open questions highlighting the potential for further interesting developments of this theory.
\end{enumerate}
	\chapter[Complete valued fields]{Complete valued fields}
\label{cha:CVF}
In this chapter we survey some of the basic facts and definitions concerning complete valued fields. Whilst also providing a background, most of the material presented here is required by later chapters and has been selected accordingly. 
\section{Introduction}
\label{sec:CVFINT}
We begin with some definitions.
\begin{definition}
We adopt the following terminology:
\begin{enumerate}
\item[(i)]
Let $F$ be a field. We will call a multiplicative norm $|\cdot|_{F}:F\rightarrow \mathbb{R}$ a {\em valuation} on $F$ and $F$ together with $|\cdot|_{F}$ a {\em valued field}.
\item[(ii)]
Let $F$ be a valued field. If the valuation on $F$ satisfies the strong triangle inequality,
\begin{equation*}
|a-b|_{F}\leq\mbox{max}(|a|_{F},|b|_{F})\mbox{ for all }a,b\in F,
\end{equation*}
then we call $|\cdot|_{F}$ a {\em non-Archimedean valuation} and $F$ a {\em non-Archimedean field}. Else we call $|\cdot|_{F}$ an {\em Archimedean valuation} and $F$ an {\em Archimedean field}.
\item[(iii)]
If a valued field is complete with respect to the metric obtained from its valuation then we call it a {\em complete valued field}. Similarly we have {\em complete valuation} and {\em complete non-Archimedean field} etc.
\item[(iv)]
More generally, a metric space $(X,d)$ is called an {\em ultrametric space} if the metric $d$ satisfies the strong triangle inequality,
\begin{equation*}
d(x,z)\leq\mbox{max}(d(x,y),d(y,z))\mbox{ for all }x,y,z\in X.
\end{equation*}
\end{enumerate}
\label{def:CVFV}
\end{definition}
The following theorem is a characterisation of non-Archimedean fields, courtesy of \cite[p18]{Schikhof}.
\begin{theorem}
Let $F$ be a valued field. Then $F$ is a non-Archimedean field if and only if $|2|_{F}\leq1$.
\label{thr:CVFCHAR}
\end{theorem}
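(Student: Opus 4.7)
The forward implication is immediate: if $F$ is non-Archimedean, then $|2|_F = |1+1|_F \leq \max(|1|_F,|1|_F) = 1$.

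For the converse, assume $|2|_F \leq 1$. The plan is to bootstrap this bound to the full strong triangle inequality in two steps, via the binomial theorem and an $n$th-root trick.

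The first step is to show that $|n|_F \leq 1$ for every positive integer $n$. Writing $n$ in binary as $n = \sum_{i=0}^{m} a_i 2^i$ with $a_i \in \{0,1\}$ and $m \leq \log_2 n$, the ordinary triangle inequality together with $|2|_F \leq 1$ yields
\begin{equation*}
|n|_F \leq \sum_{i=0}^{m} |2|_F^i \leq m+1 \leq \log_2(n) + 1.
\end{equation*}
Applying this bound to $n^k$ and using multiplicativity gives $|n|_F^k = |n^k|_F \leq k\log_2(n) + 1$, so $|n|_F \leq (k\log_2(n)+1)^{1/k}$; letting $k \to \infty$ forces $|n|_F \leq 1$.

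The second step is the standard binomial expansion argument. For $a,b \in F$ and any positive integer $n$,
\begin{equation*}
|a+b|_F^n = \left|\sum_{k=0}^{n} \binom{n}{k} a^k b^{n-k}\right|_F \leq \sum_{k=0}^{n} \left|\binom{n}{k}\right|_F |a|_F^k |b|_F^{n-k} \leq (n+1)\max(|a|_F,|b|_F)^n,
\end{equation*}
where the last inequality uses the bound $|\binom{n}{k}|_F \leq 1$ from the first step. Taking $n$th roots gives $|a+b|_F \leq (n+1)^{1/n}\max(|a|_F,|b|_F)$, and letting $n\to\infty$ yields the strong triangle inequality (after replacing $b$ by $-b$, which does not affect absolute values). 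The main obstacle is really just the first step — once integers are bounded in norm, the rest is a routine limit calculation.
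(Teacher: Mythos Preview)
Your proof is correct. The paper itself does not supply a proof of this theorem; it simply attributes the result to \cite[p18]{Schikhof}. Your argument is the standard one found there: first bound $|n|_F$ for integers via the binary expansion and an $n$th-root limit, then feed that into the binomial expansion of $(a+b)^n$ and take another $n$th-root limit. So there is nothing to compare against in the paper, but your write-up matches the referenced source and is complete.
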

\begin{remark}
Whilst it is clear from the definition of the strong triangle inequality that an Archimedean field can't be extended as a valued field to a non-Archimedean field, Theorem \ref{thr:CVFCHAR} also shows that a non-Archimedean field can't be extended to an Archimedean field.
\label{rem:CVFE}
\end{remark}
\begin{theorem}
Let $F$ be a valued field. Let $\mathfrak{C}$, with pointwise operations, be the ring of Cauchy sequences of elements of $F$ and let $\mathfrak{N}$ denote its ideal of null sequences. Then the completion $\mathfrak{C}/\mathfrak{N}$ of $F$ with the function
\begin{equation*}
|(a_{n})+\mathfrak{N}|_{\mathfrak{C}/\mathfrak{N}}:=\lim_{n\to\infty}|a_{n}|_{F},
\end{equation*}
for $(a_{n})+\mathfrak{N}\in\mathfrak{C}/\mathfrak{N}$, is a complete valued field extending $F$ as a valued field.
\label{thr:CVFCOM}
\end{theorem}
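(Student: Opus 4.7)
The plan is to verify in turn the ring/ideal structure, the field property, the valuation axioms, and finally completeness, following the standard Cauchy-sequence construction but taking care that everything works uniformly in both the Archimedean and non-Archimedean cases.

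First I would establish the algebraic structure. That $\mathfrak{C}$ is a commutative ring under pointwise operations is routine once one observes that any Cauchy sequence is bounded (because $||a_n|_F-|a_m|_F|\le|a_n-a_m|_F$), so sums and products of Cauchy sequences are again Cauchy. To see that $\mathfrak{N}$ is an ideal, null sequences clearly form an additive subgroup, and if $(a_n)\in\mathfrak{C}$ is bounded by $M$ and $(b_n)\in\mathfrak{N}$, then $|a_nb_n|_F\le M|b_n|_F\to 0$.

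The main obstacle, and the step where multiplicativity of $|\cdot|_F$ is essential, is showing that $\mathfrak{C}/\mathfrak{N}$ is a \emph{field}. Given a nonzero class $(a_n)+\mathfrak{N}$, the real sequence $(|a_n|_F)$ is Cauchy in $\mathbb{R}$ (again by the reverse-triangle bound above) and converges to some $L\ge 0$; since $(a_n)\notin\mathfrak{N}$ we must have $L>0$, so $|a_n|_F\ge L/2$ for all $n\ge N$ say. Replacing the first $N$ terms by $1$ gives a sequence $(\tilde a_n)$ representing the same class with all terms invertible in $F$. Multiplicativity then yields
\begin{equation*}
|\tilde a_n^{-1}-\tilde a_m^{-1}|_F=\frac{|\tilde a_m-\tilde a_n|_F}{|\tilde a_n|_F|\tilde a_m|_F}\le\frac{4}{L^2}|\tilde a_m-\tilde a_n|_F,
\end{equation*}
so $(\tilde a_n^{-1})\in\mathfrak{C}$, and its class is clearly the required multiplicative inverse. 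The valuation $|(a_n)+\mathfrak{N}|_{\mathfrak{C}/\mathfrak{N}}:=\lim|a_n|_F$ is well-defined by the same reverse-triangle argument, and is independent of the representative because adding a null sequence does not change the limit. The axioms of a valuation (multiplicativity, strict positivity on nonzero classes, and the triangle inequality, or the strong form when $|\cdot|_F$ is non-Archimedean) pass to the limit directly from the corresponding inequalities for $|\cdot|_F$.

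The embedding $F\hookrightarrow\mathfrak{C}/\mathfrak{N}$ sending $a$ to the class of the constant sequence is clearly an isometric ring monomorphism, and its image is dense because any Cauchy sequence $(a_n)$ is approximated by the constants $a_N$ for large $N$. It remains to prove completeness: given a Cauchy sequence $(x_k)$ in $\mathfrak{C}/\mathfrak{N}$, use density of $F$ to choose $a_k\in F$ with $|x_k-a_k|_{\mathfrak{C}/\mathfrak{N}}<1/k$; a standard $\epsilon/3$ argument shows $(a_k)$ is Cauchy in $F$, hence defines an element $x\in\mathfrak{C}/\mathfrak{N}$, and a second $\epsilon/3$ argument gives $x_k\to x$. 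The extra care needed in the non-Archimedean case amounts only to replacing the $1/k$ tolerance with a corresponding strong-triangle estimate, so no separate argument is required.
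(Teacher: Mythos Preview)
Your proposal is correct and follows essentially the same approach as the paper. The paper actually only highlights the single key step---showing that a non-null Cauchy sequence has an inverse in $\mathfrak{C}/\mathfrak{N}$ by bounding $|a_n|_F$ away from zero, adjusting the representative, and using multiplicativity to get $|b_m^{-1}-b_n^{-1}|_F=|b_m^{-1}|_F|b_n^{-1}|_F|b_n-b_m|_F<\delta^{-2}|b_n-b_m|_F$---and refers the remaining details (ring structure, well-definedness of the valuation, completeness) to a standard reference; your treatment of that central step is the same computation, and you simply fill in the surrounding routine verifications that the paper omits.
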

\begin{proof}
We will only highlight one important part of the proof since further details can be found in \cite[p80]{McCarthy}. We first note that since the valuation $|\cdot|_{F}$ is multiplicative we have $|a^{-1}|_{F}=|a|^{-1}_{F}$ for all units $a\in F^{\times}$. Let $(a_{n})$ be a Cauchy sequence taking values in $F$ but not a null sequence. Then there exists $\delta>0$ and $N\in\mathbb{N}$ such that for all $n>N$ we have $|a_{n}|_{F}>\delta$. If $(a_{n})$ takes the value $0$ then a null sequence can be added to $(a_{n})$ such that the resulting sequence $(b_{n})$ does not takes the value $0$ and $(b_{n})$ agrees with $(a_{n})$ for all $n>N$. Hence for all $m>N$ and $n>N$ we have
\begin{equation*}
|b^{-1}_{m}-b^{-1}_{n}|_{F}=|b^{-1}_{m}|_{F}|b^{-1}_{n}|_{F}|b_{n}-b_{m}|_{F}<\frac{1}{\delta^{2}}|b_{n}-b_{m}|_{F}
\end{equation*}
and so the sequence $(b^{-1}_{n})$ is also a Cauchy sequence. This shows that the ideal of null sequences $\mathfrak{N}$ is maximal and $\mathfrak{C}/\mathfrak{N}$ is therefore a field opposed to merely a ring.
\end{proof}
\begin{definition}
Let $F$ be a valued field. We will call a function $\nu:F\rightarrow\mathbb{R}\cup\{\infty\}$ a {\em valuation logarithm} if and only if for an appropriate fixed $r>1$ we have $|a|_{F}=r^{-\nu(a)}$ for all $a\in F$.
\label{def:CVFVL}
\end{definition}
\begin{remark} We have the following basic facts.
\begin{enumerate}
\item[(i)]
With reference to Definition \ref{def:CVFV}, a valuation logarithm $\nu$ on a non-Archimedean field $F$ has the following properties. For $a, b\in F$ we have:
\begin{enumerate}
\item[(1)]
$\nu(a+b)\geq\mbox{min}(\nu(a),\nu(b))$;
\item[(2)]
$\nu(ab)=\nu(a)+\nu(b)$;
\item[(3)]
$\nu(1)=0$ and $\nu(a)=\infty$ if and only if $a=0$.
\end{enumerate}
\item[(ii)]
Every valued field $F$ has a valuation logarithm since we can take $r=e$, where $e:=\exp(1)$, and for $a\in F$ define
\begin{equation*}
\nu(a): = \left\{ \begin{array}{l@{\quad\mbox{if}\quad}l}-\log|a|_{F} & a\not=0 \\ \infty & a=0. \end{array} \right.
\end{equation*}
\item[(iii)]
If $\nu$ is a valuation logarithm on a valued field $F$ then so is $c\nu$ for any $c\in\mathbb{R}$ with $c>0$. However there will sometimes be a preferred choice. For example a valuation logarithm of {\em rank 1} is such that $\nu(F^{\times})=\mathbb{Z}$.
\end{enumerate}
\label{rem:CVFVL}
\end{remark}
\begin{lemma}
Let $F$ be a non-Archimedean field with valuation logarithm $\nu$. If $a,b\in F$ are such that $\nu(a)<\nu(b)$ then $\nu(a+b)=\nu(a)$.
\label{lem:CVFEQ}
\end{lemma}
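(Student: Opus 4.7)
The plan is to prove this standard ultrametric equality in two directions, the easy one coming straight from property (1) of Remark \ref{rem:CVFVL}(i), and the harder one coming from applying the same property to a cleverly rewritten expression for $a$.

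First I would observe that property (1) gives immediately
\begin{equation*}
\nu(a+b)\geq\min(\nu(a),\nu(b))=\nu(a),
\end{equation*}
since by hypothesis $\nu(a)<\nu(b)$. The task is therefore to establish the reverse inequality $\nu(a+b)\leq\nu(a)$.

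For this, I would exploit the algebraic identity $a=(a+b)+(-b)$ together with the fact that $\nu(-b)=\nu(b)$. The latter follows from property (2) applied to $(-1)(-1)=1$, which combined with property (3) gives $2\nu(-1)=0$, hence $\nu(-1)=0$, and then $\nu(-b)=\nu(-1)+\nu(b)=\nu(b)$. Applying property (1) to $a=(a+b)+(-b)$ yields
\begin{equation*}
\nu(a)\geq\min(\nu(a+b),\nu(b)).
\end{equation*}
Since $\nu(a)<\nu(b)$, the minimum on the right cannot equal $\nu(b)$, so it must equal $\nu(a+b)$, giving $\nu(a)\geq\nu(a+b)$.

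Combining the two inequalities yields $\nu(a+b)=\nu(a)$. There is no real obstacle here; the only mild subtlety is recognising that one must symmetrise the argument by solving for $a$ in terms of $a+b$ and $-b$, since a direct application of property (1) only delivers a lower bound. This is the standard ``isosceles triangle'' phenomenon in ultrametric spaces.
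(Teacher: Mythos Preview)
Your proof is correct and follows essentially the same approach as the paper's own proof: both establish $\nu(a+b)\geq\nu(a)$ directly from the ultrametric inequality, then verify $\nu(-b)=\nu(b)$ via $2\nu(-1)=0$, and finally apply the ultrametric inequality to $a=(a+b)+(-b)$ to force the reverse inequality.
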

\begin{proof}
Given that $\nu(a)<\nu(b)$ we have $\nu(a+b)\geq\mbox{min}(\nu(a),\nu(b))=\nu(a)$. Moreover $0=\nu(1)=\nu((-1)(-1))=2\nu(-1)$ therefore giving $\nu(-b)=\nu(-1)+\nu(b)=\nu(b)$. Hence $\nu(a)\geq\mbox{min}(\nu(a+b),\nu(-b))=\mbox{min}(\nu(a+b),\nu(b))$. But $\nu(a)<\nu(b)$ and so $\nu(a)\geq\nu(a+b)$ giving $\nu(a+b)=\nu(a)$.
\end{proof}
Before looking at specific examples of complete valued fields we first consider some of the theory concerning series representations of elements.
\subsection{Series expansions of elements of valued fields}
\label{subsec:CVFSE}
\begin{definition}
Let $F$ be a valued field. If 1 is an isolated point of $|F^{\times}|_{F}$, equivalently 0 is an isolated point of $\nu(F^{\times})$ for $\nu$ a valuation logarithm on $F$, then the valuation on $F$ is said to be {\em discrete}, else it is said to be {\em dense}.
\label{def:CVFDIS}
\end{definition}
\begin{lemma}
If a valued field $F$ has a discrete valuation then $\nu(F^{\times})$ is a discrete subset of $\mathbb{R}$ for $\nu$ a valuation logarithm on $F$.
\label{lem:CVFDV}
\end{lemma}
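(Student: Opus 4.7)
The plan is to exploit the fact that a valuation logarithm $\nu$ is, by Remark \ref{rem:CVFVL}(i)(2), a group homomorphism from $(F^{\times},\cdot)$ into $(\mathbb{R},+)$, so that $\nu(F^{\times})$ is an additive subgroup of $\mathbb{R}$. The lemma then reduces to a very general fact: any additive subgroup of $\mathbb{R}$ in which $0$ is isolated is automatically discrete.

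First I would unpack the hypothesis via Definition \ref{def:CVFDIS}: since the valuation is discrete, $0$ is an isolated point of $\nu(F^{\times})$, so I can fix some $\varepsilon>0$ with
\begin{equation*}
(-\varepsilon,\varepsilon)\cap\nu(F^{\times})=\{0\}.
\end{equation*}
Next, pick an arbitrary $\alpha\in\nu(F^{\times})$ and aim to show that $\alpha$ is isolated in $\nu(F^{\times})$. Suppose for contradiction that there exists $\beta\in\nu(F^{\times})$ with $0<|\beta-\alpha|<\varepsilon$. Writing $\alpha=\nu(a)$ and $\beta=\nu(b)$ for some $a,b\in F^{\times}$, the multiplicativity of $\nu$ gives
\begin{equation*}
\beta-\alpha=\nu(b)-\nu(a)=\nu(ba^{-1})\in\nu(F^{\times}).
\end{equation*}
But then $\beta-\alpha$ is a nonzero element of $(-\varepsilon,\varepsilon)\cap\nu(F^{\times})$, contradicting the choice of $\varepsilon$. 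Hence $(\alpha-\varepsilon,\alpha+\varepsilon)\cap\nu(F^{\times})=\{\alpha\}$, and $\alpha$ is isolated in $\nu(F^{\times})$.

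Since $\alpha$ was arbitrary, every point of $\nu(F^{\times})$ is isolated, which is precisely the statement that $\nu(F^{\times})$ is a discrete subset of $\mathbb{R}$. There is no real obstacle here; the only thing to be mildly careful about is the translation-invariance argument, which relies essentially on $\nu(F^{\times})$ being a subgroup of $(\mathbb{R},+)$ rather than just a subset, and this in turn relies on $0\in\nu(F^{\times})$ (which follows from $\nu(1)=0$, cf.\ Remark \ref{rem:CVFVL}(i)(3)).
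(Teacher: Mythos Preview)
Your proof is correct and takes essentially the same approach as the paper's: both exploit that $\nu(F^{\times})$ is an additive subgroup of $\mathbb{R}$, so that isolation of $0$ transfers to isolation of every point via $\nu(b)-\nu(a)=\nu(ba^{-1})$. The only cosmetic difference is that the paper argues the contrapositive (an accumulation point anywhere yields one at $0$ by taking $b_n:=a_na_{n+1}^{-1}$), whereas you argue directly.
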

\begin{proof}
We show the contrapositive. Suppose there is a sequence $(a_{n})$ of elements of $F^{\times}$ such that $\nu(a_{n})$ converges to a point of $\mathbb{R}$ with $\nu(a_{n})\not=\lim_{m\to\infty}\nu(a_{m})$ for all $n\in\mathbb{N}$. We can take $(a_{n})$ to be such that $\nu(a_{n})\not=\nu(a_{m})$ for $n\not=m$. Then setting $b_{n}:=a_{n}a^{-1}_{n+1}$ defines a sequence $(b_{n})$ such that
\begin{equation*}
\nu(b_{n})=\nu(a_{n}a^{-1}_{n+1})=\nu(a_{n})+\nu(a^{-1}_{n+1})=\nu(a_{n})-\nu(a_{n+1})
\end{equation*}
which converges to 0.
\end{proof}
The following standard definitions are particularly important.
\begin{definition}
For $F$ a non-Archimedean field with valuation logarithm $\nu$, Define:
\begin{enumerate}
\item[(i)]
$\mathcal{O}_{F}:=\{a\in F:\nu(a)\geq0,\mbox{ equivalently }|a|_{F}\leq1\}$ the {\em ring of integers} of $F$ noting that this is a ring by the strong triangle inequality;
\item[(ii)]
$\mathcal{O}^{\times}_{F}:=\{a\in F:\nu(a)=0,\mbox{ equivalently }|a|_{F}=1\}$ the units of $\mathcal{O}_{F}$;
\item[(iii)]
$\mathcal{M}_{F}:=\{a\in F:\nu(a)>0,\mbox{ equivalently }|a|_{F}<1\}$ the maximal ideal of $\mathcal{O}_{F}$ of elements without inverses in $\mathcal{O}_{F}$;
\item[(iv)]
$\overline{F}:=\mathcal{O}_{F}/\mathcal{M}_{F}$ the {\em residue field} of $F$ of {\em residue classes}.
\end{enumerate}
\label{def:CVFRF}
\end{definition}
\begin{definition}
Let $F$ be a field with a discrete valuation and valuation logarithm $\nu$.
\begin{enumerate}
\item[(i)]
If $|F|_{F}=\{0,1\}$, equivalently $\nu(F)=\{0,\infty\}$, then the valuation is called {\em trivial}.
\item[(ii)]
If $|\cdot|_{F}$ is not trivial then an element $\pi\in F^{\times}$ such that $\nu(\pi)=\min\nu(F^{\times})\cap(0,\infty)$ is called a {\em prime} element since $\pi\not=ab$ for all $a,b\in\mathcal{O}_{F}\backslash\mathcal{O}^{\times}_{F}$ given above.
\end{enumerate}
\label{def:CVFPE}
\end{definition}
\begin{remark}
For a field $F$, as in part (ii) of Definition \ref{def:CVFPE}, it follows easily from Lemma \ref{lem:CVFDV} that $F$ has a prime element $\pi$ and from Remark \ref{rem:CVFVL} that $\nu(F^{\times})=\nu(\pi)\mathbb{Z}$ which we call the {\em value group}. Moreover for $a\in F^{\times}$ we have
\begin{equation*}
|a|_{F}=r^{-\nu(a)}=e^{\nu(\pi)\log(r)(-\nu(a)/\nu(\pi))}=e^{\log(|\pi|^{-1}_{F})(-\nu(a)/\nu(\pi))}=\left(|\pi|^{-1}_{F}\right)^{-\nu(a)/\nu(\pi)}
\end{equation*}
giving a rank 1 valuation logarithm $\frac{1}{\nu(\pi)}\nu$ noting that $|\pi|^{-1}_{F}>1$ since $r>1$.
\label{rem:CVFVG}
\end{remark}
\begin{theorem}
Let $F$ be a valued field with a non-trivial, discrete valuation. Let $\pi$ be a prime element of $F$ and let $\mathcal{R}\subseteq\mathcal{O}^{\times}_{F}\cup\{0\}$ be a set of residue class representatives with 0 representing $\bar{0}=\mathcal{M}_{F}$. Then every element $a\in F^{\times}$ has a unique series expansion over $\mathcal{R}$ of the form
\begin{equation*}
a=\sum_{i=n}^{\infty}a_{i}\pi^{i}\quad\mbox{for some }n\in\mathbb{Z}\mbox{ with }a_{n}\not=0.
\end{equation*}
Moreover if $F$ is complete then every series over $\mathcal{R}$ of the above form defines an element of $F^{\times}$.
\label{thr:CVFSE}
\end{theorem}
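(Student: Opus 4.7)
The plan is to build the series digit by digit using the residue field, and to handle both uniqueness and the converse via the ultrametric identity of Lemma \ref{lem:CVFEQ}. To keep the indexing clean I would first pass to the rank $1$ valuation logarithm $\tfrac{1}{\nu(\pi)}\nu$ from Remark \ref{rem:CVFVG}, so that $\nu(\pi)=1$ and $\nu(F^{\times})=\mathbb{Z}$; this changes nothing of substance.

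For existence, given $a\in F^{\times}$ set $n:=\nu(a)\in\mathbb{Z}$, so $a\pi^{-n}\in\mathcal{O}_{F}^{\times}$. Let $a_{n}\in\mathcal{R}$ be the unique representative of the residue class of $a\pi^{-n}$; then $a_{n}\neq 0$ and $\nu(a-a_{n}\pi^{n})\geq n+1$. Now iterate: given $a_{n},\dots,a_{n+k}\in\mathcal{R}$ with $\nu\bigl(a-\sum_{i=n}^{n+k}a_{i}\pi^{i}\bigr)\geq n+k+1$, choose $a_{n+k+1}\in\mathcal{R}$ to be the representative of the residue class of $\bigl(a-\sum_{i=n}^{n+k}a_{i}\pi^{i}\bigr)\pi^{-(n+k+1)}\in\mathcal{O}_{F}$; this pushes the valuation of the remainder up by at least one. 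The partial sums $s_{k}=\sum_{i=n}^{n+k}a_{i}\pi^{i}$ then satisfy $\nu(a-s_{k})\to\infty$, so $s_{k}\to a$ in $F$ and the series represents $a$.

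For uniqueness, suppose $\sum_{i\geq n}a_{i}\pi^{i}=\sum_{i\geq n'}b_{i}\pi^{i}$ and let $m$ be the least index at which the coefficients disagree, writing $c_{i}:=a_{i}-b_{i}$. Because $\mathcal{R}$ is a system of residue class representatives the distinct elements $a_{m},b_{m}\in\mathcal{R}$ lie in different residue classes, so $c_{m}\in\mathcal{O}_{F}^{\times}$ and $\nu(c_{m}\pi^{m})=m$. The tail $T:=\sum_{i>m}c_{i}\pi^{i}$ is the limit of partial sums each of valuation $\geq m+1$, and in the non-Archimedean setting this forces $\nu(T)\geq m+1$; hence by Lemma \ref{lem:CVFEQ} the full difference $c_{m}\pi^{m}+T$ has valuation exactly $m$, contradicting that it equals $0$.

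For the final assertion, assume $F$ is complete and let $\sum_{i\geq n}a_{i}\pi^{i}$ be any series of the stated form. Since consecutive partial sums differ by $a_{n+k}\pi^{n+k}$, which has valuation $\geq n+k$, the partial sums are Cauchy and converge to some $s\in F$. Splitting $s=a_{n}\pi^{n}+T$ as before, the tail has $\nu(T)\geq n+1>\nu(a_{n}\pi^{n})=n$, so Lemma \ref{lem:CVFEQ} yields $\nu(s)=n<\infty$ and in particular $s\in F^{\times}$.

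The argument is largely routine once one fixes the right valuation logarithm; the only point that needs a moment's care is the inequality $\nu(T)\geq m+1$ for the tail in the uniqueness step, since Lemma \ref{lem:CVFEQ} is phrased for two summands. This is handled by the general fact that in a non-Archimedean field a convergent sequence all of whose terms have valuation $\geq c$ has limit of valuation $\geq c$, which is a direct consequence of the strong triangle inequality applied to $s=s_{k}+(s-s_{k})$ for large $k$.
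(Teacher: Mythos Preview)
Your argument is correct and is the standard digit-by-digit construction via residue representatives, with uniqueness and the converse handled cleanly through Lemma~\ref{lem:CVFEQ}. Note that the paper does not actually supply its own proof of this theorem: Remark~\ref{rem:CVFSV}(i) simply refers the reader to \cite[p28]{Schikhof}, so there is nothing to compare against beyond observing that your approach is exactly the classical one found there.
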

\begin{remark}
Concerning Theorem \ref{thr:CVFSE}.
\begin{enumerate}
\item[(i)]
A proof is given in \cite[p28]{Schikhof}, in fact a generalisation of Theorem \ref{thr:CVFSE} is also given that can be applied to non-Archimedean fields with a dense valuation.
\item[(ii)]
For $a=\sum_{i=n}^{\infty}a_{i}\pi^{i}$ as in Theorem \ref{thr:CVFSE} and using the rank 1 valuation logarithm of Remark \ref{rem:CVFVG} we have, for each $i\geq n$, $\nu(a_{i}\pi^{i})=\nu(a_{i})+i\nu(\pi)=i$ if $a_{i}\pi^{i}\not=0$ and $\nu(a_{i}\pi^{i})=\infty$ otherwise. Further by Lemma \ref{lem:CVFEQ} $b_{m}:=\sum_{i=n}^{m}a_{i}\pi^{i}$ defines a Cauchy sequence in $F$, with respect to $|\cdot|_{F}$, and its limit is $a$. Hence, since for each $m>n$ we have $|a|_{F}-|b_{m}|_{F}\leq |a-b_{m}|_{F}$, $|b_{m}|_{F}$ converges in $\mathbb{R}$ to $|a|_{F}$. But $\nu(b_{m})=n$ for all $m>n$ by Lemma \ref{lem:CVFEQ} and so $\nu(a)=n$.
\end{enumerate}
\label{rem:CVFSV}
\end{remark}
We will now look at some examples of complete valued fields and consider the availability of such structures in the Archimedean and non-Archimedean settings.
\subsection{Examples of complete valued fields}
\label{subsec:CVFEX}
\begin{example} Here are some non-Archimedean examples.
\begin{enumerate}
\item[(i)]
Let $F$ be any field. Then $F$ with the trivial valuation is a non-Archimedean field. It is complete noting that in this case each Cauchy sequences will be constant after some finite number of initial values. The trivial valuation induces the trivial topology on $F$ where every subset of $F$ is {\em clopen} i.e. both open and closed. Furthermore $F$ will coincide with its own residue field.
\item[(ii)]
There are examples of complete non-Archimedean fields of non-zero characteristic with non-trivial valuation. For each there is a prime $p$ such that the field is a transcendental extension of the finite field $\mathbb{F}_{p}$ of $p$ elements. The reason why such a field is not an algebraic extension of $\mathbb{F}_{p}$ follows easily from the fact that the only valuation on a finite field is the trivial valuation. One example of this sort is the valued field of formal Laurent series $\mathbb{F}_{p}\{\{T\}\}$ in one variable over $\mathbb{F}_{p}$ with termwise addition,
\begin{equation*}
\mbox{$\sum_{n\in\mathbb{Z}}a_{n}T^{n}+\sum_{n\in\mathbb{Z}}b_{n}T^{n}:=\sum_{n\in\mathbb{Z}}(a_{n}+b_{n})T^{n}$,}
\end{equation*}
multiplication in the form of the Cauchy product,
\begin{equation*}
\mbox{$(\sum_{n\in\mathbb{Z}}a_{n}T^{n})(\sum_{n\in\mathbb{Z}}b_{n}T^{n}):=\sum_{n\in\mathbb{Z}}(\sum_{i\in\mathbb{Z}}a_{i}b_{n-i})T^{n}$,}
\end{equation*}
and valuation given at zero by $|0|_{T}:=0$ and on the units $\mathbb{F}_{p}\{\{T\}\}^{\times}$ by,
\begin{equation*}
\mbox{$|\sum_{n\in\mathbb{Z}}a_{n}T^{n}|$}_{T}:=r^{-\mbox{min}\{n:a_{n}\not= 0\}}\mbox{ for any fixed }r>1.
\end{equation*}
The valuation on $\mathbb{F}_{p}\{\{T\}\}$ is discrete and its residue field is isomorphic to $\mathbb{F}_{p}$. The above construction also gives a complete non-Archimedean field if we replace $\mathbb{F}_{p}$ with any other field $F$, see \cite[p288]{Schikhof}.
\item[(iii)]
On the other hand complete valued fields of characteristic zero necessarily contain one of the completions of the rational numbers $\mathbb{Q}$. The Levi-Civita field $R$ is such a valued field, see \cite{Shamseddine}. Each element $a\in R$ can be represented as a formal power series of the form
\begin{equation*}
\mbox{$a=\sum_{q\in\mathbb{Q}}a_{q}T^{q}$ with $a_{q}\in\mathbb{R}$ for all $q\in\mathbb{Q}$}
\end{equation*}
such that for each $q\in\mathbb{Q}$ there are at most finitely many $q'<q$ with $a_{q'}\not=0$. Moreover addition, multiplication and the valuation for $R$ can all be obtained by analogy with example (ii) above. A total order can be put on the Levi-Civita field such that the order topology agrees with the topology induced by the field's valuation which is non-trivial. To verify this one shows that the order topology sub-base of open rays topologically generates the valuation topology sub-base of open balls and vice versa. This might be useful to those interested in generalising the theory of C*-algebras to new fields where there is a need to define positive elements. The completion of $\mathbb{Q}$ that the Levi-Civita field contains is in fact $\mathbb{Q}$ itself since the valuation when restricted to $\mathbb{Q}$ is trivial.
\end{enumerate}
\label{exa:CVFA}
\end{example}
We consider what examples of complete Archimedean fields there are. Since the only valuation on a finite field is the trivial valuation, it follows from Remark \ref{rem:CVFE} that every Archimedean field is of characteristic zero. Moreover every non-trivial valuation on the rational numbers is given by Ostrowski's Theorem, see \cite[p2]{Fesenko}\cite[p22]{Schikhof}.
\begin{theorem}
A non-trivial valuation on $\mathbb{Q}$ is either a power of the absolute valuation $|\cdot|_{\infty}^{c}$, with $0<c\leq1$, or a power of the $p$-adic valuation $|\cdot|_{p}^{c}$ for some prime $p\in\mathbb{N}$ with positive $c\in\mathbb{R}$.
\label{thr:CVFOS}
\end{theorem}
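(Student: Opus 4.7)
My plan is to split on whether the valuation satisfies $|2| \le 1$, invoking Theorem \ref{thr:CVFCHAR} to identify the non-Archimedean case, and then handle the two cases separately.

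In the non-Archimedean case, the strong triangle inequality combined with $|1|=1$ forces $|n| \le 1$ for every $n \in \mathbb{Z}$, so the set
\begin{equation*}
\mathfrak{p} := \{n \in \mathbb{Z} : |n| < 1\}
\end{equation*}
is closed under addition. Multiplicativity makes it a prime ideal of $\mathbb{Z}$, and non-triviality of the valuation (which would otherwise extend trivially to $\mathbb{Q}$) makes it non-zero, so $\mathfrak{p}=p\mathbb{Z}$ for a unique prime $p$. Setting $c := -\log|p|/\log p > 0$ gives $|p|=p^{-c}$, and combined with $|n|=1$ for $n$ coprime to $p$ this forces $|\cdot|=|\cdot|_p^c$ on $\mathbb{Z}$; multiplicativity extends this to all of $\mathbb{Q}$.

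In the Archimedean case some integer $n_0 \ge 2$ has $|n_0|>1$, and the crux is to show that $|n|^{1/\log n}$ is constant on integers $n \ge 2$. For any $m, n \ge 2$ and $s \in \mathbb{N}$ I would write $m^s$ in base $n$ as $m^s = \sum_{i=0}^k a_i n^i$ with $0 \le a_i < n$ and $n^k \le m^s$, bound each $|a_i| \le a_i \le n$ via the triangle inequality and $|1|=1$, and conclude
\begin{equation*}
|m|^s \le n(k+1)\max(1,|n|)^k \le n\left(1+\tfrac{s\log m}{\log n}\right)\max(1,|n|)^{s\log m/\log n}.
\end{equation*}
Taking $s$-th roots and sending $s \to \infty$ yields $|m| \le \max(1,|n|)^{\log m/\log n}$. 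Applying this with $m$ and $n_0$ swapped rules out $|m| \le 1$ for any $m \ge 2$, so $\max(1,|n|)=|n|$ throughout. Symmetrising in $m$ and $n$ then gives $|m|^{1/\log m}=|n|^{1/\log n}$; writing the common value as $e^c$ yields $|n|=n^c$ on positive integers, which extends multiplicatively to $|\cdot|=|\cdot|_\infty^c$ on $\mathbb{Q}$. The bound $c \le 1$ comes from $2^c = |2| \le |1|+|1|=2$.

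The hard part is the Archimedean case, specifically the base-$n$ expansion estimate and the symmetrisation that pins $|n|^{1/\log n}$ down to a universal constant; the non-Archimedean case and the final verification are essentially bookkeeping.
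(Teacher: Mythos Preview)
Your proof is correct and follows the standard textbook argument for Ostrowski's theorem. However, the paper does not actually supply its own proof of this statement: it merely states the theorem and refers the reader to \cite[p2]{Fesenko} and \cite[p22]{Schikhof}. So there is no in-paper proof to compare against. Your argument is essentially the one found in those references (and most standard sources): the split into Archimedean and non-Archimedean cases via Theorem~\ref{thr:CVFCHAR}, the prime-ideal identification in the non-Archimedean case, and the base-$n$ expansion estimate followed by the $s$-th root limit in the Archimedean case.
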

\begin{remark}
We will look at the $p$-adic valuations on $\mathbb{Q}$ and the $p$-adic numbers in Example \ref{exa:CVFPN}. We note that any two of the valuations mentioned in Theorem \ref{thr:CVFOS} that are not the same up to a positive power will also not be equivalent as norms. Further, since all of the $p$-adic valuations are non-Archimedean, Theorem \ref{thr:CVFOS} implies that every complete Archimedean field contains $\mathbb{R}$, with a positive power of the absolute valuation, as a valued sub-field. It turns out that almost all complete valued fields are non-Archimedean with $\mathbb{R}$ and $\mathbb{C}$ being the only two Archimedean exceptions up to isomorphism as topological fields, see \cite[p36]{Schikhof}. This in part follows from the Gel'fand-Mazur Theorem which depends on spectral analysis involving Liouville's Theorem and the Hahn-Banach Theorem in the complex setting. We will return to these issues in the more general setting of Banach $F$-algebras.
\label{rem:CVFOS}
\end{remark}
\begin{example}
\label{exa:CVFPN}
Let $p\in\mathbb{N}$ be a prime. Then with reference to Remark \ref{rem:CVFVL}, for $n\in\mathbb{Z}$, 
\begin{equation*}
\nu_{p}(n):=\left\{\begin{array}{l@{\quad\mbox{if}\quad}l}\mbox{max}\{i\in\mathbb{N}_{0}:p^{i}|n\} & n\not=0 \\ \infty & n=0 \end{array}\right. , \quad\mathbb{N}_{0}:=\mathbb{N}\cup\{0\},
\end{equation*}
extends uniquely to $\mathbb{Q}$ under the properties of a valuation logarithm. Indeed for $n\in\mathbb{N}$ we have
\begin{equation*}
0=\nu_{p}(1)=\nu_{p}(n/n)=\nu_{p}(n)+\nu_{p}(1/n)
\end{equation*}
giving $\nu_{p}(1/n)=-\nu_{p}(n)$ etc. The standard $p$-adic valuation of $a\in\mathbb{Q}$ is then given by $|a|_{p}:=p^{-\nu_{p}(a)}$. This is a discrete valuation on $\mathbb{Q}$ with respect to which $p$ is a prime element in the sense of Definition \ref{def:CVFPE}. Moreover $\mathcal{R}_{p}:=\{0,1,\cdots,p-1\}$ is one choice of a set of residue class representatives for $\mathbb{Q}$. This is because for $m,n\in\mathbb{N}$ with $p\nmid m$ and $p\nmid n$ we have that $m$, using the Division algorithm, can be expressed as $m=a_{1}+pb_{1}$ and $1/n$, using the extended Euclidean algorithm, can be expressed as $1/n=a_{2}+pb_{2}/n$ with $a_{1},a_{2}\in\{1,\cdots,p-1\}$ and $b_{1},b_{2}\in\mathbb{Z}$. Hence, with reference to Definition \ref{def:CVFRF}, $m/n$ can be expressed as $m/n=a_{3}+pb_{3}/n$ with $a_{3}\in\{1,\cdots,p-1\}$ and $pb_{3}/n\in\mathcal{M}_{p}$ as required. With these details in place we can apply Theorem \ref{thr:CVFSE} so that every element $a\in\mathbb{Q}^{\times}$ has a unique series expansion over $\mathcal{R}_{p}$ of the form
\begin{equation*}
a=\sum_{i=n}^{\infty}a_{i}p^{i}\quad\mbox{for some }n\in\mathbb{Z}\mbox{ with }a_{n}\not=0.
\end{equation*}
The completion of $\mathbb{Q}$ with respect to $|\cdot|_{p}$ is the field of $p$-adic numbers denoted $\mathbb{Q}_{p}$. The elements of $\mathbb{Q}_{p}^{\times}$ are all of the series of the above form when using the expansion over $\mathcal{R}_{p}$. Further, with reference to Remark \ref{rem:CVFSV}, for such an element $a=\sum_{i=n}^{\infty}a_{i}p^{i}$ with $a_{n}\not=0$ we have $\nu_{p}(a)=n$. As an example of such expansions for $p=5$ we have,
\begin{equation*}
\frac{1}{2}=3\cdot5^{0}+2\cdot5+2\cdot5^{2}+2\cdot5^{3}+2\cdot5^{4}+\cdots.
\end{equation*}
More generally the residue field of $\mathbb{Q}_{p}$ is the finite field $\mathbb{F}_{p}$ of $p$ elements. Each non-zero element of $\mathbb{F}_{p}$ has a lift to a $p-1$ root of unity in $\mathbb{Q}_{p}$, see \cite[p37]{Fesenko}. These roots of unity together with $0$ also constitute a set of residue class representatives for $\mathbb{Q}_{p}$. The ring that they generate embeds as a ring into the complex numbers, e.g. see Figure \ref{fig:CVFRL}.
\begin{figure}[h]
\begin{center}
\includegraphics{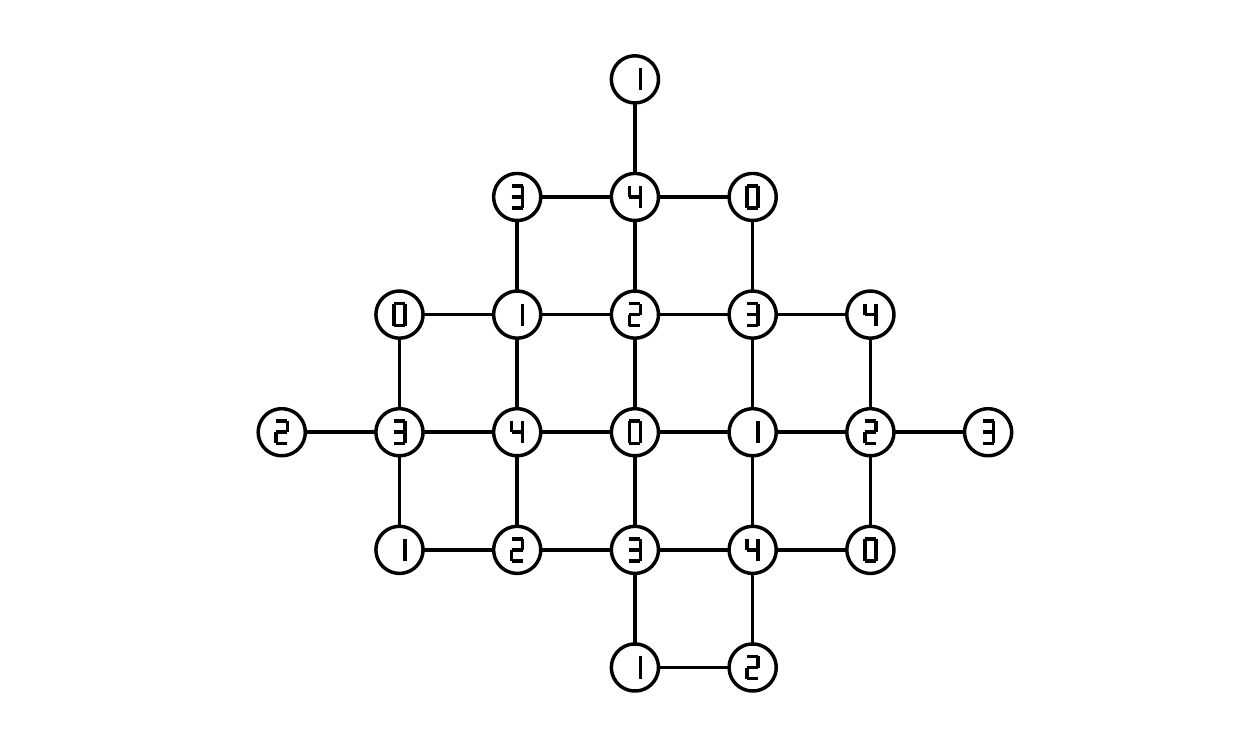}
\end{center}
\caption{Part of a ring in $\mathbb{C}$. The points are labeled with the first coefficient of their corresponding $5$-adic expansion over $\mathcal{R}_{5}$ under a ring isomorphism.}
\label{fig:CVFRL}
\end{figure}
Moreover as a field, rather than as a valued field, $\mathbb{Q}_{p}$ has an embedding into $\mathbb{C}$. The $p$-adic valuation on $\mathbb{Q}_{p}$ can then be extended to a complete valuation on the complex numbers which in this case as a valued field we denote as $\mathbb{C}_{p}$, see \cite[46]{Schikhof}\cite{Roquette}.
%The label \label{exa:CVFPN} has been put at the start of the example to stop the hperlink coming after the figure.
\end{example}
Finally it is interesting to note that the different standard valuations on $\mathbb{Q}$, when restricted to the units $\mathbb{Q}^{\times}$, are related by the equation $|\cdot|_{0}|\cdot|_{2}|\cdot|_{3}|\cdot|_{5}\cdots|\cdot|_{\infty}=1$ where $|\cdot|_{0}$ denotes the trivial valuation and $|\cdot|_{\infty}$ the absolute value function. See \cite[p3]{Fesenko}.
\subsection{Topological properties of complete valued fields}
\label{subsec:CVFTP}
In this subsection we consider the connectedness and local compactness of complete valued fields.
\begin{definition}
Let $X$ be a topological space and $Y\subseteq X$.
\begin{enumerate}
\item[(i)]
If $Y$ cannot be expressed as the disjoint union of two non-empty clopen subsets with respect to the relative topology then $Y$ is said to be a {\em connected subset} of $X$.
\item[(ii)]
If the only non-empty connected subsets of $X$ are singletons then $X$ is said to be {\em totally disconnected}.
\item[(iii)]
If for each pair of points $x,y\in X$ there exists a continuous map $f:I\rightarrow X$, $I:=[0,1]\subseteq\mathbb{R}$, such that $f(0)=x$ and $f(1)=y$ then $X$ is {\em path-connected}.
\item[(iv)]
A {\em neighborhood base} $\mathfrak{B}_{x}$ at a point $x\in X$ is a collection of neighborhoods of $x$ such that for every neighborhood $U$ of $x$ there is $V\in\mathfrak{B}_{x}$ with $V\subseteq U$.
\item[(v)]
We call $X$ {\em locally compact} if and only if each point in $X$ has a neighborhood base consisting of compact sets.
\end{enumerate}
\label{def:CVFCO}
\end{definition}
The following lemma is well known however I have provided a proof for the reader's convenience.
\begin{lemma}
Let $F$ be a non-Archimedean field. Then $F$ is totally disconnected.
\label{lem:CVFTD}
\end{lemma}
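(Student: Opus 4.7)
The plan is to exploit the fundamental peculiarity of the non-Archimedean setting, namely that every open ball is simultaneously closed. Once that is in place, any subset with at least two points can be split into two non-empty clopen pieces, so only singletons (and the empty set) can be connected.

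First I would take an arbitrary subset $Y\subseteq F$ containing two distinct points $x,y$, and fix a radius $r\in\mathbb{R}$ with $0<r\leq|x-y|_{F}$. Set
\begin{equation*}
B:=\{z\in F:|z-x|_{F}<r\}.
\end{equation*}
By construction $x\in B$ and $y\notin B$, and $B$ is open in $F$ in the usual metric-space sense.

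The key step is to verify that $B$ is also closed in $F$. For this, suppose $z\in F\setminus B$, so $|z-x|_{F}\geq r$, and pick any $w$ with $|w-z|_{F}<r$. Writing $w-x=(w-z)+(z-x)$, the strong triangle inequality (equivalently Lemma \ref{lem:CVFEQ} applied to a valuation logarithm, since $|w-z|_{F}<|z-x|_{F}$ unless $|z-x|_{F}=r$ in which case one uses the strong triangle inequality directly) forces $|w-x|_{F}=|z-x|_{F}\geq r$. Hence the open ball of radius $r$ about $z$ lies in the complement of $B$, so $F\setminus B$ is open and $B$ is clopen.

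Consequently $Y\cap B$ and $Y\cap(F\setminus B)$ are disjoint non-empty clopen subsets of $Y$ in the relative topology whose union is $Y$, so $Y$ is disconnected. Since $Y$ was any subset with at least two points, the only non-empty connected subsets of $F$ are singletons, giving total disconnectedness. The only point requiring genuine care is the clopen claim for $B$; everything else is bookkeeping around Definition \ref{def:CVFCO}.
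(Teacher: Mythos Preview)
Your proof is correct and follows essentially the same approach as the paper: both arguments show that open balls in a non-Archimedean field are clopen via the strong triangle inequality, and then use this to separate any two distinct points. The paper frames the clopen claim as the observation that $a\sim b\iff |a-b|_F<r$ is an equivalence relation (so each ball is an equivalence class and every point is a centre), whereas you argue directly that the complement of $B$ is open; these are minor variations of the same idea. One small remark: your parenthetical about the case $|z-x|_F=r$ is unnecessary, since $|w-z|_F<r\leq|z-x|_F$ already gives strict inequality and Lemma~\ref{lem:CVFEQ} applies uniformly.
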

\begin{proof}
Let $F$ be a non-Archimedean field and $r\in\mathbb{R}$ with $r>0$. For $a,b\in F$, $a\sim b$ if and only if $|a-b|_{F}<r$ defines an equivalence relation on $F$ by the strong triangle inequality noting that for transitivity if $a\sim b$ and $b\sim c$ then
\begin{equation*}
|a-c|_{F}\leq\mbox{max}(|a-b|_{F},|b-c|_{F})<r.
\end{equation*}
Hence for $a\in F$ the $F$-ball $B_{r}(a)$ is an equivalence class and so every element of $B_{r}(a)$ is at its center because every element is an equivalence class representative. In particular if $b\in B_{r}(a)$ then $B_{r}(b)=B_{r}(a)$ but for $b\notin B_{r}(a)$ we have $B_{r}(b)\cap B_{r}(a)=\emptyset$, showing that $B_{r}(a)$ is clopen. Since this holds for every $r>0$, $a$ has a neighborhood base of clopen balls. Hence since $F$ is Hausdorff, $\{a\}$ is the only connected subset of $F$ with $a$ as an element and so $F$ is totally disconnected.
\end{proof}
\begin{remark} We make the following observations.
\begin{enumerate}
\item[(i)]
Every complete Archimedean field is path-connected whereas every complete non-Archimedean field is totally disconnected.
\item[(ii)]
In general a valued field being totally disconnected is not the same as it being discrete. For example $\mathbb{Q}$ with the absolute valuation is a totally disconnected Archimedean field but it is obviously neither discrete nor complete. Also it is easy to show that a valued field admits a non-constant path if and only if it is path-connected, see \cite[p197]{Willard} for the standard definitions used here.
\item[(iii)]
With reference to the proof of Lemma \ref{lem:CVFTD}, $a\sim b$ if and only if $|a-b|_{F}\leq r$, noting the change from the strict inequality, is again an equivalence relation on $F$. Hence every ball of positive radius in a non-Archimedean field is clopen although a ball $\bar{B}_{r}(a):=\{b\in F:|b-a|_{F}\leq r\}$ may contain elements in addition to those in $B_{r}(a)$ depending on whether $r\in|F^{\times}|_{F}$. To clarify then, in the non-Archimedean setting $\bar{B}_{r}(a)$ does not denote the closure of $B_{r}(a)$ with respect to the valuation.
\item[(iv)]
In section \ref{subsec:UASCS} concerning complex uniform algebras we will look at Swiss cheese sets. For a non-Archimedean field $F$ if $a,b\in F$ and $r_{1},r_{2}\in\mathbb{R}$ with $r_{1}\geq r_{2}>0$ then either $B_{r_{2}}(b)\subseteq B_{r_{1}}(a)$ or $B_{r_{2}}(b)\cap B_{r_{1}}(a)=\emptyset$ since either $B_{r_{1}}(b)=B_{r_{1}}(a)$ or $B_{r_{1}}(b)\cap B_{r_{1}}(a)=\emptyset$. Further if $S$ is an $F$-ball or the complement of an $F$-ball then the closure of $S$ with respect to $|\cdot|_{F}$ coincides with $S$ since $F$-balls are clopen. Hence a Swiss cheese set $X\subseteq F$ will be classical exactly when there exists a countable or finite collection $\mathcal{D}$ of $F$-balls, with finite radius sum, and an $F$-ball $\Delta$ such that each element of $\mathcal{D}$ is a subset of $\Delta$ and $X=\Delta\backslash\bigcup\mathcal{D}$. It follows that such a set $X$ can be empty in the non-Archimedean setting.
\end{enumerate}
\label{rem:CVFTD}
\end{remark}
\begin{theorem}
Let $X$ be a Hausdorff space. Then $X$ is locally compact if and only if each point in $X$ has a compact neighborhood.
\label{thr:CVFHL}
\end{theorem}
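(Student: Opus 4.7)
The plan is to prove the nontrivial direction only, since if every point has a neighborhood base of compact sets then in particular every point has one compact neighborhood. So assume $X$ is Hausdorff and that each point possesses at least one compact neighborhood; I want to upgrade this to a neighborhood base of compact sets.

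Fix $x \in X$ together with a compact neighborhood $K$ of $x$, and let $U$ be an arbitrary open neighborhood of $x$. I will produce a compact neighborhood of $x$ lying in $U$. The key tool is that a compact Hausdorff space is regular, so $K$ with the subspace topology is regular. Apply this to the point $x$ and the closed (in $K$) set $C := K \setminus (K^{\circ} \cap U)$, where $K^{\circ}$ is the $X$-interior of $K$. Note that $x \notin C$ because $x \in K^{\circ} \cap U$. Regularity then yields disjoint $K$-open sets $V_{1}, V_{2}$ with $x \in V_{1}$ and $C \subseteq V_{2}$, whence
\begin{equation*}
V_{1} \subseteq K \setminus V_{2} \subseteq K^{\circ} \cap U.
\end{equation*}
Let $N := \overline{V_{1}}^{K}$, the closure of $V_{1}$ taken in $K$. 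Because $K \setminus V_{2}$ is $K$-closed and contains $V_{1}$, we also have $N \subseteq K^{\circ} \cap U \subseteq U$. Moreover $N$ is a closed subset of the compact space $K$, hence compact.

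It remains to check that $N$ is a neighborhood of $x$ in $X$, not merely in $K$. Since $V_{1} \subseteq K^{\circ}$, one can write $V_{1} = V_{1}' \cap K = V_{1}' \cap K^{\circ}$ for some $X$-open $V_{1}'$, so $V_{1}$ is itself open in $X$. Thus $x \in V_{1} \subseteq N \subseteq U$ with $V_{1}$ open in $X$, exhibiting $N$ as a compact $X$-neighborhood of $x$ contained in $U$. Since $U$ was arbitrary, the compact neighborhoods of $x$ form a neighborhood base, which is the required conclusion.

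The main obstacle I expect is bookkeeping between the topologies of $X$ and $K$: one must be careful that the set produced by regularity inside $K$ is actually a neighborhood of $x$ in $X$, which is why the argument is arranged so that the relevant open set $V_{1}$ ends up sitting inside the $X$-interior $K^{\circ}$ and therefore transfers to an $X$-open set for free. The Hausdorff hypothesis is used precisely to guarantee that the compact space $K$ is regular, so that the separation step producing $V_{1}$ and $V_{2}$ is available.
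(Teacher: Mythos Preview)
Your argument is correct. The paper does not actually supply its own proof of this theorem; it simply refers the reader to \cite[p130]{Willard}, so there is no in-paper argument to compare against. Your proof is the standard one (and is essentially what one finds in Willard): use that a compact Hausdorff space is regular to shrink inside the given compact neighborhood, and then check that the resulting set is a neighborhood in $X$ by ensuring the relevant relatively open set lies inside $K^{\circ}$. The bookkeeping between the $K$-topology and the $X$-topology is handled correctly.
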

\begin{theorem}
Let $F$ be a complete non-Archimedean field that is not simultaneously both infinite and with the trivial valuation. Then the following are equivalent:
\begin{enumerate}
\item[(i)]
$F$ is locally compact;
\item[(ii)]
the residue field $\overline{F}$ is finite and the valuation on $F$ is discrete;
\item[(iii)]
each bounded sequence in $F$ has a convergent subsequence;
\item[(iv)]
each infinite bounded subset of $F$ has an accumulation point in $F$;
\item[(v)]
each closed and bounded subset of $F$ is compact.
\end{enumerate}
\label{thr:CVFHB}
\end{theorem}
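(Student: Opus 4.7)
The plan is to establish a cycle of implications (v) $\Rightarrow$ (i) $\Rightarrow$ (ii) $\Rightarrow$ (iii) $\Rightarrow$ (iv) $\Rightarrow$ (v), after first dispatching the degenerate case. If the valuation on $F$ is trivial, the standing hypothesis forces $F$ to be finite, in which case every subset is both compact and clopen, the residue field is $F$ itself, and each of (i)--(v) holds trivially. For the remainder I would assume the valuation is non-trivial and fix a prime element $\pi$ with the rank-$1$ valuation logarithm supplied by Remark \ref{rem:CVFVG}.

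Two of the implications are essentially formal. For (v) $\Rightarrow$ (i), the closed balls $\bar{B}_r(a)$ are bounded and closed, hence compact by (v), and form a neighborhood base at $a$. For (iii) $\Rightarrow$ (iv), any infinite bounded subset contains an injective sequence to which (iii) can be applied, and its subsequential limit is an accumulation point of the original set. For (iv) $\Rightarrow$ (v), a sequence in a closed bounded $C \subseteq F$ either repeats a value infinitely often, giving a constant convergent subsequence, or has infinite range, to which (iv) supplies an accumulation point that must lie in $C$ by closedness; so $C$ is sequentially compact and hence compact.

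The structural step is (i) $\Rightarrow$ (ii). I would first promote local compactness at $0$ to compactness of the whole ring $\mathcal{O}_F$: Theorem \ref{thr:CVFHL} provides a compact neighborhood of $0$ containing some closed ball $\bar{B}_r(0)$, and since the valuation is non-trivial, scaling by an element of $F^{\times}$ of large enough absolute value realises $\mathcal{O}_F$ as a closed, hence compact, subset. The residue classes of $\mathcal{O}_F$ modulo $\mathcal{M}_F$ are translates of the clopen ball $\mathcal{M}_F$, and partition $\mathcal{O}_F$ into pairwise disjoint open sets; compactness forces the partition to be finite, so $\overline{F}$ is finite. For discreteness of the valuation I would argue by contradiction: if $|F^\times|_F$ had an accumulation point in $(0,\infty)$, then by scaling by a power of $\pi$ I could choose $(a_n)$ in $\mathcal{O}_F$ with $|a_n|_F$ pairwise distinct and converging to some $s \in (0,1)$; Lemma \ref{lem:CVFEQ} then forces $|a_n - a_m|_F = \max(|a_n|_F,|a_m|_F)$ to be bounded below by a positive quantity, so no subsequence is Cauchy, contradicting sequential compactness of $\mathcal{O}_F$.

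The remaining and most substantive step is (ii) $\Rightarrow$ (iii), which I expect to be the main obstacle. I would settle it using Theorem \ref{thr:CVFSE} together with a diagonal pigeonhole argument. Given a bounded sequence in $F$, rescale it into $\mathcal{O}_F$ and expand each term as $a_n = \sum_{i \geq 0} a_{n,i}\pi^i$ over the finite residue-representative set whose finiteness comes from hypothesis (ii). Pigeonhole on the zeroth coefficient produces a subsequence along which $a_{n,0}$ is constant; iterating this extraction so that at stage $k$ the first $k+1$ coefficients are fixed along the chosen subsequence, and then passing to the diagonal, yields a subsequence whose initial coefficients stabilise to arbitrary depth. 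By Remark \ref{rem:CVFSV} this diagonal subsequence is Cauchy in $F$ and converges by completeness. The only delicacy is arranging the nested extractions so the diagonal genuinely belongs to each selected subsequence from some index onward; once that bookkeeping is in place the argument is routine.
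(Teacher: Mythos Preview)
The paper does not supply its own proof of this theorem; it simply refers the reader to Schikhof, so there is nothing in the paper to compare against beyond noting that your argument follows the standard route one finds in that reference.

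Your cycle of implications is correct in outline, and the substantive steps (i) $\Rightarrow$ (ii) via compactness of $\mathcal{O}_F$ and (ii) $\Rightarrow$ (iii) via the $\pi$-adic expansion and a diagonal extraction are exactly the expected arguments. There is, however, one genuine slip in the setup: you fix a prime element $\pi$ at the outset ``for the remainder'', invoking Remark \ref{rem:CVFVG}, but a prime element exists only when the valuation is already discrete (Definition \ref{def:CVFPE}(ii)). Using $\pi$ in the discreteness part of (i) $\Rightarrow$ (ii) is therefore circular. The fix is immediate: in that step, scale instead by a power of any fixed $c \in F^{\times}$ with $|c|_F < 1$, which exists merely because the valuation is non-trivial. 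Reserve the introduction of $\pi$ for the (ii) $\Rightarrow$ (iii) step, where discreteness is part of the hypothesis and Theorem \ref{thr:CVFSE} legitimately applies. With that adjustment the proof goes through.
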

Proofs of Theorem \ref{thr:CVFHL} and Theorem \ref{thr:CVFHB} can be found in \cite[p130]{Willard} and \cite[p29,p57]{Schikhof} respectively.
\begin{remark}
Concerning Theorem \ref{thr:CVFHB}.
\begin{enumerate}
\item[(i)]
Let $F$ be an infinite field with the trivial valuation. Then $F$ is locally compact since for $a\in F$ it follows that $\{\{a\}\}$ is a neighborhood base of compact sets for $a$. However $F$ does not have any of the other properties given in Theorem \ref{thr:CVFHB}. For example the residue field $\overline{F}$ is $F$ and so it is not finite and $F$ itself is closed and bounded but not compact etc.
\item[(ii)]
We will call a complete non-Archimedean field $F$ that satisfies (ii) of Theorem \ref{thr:CVFHB} a {\em local field}. Some authors weaken the condition on the residue field when defining local fields so that the residue field needs only to be of prime characteristic for some prime $p$ and perfect, that is the Frobenius endomorphism on $\overline{F}$, $\bar{a}\to \bar{a}^{p}$, is an automorphism.
\item[(iii)]
Since the only complete Archimedean fields are $\mathbb{R}$ and $\mathbb{C}$ all but property (ii) of Theorem \ref{thr:CVFHB} hold for complete Archimedean fields by the Heine-Borel Theorem and Bolzano-Weierstrass Theorem etc. In fact this provides one way to prove Theorem \ref{thr:CVFHB} since if $F$ is a local field then there is a homeomorphic embedding of $F$ into $\mathbb{C}$ as a closed unbounded subset.
\item[(iv)]
By the details given in Example \ref{exa:CVFPN} it is immediate that for each prime $p$ the field of $p$-adic numbers $\mathbb{Q}_{p}$ is a local field. However $\mathbb{C}_{p}$ is not a local field since its valuation is dense and its residue field is infinite, see \cite[p45]{Schikhof}.
\end{enumerate}
\label{rem:CVFHB}
\end{remark}
\section{Extending complete valued fields}
\label{sec:CVFEGT}
In this section and later chapters we will adopt the following notation.
\notation
If $F$ is a field and $L$ is a field extending $F$ then we will denote the Galois group of $F$-automorphisms on $L$, that is automorphisms on $L$ that fix the elements of $F$, by $\mbox{Gal}(^{L}/_{F})$. Further we will denote fixed fields by:
\begin{enumerate}
\item[(i)]
$L^{g}:=\{x\in L:g(x)=x\}$, for $g\in\mbox{Gal}(^{L}/_{F})$;
\item[(ii)]
$L^{G}:=\bigcap_{g\in G}L^{g}$, for a subgroup $G\leqslant\mbox{Gal}(^{L}/_{F})$.
\end{enumerate}
More generally if $S$ is a set and $G$ is a group of self maps $g:S\rightarrow S$, with group law composition, then we will denote:
\begin{enumerate}
\item[(1)]
$\mbox{ord}(g):=\mbox{min}\{n\in\mathbb{N}:g^{(n)}=\mbox{id}\}$, the order of an element $g\in G$ with finite order;
\item[(2)]
$\mbox{ord}(g,s):=\mbox{min}\{n\in\mathbb{N}:g^{(n)}(s)=s\}$, the order at an element $s\in S$, when finite, of an element $g\in G$;
\item[(3)]
$\mbox{ord}(g,S):=\{\mbox{ord}(g,s):s\in S\}$, the order set of an element $g\in G$ with finite order.
\end{enumerate}
In the rest of this section we will look mainly at extensions of valued fields, including their valuations, as well as some Galois theory used in later chapters.
\subsection{Extensions}
\label{subsec:CVFE}
The first theorem below is rather general in scope.
\begin{theorem}
Let $F$ be a complete non-Archimedean field. All non-Archimedean norms, that is norms that observe the strong triangle inequality, on a finite dimensional $F$-vector space $E$ are equivalent. Further $E$ is a Banach space, i.e. complete normed space, with respect to each norm.
\label{thr:CVFEN}
\end{theorem}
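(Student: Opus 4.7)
Fix a basis $e_{1},\dots,e_{n}$ of $E$ and define the auxiliary non-Archimedean norm
\begin{equation*}
\Bigl\|\sum_{i=1}^{n}a_{i}e_{i}\Bigr\|_{\infty}:=\max_{1\leq i\leq n}|a_{i}|_{F}.
\end{equation*}
It is routine to check that $\|\cdot\|_{\infty}$ is a non-Archimedean norm and that the coordinate map $E\to F^{n}$ is an isometry onto $F^{n}$ with the max norm; since $F$ is complete, $F^{n}$ and hence $(E,\|\cdot\|_{\infty})$ are Banach spaces. The theorem will follow once I prove that an arbitrary non-Archimedean norm $\|\cdot\|$ on $E$ is equivalent to $\|\cdot\|_{\infty}$, because equivalence with a complete norm forces $(E,\|\cdot\|)$ to be complete.

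The upper estimate is immediate: setting $C:=\max_{i}\|e_{i}\|$, the strong triangle inequality gives
\begin{equation*}
\Bigl\|\sum_{i=1}^{n}a_{i}e_{i}\Bigr\|\leq\max_{i}|a_{i}|_{F}\|e_{i}\|\leq C\Bigl\|\sum_{i=1}^{n}a_{i}e_{i}\Bigr\|_{\infty}.
\end{equation*}
The substance is in the reverse inequality, which I would prove by induction on $n=\dim_{F}E$. The case $n=1$ is trivial because $\|ae_{1}\|=|a|_{F}\|e_{1}\|$. For the inductive step, let $E_{j}$ denote the hyperplane spanned by $\{e_{i}:i\neq j\}$. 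By the inductive hypothesis the norm $\|\cdot\|$ is equivalent to $\|\cdot\|_{\infty}$ on each $E_{j}$, so $(E_{j},\|\cdot\|)$ is complete and therefore closed in $(E,\|\cdot\|)$. Since $e_{j}\notin E_{j}$, this forces
\begin{equation*}
c_{j}:=\inf_{x\in E_{j}}\|e_{j}-x\|>0.
\end{equation*}
Now take any nonzero $v=\sum a_{i}e_{i}\in E$ and choose $j$ with $|a_{j}|_{F}=\|v\|_{\infty}$. Dividing by $a_{j}$ and using $v/a_{j}=e_{j}+\sum_{i\neq j}(a_{i}/a_{j})e_{i}$ with the second summand in $E_{j}$, I obtain $\|v\|/\|v\|_{\infty}=\|v/a_{j}\|\geq c_{j}$. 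Setting $c:=\min_{j}c_{j}>0$ yields $c\|v\|_{\infty}\leq\|v\|$, completing the equivalence.

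The main obstacle is exactly the positivity of $c_{j}$, which rests on the closedness of $E_{j}$; this in turn is delivered by the inductive hypothesis, so the induction machinery must be set up carefully (equivalence on $E_{j}$ is used to upgrade completeness of $(E_{j},\|\cdot\|_{\infty})$ to completeness of $(E_{j},\|\cdot\|)$). Once the equivalence is established for $E$, completeness of $(E,\|\cdot\|)$ follows because a Cauchy sequence with respect to $\|\cdot\|$ is Cauchy with respect to $\|\cdot\|_{\infty}$ and its limit in $\|\cdot\|_{\infty}$ is also its limit in $\|\cdot\|$. No Archimedean input is needed anywhere, and the argument uses only the strong triangle inequality together with completeness of $F$.
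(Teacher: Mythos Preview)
Your proof is correct and follows essentially the same approach as the paper: induction on $\dim E$, the easy upper bound via the strong triangle inequality, and the lower bound obtained from the positive distance of a basis vector to the complementary hyperplane, which is closed because the inductive hypothesis makes it complete. The only cosmetic difference is that the paper works with a single hyperplane $D=\operatorname{span}(e_{1},\dots,e_{n-1})$ and uses the ultrametric inequality once more to control both the $D$-component and the $e_{n}$-coefficient, whereas you use all $n$ hyperplanes $E_{j}$ and pick the one matching the largest coordinate; both variants are standard and equally valid.
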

Theorem \ref{thr:CVFEN} also holds for complete Archimedean fields and in the Archimedean setting proofs often make use of the underlying field being locally compact. However the complete non-Archimedean field $F$ in Theorem \ref{thr:CVFEN} is not assumed to be locally compact and so a proof of the theorem from \cite{Schikhof} has been included below for interest.
\begin{proof}[Proof of Theorem \ref{thr:CVFEN}]
We use induction on $n:=\dim E$. The base case $n=1$ is immediate. Suppose Theorem \ref{thr:CVFEN} holds for $(n-1)$-dimensional spaces and let $E$ be such that $\dim E=n$. We choose a base $e_{1},\cdots,e_{n}$ for $E$ and define
\begin{equation*}
\|x\|_{\infty}=:\max_{i}|a_{i}|_{F}\quad\mbox{for }x=\sum_{i=1}^{n}a_{i}e_{i}\in E.
\end{equation*}
Note that $\|\cdot\|_{\infty}$ is a non-Archimedean norm on $E$ by $|\cdot|_{F}$ being a non-Archimedean valuation. Now let $\|\cdot\|$ be any other non-Archimedean norm on $E$. We show that $\|\cdot\|$ is equivalent to $\|\cdot\|_{\infty}$. For $x=\sum_{i=1}^{n}a_{i}e_{i}\in E$ we have
\begin{equation*}
\|x\|\leq\max_{i}|a_{i}|_{F}\|e_{i}\|\leq M\|x\|_{\infty}
\end{equation*}
where $M:=\max_{i}\|e_{i}\|$. Hence it remains to show that there is a positive constant $N$ such that for all $x\in E$ we have $\|x\|\geq N\|x\|_{\infty}$. Let $D$ be the linear subspace generated by $e_{1},\cdots,e_{n-1}$. By the inductive hypothesis there is $c>0$ such that for all $x\in D$ we have $\|x\|\geq c\|x\|_{\infty}$. Further $D$ is complete and hence closed in $E$ with respect to $\|\cdot\|$. Hence for
\begin{equation*}
c':=\|e_{n}\|^{-1}\inf\{\|e_{n}-y\|:y\in D\}
\end{equation*}
we have $0<c'\leq1$. Now set $N:=\min(c'c,c'\|e_{n}\|)$ and let $x\in E$. Then $x=y+a_{n}e_{n}$ for some $y\in D$ and $a_{n}\in F$. If $a_{n}\not=0$ then $\|x\|=|a_{n}|_{F}\|e_{n}+a_{n}^{-1}y\|\geq|a_{n}|_{F}\|e_{n}\|c'=c'\|a_{n}e_{n}\|$. But then we also get
\begin{equation*}
\|x\|\geq\max(c'\|x\|,c'\|a_{n}e_{n}\|)\geq c'\|x-a_{n}e_{n}\|=c'\|y\|
\end{equation*}
and this inequality also holds for $a_{n}=0$ since $0<c'\leq1$. We get 
\begin{equation*}
\|x\|\geq c'\max(\|y\|,\|a_{n}e_{n}\|)\geq c'\max(c\|y\|_{\infty},\|e_{n}\| |a_{n}|_{F})\geq N\max(\|y\|_{\infty},|a_{n}|_{F})
\end{equation*}
where $N\max(\|y\|_{\infty},|a_{n}|_{F})=N\|x\|_{\infty}$. Hence $\|\cdot\|$ and $\|\cdot\|_{\infty}$ are equivalent. Finally we note that a sequence in $E$ is a Cauchy sequence with respect to $\|\cdot\|_{\infty}$ if and only if each of its coordinate sequences is a Cauchy sequence with respect to $|\cdot|_{F}$. Hence $E$ is a Banach space with respect to each norm by the equivalence of norms and completeness of $F$.
\end{proof}
\begin{remark}
Let $L$ be a non-Archimedean field and let $F$ be a complete subfield of $L$. If $L$ is a finite extension of $F$ then $L$ is also complete by Theorem \ref{thr:CVFEN}. Now suppose $L$ is such a complete finite extension of $F$, then viewing $L$ as a finite dimensional $F$-vector space we note that convergence in $L$ is coordinate-wise since $|\cdot|_{L}$ is equivalent to $\|\cdot\|_{\infty}$ by Theorem \ref{thr:CVFEN}. Hence each element $g\in\mbox{Gal}(^{L}/_{F})$ is continuous since being linear over $F$. We will see later in this section that, for such complete finite extensions, each element of $\mbox{Gal}(^{L}/_{F})$ is in fact an isometry. Finally in all cases if $L$ is complete and $g$ is continuous then the fixed field $L^{g}$ is also complete. To see this let $(a_{n})$ be a Cauchy sequence in $L^{g}$ and let $a$ be its limit in $L$. For $\mathrm{id}$ the identity map on $L$, note that $g-\mathrm{id}$ is also continuous on $L$ and so $L^{g}=(g-\mathrm{id})^{-1}(0)$ is a closed subset of $L$. In particular we have $a\in L^{g}$ as required.
\label{rem:CVFEN}
\end{remark}
The following is Krull's extension theorem, a proof can be found in \cite[p34]{Schikhof}.
\begin{theorem}
Let $F$ be a subfield of a field $L$ and let $|\cdot|_{F}$ be a non-Archimedean valuation on $F$. Then there exists a non-Archimedean valuation on $L$ that extends $|\cdot|_{F}$.
\label{thr:CVFKET}
\end{theorem}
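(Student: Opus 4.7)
The strategy is a Zorn's lemma argument on the poset $\mathcal{P}$ of pairs $(K,|\cdot|_{K})$ with $F\subseteq K\subseteq L$ a subfield and $|\cdot|_{K}$ a non-Archimedean valuation on $K$ extending $|\cdot|_{F}$, ordered by extension. The pair $(F,|\cdot|_{F})$ witnesses that $\mathcal{P}$ is non-empty, and any chain admits the obvious union as an upper bound (define the valuation on the limit field by taking the value on any element from some member of the chain; compatibility makes this well defined and the strong triangle inequality is inherited). Hence $\mathcal{P}$ has a maximal element $(K_{0},|\cdot|_{K_{0}})$, and it suffices to show $K_{0}=L$.

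Suppose for contradiction there exists $\alpha\in L\setminus K_{0}$. If $\alpha$ is transcendental over $K_{0}$, then $K_{0}(\alpha)\cong K_{0}(t)$ and I extend $|\cdot|_{K_{0}}$ via the Gauss valuation: on polynomials set $\left|\sum_{i=0}^{n}a_{i}t^{i}\right|:=\max_{0\leq i\leq n}|a_{i}|_{K_{0}}$, and on the fraction field set $|p/q|:=|p|/|q|$. Multiplicativity on polynomials follows from a direct argument using Lemma \ref{lem:CVFEQ} applied to the leading nonzero coefficients, and the strong triangle inequality is immediate. This yields a non-Archimedean valuation on $K_{0}(\alpha)$ properly extending $|\cdot|_{K_{0}}$, contradicting maximality of $K_{0}$.

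If instead $\alpha$ is algebraic over $K_{0}$ with minimal polynomial $p\in K_{0}[x]$, I pass to the completion $\widehat{K_{0}}$ provided by Theorem \ref{thr:CVFCOM}. Fix an irreducible factor $q\in\widehat{K_{0}}[x]$ of $p$ having $\alpha$ as a root in some algebraic closure, and set $M:=\widehat{K_{0}}[x]/(q)$, a finite extension of the complete non-Archimedean field $\widehat{K_{0}}$. By the standard theory of finite extensions of complete non-Archimedean fields, $|\cdot|_{\widehat{K_{0}}}$ extends uniquely to $M$ via $|a|_{M}:=|N_{M/\widehat{K_{0}}}(a)|_{\widehat{K_{0}}}^{1/[M:\widehat{K_{0}}]}$. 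Since $q(\alpha)=0$, the assignment $\alpha\mapsto x+(q)$ induces a field embedding $K_{0}(\alpha)\hookrightarrow M$; pulling $|\cdot|_{M}$ back along this embedding extends $|\cdot|_{K_{0}}$ to $K_{0}(\alpha)\subseteq L$, once again contradicting maximality.

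The principal obstacle is the algebraic case, specifically establishing that the norm formula actually satisfies the strong triangle inequality on $M$. Since $\widehat{K_{0}}$ is not assumed locally compact, the usual compactness-based boundedness argument is unavailable, and one must either invoke the Newton polygon of $q$ or first produce \emph{some} extension to $M$ by a valuation-ring (Chevalley-style) argument and then verify uniqueness to identify it with the norm formula. Once existence and uniqueness on finite extensions of complete non-Archimedean fields is granted, the reduction above completes the proof.
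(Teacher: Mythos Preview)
The paper does not actually supply a proof of this theorem; it simply cites Schikhof, \emph{Ultrametric Calculus}, p.~34. So there is no in-paper argument to compare against, and your Zorn's-lemma reduction to the transcendental (Gauss valuation) and algebraic (finite extension of the completion) cases is a standard and correct route to Krull's extension theorem.

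One caution specific to this paper's logical structure: in the paper, both the uniqueness result Theorem~\ref{thr:CVFUT} and the norm-formula Theorem~\ref{thr:CVFEE} are presented \emph{after} Theorem~\ref{thr:CVFKET}, and Remark~\ref{rem:CVFUT}(i) explicitly derives part~(i) of Theorem~\ref{thr:CVFUT} from Theorem~\ref{thr:CVFKET} together with Theorem~\ref{thr:CVFEN}. Hence within this paper you cannot simply invoke those later theorems to handle the algebraic step without circularity. You already flag this in your final paragraph, correctly noting that the strong triangle inequality for the norm formula on a finite extension of a complete non-Archimedean field must be established independently (e.g.\ via Newton polygons, or by a Chevalley-type extension-of-places argument on valuation rings). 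That independent argument is precisely the substantive content of the theorem, so your write-up is best read as a correct reduction that isolates, but does not itself discharge, the essential difficulty.
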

The following corollary to Theorem \ref{thr:CVFKET}, which also uses Theorem \ref{thr:CVFCOM}, contrasts with the Archimedean setting.
\begin{corollary}
For every complete non-Archimedean field $F$ there exists a proper extension $L$ of $F$ for which the complete valuation on $F$ extends to a complete valuation on $L$.
\label{cor:CVFEE}
\end{corollary}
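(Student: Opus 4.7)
The plan is to chain together Krull's extension theorem (Theorem \ref{thr:CVFKET}) and the completion construction of Theorem \ref{thr:CVFCOM}. First I would take any proper field extension $L_{0}$ of $F$; the cleanest concrete choice is the rational function field $L_{0}:=F(T)$ in a single indeterminate, which is manifestly a proper field extension of $F$. Next, Theorem \ref{thr:CVFKET} provides a non-Archimedean valuation $|\cdot|_{L_{0}}$ on $L_{0}$ whose restriction to $F$ agrees with $|\cdot|_{F}$. Finally, Theorem \ref{thr:CVFCOM} gives the completion $L:=\mathfrak{C}/\mathfrak{N}$ of $(L_{0},|\cdot|_{L_{0}})$, which is a complete valued field extending $L_{0}$, and hence extending $F$, as a valued field. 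This produces the required $L$ together with a complete extension of $|\cdot|_{F}$.

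The one point requiring care is to check that $L$ really is a proper field extension of $F$, i.e.\ that the completion procedure has not identified $T$ with an element of $F$. Since $F$ is already complete, the canonical map sending $a\in F$ to the class of the constant Cauchy sequence $(a)$ is an isometric embedding of $F$ into $L$ whose image is closed; in other words, $F$ sits inside $L$ exactly as it already sits inside $L_{0}$, with no new elements of $F$ created by the completion. Consequently the element $T\in L_{0}\setminus F$ survives in $L$ and still fails to lie in this copy of $F$, so $L\supsetneq F$ as required.

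There is no genuine obstacle in this argument --- the corollary is essentially a packaging of Krull plus completion. The only mildly subtle observation is the last one, namely that completing a valued field which already contains a complete subfield $F$ cannot collapse the extension back onto $F$, and this is immediate from the fact that the constant-sequence embedding $F\hookrightarrow L$ coincides with the composition $F\hookrightarrow L_{0}\hookrightarrow L$.
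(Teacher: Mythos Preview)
Your proposal is correct and follows exactly the route the paper indicates: the corollary is stated immediately after Krull's extension theorem with the remark that it also uses the completion theorem (Theorem~\ref{thr:CVFCOM}), and no further proof is given. Your explicit choice $L_{0}=F(T)$ and your check that the completion remains a proper extension of $F$ simply fill in the details the paper leaves implicit.
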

Moreover an extension of a valuation is often unique.
\begin{theorem}
Let $F$ be a complete non-Archimedean field, let $L$ be an algebraic extension of $F$ and let $a\in L$. Then:
\begin{enumerate}
\item[(i)]
there is a unique valuation $|\cdot|_{L}$ on $L$ that extends the valuation on $F$;
\item[(ii)]
if $\|\cdot\|$ is an arbitrary norm on the $F$-vector space $L$ then $|a|_{L}=\lim_{n\to\infty}\sqrt[n]{\|a^{n}\|}$.
\end{enumerate}
\label{thr:CVFUT}
\end{theorem}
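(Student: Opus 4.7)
The plan is to reduce both parts to the setting of the finite extension $F(a)$ and then invoke the equivalence of non-Archimedean norms on finite-dimensional $F$-vector spaces (Theorem \ref{thr:CVFEN}).

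For existence in (i), I would apply Krull's extension theorem (Theorem \ref{thr:CVFKET}) directly to obtain a non-Archimedean valuation $|\cdot|_L$ on $L$ extending $|\cdot|_F$. Note that \emph{any} valuation on $L$ extending $|\cdot|_F$ is automatically non-Archimedean by Theorem \ref{thr:CVFCHAR}, since $|2|_L = |2|_F \leq 1$; this is what will let us apply Theorem \ref{thr:CVFEN} to the restriction of such an extension to any finite subextension of $L$.

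For uniqueness, suppose $|\cdot|_1$ and $|\cdot|_2$ are two valuations on $L$ extending $|\cdot|_F$, and fix $a \in L$. Then $F(a)$ is a finite-dimensional $F$-vector space and the restrictions of $|\cdot|_1$ and $|\cdot|_2$ to $F(a)$ are non-Archimedean norms on it, hence equivalent by Theorem \ref{thr:CVFEN}. So there exist constants $c_1, c_2 > 0$ with $c_1 |x|_1 \leq |x|_2 \leq c_2 |x|_1$ for all $x \in F(a)$. Applying this to $x^n$ and using multiplicativity yields $c_1^{1/n} |x|_1 \leq |x|_2 \leq c_2^{1/n} |x|_1$; letting $n \to \infty$ forces $|x|_1 = |x|_2$, and in particular $|a|_1 = |a|_2$. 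Since $a$ was arbitrary, uniqueness follows.

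Part (ii) proceeds by the same mechanism. Given a non-Archimedean norm $\|\cdot\|$ on the $F$-vector space $L$, its restriction to $F(a)$ and the restriction of $|\cdot|_L$ to $F(a)$ are two non-Archimedean norms on the finite-dimensional $F$-vector space $F(a)$; by Theorem \ref{thr:CVFEN} they are equivalent, so there exist $c_1, c_2 > 0$ with $c_1 |x|_L \leq \|x\| \leq c_2 |x|_L$ for all $x \in F(a)$. Specialising to $x = a^n$ and using multiplicativity of $|\cdot|_L$ gives
\begin{equation*}
c_1^{1/n} |a|_L \leq \sqrt[n]{\|a^n\|} \leq c_2^{1/n} |a|_L,
\end{equation*}
and the result follows on letting $n \to \infty$. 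The only real obstacle is bookkeeping: one must be careful that the extension produced by Krull's theorem is non-Archimedean (handled by Theorem \ref{thr:CVFCHAR}) and that passing from the possibly infinite algebraic extension $L$ to the finite subextension $F(a)$ is legitimate for each individual $a$ — which it is, because both the uniqueness conclusion in (i) and the limit formula in (ii) are pointwise statements in $a$.
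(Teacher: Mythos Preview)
Your argument for part (i) is correct and essentially matches the paper's own justification in Remark \ref{rem:CVFUT}: existence via Krull's theorem (Theorem \ref{thr:CVFKET}), uniqueness via the equivalence of norms on the finite subextension $F(a)$ (Theorem \ref{thr:CVFEN}), with your $n$-th root trick cleanly extracting equality from mere equivalence using multiplicativity.

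For part (ii), however, you have slipped in an assumption not present in the statement: you write ``Given a non-Archimedean norm $\|\cdot\|$'', but the theorem says $\|\cdot\|$ is an \emph{arbitrary} norm on the $F$-vector space $L$. This matters because Theorem \ref{thr:CVFEN}, as stated in the paper, applies only to non-Archimedean norms (those satisfying the strong triangle inequality), and there do exist $F$-vector space norms that are not non-Archimedean --- for instance $\|(x,y)\| := |x|_F + |y|_F$ on $F^2$, where $\|(1,1)\| = 2 > 1 = \max(\|(1,0)\|,\|(0,1)\|)$. So your appeal to Theorem \ref{thr:CVFEN} does not cover the general case claimed. The paper itself defers part (ii) to \cite[p39]{Schikhof}; to repair your argument you would need either the stronger fact that \emph{all} norms on a finite-dimensional vector space over a complete non-Archimedean field are equivalent (which is true but is not what Theorem \ref{thr:CVFEN} provides), or a direct comparison that does not rely on $\|\cdot\|$ being non-Archimedean.
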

\begin{remark}
We make the following observations.
\begin{enumerate}
\item[(i)]
Part (i) of Theorem \ref{thr:CVFUT} follows easily from Theorem \ref{thr:CVFKET} and Theorem \ref{thr:CVFEN} applied respectively noting that if $a\in L$ then $a$ is also an element of a finite extension of $F$. See \cite[p39]{Schikhof} for the rest of the proof.
\item[(ii)]
It is worth emphasizing that Theorem \ref{thr:CVFEN}, Theorem \ref{thr:CVFKET} and Theorem \ref{thr:CVFUT} all hold for the case where the valuation on $F$ is trivial.
\item[(iii)]
Now for $L$ and $F$ conforming to the conditions of Theorem \ref{thr:CVFUT} we have that each $g\in\mbox{Gal}(^{L}/_{F})$ is indeed an isometry on $L$ since $|a|':=|g(a)|_{L}$, for $a\in L$, is a valuation on $L$ giving $|g(a)|_{L}=|a|_{L}$ by uniqueness.
\end{enumerate}
\label{rem:CVFUT}
\end{remark}
The following theory will often allow us to express the extension of a valuation in a particularly useful form. We begin with a standard theorem.
\begin{theorem}
Let $F$ be a field, let $L$ be an algebraic extension of $F$ and let $a\in L$. Then there is a unique monic irreducible polynomial $\mbox{Irr}_{F,a}(x)\in F[x]$ such that $\mbox{Irr}_{F,a}(a)=0$. Moreover, for the simple extension $F(a)$, we have $[F(a),F]=\mbox{degIrr}_{F,a}(x)$ where $[F(a),F]$ denotes the dimension of $F(a)$ as an $F$-vector space.
\label{thr:CVFIR}
\end{theorem}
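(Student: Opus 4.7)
The plan is to study the evaluation homomorphism and exploit that $F[x]$ is a principal ideal domain. Concretely, define the ring homomorphism $\phi_{a}\colon F[x]\to L$ by $\phi_{a}(p(x)):=p(a)$, whose image is the subring $F[a]\subseteq L$. Since $a$ is algebraic over $F$, the kernel $\ker\phi_{a}$ is a nonzero ideal of $F[x]$, and because $F[x]$ is a PID this kernel has the form $(m(x))$ for some nonzero $m(x)\in F[x]$. Dividing through by the leading coefficient produces a monic generator, and any two monic generators of the same principal ideal in $F[x]$ must coincide, which will give both existence and uniqueness once irreducibility is established.

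For irreducibility, I would argue by contradiction: if $m(x)=p(x)q(x)$ in $F[x]$ with both factors of degree strictly less than $\deg m$, then $p(a)q(a)=0$ in $L$, and since $L$ is a field (hence an integral domain) one of $p(a)$, $q(a)$ vanishes. That forces $p(x)\in\ker\phi_{a}$ or $q(x)\in\ker\phi_{a}=(m(x))$, which is incompatible with its smaller degree. Hence $m(x)$ is irreducible, and we set $\mathrm{Irr}_{F,a}(x):=m(x)$. For uniqueness among monic irreducible polynomials vanishing at $a$: any such polynomial lies in $\ker\phi_{a}=(m(x))$, so $m(x)$ divides it, and equality of degrees (forced by irreducibility of the other polynomial, or just by the monic generator property) yields equality.

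For the dimension statement, the first isomorphism theorem gives $F[x]/(m(x))\cong F[a]$. Since $m(x)$ is irreducible in the PID $F[x]$, the ideal $(m(x))$ is maximal, so $F[x]/(m(x))$ is a field; therefore $F[a]$ is already a field and coincides with $F(a)$. A standard division-with-remainder argument in $F[x]$ shows that the cosets $1,\bar{x},\bar{x}^{2},\dots,\bar{x}^{n-1}$, where $n=\deg m$, form an $F$-basis of $F[x]/(m(x))$: they span because any polynomial can be reduced modulo $m(x)$ to one of degree $<n$, and they are linearly independent because any nontrivial $F$-linear relation would produce a nonzero polynomial of degree $<n$ in $\ker\phi_{a}=(m(x))$, contradicting $\deg m=n$. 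Transporting this basis across the isomorphism yields $[F(a):F]=n=\deg\mathrm{Irr}_{F,a}(x)$.

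No step here is a genuine obstacle; the only mildly delicate point is to be careful that the claim ``$F[a]=F(a)$'' really rests on irreducibility of $m(x)$ (so that $(m(x))$ is maximal), since this is where algebraicity of $a$ is used in a non-trivial way beyond merely guaranteeing $\ker\phi_{a}\neq 0$.
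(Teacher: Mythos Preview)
Your argument is correct and is the standard proof of this classical result: study the evaluation map $F[x]\to L$, use that $F[x]$ is a PID to get a monic generator of the kernel, deduce irreducibility from $L$ being an integral domain, and read off the dimension from the division-with-remainder basis of $F[x]/(m(x))$.

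The paper itself does not prove this theorem. It introduces it with ``We begin with a standard theorem'' and then moves directly on to Definition~\ref{def:CVFSN}; no proof or reference is supplied. So there is nothing to compare against beyond noting that your approach is exactly the textbook one. Incidentally, your observation that $F[a]=F(a)$ because $(m(x))$ is maximal is precisely the content of Lemma~\ref{lem:CVFPOL} later in the paper, which the author does prove separately (via B\'ezout's identity rather than via maximality of the ideal).
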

\begin{definition}
Let $F$ be a field and let $L$ be an algebraic extension of $F$.
\begin{enumerate}
\item[(i)]
An element $a\in L$ is said to be {\em separable} over $F$ if $a$ is not a repeated root of its own irreducible polynomial $\mbox{Irr}_{F,a}(x)$.
\item[(ii)]
We call $L_{sc}:=\{a\in L:a\mbox{ is separable over }F\}$ the {\em separable closure} of $F$ in $L$.
\item[(iii)]
The extension $L$ is said to be a {\em separable extension} of $F$ if $L=L_{sc}$.
\item[(iv)]
Let $f(x)\in F[x]$. Then $L$ is called a {\em splitting field} of $f(x)$ over $F$ if $f(x)$ splits completely in $L[x]$ as a product of linear factors but not over any proper subfield of $L$ containing $F$.
\item[(v)]
We will call $L$ a {\em normal extension} of $F$ if $L$ is the splitting field over $F$ of some polynomial in $F[x]$.
\item[(vi)]
The field $L$ is called a {\em Galois extension} of $F$ if $L^{G}=F$ for $G:=\mbox{Gal}(^{L}/_{F})$.
\end{enumerate}
\label{def:CVFSN}
\end{definition}
\begin{remark}
Following Definition \ref{def:CVFSN} we note that the separable closure $L_{sc}$ of $F$ in $L$ is a field with $F\subseteq L_{sc}\subseteq L$. Moreover if $F$ is of characteristic zero then $L$ is a separable extension of $F$.
\label{rem:CVFSN}
\end{remark}
For proofs of the following two theorems and Remark \ref{rem:CVFSN} see \cite[p13-p19,p36]{McCarthy}.
\begin{theorem}
Let $F$ be a field and let $L$ be a finite extension of $F$. Then there is a normal extension $L_{ne}$ of $F$ which contains $L$ and which is the smallest such extension in the sense that if $K$ is a normal extension of $F$ which contains $L$ then there is a $L$-monomorphism of $L_{ne}$ into $K$, i.e. an embedding of $L_{ne}$ into $K$ that fixes $L$.
\label{thr:CVFNE}
\end{theorem}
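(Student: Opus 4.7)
The plan is to construct $L_{ne}$ explicitly as a splitting field built from the generators of $L$, then verify the universal property by embedding.

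Since $L$ is a finite extension of $F$, I can write $L=F(a_{1},\dots,a_{n})$ for finitely many elements $a_{i}\in L$. Let $p_{i}(x):=\mbox{Irr}_{F,a_{i}}(x)\in F[x]$, which exists by Theorem \ref{thr:CVFIR}, and set
\begin{equation*}
f(x):=p_{1}(x)p_{2}(x)\cdots p_{n}(x)\in F[x].
\end{equation*}
Let $L_{ne}$ be a splitting field of $f(x)$ over $L$. Since each $a_{i}$ is already a root of $p_{i}$, adjoining the remaining roots of $f$ to $L$ gives a field that is simultaneously a splitting field of $f$ over $F$, so $L_{ne}$ is a normal extension of $F$ in the sense of Definition \ref{def:CVFSN}(v), and clearly $F\subseteq L\subseteq L_{ne}$.

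For the minimality, let $K$ be any normal extension of $F$ containing $L$, so $K$ is the splitting field over $F$ of some $g(x)\in F[x]$. The key step is to show that each $p_{i}(x)$ splits completely in $K[x]$. To see this, fix $i$ and let $\Omega$ be an algebraic closure of $K$; let $b\in\Omega$ be any root of $p_{i}$. Since $p_{i}$ is irreducible over $F$, there is an $F$-isomorphism $F(a_{i})\to F(b)$ sending $a_{i}\mapsto b$, and this extends to an $F$-automorphism $\sigma$ of $\Omega$ (standard extension of embeddings for algebraic extensions). Because $K$ is the splitting field of $g\in F[x]$ over $F$, the set of roots of $g$ in $\Omega$ is stable under $\sigma$, and $K$ is generated over $F$ by these roots; hence $\sigma(K)=K$. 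Since $a_{i}\in K$, we get $b=\sigma(a_{i})\in K$. As $b$ was an arbitrary root of $p_{i}$, the polynomial $p_{i}$ splits in $K[x]$, and therefore so does $f$.

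Now $f$ splits in $K[x]$, so the subfield of $K$ generated over $L$ by the roots of $f$ in $K$ is itself a splitting field of $f$ over $L$. By the uniqueness of splitting fields up to isomorphism fixing the base field, this subfield is $L$-isomorphic to $L_{ne}$, which provides the desired $L$-monomorphism $L_{ne}\hookrightarrow K$. The main obstacle is the step establishing that $K$ swallows every root of each $p_{i}$; once this is done, minimality reduces to the standard uniqueness of splitting fields. Two mild technicalities are worth noting: the argument uses the existence of an algebraic closure and the extension of $F$-embeddings through algebraic extensions, both of which are standard and independent of any valuation structure, and no separability hypothesis is needed since the construction of $L_{ne}$ as a splitting field makes sense in all characteristics.
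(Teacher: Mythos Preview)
Your proof is correct and follows the standard construction of the normal closure as the splitting field of the product of the minimal polynomials of a generating set. Note, however, that the paper does not actually supply its own proof of this theorem: it simply refers the reader to \cite[p13--p19,p36]{McCarthy}, so there is no in-paper argument to compare against. Your argument is precisely the textbook approach one would find in such a reference, and all the ingredients you invoke (existence of splitting fields, extension of $F$-embeddings into an algebraic closure, stability of the root set of $g$ under $F$-automorphisms, and uniqueness of splitting fields up to isomorphism over the base) are standard and correctly applied.
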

\begin{theorem}
Let $F$ be a field and let $L$ be a finite extension of $F$. Then, with reference to Theorem \ref{thr:CVFNE} and Definition \ref{def:CVFSN}, there are exactly $[L_{sc}:F]$ distinct $F$-isomorphisms of $L$ onto subfields of $L_{ne}$. Further if $L=L_{ne}$ then $\#\mbox{Gal}(^{L}/_{F})=[L_{sc}:F]$. Moreover $L$ is a Galois extension of $F$ if and only if $L_{sc}=L=L_{ne}$ in which case $\#\mbox{Gal}(^{L}/_{F})=[L:F]$.
\label{thr:CVFGL}
\end{theorem}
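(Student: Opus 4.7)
My plan is to reduce the counting statement to the simple-extension case and then lift it by a tower argument, after which the remaining claims fall out quickly. First I would note that any $F$-homomorphism from the field $L$ into $L_{ne}$ is automatically injective, so ``$F$-isomorphisms of $L$ onto subfields of $L_{ne}$'' and ``$F$-embeddings $L\hookrightarrow L_{ne}$'' are the same objects, and it suffices to count the latter.

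For the base case, let $L=F(a)$ be simple. An $F$-embedding $\sigma:F(a)\rightarrow L_{ne}$ is determined by $\sigma(a)$, which can be any root of $\mbox{Irr}_{F,a}(x)$ in $L_{ne}$, and conversely any such choice extends uniquely to an $F$-embedding. Since $L_{ne}$ is normal over $F$ and contains $a$, the polynomial $\mbox{Irr}_{F,a}(x)$ splits completely in $L_{ne}[x]$, so the embedding count equals the number of \emph{distinct} roots of $\mbox{Irr}_{F,a}(x)$, which by Definition~\ref{def:CVFSN} is $[F(a)_{sc}:F]$. For a general finite extension I would pick a chain of simple extensions $F=F_{0}\subsetneq F_{1}\subsetneq\cdots\subsetneq F_{k}=L$ and induct on $k$: each $F$-embedding of $F_{k-1}$ into $L_{ne}$ extends in exactly $[F_{k}:F_{k-1}]_{s}$ ways (applying the base case over the image subfield), and multiplicativity of both degree and separable degree then gives $[L_{sc}:F]$ total embeddings.

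When $L=L_{ne}$, every $F$-embedding lands inside $L$ and, being an injective $F$-linear endomorphism of the finite-dimensional $F$-vector space $L$, must be surjective; hence the embeddings are precisely the elements of $\mbox{Gal}(^{L}/_{F})$, giving $\#\mbox{Gal}(^{L}/_{F})=[L_{sc}:F]$. For the final equivalence, assume first that $L$ is Galois, so $L^{G}=F$ for $G:=\mbox{Gal}(^{L}/_{F})$; $G$ is finite because $L/F$ is finitely generated and each generator is moved to one of the finitely many roots of its minimal polynomial lying in $L$. For any $a\in L$, the orbit polynomial $p_{a}(x):=\prod_{b\in G\cdot a}(x-b)$ is $G$-invariant with distinct roots, hence lies in $L^{G}[x]=F[x]$ and is separable; therefore $\mbox{Irr}_{F,a}$ divides $p_{a}$, is itself separable, and splits in $L$. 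This gives $L=L_{sc}$ and exhibits $L$ as the splitting field over $F$ of the product of minimal polynomials of a finite $F$-generating set, so $L=L_{ne}$. Conversely, if $L_{sc}=L=L_{ne}$ then the previous paragraph gives $\#G=[L:F]$, and Artin's theorem yields $[L:L^{G}]=\#G=[L:F]$; combined with $F\subseteq L^{G}$ this forces $L^{G}=F$.

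The main obstacle I anticipate is the tower step: in positive characteristic the separable degree genuinely differs from the total degree, and one must verify that counting distinct roots (equivalently, the separable degree) is multiplicative across a chain of simple extensions. The cleanest route is to establish $[L:F]_{s}=[L:K]_{s}[K:F]_{s}$ algebraically once and for all (by splitting off the purely inseparable part of each step via Frobenius) and only then to stitch embeddings together. A secondary subtlety, that the extension of an $F$-embedding of $F_{k-1}$ to $F_{k}$ really does land inside $L_{ne}$, is handled by observing that $L_{ne}$ is simultaneously a normal closure of every intermediate subfield, so all the roots one needs are already present.
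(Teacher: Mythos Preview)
The paper does not actually prove this theorem; the sentence preceding it reads ``For proofs of the following two theorems and Remark~\ref{rem:CVFSN} see \cite[p13--p19, p36]{McCarthy}.'' So there is no in-paper argument to compare against. Your proposal is the standard textbook proof and is correct in outline: count embeddings for a simple extension via roots of the minimal polynomial, lift through a tower by multiplicativity of separable degree, use finite-dimensionality to identify embeddings with automorphisms when $L=L_{ne}$, and use the orbit-polynomial and Artin arguments for the Galois equivalence.

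One small point of presentation: when you assert that the number of distinct roots of $\mbox{Irr}_{F,a}(x)$ ``by Definition~\ref{def:CVFSN} is $[F(a)_{sc}:F]$'', that equality is not literally the content of the definition. It requires the structure result that in characteristic $p$ one has $\mbox{Irr}_{F,a}(x)=h(x^{p^{e}})$ with $h$ separable, so that the distinct-root count is $\deg h=[F(a^{p^{e}}):F]=[F(a)_{sc}:F]$. You essentially flag this in your obstacles paragraph (separating off the purely inseparable part via Frobenius), so the issue is cosmetic rather than a genuine gap; just be sure to state it as a lemma rather than invoking the definition directly.
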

\begin{definition}
Let $F$ be a field, let $L$ be a finite extension of $F$ and let $n_{0}:=[L_{sc}:F]$. By Theorem \ref{thr:CVFGL} there are exactly $n_{0}$ distinct $F$-isomorphisms $g_{1},\cdots,g_{n_{0}}$ of $L$ onto subfields of $L_{ne}$. The {\em norm map} $N_{^{L}/_{F}}:L\rightarrow F$ is defined as
\begin{equation*}
N_{^{L}/_{F}}(a):=\left(\prod_{i=1}^{n_{0}}g_{i}(a)\right)^{[L:L_{sc}]}\quad\mbox{for }a\in L.
\end{equation*}
\label{def:CVFNM}
\end{definition}
A proof showing that the norm map only takes values in the ground field can be found in \cite[p23,p24]{McCarthy}. Using the preceding theory we can now state and prove a theorem that will often allow us to express the extension of a valuation in a particularly useful form. The theorem is in the literature. However, having set out the preceding theory, the proof presented here is more immediate than the sources I have seen.
\begin{theorem}
Let $F$ be a complete non-Archimedean field with valuation $|a|_{F}=r^{-\nu(a)}$, for $a\in F$, where $\nu$ is a valuation logarithm on $F$. Let $L$ be a finite extension of $F$ as a field. Then, with reference to Theorem \ref{thr:CVFUT} and Theorem \ref{thr:CVFEN}, the unique extension of $|\cdot|_{F}$ to a complete valuation $|\cdot|_{L}$ on $L$ is given by
\begin{equation*}
|a|_{L}=\sqrt[n]{|N_{^{L}/_{F}}(a)|_{F}}=r^{-\omega(a)}\quad\mbox{for }a\in L,
\end{equation*}
where $n=[L:F]$ and $\omega:=\frac{1}{n}\nu\circ N_{^{L}/_{F}}$ is the corresponding extension of $\nu$ to $L$. If in addition the valuation $|\cdot|_{F}$ is discrete then $|\cdot|_{L}$ is also discrete. If further $|\cdot|_{F}$ is non-trivial and $\nu$ is the rank 1 valuation logarithm of remark \ref{rem:CVFVG} then $e\omega(L^{\times})=\mathbb{Z}$ for some $e\in\mathbb{N}$.
\label{thr:CVFEE}
\end{theorem}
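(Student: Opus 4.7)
The plan is to define the candidate function $|a|' := \sqrt[n]{|N_{^{L}/_{F}}(a)|_F}$ on $L$ and show it equals $|\cdot|_L$ by invoking the uniqueness half of Theorem \ref{thr:CVFUT}. Multiplicativity of $|\cdot|'$ follows from multiplicativity of $N_{^{L}/_{F}}$ (each $F$-isomorphism $g_i$ is multiplicative, and raising the product to the fixed power $[L:L_{sc}]$ preserves this), while $|a|' = 0 \iff a = 0$ follows from injectivity of the $g_i$. For $a \in F$, every $g_i$ fixes $a$ and $n_{0}[L:L_{sc}] = n$, so $N_{^{L}/_{F}}(a) = a^{n}$ and $|a|' = |a|_F$, so $|\cdot|'$ extends $|\cdot|_F$. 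The strong triangle inequality will come for free once $|\cdot|' = |\cdot|_L$, since the latter is already known to be a non-Archimedean valuation by Theorem \ref{thr:CVFUT}.

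The crux, and the main obstacle, is the identification $|a|_L^{n} = |N_{^{L}/_{F}}(a)|_F$. I would first reduce to the simple extension via the standard tower formula $N_{^{L}/_{F}}(a) = N_{^{F(a)}/_{F}}(a)^{[L:F(a)]}$, which reduces matters to showing $|a|_L^{m} = |N_{^{F(a)}/_{F}}(a)|_F$ for $m := [F(a):F]$. Writing $\mathrm{Irr}_{F,a}(x) = x^{m} + c_{m-1}x^{m-1} + \cdots + c_{0}$ and factoring it in $L_{ne}$ as $\prod_{j=1}^{m_{0}}(x - a_{j})^{p^{k}}$, where $m_0 = [F(a)_{sc}:F]$ and $p^{k} = [F(a):F(a)_{sc}]$ (with $p = \mathrm{char}\,F$ when the extension is inseparable and $p^{k}=1$ otherwise), the key claim is that all the distinct roots $a_{j}$ share the common absolute value $|a|_L$ in $L_{ne}$. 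This is where uniqueness of the valuation extension is used decisively: for any $F$-embedding $\sigma: F(a) \to L_{ne}$, the pullback $x \mapsto |\sigma(x)|_{L_{ne}}$ is a non-Archimedean valuation on $F(a)$ extending $|\cdot|_F$, and hence coincides with $|\cdot|_L$ restricted to $F(a)$ by the uniqueness in Theorem \ref{thr:CVFUT}; applied to $x = a$ this yields $|\sigma(a)|_{L_{ne}} = |a|_L$, and every $a_{j}$ arises as such a $\sigma(a)$ by Theorem \ref{thr:CVFGL}. Vieta's formula then delivers $|c_{0}|_F = \prod_{j}|a_{j}|_{L_{ne}}^{p^{k}} = |a|_L^{m_{0} p^{k}} = |a|_L^{m}$, and since $N_{^{F(a)}/_{F}}(a) = (-1)^{m} c_{0}$ up to sign, the simple-extension claim is proved; combining with the tower formula yields the general case.

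For the remaining assertions, observe that $\omega(L^{\times}) = \frac{1}{n}\nu(N_{^{L}/_{F}}(L^{\times})) \subseteq \frac{1}{n}\nu(F^{\times})$. If $|\cdot|_F$ is discrete then $\nu(F^{\times})$ is a discrete subgroup of $\mathbb{R}$ by Lemma \ref{lem:CVFDV}, so $\omega(L^{\times})$ is discrete too, forcing $|\cdot|_L$ discrete. If additionally $\nu$ is the rank $1$ logarithm, so $\nu(F^{\times}) = \mathbb{Z}$, then $\omega(L^{\times})$ is a subgroup of $\frac{1}{n}\mathbb{Z}$ containing $\mathbb{Z}$ (since $\omega$ restricts to $\nu$ on $F^{\times}$); every such subgroup has the form $\frac{1}{e}\mathbb{Z}$ for a positive divisor $e$ of $n$, giving $e\omega(L^{\times}) = \mathbb{Z}$ as claimed.
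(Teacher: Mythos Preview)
Your proof is correct, and the decisive idea---that uniqueness of the valuation extension forces every $F$-embedding into $L_{ne}$ to be isometric---is exactly what the paper uses. The execution differs: the paper works directly with the Galois-theoretic definition of $N_{^{L}/_{F}}$ (Definition~\ref{def:CVFNM}) and computes in one line
\[
|a|_{L}=\sqrt[n]{|a|_{L}^{[L:L_{sc}][L_{sc}:F]}}=\sqrt[n]{\left(\prod_{i=1}^{n_{0}}|g_{i}(a)|_{L_{ne}}\right)^{[L:L_{sc}]}}=\sqrt[n]{|N_{^{L}/_{F}}(a)|_{F}},
\]
with no need for the tower formula, the minimal polynomial, or Vieta. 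Your detour through $F(a)$ is sound but unnecessary given how the paper has set up the norm map. One organizational remark: your opening paragraph announces a plan to invoke the uniqueness half of Theorem~\ref{thr:CVFUT} to conclude $|\cdot|'=|\cdot|_{L}$, but that would require first proving the strong triangle inequality for $|\cdot|'$, which you then say comes for free only \emph{after} the identity is established. In practice you never use uniqueness in that direction; your crux paragraph proves $|a|_{L}^{n}=|N_{^{L}/_{F}}(a)|_{F}$ directly, which is the right move and matches the paper's strategy. Your final paragraph on discreteness and the index $e$ is a clean structure-of-subgroups argument; the paper instead exhibits the minimal positive value of $\omega$ explicitly and builds $e'\mathbb{Z}$ by hand, to the same effect.
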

\begin{proof}
Let $L_{ne}$ be the normal extension of $F$ containing $L$ of Theorem \ref{thr:CVFNE}. Since $L_{ne}$ is the splitting field of some polynomial in $F[x]$ it is a finite extension of $F$ and so also of $L$. Hence by Theorem \ref{thr:CVFUT} the valuation $|\cdot|_{L}$ extends uniquely to a valuation $|\cdot|_{L_{ne}}$ on $L_{ne}$. Let $n_{0}:=[L_{sc}:F]$ and let $g_{1},\cdots,g_{n_{0}}$ be the $n_{0}$ distinct $F$-isomorphisms of $L$ onto subfields of $L_{ne}$ as given by Theorem \ref{thr:CVFGL}. Then for each $i\in\{1,\cdots,n_{0}\}$ we have that $|a|_{i}:=|g_{i}(a)|_{L_{ne}}$, for $a\in L$, is a valuation on $L$ extending $|\cdot|_{F}$. Hence, by the uniqueness of $|\cdot|_{L}$ as an extension of $|\cdot|_{F}$ to $L$, each of $g_{1},\cdots,g_{n_{0}}$ is an isometry from $L$ onto a subfield of $L_{ne}$ with respect to $|\cdot|_{L_{ne}}$. Hence setting $n:=[L:F]$, $n_{1}:=[L:L_{sc}]$ and noting that the norm map $N_{^{L}/_{F}}$ takes values in $F$, we have for all $a\in L$
\begin{equation*}
|a|_{L}=\sqrt[n]{|a|_{L}^{[L:L_{sc}][L_{sc}:F]}}=\sqrt[n]{\left(\prod_{i=1}^{n_{0}}|g_{i}(a)|_{L_{ne}}\right)^{n_{1}}}=\sqrt[n]{\left|\left(\prod_{i=1}^{n_{0}}g_{i}(a)\right)^{n_{1}}\right|_{L_{ne}}}=\sqrt[n]{|N_{^{L}/_{F}}(a)|_{F}}.
\end{equation*}
Therefore we also have $\omega(L^{\times})=\frac{1}{n}\nu\circ N_{^{L}/_{F}}(L^{\times})\subseteq\frac{1}{n}\nu(F^{\times})$ and so if $|\cdot|_{F}$ is a discrete valuation then so is $|\cdot|_{L}$. Moreover $\omega$ is indeed an extension of $\nu$ since for $a\in F$ we have $\omega(a)=\frac{1}{n}\nu\circ N_{^{L}/_{F}}(a)=\frac{1}{n}\nu(a^{n})=\frac{1}{n}n\nu(a)=\nu(a)$. Now suppose that $\nu$ is a rank 1 valuation logarithm so that $\nu(F^{\times})=\mathbb{Z}$ and $\omega(L^{\times})\subseteq\frac{1}{n}\mathbb{Z}$. Then there are at most $n$ elements in $\omega(L^{\times})\cap(0,1]$ but also at least $1$ element since there is $\pi\in F^{\times}$ that is prime with respect to $\nu$ giving $\omega(\pi)=\nu(\pi)=1$. Hence let $e':=\min\omega(L^{\times})\cap(0,1]$ and $a\in L^{\times}$ such that $\omega(a)=e'$. We show that $\omega(L^{\times})=e'\mathbb{Z}$. Let $b\in L^{\times}$ giving $\omega(b)=ke'+\varepsilon$ for some $0\leq\varepsilon<e'$ and $k\in\mathbb{Z}$. Then since $a^{k},a^{-k}\in L^{\times}$ we have $\omega(ba^{-k})=\omega(b)-k\omega(a)=ke'+\varepsilon-ke'=\varepsilon$ giving $\varepsilon=0$ by the definition of $e'$. Hence $\omega(L^{\times})\subseteq e'\mathbb{Z}$. On the other hand for $k\in\mathbb{Z}$ we have $\omega(a^{k})=k\omega(a)=ke'$ so $e'\mathbb{Z}\subseteq\omega(L^{\times})$ giving $\omega(L^{\times})=e'\mathbb{Z}$. Finally since $\omega(\pi)=1$ we have $1\in e'\mathbb{Z}$ and so there is $e\in\mathbb{N}$ such that $e'e=1$ giving $e\omega(L^{\times})=\mathbb{Z}$ which completes the proof.
\end{proof}
\begin{remark}
Let $F$ and $L$ be as in Theorem \ref{thr:CVFEE} with non-trivial discrete valuations. Let $\nu$ be the rank 1 valuation logarithm on $F$ and let $\omega$ be the extension of $\nu$ to $L$.
\begin{enumerate}
\item[(i)]
With group law addition, $\omega(F^{\times})$ and $\omega(L^{\times})$ are groups. It is immediate from Theorem \ref{thr:CVFEE} that $e=[\omega(L^{\times}):\omega(F^{\times})]$, the index of $\omega(F^{\times})$ in $\omega(L^{\times})$.
\item[(ii)]
If $e=1$ then $L$ is called an {\em unramified} extension of $F$. If $e=[L:F]$ then $L$ is called a {\em totally ramified} extension of $F$. Other classifications are also in use in the literature.
\item[(iii)]
The value of $e$ has implications for the degree of the extension $\overline{L}$ of the residue field $\overline{F}$. For $F$ and $L$ as specified in these remarks we have $[L:F]=e[\overline{L}:\overline{F}]$, see \cite[p107,p108]{McCarthy} for details. Hence in this case, with reference to Theorem \ref{thr:CVFHB}, if $F$ is locally compact then $L$ is locally compact.
\end{enumerate}
\label{rem:CVFEE}
\end{remark}
\subsection{Galois theory}
\label{subsec:CVFGT}
The following is the fundamental theorem of Galois theory, see \cite[p36]{McCarthy}.
\begin{theorem}
Let $F$ and $E$ be fields such that $E$ is a finite Galois extension of $F$, that is $E^{G}=F$ for $G:=\mbox{Gal}(^{E}/_{F})$. Then we have the following one-one correspondence
\begin{equation*}
\{G':G'\leqslant G\mbox{ is a subgroup}\}\leftrightarrow\{E':E'\mbox{ is a field with }F\subseteq E'\subseteq E\}
\end{equation*}
given by the inverse maps $G'\mapsto E^{G'}$ and $E'\mapsto\mbox{Gal}(^{E}/_{E'})$.
\label{thr:CVFFTG}
\end{theorem}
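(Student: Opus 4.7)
The plan is to verify that the two assignments in the correspondence are mutually inverse, and both are well-defined. First I would observe well-definedness: if $G'\leqslant G$ then $E^{G'}$ is a subfield of $E$ and contains $F=E^G\subseteq E^{G'}$, so it is an intermediate field; conversely, if $F\subseteq E'\subseteq E$ then any element of $\mathrm{Gal}(^E/_{E'})$ fixes $F$ pointwise, so it lies in $G$. The content of the theorem is therefore that the compositions $E'\mapsto\mathrm{Gal}(^E/_{E'})\mapsto E^{\mathrm{Gal}(^E/_{E'})}$ and $G'\mapsto E^{G'}\mapsto\mathrm{Gal}(^E/_{E^{G'}})$ are the identity.

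For the first composition, the key input is that $E$ remains Galois over every intermediate field. Since $E$ is a finite Galois extension of $F$, by Theorem~\ref{thr:CVFGL} we have $E_{sc}=E=E_{ne}$, so $E$ is the splitting field over $F$ of some separable polynomial $f(x)\in F[x]$. Viewing $f(x)$ as a polynomial in $E'[x]$, the same roots generate $E$ over $E'$, so $E$ is the splitting field of $f(x)$ over $E'$ and hence normal over $E'$; separability is inherited from $F$. Applying Theorem~\ref{thr:CVFGL} again to the extension $E/E'$ gives $E^{\mathrm{Gal}(^E/_{E'})}=E'$ together with $\#\mathrm{Gal}(^E/_{E'})=[E:E']$. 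This is the crucial reduction: the problem for an arbitrary intermediate field is reduced to the definitional property of a Galois extension.

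For the second composition, fix $G'\leqslant G$ and write $E'':=E^{G'}$. Clearly $G'\leqslant\mathrm{Gal}(^E/_{E''})$, and by the previous step $\#\mathrm{Gal}(^E/_{E''})=[E:E'']$. It then suffices to prove the degree identity $[E:E^{G'}]=\#G'$, since combined with the inclusion this forces $G'=\mathrm{Gal}(^E/_{E''})$. This degree equality is Artin's lemma: a finite group $G'$ of automorphisms of a field $E$ satisfies $[E:E^{G'}]\leq\#G'$. The standard proof is by linear dependence — assuming $n:=\#G'$ and $n+1$ elements $a_1,\ldots,a_{n+1}\in E$ that are $E^{G'}$-linearly independent, the homogeneous system $\sum_j g_i(a_j)x_j=0$ (for $g_i\in G'$) has a non-trivial solution in $E^{n+1}$; picking a solution with the fewest non-zero entries and acting by an element of $G'$ that moves one entry produces a shorter solution, contradiction. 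Combining with the easy inequality $\#G'\leq\#\mathrm{Gal}(^E/_{E^{G'}})=[E:E^{G'}]$ yields equality.

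The main obstacle is the degree identity $[E:E^{G'}]=\#G'$; everything else is formal once we know $E/E'$ is Galois for every intermediate $E'$. I would present Artin's lemma (via linear independence of characters, or the minimal-length-relation argument sketched above) as the technical heart of the proof, and then assemble the bijection from the two identities established in the previous two paragraphs.
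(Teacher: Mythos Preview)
Your argument is correct and follows the standard route to the fundamental theorem of Galois theory: first show $E$ is Galois over every intermediate field (so $E^{\mathrm{Gal}(^E/_{E'})}=E'$), then invoke Artin's lemma for the degree identity $[E:E^{G'}]=\#G'$ to close the other direction. The paper, however, does not supply its own proof of this statement; it simply records the theorem and cites \cite[p36]{McCarthy} for the details. So there is no in-paper argument to compare against --- your proposal stands as a complete and standard proof where the paper defers to an external reference.
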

\begin{corollary}
Let $F$ and $L$ be fields such that $L$ is a finite extension of $F$ and let $G:=\mbox{Gal}(^{L}/_{F})$. Then $L$ is a finite Galois extension of $L^{G}$ and so for $L$ and $L^{G}$ Theorem \ref{thr:CVFFTG} is applicable.
\label{cor:CVFLFTG}
\end{corollary}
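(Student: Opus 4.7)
The plan is to verify directly the two requirements of Definition \ref{def:CVFSN}(vi) for the extension $L/L^{G}$: that it is finite, and that the fixed field of $\mbox{Gal}(^{L}/_{L^{G}})$ in $L$ equals $L^{G}$. Once both are established, Theorem \ref{thr:CVFFTG} applies verbatim with $F$ replaced by $L^{G}$, which is the full content of the corollary.

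Finiteness is essentially free. Since every $g\in G$ fixes $F$ pointwise we have $F\subseteq L^{g}$ for each $g\in G$, hence $F\subseteq L^{G}\subseteq L$; the tower law then gives $[L:L^{G}]\leq[L:F]<\infty$. For the Galois condition, set $H:=\mbox{Gal}(^{L}/_{L^{G}})$. The inclusion $L^{G}\subseteq L^{H}$ is tautological, since by definition every element of $H$ fixes $L^{G}$ pointwise. The only real step is the reverse inclusion, and I would obtain it by first noting that $G\leqslant H$: any $g\in G$ fixes $L^{G}$ because $L^{G}$ is, by its very definition, contained in each $L^{g}$, so $g\in\mbox{Gal}(^{L}/_{L^{G}})=H$. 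Enlarging a group of self-maps of $L$ only shrinks the fixed field, so
\begin{equation*}
L^{H}=\bigcap_{h\in H}L^{h}\subseteq\bigcap_{g\in G}L^{g}=L^{G},
\end{equation*}
and equality follows.

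With both $[L:L^{G}]<\infty$ and $L^{H}=L^{G}$ in hand, $L/L^{G}$ is a finite Galois extension in the sense of Definition \ref{def:CVFSN}(vi), so Theorem \ref{thr:CVFFTG} applies to the pair $(L,L^{G})$. There is no genuine obstacle here; the argument is purely formal, resting only on the elementary monotonicity $G_{1}\subseteq G_{2}\Rightarrow L^{G_{2}}\subseteq L^{G_{1}}$ for groups of self-maps of $L$. In particular, no appeal to separability, normality, or the splitting field $L_{ne}$ or separable closure $L_{sc}$ is required beyond what is already packaged into the definition of a Galois extension.
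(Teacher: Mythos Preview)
Your proof is correct and follows essentially the same line as the paper's. The only minor difference is that the paper establishes the full equality $G=\mbox{Gal}(^{L}/_{L^{G}})$ (it also checks the reverse inclusion $\mbox{Gal}(^{L}/_{L^{G}})\subseteq G$ via $F\subseteq L^{G}$) and then reads off $L^{G'}=L^{G}$, whereas you only use the inclusion $G\subseteq H$ together with monotonicity of fixed fields; both are equally elementary and reach the same conclusion.
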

\begin{proof}
We show that $L$ is a Galois extension of $L^{G}$. For $g\in\mbox{Gal}(^{L}/_{F})$ we have $g(a)=a$ for all $a\in L^{G}$ and so $g\in\mbox{Gal}(^{L}/_{L^{G}})$. On the other hand for $g\in\mbox{Gal}(^{L}/_{L^{G}})$ we have $g(a)=a$ for all $a\in F$ since $F\subseteq L^{G}$ and so $g\in\mbox{Gal}(^{L}/_{F})$. Therefore $\mbox{Gal}(^{L}/_{F})=\mbox{Gal}(^{L}/_{L^{G}})$ and so setting $G':=\mbox{Gal}(^{L}/_{L^{G}})$ gives  $L^{G'}=L^{G}$ as required.
\end{proof}
The following group theory result must be known. However we will provide a proof in lieu of a reference. 
\begin{lemma}
Let $(G,+)$ be a group and $g\in\mbox{Aut}(G)$ be a group automorphism on $G$. If $a,b\in G$ are such that $\mbox{gcd}(\mbox{ord}(g,a),\mbox{ord}(g,b))=1$ then $\mbox{ord}(g,a+b)=\mbox{ord}(g,a)\mbox{ord}(g,b)$.
\label{lem:CVFGCD}
\end{lemma}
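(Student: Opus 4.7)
The plan is to set $m := \mbox{ord}(g,a)$, $n := \mbox{ord}(g,b)$, and $k := \mbox{ord}(g,a+b)$, and to prove the two divisibilities $k \mid mn$ and $mn \mid k$ separately; combined with $\gcd(m,n)=1$ this yields $k = mn$.

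For $k \mid mn$, I would observe that because $g$ is a group homomorphism, $g^{(mn)}(a+b) = g^{(mn)}(a) + g^{(mn)}(b)$, and since $m \mid mn$ and $n \mid mn$, both summands reduce to $a$ and $b$ respectively. Hence $g^{(mn)}(a+b) = a+b$, so $k$ is well-defined and divides $mn$ by the standard fact that the set $\{j \in \mathbb{Z} : g^{(j)}(s) = s\}$ is a subgroup of $\mathbb{Z}$ (here we use that $g$ is an automorphism, so $g^{(-1)}$ makes sense and the set is closed under differences), hence equal to $\mbox{ord}(g,s)\mathbb{Z}$.

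For $mn \mid k$, the key trick is to apply $g^{(k)}$ repeatedly to eliminate $a$ or $b$. Specifically, from $g^{(k)}(a+b) = a+b$ I would deduce $g^{(km)}(a+b) = a+b$, which rewrites as $g^{(km)}(a) + g^{(km)}(b) = a+b$. Since $m \mid km$ forces $g^{(km)}(a) = a$, this leaves $g^{(km)}(b) = b$, whence $n \mid km$; by coprimality $n \mid k$. The symmetric argument using $g^{(kn)}$ gives $m \mid k$. Using $\gcd(m,n) = 1$ once more, $mn \mid k$.

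There is essentially no obstacle here beyond careful bookkeeping; the only subtle point worth flagging is that the divisibility $g^{(j)}(s) = s \Rightarrow \mbox{ord}(g,s) \mid j$ does require $g$ to be invertible, which is supplied by the automorphism hypothesis. The argument never uses commutativity of $G$, only that $g$ respects the group operation.
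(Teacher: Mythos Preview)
Your proof is correct. Both you and the paper establish $k \mid mn$ in the same way, via $g^{(mn)}(a+b) = g^{(mn)}(a) + g^{(mn)}(b) = a+b$. The difference lies in the reverse direction: the paper argues by contradiction, assuming $\mbox{ord}(g,a+b) < mn$, choosing a prime $r$ from the factorisation of $m$ or $n$ with $\mbox{ord}(g,a+b) \mid mn/r$, and then using cancellation on $g^{(mn/r)}(a+b)=a+b$ to force $\mbox{ord}(g,a) \mid mn/r$ (or the symmetric statement for $b$), contradicting coprimality. Your route is more direct: the trick of passing from $g^{(k)}(a+b)=a+b$ to $g^{(km)}(a+b)=a+b$ cleanly isolates $b$, giving $n \mid km$ and hence $n \mid k$ by coprimality, and symmetrically $m \mid k$. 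This avoids the prime factorisation and the contradiction entirely, and is arguably the slicker argument. One small remark: your aside that invertibility of $g$ is needed for the implication $g^{(j)}(s)=s \Rightarrow \mbox{ord}(g,s) \mid j$ is not quite right for positive $j$, where the division algorithm alone suffices; invertibility is only needed if you want the full subgroup $\mbox{ord}(g,s)\mathbb{Z}$ of $\mathbb{Z}$, which your argument does not actually use.
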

\begin{proof}
We assume the conditions of Lemma \ref{lem:CVFGCD} and note that the result is immediate if one or more of $\mbox{ord}(g,a)$ and $\mbox{ord}(g,b)$ is equal to 1. So assuming otherwise, let $p_{1}^{k_{1}}p_{2}^{k_{2}}\cdots p_{i}^{k_{i}}$ and $q_{1}^{l_{1}}q_{2}^{l_{2}}\cdots q_{j}^{l_{j}}$ be the prime decompositions of $\mbox{ord}(g,a)$ and $\mbox{ord}(g,b)$ respectively. For $n:=\mbox{ord}(g,a)\mbox{ord}(g,b)$ we have
\begin{equation*}
g^{(n)}(a+b)=g^{(n)}(a)+g^{(n)}(b)=a+b.
\end{equation*}
Therefore $\mbox{ord}(g,a+b)|n$. Suppose towards a contradiction that $\mbox{ord}(g,a+b)<n$. Then $\mbox{ord}(g,a+b)|\frac{n}{r}$ for some $r\in\{p_{1},p_{2},\cdots,p_{i},q_{1},q_{2},\cdots,q_{j}\}$. If $r=p_{m}$ for some $m\in\{1,2,\cdots,i\}$ then $a+b=g^{(\frac{n}{r})}(a+b)=g^{(\frac{n}{r})}(a)+g^{(\frac{n}{r})}(b)=g^{(\frac{n}{r})}(a)+b$ giving, by right cancellation of $b$, $g^{(\frac{n}{r})}(a)=a$. It then follows that 
\begin{equation*}
p_{1}^{k_{1}}p_{2}^{k_{2}}\cdots p_{m}^{k_{m}}\cdots p_{i}^{k_{i}}|p_{1}^{k_{1}}p_{2}^{k_{2}}\cdots p_{m}^{k_{m}-1}\cdots p_{i}^{k_{i}}q_{1}^{l_{1}}q_{2}^{l_{2}}\cdots q_{j}^{l_{j}}
\end{equation*}
giving $p_{m}|q_{1}^{l_{1}}q_{2}^{l_{2}}\cdots q_{j}^{l_{j}}$ which is a contradiction since $\mbox{gcd}(\mbox{ord}(g,a),\mbox{ord}(g,b))=1$. A similar contradiction occurs for $r=q_{m}$ with $m\in\{1,2,\cdots,j\}$. Hence $\mbox{ord}(g,a+b)=n$ as required.
\end{proof}
\begin{lemma}
Let $F$ be a field with finite extension $L$ and let $g\in\mbox{Gal}(^{L}/_{F})$. If $n\in\mathbb{N}$ is such that $n|\mbox{ord}(g)$ then $n\in\mbox{ord}(g,L)$.
\label{lem:CVFOSET}
\end{lemma}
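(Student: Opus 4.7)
The plan is to reduce to the prime-power case via Galois theory and then glue the prime-power pieces together using Lemma \ref{lem:CVFGCD}. Set $m:=\mbox{ord}(g)$ and suppose $n\mid m$ with prime factorisation $n=p_{1}^{k_{1}}p_{2}^{k_{2}}\cdots p_{r}^{k_{r}}$.

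First, I would establish that $L$ is a finite Galois extension of the fixed field $L^{\langle g\rangle}$ with Galois group exactly $\langle g\rangle$, a cyclic group of order $m$. This follows by applying Corollary \ref{cor:CVFLFTG} to realise $L/L^{\mbox{Gal}(^{L}/_{F})}$ as a finite Galois extension, and then invoking the subgroup--field correspondence of Theorem \ref{thr:CVFFTG} to pass to the intermediate field corresponding to the cyclic subgroup $\langle g\rangle\leqslant\mbox{Gal}(^{L}/_{F})$.

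Next, I would handle each prime power $p^{k}$ with $p^{k}\mid n$ separately. Since $p^{k-1}\mid p^{k}\mid m$, the cyclic subgroups $\langle g^{p^{k}}\rangle\subsetneq\langle g^{p^{k-1}}\rangle$ of $\langle g\rangle$ have orders $m/p^{k}$ and $m/p^{k-1}$ respectively, so by Theorem \ref{thr:CVFFTG} their fixed fields satisfy $L^{g^{p^{k-1}}}\subsetneq L^{g^{p^{k}}}$. Selecting any $a\in L^{g^{p^{k}}}\setminus L^{g^{p^{k-1}}}$ forces $\mbox{ord}(g,a)\mid p^{k}$ and $\mbox{ord}(g,a)\nmid p^{k-1}$, so $\mbox{ord}(g,a)=p^{k}$. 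Applying this for each $i$ yields elements $a_{1},\ldots,a_{r}\in L$ with $\mbox{ord}(g,a_{i})=p_{i}^{k_{i}}$.

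Finally, $g$ is in particular an automorphism of the additive group $(L,+)$, and the orders $p_{1}^{k_{1}},\ldots,p_{r}^{k_{r}}$ are pairwise coprime. Iterating Lemma \ref{lem:CVFGCD} therefore gives
\begin{equation*}
\mbox{ord}(g,a_{1}+a_{2}+\cdots+a_{r})=p_{1}^{k_{1}}p_{2}^{k_{2}}\cdots p_{r}^{k_{r}}=n,
\end{equation*}
so $n\in\mbox{ord}(g,L)$ as required. The only delicate point is the Galois-theoretic setup identifying $\mbox{Gal}(^{L}/_{L^{\langle g\rangle}})$ with $\langle g\rangle$; once that is in place, the strict containment of fixed fields along the chain $\langle g^{p^{k}}\rangle\subsetneq\langle g^{p^{k-1}}\rangle$ is immediate and Lemma \ref{lem:CVFGCD} does the rest.
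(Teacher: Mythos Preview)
Your proof is correct and uses the same two ingredients as the paper's: the Galois correspondence to produce elements of prescribed prime-power order via the strict containment $L^{g^{p^{k-1}}}\subsetneq L^{g^{p^{k}}}$, and Lemma~\ref{lem:CVFGCD} to combine them additively. The only difference is organisational: the paper argues by contradiction with a minimal counterexample $n$, splitting into the case $n=p^{k}$ (handled by the Galois step) versus $n=p^{k}r$ with $r>1$ (handled by minimality plus Lemma~\ref{lem:CVFGCD}), whereas you go straight to the prime factorisation of $n$ and build the witness directly---the underlying mathematics is identical, and your direct version is arguably a little cleaner.
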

\begin{proof}
Suppose towards a contradiction that there is $n\in\mathbb{N}$ such that $n|\mbox{ord}(g)$ but $n\notin\mbox{ord}(g,L)$. We can take $n$ to be the least such element and note that $n\not= 1$ since $1\in\mbox{ord}(g,L)$. Express $n$ as $n=p^{k}r$ where $p$ is a prime, $k,r\in\mathbb{N}$ and $p\nmid r$. We thus have the following two cases.\\ 
{\bf{Case:}} $r\not= 1$. In this case by the definition of $n$ we have $p^{k},r\in\mbox{ord}(g,L)$ and so there are $a,b\in L$ with $\mbox{ord}(g,a)=p^{k}$, $\mbox{ord}(g,b)=r$ and
\begin{equation*}
\mbox{gcd}(\mbox{ord}(g,a),\mbox{ord}(g,b))=1.
\end{equation*}
Then by Lemma \ref{lem:CVFGCD} we have $\mbox{ord}(g,a+b)=\mbox{ord}(g,a)\mbox{ord}(g,b)$ which contradicts our assumption that $n\notin\mbox{ord}(g,L)$.\\
{\bf{Case:}} $r=1$. In this case $n=p^{k}$ and note that $\mbox{ord}(g)=nm$ for some $m\in\mathbb{N}$. Hence we have the following subgroups of $G$:
\begin{enumerate}
\item[(i)]
$\langle g^{(n)}\rangle:=(\{\mbox{id},g^{(n)},g^{(2n)},\cdots,g^{((m-1)n)}\},\circ)<G$;
\item[(ii)]
$\langle g^{(\frac{n}{p})}\rangle:=(\{\mbox{id},g^{(\frac{n}{p})},g^{(2\frac{n}{p})},\cdots,g^{((mp-1)\frac{n}{p})}\},\circ)\leqslant G$.
\end{enumerate}
Therefore $\#\langle g^{(n)}\rangle=m$, $\#\langle g^{(\frac{n}{p})}\rangle=mp$ and $\langle g^{(n)}\rangle$ is a proper normal subgroup of $\langle g^{(\frac{n}{p})}\rangle$. Hence by Corollary \ref{cor:CVFLFTG} we have the following tower of fields
\begin{equation*}
L^{G}\subseteq L^{\langle g^{(n/p)}\rangle}\subsetneqq L^{\langle g^{(n)}\rangle}\subseteq L.
\end{equation*}
Now it is immediate that $L^{g^{(n)}}=L^{\langle g^{(n)}\rangle}$ and $L^{g^{(n/p)}}=L^{\langle g^{(n/p)}\rangle}$ and so there is some $a\in L^{g^{(n)}}\backslash L^{g^{(n/p)}}$ with $\mbox{ord}(g,a)|n$ but $\mbox{ord}(g,a)\nmid\frac{n}{p}$. Therefore $\mbox{ord}(g,a)=p^{k}=n$ which again contradicts our assumption that $n\notin\mbox{ord}(g,L)$. In particular the lemma holds.
\end{proof}
The following lemma is well known but we will provide a proof in lieu of a reference.
\begin{lemma}
\label{lem:CVFPOL}
Let $F$ be a field, let $L$ be an algebraic extension of $F$ and let $a\in L$. For the simple extension $F(a)$ of $F$ and $F[X]$ the ring of polynomials over $F$ we have $F(a)=F[a]$.
\end{lemma}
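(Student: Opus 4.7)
The plan is to use the minimal polynomial machinery from Theorem \ref{thr:CVFIR}. Write $p(X):=\mathrm{Irr}_{F,a}(X)\in F[X]$ for the monic irreducible polynomial of $a$ over $F$, which exists since $L$ is algebraic over $F$.

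First I would dispose of one inclusion by a soft argument: $F(a)$ is, by definition, a field containing $F$ and $a$, hence it contains every polynomial expression $c_0+c_1 a+\cdots+c_n a^n$ with $c_i\in F$, giving $F[a]\subseteq F(a)$.

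For the harder inclusion $F(a)\subseteq F[a]$, I would consider the evaluation homomorphism $\varphi_a\colon F[X]\to L$, $q(X)\mapsto q(a)$, whose image is precisely $F[a]$ and whose kernel is an ideal of $F[X]$. Since $F[X]$ is a principal ideal domain and $p(X)$ is an element of $\ker\varphi_a$ of minimal degree (this is exactly the content of $p$ being the minimal polynomial), standard PID arguments show $\ker\varphi_a=(p(X))$. The first isomorphism theorem then gives
\begin{equation*}
F[X]/(p(X))\;\cong\;F[a].
\end{equation*}
Because $p(X)$ is irreducible in the PID $F[X]$, the ideal $(p(X))$ is maximal, so the quotient $F[X]/(p(X))$ is a field. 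Hence $F[a]$ is already a field. Since $F[a]$ contains both $F$ and $a$ and $F(a)$ is the smallest such field inside $L$, we conclude $F(a)\subseteq F[a]$, completing the equality.

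The only step that needs any care is the identification $\ker\varphi_a=(p(X))$: one uses the Euclidean algorithm in $F[X]$ to write an arbitrary $q\in\ker\varphi_a$ as $q=sp+t$ with $\deg t<\deg p$, then observes $t(a)=0$ forces $t=0$ by minimality of $\deg p$. Everything else is routine field-theoretic bookkeeping, and no use of the non-Archimedean or valuation-theoretic hypotheses is required.
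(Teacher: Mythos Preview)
Your proof is correct. The paper takes a slightly different, more explicit route to the nontrivial inclusion: rather than showing abstractly that $F[a]$ is a field, it picks an arbitrary element $\frac{p(a)}{q(a)}\in F(a)$, observes that $q(a)\neq 0$ forces $\gcd(\mathrm{Irr}_{F,a},q)=1$, and then invokes B\'ezout's identity to write $s(x)q(x)+t(x)\mathrm{Irr}_{F,a}(x)=1$; evaluating at $a$ gives $s(a)q(a)=1$, so $\frac{p(a)}{q(a)}=p(a)s(a)\in F[a]$. Your argument is the structural version of the same idea: you use the PID property of $F[X]$ to conclude that the irreducible $p(X)$ generates a maximal ideal, hence $F[a]\cong F[X]/(p(X))$ is a field, and then appeal to minimality of $F(a)$. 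Both rest on the same underlying fact (irreducibility of the minimal polynomial in a PID), but the paper's approach produces an explicit polynomial inverse for $q(a)$, while yours is cleaner bookkeeping via the first isomorphism theorem.
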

\begin{proof}
It is immediate that $F[a]\subseteq F(a)$. Now by Theorem \ref{thr:CVFIR} there is a unique monic irreducible polynomial $\mbox{Irr}_{F,a}(x)\in F[X]$ such that $\mbox{Irr}_{F,a}(a)=0$. Further for any element $\frac{p(a)}{q(a)}\in F(a)$, given by $p(x),q(x)\in F[X]$, we have $q(a)\not=0$. Hence $\mbox{Irr}_{F,a}(x)$ and $q(x)$ are relatively prime, that is we have $\mbox{gcd}(\mbox{Irr}_{F,a},q)=1$. Therefore by Bezout's identity there are $s(x),t(x)\in F[X]$ such that $s(x)q(x)+t(x)\mbox{Irr}_{F,a}(x)=1$ giving $q(x)=\frac{1-t(x)\mbox{Irr}_{F,a}(x)}{s(x)}$. Finally then we have $q(a)=\frac{1}{s(a)}$ giving $\frac{p(a)}{q(a)}=p(a)s(a)$ which is an element of $F[a]$ as required.
\end{proof}
	\chapter[Functions and algebras]{Functions and algebras}
\label{cha:FAA}
In this chapter we build upon some of the basic facts and analysis of complete valued fields surveyed in Chapter \ref{cha:CVF}. The first section establishes particular facts in functional analysis over complete valued fields that will be used in later chapters. However it is not the purpose of the first section to provide an extensive introduction to the subject. The second section provides background on Banach $F$-algebras, Banach algebras over a complete valued field $F$. Whilst some of the details are included purely as background others also support the discussion from Remark \ref{rem:CVFOS} of Chapter \ref{cha:CVF}.
\section{Functional analysis over complete valued fields}
\label{sec:FAAFA}
We begin with the following lemma.
\begin{lemma}
Let $F$ be a non-Archimedean field and let $(a_{n})$ be a sequence of elements of $F$.
\begin{enumerate}
\item[(i)]
If $\lim_{n\to\infty}a_{n}=a$ for some $a\in F^{\times}$ then there exists $N\in\mathbb{N}$ such that for all $n\geq N$ we have $|a_{n}|_{F}=|a|_{F}$. We will call this {\em convergence from the side}, opposed to from above or below.
\item[(ii)]
If $F$ is also complete then $\sum{a_{n}}$ converges if and only if $\lim_{n\to\infty}a_{n}=0$, in sharp contrast to the Archimedean case. Further if $\sum{a_{n}}$ does converge then
\begin{equation*}
\left|\sum{a_{n}}\right|_{F}\leq\max\{|a_{n}|_{F}:n\in\mathbb{N}\}.
\end{equation*}
\end{enumerate}
\label{lem:FAACS}
\end{lemma}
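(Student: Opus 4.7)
For part (i), the plan is to exploit Lemma \ref{lem:CVFEQ} (or equivalently the strong triangle inequality). Since $a\in F^{\times}$ we have $|a|_{F}>0$, so convergence provides $N\in\mathbb{N}$ such that $|a_{n}-a|_{F}<|a|_{F}$ for every $n\geq N$. Writing $a_{n}=a+(a_{n}-a)$ and applying Lemma \ref{lem:CVFEQ} to a valuation logarithm $\nu$ (with $\nu(a_{n}-a)>\nu(a)$) gives $\nu(a_{n})=\nu(a)$, i.e.\ $|a_{n}|_{F}=|a|_{F}$. The one-line alternative is to use $|a_{n}|_{F}\leq\max(|a|_{F},|a_{n}-a|_{F})=|a|_{F}$ and $|a|_{F}\leq\max(|a_{n}|_{F},|a_{n}-a|_{F})$, where the second max cannot equal $|a_{n}-a|_{F}$, forcing equality.

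For part (ii), the strategy is to translate convergence of the series into a Cauchy condition on the partial sums $S_{N}:=\sum_{n=1}^{N}a_{n}$ and then invoke completeness. The ``only if'' direction is immediate, since $a_{n}=S_{n}-S_{n-1}$ forces $a_{n}\to0$ whenever $(S_{N})$ converges, and this direction does not require completeness. For the ``if'' direction, suppose $a_{n}\to0$. For $M>N$ a repeated application of the strong triangle inequality yields
\begin{equation*}
|S_{M}-S_{N}|_{F}=\left|\sum_{n=N+1}^{M}a_{n}\right|_{F}\leq\max_{N<n\leq M}|a_{n}|_{F},
\end{equation*}
and the right-hand side tends to $0$ as $N\to\infty$ since $a_{n}\to0$. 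Hence $(S_{N})$ is Cauchy in $F$, and by completeness it converges; this is the sum of the series.

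For the bound, the same strong triangle inequality gives $|S_{N}|_{F}\leq\max_{n\leq N}|a_{n}|_{F}\leq\sup_{n}|a_{n}|_{F}$ for every $N$. Passing to the limit using continuity of $|\cdot|_{F}$ yields $\left|\sum a_{n}\right|_{F}\leq\sup_{n}|a_{n}|_{F}$. To upgrade the supremum to a maximum (as stated), observe that $a_{n}\to 0$ implies that for any $\varepsilon>0$ only finitely many $n$ satisfy $|a_{n}|_{F}\geq\varepsilon$; hence the supremum is attained (or the sequence is identically zero, in which case the bound is trivial). There is no serious obstacle here: the only mildly delicate point is justifying that the maximum over an infinite index set makes sense, which the observation above handles.
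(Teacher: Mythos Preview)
Your proof is correct and follows essentially the same approach as the paper's. The one minor tactical difference is in the bound in (ii): rather than invoking continuity of $|\cdot|_{F}$ to pass to the limit, the paper applies part (i) to the sequence of partial sums, so that when $\sum a_{n}\neq 0$ one has $|S_{N}|_{F}=\left|\sum a_{n}\right|_{F}$ for all large $N$, and the inequality then follows directly from the finite case.
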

\begin{proof}
For (i), since $a\not=0$ we have $|a|_{F}>0$ and so there is some $N\in\mathbb{N}$ such that, for all $n\geq N$, $|a_{n}-a|_{F}<|a|_{F}$. Hence for all $n\geq N$ we have by Lemma \ref{lem:CVFEQ} that $|a_{n}|_{F}=|(a_{n}-a)+a|_{F}=|a|_{F}$.\\
For (ii), suppose $\lim_{n\to\infty}a_{n}=0$ and let $\varepsilon>0$. Then there is $N\in\mathbb{N}$ such that for all $n\geq N$ we have $|a_{n}|_{F}<\varepsilon$. Hence for $n_{1},n_{2}\in\mathbb{N}$ with $N<n_{1}<n_{2}$ we have
\begin{equation*}
\left|\sum_{i=1}^{n_{2}}a_{i}-\sum_{i=1}^{n_{1}}a_{i}\right|_{F}=\left|\sum_{i=n_{1}+1}^{n_{2}}a_{i}\right|_{F}\leq\max\{|a_{n_{1}+1}|_{F},\cdots,|a_{n_{2}}|_{F}\}<\varepsilon.
\end{equation*}
Hence the sequence of partial sums is a Cauchy sequence in $F$ and so converges. The converse is immediate. Further suppose $\sum{a_{n}}$ does converge. For $\sum{a_{n}}\not=0$ we have by (i) that there is $N\in\mathbb{N}$ such that for all $n\geq N$
\begin{equation*}
\left|\sum_{i=1}^{\infty}a_{i}\right|_{F}=\left|\sum_{i=1}^{n}a_{i}\right|_{F}\leq\max\{|a_{1}|_{F},\cdots,|a_{n}|_{F}\}\leq\max\{|a_{i}|_{F}:i\in\mathbb{N}\}.
\end{equation*}
On the other hand for $\sum{a_{n}}=0$ the result is immediate.
\end{proof}
The following theorem appears in \cite[p59]{Schikhof} without proof.
\begin{theorem}
Let $F$ be a complete non-Archimedean field and let $a_{0},a_{1},a_{2},\cdots$ be a sequence of elements of $F$. Define the radius of convergence by
\begin{equation*}
\rho:=\frac{1}{\limsup_{n\to\infty}\sqrt[n]{|a_{n}|_{F}}}\quad\mbox{where by convention }0^{-1}=\infty\mbox{ and }\infty^{-1}=0.
\end{equation*}
Then the power series $\sum{a_{n}x^{n}}$, $x\in F$, converges if $|x|_{F}<\rho$ and diverges if $|x|_{F}>\rho$. Furthermore for each $t\in(0,\infty)$, $t<\rho$ the convergence is uniform on $\bar{B}_{t}(0):=\{a\in F:|a|_{F}\leq t\}$.
\label{thr:FAACP}
\end{theorem}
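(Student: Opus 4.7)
The plan is to reduce everything to Lemma \ref{lem:FAACS}(ii), which in the non-Archimedean setting converts convergence of a series into the simple condition that its terms tend to zero. This replaces the usual Weierstrass M-test style comparisons needed in the Archimedean case and makes all three claims essentially elementary.

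First, for the convergence direction, I would fix $x \in F$ with $|x|_F < \rho$ and choose an auxiliary $s \in \mathbb{R}$ with $|x|_F < s < \rho$. By the definition of $\rho$ and $\limsup$, one has $\limsup_{n\to\infty} \sqrt[n]{|a_n|_F} < 1/s$, so there exists $N$ such that $\sqrt[n]{|a_n|_F} < 1/s$ for all $n \geq N$, equivalently $|a_n|_F < 1/s^n$. Then $|a_n x^n|_F < (|x|_F/s)^n$, and since $|x|_F/s < 1$ this tends to $0$. By Lemma \ref{lem:FAACS}(ii) the series $\sum a_n x^n$ converges. For the divergence direction, fix $x$ with $|x|_F > \rho$ and pick $\varepsilon > 0$ so small that $|x|_F(1/\rho - \varepsilon) > 1$; by definition of $\limsup$ there are infinitely many $n$ with $\sqrt[n]{|a_n|_F} > 1/\rho - \varepsilon$, hence $|a_n x^n|_F > \bigl(|x|_F(1/\rho - \varepsilon)\bigr)^n \to \infty$ along this subsequence. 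In particular $a_n x^n \not\to 0$, and again by Lemma \ref{lem:FAACS}(ii) the series diverges. (The boundary cases $\rho = 0$ and $\rho = \infty$ follow by adapting this argument, using the convention stated in the theorem.)

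For the uniform convergence statement, fix $t \in (0, \rho)$ and choose $s$ with $t < s < \rho$. As above, there is $N$ such that $|a_n|_F \leq 1/s^n$ for all $n \geq N$. For any $x \in \bar{B}_t(0)$ and $m_2 > m_1 \geq N$, the strong triangle inequality together with Lemma \ref{lem:FAACS}(ii) applied to the partial tails gives
\begin{equation*}
\left|\sum_{n=m_1+1}^{m_2} a_n x^n\right|_F \leq \max_{m_1 < n \leq m_2} |a_n x^n|_F \leq \max_{m_1 < n \leq m_2}(t/s)^n = (t/s)^{m_1+1},
\end{equation*}
and the right-hand side is independent of $x$ and tends to $0$ as $m_1 \to \infty$ since $t/s < 1$. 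Thus the partial sums form a uniformly Cauchy sequence on $\bar{B}_t(0)$, giving the required uniform convergence.

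I do not anticipate a serious obstacle; the argument is a straightforward $\limsup$ manipulation combined with the non-Archimedean series criterion. The only mild subtlety is being careful in the divergence direction to exploit the $\limsup$ (rather than $\lim$) by selecting infinitely many indices, and to handle the edge cases $\rho \in \{0, \infty\}$ using the stated conventions.
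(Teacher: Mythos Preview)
Your proof is correct and relies on the same core tool as the paper, namely the non-Archimedean series criterion of Lemma~\ref{lem:FAACS}(ii), but the organisation differs in two noticeable ways.

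For the convergence/divergence dichotomy, the paper argues via contrapositives: it shows that divergence forces $\limsup_n \sqrt[n]{|a_n x^n|_F}\geq 1$ (hence $|x|_F\geq\rho$), and that convergence forces this $\limsup$ to be at most $1$ (hence $|x|_F\leq\rho$). You instead argue directly, introducing an auxiliary real $s$ between $|x|_F$ and $\rho$ and bounding $|a_n x^n|_F$ geometrically. Both work; your version is closer to the classical Cauchy--Hadamard argument, while the paper's contrapositive route avoids choosing $s$ at the cost of a slightly less transparent flow.

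The more interesting divergence occurs in the uniform-convergence part. The paper insists on producing an actual element $x_0\in F$ dominating $\bar B_t(0)$, which forces a case split: when $|\cdot|_F$ is dense one can find $x_0$ with $t<|x_0|_F<\rho$, but when the valuation is discrete one instead takes $x_0$ of maximal norm inside $\bar B_t(0)$. You sidestep this entirely by working with a real $s\in(t,\rho)$ and the inequality $|a_n|_F\leq s^{-n}$, never needing any point of $F$ with that norm. Your approach is cleaner and avoids the dense/discrete distinction; the paper's approach has the minor conceptual advantage that the uniform bound is witnessed by evaluation at a single point $x_0$ of the field.
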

\begin{proof}
Note that the following equalities hold, except for when $|x|_{F}=\rho=0$,
\begin{equation}
\label{equ:FAACP}
\limsup_{n\to\infty}\sqrt[n]{|a_{n}x^{n}|_{F}}=\limsup_{n\to\infty}\sqrt[n]{|a_{n}|_{F}|x|_{F}^{n}}=\frac{|x|_{F}}{\rho}.
\end{equation}
Suppose $\sum{a_{n}x^{n}}$ is divergent. Then by part (ii) of Lemma \ref{lem:FAACS}, $\lim_{n\to\infty}a_{n}x^{n}$ is not $0$. Therefore there is some $\varepsilon\in(0,1]$ such that for each $m\in\mathbb{N}$ there is $n>m$ with $|a_{n}x^{n}|_{F}\geq\varepsilon$, in particular $\sqrt[n]{|a_{n}x^{n}|_{F}}\geq\sqrt[n]{\varepsilon}\geq\sqrt[m]{\varepsilon}$. Hence since $\lim_{m\to\infty}\sqrt[m]{\varepsilon}=1$ we have $\limsup_{n\to\infty}\sqrt[n]{|a_{n}x^{n}|_{F}}\geq1$. Therefore in this case $|x|_{F}\geq\rho$ by (\ref{equ:FAACP}). In particular for cases where $|x|_{F}<\rho$ the series $\sum{a_{n}x^{n}}$ converges.\\
On the other hand suppose $\sum{a_{n}x^{n}}$ converges. Then since $\lim_{n\to\infty}|a_{n}x^{n}|_{F}=0$ we have $\limsup_{n\to\infty}\sqrt[n]{|a_{n}x^{n}|_{F}}\leq\limsup_{n\to\infty}\sqrt[n]{\frac{1}{2}}=1$. Therefore in this case $|x|_{F}\leq\rho$ by (\ref{equ:FAACP}). In particular for cases where $|x|_{F}>\rho$ the series $\sum{a_{n}x^{n}}$ diverges.\\
Now suppose there is $t\in(0,\infty)$ with $t<\rho$ and let $\varepsilon>0$. If the valuation on $F$ is dense then $|F^{\times}|_{F}$ is dense in the positive reals and so there is some $x_{0}\in F^{\times}$ with $t<|x_{0}|_{F}<\rho$. Alternatively, if the valuation on $F$ is discrete, there is $x_{0}\in\bar{B}_{t}(0)$ with $|x_{0}|_{F}=\max\{|a|_{F}:a\in\bar{B}_{t}(0)\}$. In either case, since $|x_{0}|_{F}<\rho$, $\sum{a_{n}x_{0}^{n}}$ converges and so $\lim_{n\to\infty}a_{n}x_{0}^{n}=0$. Hence there is some $N\in\mathbb{N}$ such that for all $n>N$ we have $|a_{n}x_{0}^{n}|_{F}<\varepsilon$. Now by the last part of Lemma \ref{lem:FAACS} we have for all $m>N$ and $x\in \bar{B}_{t}(0)$ that
\begin{equation*}
|\sum_{n=1}^{\infty}a_{n}x^{n}-\sum_{n=1}^{m}a_{n}x^{n}|_{F}=|\sum_{n=m+1}^{\infty}a_{n}x^{n}|_{F}\leq\max\{|a_{n}x^{n}|_{F}:n\geq m+1\}.
\end{equation*}
But $|a_{n}x^{n}|_{F}=|a_{n}|_{F}|x|_{F}^{n}\leq |a_{n}|_{F}|x_{0}|_{F}^{n}=|a_{n}x_{0}^{n}|_{F}$ and so $\max\{|a_{n}x^{n}|_{F}:n\geq m+1\}<\varepsilon$ and the convergence is uniform on $\bar{B}_{t}(0)$.
\end{proof}
\begin{remark}
With reference to Theorem \ref{thr:FAACP}.
\begin{enumerate}
\item[(i)]
We note that the radius of convergence as defined in Theorem \ref{thr:FAACP} is the same as that used in the Archimedean setting when replacing $F$ with the complex numbers. However, unlike in the complex setting, if the valuation on $F$ is discrete then a power series $\sum{a_{n}x^{n}}$ may not have a unique choice for the definition of its radius of convergence since $|F^{\times}|_{F}$ is discrete in this case.
\item[(ii)]
We need to be careful when considering convergence of power series. Let $|\cdot|_{\infty}$ denote the absolute valuation on $\mathbb{R}$ and let $|\cdot|_{0}$ denote the trivial valuation on $\mathbb{R}$. All power series are convergent on $B_{1}(0):=\{a\in\mathbb{R}:|a|_{0}<1\}=\{0\}$ with respect to $|\cdot|_{0}$. Whereas the only power series that are convergent at a point $a\in\mathbb{R}^{\times}$ with respect to $|\cdot|_{0}$ are polynomials. On the other hand $\exp(x):=\sum_{n=1}^{\infty}\frac{x^{n}}{n!}$ converges everywhere on $\mathbb{R}$ with respect to $|\cdot|_{\infty}$. The function $\exp(x)$ defined with respect to $|\cdot|_{\infty}$ is a continuous function on all of $\mathbb{R}$ with respect to $|\cdot|_{0}$ but does not have a power series representation on $\mathbb{R}$ with respect to $|\cdot|_{0}$. Similarly $\sum_{n=1}^{\infty}\frac{x^{n}}{n!}$ does not converge everywhere on the $p$-adic  numbers $\mathbb{Q}_{p}$ with respect to $|\cdot|_{p}$, see \cite[p70]{Schikhof} for details in this case.
\item[(iii)]
Under the conditions of Theorem \ref{thr:FAACP}, suppose that the ball $B_{\rho}(0)$ is without isolated points where $\rho$ is the radius of convergence of $f(x):=\sum{a_{n}x^{n}}$. Then, with differentiation defined as in the Archimedean setting, the derivative of $f$ exists on $B_{\rho}(0)$ and it is $f'(x)=\sum{na_{n}x^{n-1}}$. We will not consider this in depth but note, for $x\in B_{\rho}(0)$, the series $\sum{a_{n}x^{n}}$ converges giving $\lim_{n\to\infty}c_{n}=0$ for $c_{n}:=a_{n}x^{n}$ by Lemma \ref{lem:FAACS}. Hence for all $n\in\mathbb{N}$, since $|n|_{F}=|1_{1}+\cdots+1_{n}|_{F}\leq\max\{|1_{1}|_{F},\cdots,|1_{n}|_{F}\}=1$, we have for $x\not=0$ that
\begin{equation*}
|na_{n}x^{n-1}|_{F}=|n|_{F}|x^{-1}|_{F}|a_{n}x^{n}|_{F}\leq|x^{-1}|_{F}|c_{n}|_{F}.
\end{equation*}
Therefore the series $\sum{na_{n}x^{n-1}}$ also converges on $B_{\rho}(0)$ by Lemma \ref{lem:FAACS}.
\end{enumerate}
\label{rem:FAACP}
\end{remark}
\subsection{Analytic functions}
\label{subsec:FAAAF}
Let $F$ be a complete valued field. In this subsection we consider $F$ valued functions that are analytic on the interior of some subset of $F$ that is without isolated points. In particular the situation concerning such analytic functions is some what different in the non-Archimedean setting to that in the Archimedean one, even though the standard results of differentiation such as the chain rule and Leibniz rule are the same, see \cite[p59]{Schikhof}. Recall that if a complex valued function $f$ is analytic on an open disc $D_{r}(a)\subseteq\mathbb{C}$ then $f$ can be represented by the convergent power series
\begin{equation*}
f(z)=\sum_{n=0}^{\infty}\frac{f^{(n)}(a)}{n!}(z-a)^{n}\quad\mbox{for }z\in D_{r}(a),
\end{equation*}
known as the {\em Taylor expansion} of $f$ about $a$, where $f^{(n)}(a)$ is the $n$th derivative of $f$ at $a$. Moreover if $b\in D_{r}(a)$ then $f$ can also be expanded about $b$. However this expansion need not be convergent on all of $D_{r}(a)$ merely on the largest open disc centered at $b$ contained in $D_{r}(a)$ since a lack of differentiability of $f$ at points on, or outside, the boundary of $D_{r}(a)$ will restrict the radius of convergence of such an expansion, see \cite[p449,p450]{Apostol}.\\
Now for $F$ a complete non-Archimedean field the same scenario in this case is such that if $f$ is analytic on a ball $B_{r}(a)\subseteq F$ and can be represented by a Taylor expansion about $a$ on all of $B_{r}(a)$ then $f$ can be represented by a Taylor expansion about any other point $b\in B_{r}(a)$ and this expansion will also be valid on all of $B_{r}(a)$, see \cite[p68]{Schikhof}. This is closely related to the fact that every point of $B_{r}(a)$ is at its center, see the proof of Lemma \ref{lem:CVFTD}. However in general a function $f$ analytic on $B_{r}(a)\subseteq F$ need not have a Taylor expansion about $a$ that is valid on all of $B_{r}(a)$. This is because $B_{r}(a)$ can be decomposed as a disjoint union of clopen balls, see Remark \ref{rem:CVFTD}, upon each of which $f$ can independently be defined. This leads to the following definitions.
\begin{definition}
Let $F$ be a complete valued field with non-trivial valuation. 
\begin{enumerate}
\item[(i)]
We will call a subset $X\subseteq F$ {\em strongly convex} if $X$ is either $F$, the empty set $\emptyset$, a ball or a singleton set.
\item[(ii)]
Let $X$ be an open strongly convex subset of $F$ and let $f:X\rightarrow F$ be a continuous $F$-valued function on $X$. If $f$ can be represented by a single Taylor expansion that is valid on all of $X$ then we say that $f$ is {\em globally analytic} on $X$.
\item[(iii)]
Let $X$ be an open subset of $F$ and let $f:X\rightarrow F$ be a continuous $F$-valued function on $X$. If for each $a\in X$ there is an open strongly convex neighborhood $V\subseteq X$ of $a$ such that $f|_{V}$ is globally analytic on $V$ then we say that $f$ is {\em locally analytic} on $X$.
\item[(iv)]
Let $X$ and $f$ be as in (iii). As usual, if the derivative
\begin{equation*}
f'(a):=\lim_{x\to a}\frac{f(x)-f(a)}{x-a}
\end{equation*}
exists at every $a\in X$ then we say that $f$ is {\em analytic} on $X$.
\item[(v)]
In the case where $X=F$ we similarly define {\em globally entire}, {\em locally entire} and {\em entire} functions on $X$.
\end{enumerate}
\label{def:FAAGL}
\end{definition}
\begin{remark}
Note that the condition in Definition \ref{def:FAAGL} that $F$ has a non-trivial valuation is there because it does not make sense to talk about analytic functions defined on a space without accumulation points. We also note that (ii), (iii) and (iv) of Definition \ref{def:FAAGL} are equivalent in the complex setting for $X$ an open strongly convex subset of $\mathbb{C}$, see \cite[p450]{Apostol}.
\label{rem:FAAGL}
\end{remark}
Now let $F$ be a complete non-Archimedean field and let $(f_{n})$ be a sequence of $F$-valued functions analytic on $\bar{B}_{1}(0)$ and converging uniformly on $\bar{B}_{1}(0)$ to a function $f$. We ask whether $f$ will also be analytic on $\bar{B}_{1}(0)$ in this case? It is very well known that the answer to the analog of this question involving the complex numbers is yes although in this case the functions are required to be continuous on $\bar{B}_{1}(0)$ and analytic only on the interior of $\bar{B}_{1}(0)$ since $\bar{B}_{1}(0)$ will not be clopen. In the case involving the real numbers the answer to the question is of course no since for example a function with a chevron shaped graph in $\mathbb{R}^{2}$ can be uniformly approximated by differentiable functions. In the non-Archimedean setting the following theorem provides insight for when $F$ is not locally compact and also gives a maximum principle result, see \cite[p122]{Schikhof} for proof.
\begin{theorem}
Let $F$ be a complete non-Archimedean field that is not locally compact and let $r\in|F^{\times}|_{F}$.
\begin{enumerate}
\item[(i)]
If $f_{1},f_{2},\cdots$ are globally analytic functions on $\bar{B}_{r}(0)$ and if $f:=\lim_{n\to\infty}f_{n}$ uniformly on $\bar{B}_{r}(0)$ then $f$ is also globally analytic on $\bar{B}_{r}(0)$.
\item[(ii)]
Let $f$ be a globally analytic function on $\bar{B}_{r}(0)$ with power series $f(x)=\sum_{n=0}^{\infty}a_{n}x^{n}$.\\
If the valuation $|\cdot|_{F}$ is dense then
\begin{equation*}
\sup\{|f(x)|_{F}:|x|_{F}\leq r\}=\sup\{|f(x)|_{F}:|x|_{F}<r\}=\max\{|a_{n}|_{F}r^{n}:n\geq0\}<\infty.
\end{equation*}
If the residue field $\overline{F}$ is infinite then
\begin{equation*}
\max\{|f(x)|_{F}:|x|_{F}\leq r\}=\max\{|f(x)|_{F}:|x|_{F}=r\}=\max\{|a_{n}|_{F}r^{n}:n\geq0\}<\infty.
\end{equation*}
\end{enumerate}
\label{thr:FAAMP}
\end{theorem}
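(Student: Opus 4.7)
The plan is to prove part (ii) first, since the maximum principle it supplies drives part (i) cleanly. For the upper bound in (ii), the ultrametric estimate of Lemma \ref{lem:FAACS} immediately gives $|f(x)|_F \leq \max_n |a_n|_F |x|_F^n \leq M$ for $|x|_F \leq r$, where $M := \max_n |a_n|_F r^n$. Choosing $x_0 \in F^\times$ with $|x_0|_F = r$ (available since $r \in |F^\times|_F$), global analyticity forces $\sum a_n x_0^n$ to converge, so $|a_n|_F r^n \to 0$ by Lemma \ref{lem:FAACS}(ii); hence $M$ is finite and attained. Setting $I := \{n : |a_n|_F r^n = M\}$ (a finite set), $M' := \max_{n \notin I}|a_n|_F r^n$ is likewise attained and satisfies $M' < M$.

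For the reverse inequality I would split $f = g + h$ with $g(x) := \sum_{n \in I} a_n x^n$ and $h(x) := \sum_{n \notin I} a_n x^n$, noting $|h(x)|_F \leq M'$ whenever $|x|_F \leq r$. In the infinite-residue-field case, pick $c, d \in F^\times$ with $|c|_F = r$ and $|d|_F = M$; under the substitution $x = cy$ with $|y|_F = 1$, $d^{-1} g(cy)$ becomes a polynomial in $y$ whose coefficients at the indices $n \in I$ all lie in $\mathcal{O}_F^\times$. Its reduction modulo $\mathcal{M}_F$ is a nonzero polynomial over $\overline{F}$ with only finitely many roots, so the infiniteness of $\overline{F}$ supplies $\bar{y} \in \overline{F}^\times$ avoiding them. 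Lifting produces $|g(x)|_F = M$ with $|x|_F = r$, and $|g(x)|_F > M' \geq |h(x)|_F$ combined with Lemma \ref{lem:CVFEQ} then yields $|f(x)|_F = M$.

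In the dense-valuation case the residue field trick is unavailable, so I would exploit the density of $|F^\times|_F$ in $(0,\infty)$ to pick $s \in |F^\times|_F$ with $s < r$ as close to $r$ as needed, and work on the sphere $|x|_F = s$. Writing $n_1 := \min I$ and factoring $g(x) = x^{n_1}\bigl(a_{n_1} + \sum_{n \in I,\, n > n_1} a_n x^{n - n_1}\bigr)$, the inequality $|a_n x^{n-n_1}|_F = M r^{-n} s^{n-n_1} < M r^{-n_1} = |a_{n_1}|_F$ (for $n \in I$, $n > n_1$ and $s < r$) shows that the $a_{n_1}$ term strictly dominates, so Lemma \ref{lem:CVFEQ} gives $|g(x)|_F = M(s/r)^{n_1}$. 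Choosing $s$ close enough to $r$ that $M(s/r)^{n_1} > M'$, a second application of Lemma \ref{lem:CVFEQ} to $f = g + h$ produces $|f(x)|_F = M(s/r)^{n_1}$; letting $s \to r$ through $|F^\times|_F$ drives this quantity to $M$, establishing both suprema in the dense statement.

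For part (i), I would apply the maximum principle from (ii) to the differences $f_n - f_m$: writing $f_n(x) = \sum_k a_{n,k} x^k$, one reads off $\sup_{|x|_F \leq r}|f_n - f_m|_F = \max_k |a_{n,k} - a_{m,k}|_F r^k$. That $(f_n)$ is uniformly Cauchy then forces $(a_{n,k})_n$ to be Cauchy uniformly in $k$, producing limit coefficients $a_k$ with $|a_k - a_{n,k}|_F r^k$ small for all $k$ simultaneously when $n$ is large; combining with $|a_{n,k}|_F r^k \to 0$ for fixed $n$ yields $|a_k|_F r^k \to 0$, so $\sum a_k x^k$ is globally analytic on $\bar{B}_r(0)$ by Lemma \ref{lem:FAACS} and the same estimate identifies this series with the uniform limit $f$. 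The main obstacle I anticipate is the dense case of (ii): the residue-field argument breaks down, and one must instead retreat to an interior sphere and balance the dominant factor $M(s/r)^{n_1}$ against the residual $M'$ by separating the index set $I$ from its complement before passing to the limit $s \to r$.
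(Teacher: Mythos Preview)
The paper does not actually supply its own proof of this theorem; it simply refers the reader to \cite[p122]{Schikhof}. So there is nothing in the paper to compare your argument against line by line.

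That said, your proposal is correct and is essentially the standard argument one finds in Schikhof. The key maneuver in (ii)---splitting off the finitely many dominant terms $g = \sum_{n\in I} a_n x^n$, then arranging $|g(x)|_F = M$ either by a residue-field nonvanishing argument (infinite $\overline{F}$) or by retreating to a sphere of radius $s<r$ where the lowest-index dominant term wins (dense valuation)---is exactly how the maximum principle is established, and your use of Lemma~\ref{lem:CVFEQ} at each stage is sound. One small remark: in the infinite-residue-field case you should note explicitly that $M\in|F^\times|_F$ (so that $d$ exists); this follows since $M=|a_{n}|_F r^{n}$ with $a_n\neq 0$ and $r\in|F^\times|_F$. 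Your derivation of (i) from (ii) via the coefficientwise estimate $\sup_{|x|_F\le r}|f_n-f_m|_F=\max_k|a_{n,k}-a_{m,k}|_F r^k$ is the natural route and works because the hypothesis that $F$ is not locally compact guarantees, via Theorem~\ref{thr:CVFHB}, that at least one branch of (ii) is available.
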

\begin{remark}
In Theorem \ref{thr:FAAMP} $\bar{B}_{r}(0)$ is not compact since $F$ is not locally compact. In fact every ball of positive radius is not compact in this case and this follows from Theorem \ref{thr:CVFHB} noting that translations and non-zero scalings in $F$ are homeomorphisms on $F$. Now since we are progressing towards a study of uniform algebras and their generalisation over complete valued fields we note that in order to use the uniform norm, see Remark \ref{rem:UASA}, on such algebras of continuous functions we need the functions to be bounded. Hence to avoid imposing boundedness directly it is convenient to work on compact spaces.
\label{rem:FAAMP}
\end{remark}
For $\bar{B}_{1}(0)$ compact, i.e. in the $F$ locally compact case, I provide the following example to show that in this case the uniform limit of locally analytic, and hence analytic, functions on $\bar{B}_{1}(0)$ need not be analytic.
\begin{example}
Let $F$ be a locally compact, complete non-Archimedean field with non-trivial valuation. Then we have the following sequence of functions on $\bar{B}_{1}(0)\subseteq F$,
\begin{equation*}
f_{n}(x):=\left\{ \begin{array}{l@{\quad\mbox{if}\quad}l}\pi^{\nu(x)} & \nu(x)<n \\ 0 & \nu(x)\geq n \end{array} \right.\quad\mbox{for }x\in\bar{B}_{1}(0)
\end{equation*}
where $\pi$ is a prime element of $F$ and $\nu$ is the rank 1 valuation logarithm. For each $n\in\mathbb{N}$, $f_{n}$ is a locally constant function since convergence in $F$ is from the side, see Lemma \ref{lem:FAACS}, and so $f_{n}$ is locally analytic on $\bar{B}_{1}(0)$. Moreover the sequence $(f_{n})$ converges uniformly on $\bar{B}_{1}(0)$ to the continuous function
\begin{equation*}
f(x):=\left\{ \begin{array}{l@{\quad\mbox{if}\quad}l}\pi^{\nu(x)} & x\not=0 \\ 0 & x=0 \end{array} \right.\quad\mbox{for }x\in\bar{B}_{1}(0)
\end{equation*}
with $\lim_{x\to0}f(x)=0$ since $|f(x)|_{F}=|x|_{F}$ for all $x\in\bar{B}_{1}(0)$. We now show that $f$ is not differentiable at zero. let $a_{1},a_{2},\cdots$ and $b_{1},b_{2},\cdots$ be sequences in $F$ given by $a_{n}:=\pi^{n}$ and $b_{n}:=-\pi^{n}$. Both of these sequences tend to zero as $n$ tends to $\infty$. But then
\begin{equation*}
\frac{f(a_{n})-f(0)}{a_{n}}=(\pi^{n}-0)\pi^{-n}=1\quad\mbox{and }\frac{f(b_{n})-f(0)}{b_{n}}=(\pi^{n}-0)(-\pi^{-n})=-1
\end{equation*}
for all $n\in\mathbb{N}$ so that the limit $\lim_{x\to0}\frac{f(x)-f(0)}{x}$ does not exist as required. Alternatively we can obtain a similar example by redefining $f$ as
\begin{equation*}
f(x):=\left\{ \begin{array}{l@{\quad\mbox{if}\quad}l}\pi^{\frac{1}{2}\nu(x)} & \nu(x)\mbox{ is even} \\ \pi^{\frac{1}{2}(\nu(x)-1)} & \nu(x)\mbox{ is odd} \\ 0 & x=0 \end{array} \right.\quad\mbox{for }x\in\bar{B}_{1}(0).
\end{equation*}
In this case $\lim_{x\to0}\frac{f(x)-f(0)}{x}$ blows up with respect to $|\cdot|_{F}$ as demonstrated by the sequence $c_{1},c_{2},\cdots$ with $c_{n}:=\pi^{2n}$. 
\label{exa:FAANA}
\end{example}
Later when we look at non-complex analogs of uniform algebras we will see, from Kaplansky's non-Archimedean generalisation of the Stone-Weierstrass theorem, that the continuous functions in Example \ref{exa:FAANA} can be uniformly approximated by polynomials on $\bar{B}_{1}(0)$ given that $\bar{B}_{1}(0)$ is compact in this case. Hence Example \ref{exa:FAANA} also shows that, for $F$ locally compact, the uniform limit of globally analytic functions on $\bar{B}_{1}(0)$ need not be analytic in contrast to Theorem \ref{thr:FAAMP}.\\
In anticipation of topics in the next section we now consider Liouville's theorem. It is immediate that the standard Liouville theorem never holds in the non-Archimedean setting since for a complete non-Archimedean field $F$ with non-trivial valuation the indicator function $\chi_{_{B}}$ for $B:=\bar{B}_{1}(0)$ is a non-constant bounded locally analytic function from $F$ to $F$ noting that $\bar{B}_{1}(0)$ is a clopen subset of $F$. However the following is called the ultrametric Liouville theorem.
\begin{theorem}
Let $F$ be a complete non-Archimedean field with non-trivial valuation. Then every bounded globally analytic function from $F$ to $F$ is constant if and only if $F$ is not locally compact.
\label{thr:FAAULT}
\end{theorem}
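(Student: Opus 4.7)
The plan is to prove the two directions separately. For the implication ``$F$ not locally compact $\Rightarrow$ every bounded globally analytic $f:F\to F$ is constant'', the key tool is the maximum principle, Theorem \ref{thr:FAAMP}. Suppose $f(x)=\sum_{n=0}^{\infty}a_{n}x^{n}$ is globally analytic on $F$ and $|f(x)|_{F}\leq M$ for all $x\in F$. Since $F$ is not locally compact and has non-trivial valuation, Theorem \ref{thr:CVFHB} tells us that either $|\cdot|_{F}$ is dense or $\overline{F}$ is infinite. In either case, Theorem \ref{thr:FAAMP} applied to $f$ on $\bar{B}_{r}(0)$, for any $r\in|F^{\times}|_{F}$, gives
\begin{equation*}
\max_{n\geq 0}|a_{n}|_{F}r^{n}=\sup_{|x|_{F}\leq r}|f(x)|_{F}\leq M.
\end{equation*}
Because the valuation is non-trivial, $|F^{\times}|_{F}$ is unbounded above, and letting $r\to\infty$ through $|F^{\times}|_{F}$ forces $|a_{n}|_{F}\leq M r^{-n}\to 0$, hence $a_{n}=0$ for every $n\geq 1$. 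Thus $f\equiv a_{0}$.

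For the reverse implication, I need to exhibit, under the assumption that $F$ is locally compact, a non-constant bounded globally analytic function $F\to F$. Here Theorem \ref{thr:CVFHB} tells us $\overline{F}$ is finite (of order $q$, say) and the valuation is discrete with prime element $\pi$. The strategy is to construct a power series $f(x)=\sum a_{n}x^{n}$ whose coefficients decay fast enough that $|a_{n}|_{F}^{1/n}\to 0$ (so $f$ is globally entire), yet whose partial expressions produce enough cancellation on $F$ to keep $|f|_{F}$ bounded. Concretely, fix a set of residue representatives $\mathcal{R}=\{0,a_{1},\ldots,a_{q-1}\}$ and consider the polynomial $P(x)=\prod_{a\in\mathcal{R}}(x-a)$, which satisfies $|P(x)|_{F}\leq|\pi|_{F}$ on $\mathcal{O}_{F}$ (since any $x\in\mathcal{O}_{F}$ lies in one of the cosets $a+\pi\mathcal{O}_{F}$) while $|P(x)|_{F}=|x|_{F}^{q}$ on $F\setminus\mathcal{O}_{F}$. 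Rescaling $P$ by dilates $\pi^{-k}$ and combining with carefully chosen powers of $\pi$ yields a series $\sum_{k\geq 0}\pi^{k}\,Q_{k}(x)$, with $Q_{k}$ built from $P$ and its dilates, that converges globally (the $\pi^{k}$ factor dominates the polynomial growth) and stays bounded on $F$ (because each summand, after the rescaling, maps $F$ into $\mathcal{O}_{F}$). Verifying that the sum is non-constant reduces to exhibiting two points of $F$ where it takes distinct values, which is immediate from the factorisation structure of $P$.

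The main obstacle is the locally compact half: the first direction is essentially a one-line consequence of Theorem \ref{thr:FAAMP} once one recognises that local compactness is precisely what that theorem excludes, whereas producing an explicit bounded non-constant entire function requires balancing two competing demands. The coefficients must grow fast enough (relative to $|x|_{F}$ in the relevant range) that $\max_{n}|a_{n}|_{F}r^{n}$ becomes unbounded as $r\to\infty$---this is forced, since otherwise the argument of the first direction would show $f$ is constant---yet they must decay in the sense that $|a_{n}|_{F}^{1/n}\to 0$ for global convergence, and the partial sums must exhibit large cancellation on each sphere $|x|_{F}=r$. In other words, the construction must live in precisely the gap that Theorem \ref{thr:FAAMP} closes in the non-locally-compact setting, so the delicacy is in certifying boundedness without having the maximum principle available.
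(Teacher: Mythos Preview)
Your argument for the ``if'' direction (assuming $F$ is not locally compact) is correct and is exactly the proof the paper gives: bound $|a_m|_F r^m$ by $M$ via Theorem~\ref{thr:FAAMP}(ii) for arbitrary $r\in|F^\times|_F$, then let $r\to\infty$. The paper does not prove the converse direction at all; it simply refers to Schikhof.

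For the converse you are attempting more than the paper does, and your sketch has a real gap. You correctly isolate the building block $P(x)=\prod_{a\in\mathcal{R}}(x-a)$ with $|P(x)|_F\le|\pi|_F$ on $\mathcal{O}_F$ and $|P(x)|_F=|x|_F^{q}$ off $\mathcal{O}_F$, and you correctly diagnose the tension: one needs $|a_n|_F^{1/n}\to 0$ for global convergence while $\max_n|a_n|_F r^n$ must be unbounded, with boundedness of $f$ coming from genuine cancellation. But the two justifications you give for your series $\sum_{k\ge 0}\pi^k Q_k(x)$ are both false as stated. First, ``each summand, after the rescaling, maps $F$ into $\mathcal{O}_F$'': no non-constant polynomial does this, since any polynomial of positive degree satisfies $|Q(x)|_F=|x|_F^{\deg Q}\cdot|\text{leading coeff}|_F$ for large $|x|_F$. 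Second, ``the $\pi^k$ factor dominates the polynomial growth'': if $Q_k$ is built from dilates such as $P(\pi^{-k}x)$, its leading coefficient has absolute value $|\pi|_F^{-kq}$, so $\pi^k Q_k$ has leading coefficient of size $|\pi|_F^{k(1-q)}\to\infty$, and the series cannot define an entire function. If instead you dilate the other way, $P(\pi^k x)$, you get control only on the ball $|x|_F\le|\pi|_F^{-k}$, and a finite sum $\sum_{k\le N}$ is unbounded on $|x|_F=|\pi|_F^{-N}$. In short, neither a single rescaling nor a naive sum produces simultaneous global convergence and boundedness; the actual construction (as in Schikhof) requires a more delicate iteration in which the cancellation on each sphere is arranged inductively, not term-by-term. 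You should either write down an explicit $f$ and verify both properties directly, or cite the reference as the paper does.
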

\begin{proof}
See \cite[p124,p125]{Schikhof} for a full proof of Theorem \ref{thr:FAAULT}. However proof in the if direction is as follows. Let $f(x)=\sum_{n=0}^{\infty}a_{n}x^{n}$, for $x\in F$, be as in Theorem \ref{thr:FAAULT}. Since $f$ is bounded there is $M<\infty$ such that $|f(x)|_{F}\leq M$ for all $x\in F$. Let $m\in\mathbb{N}$. Since $F$ is not locally compact we can apply (ii) of Theorem \ref{thr:FAAMP} so that for $r\in|F^{\times}|_{F}$ we have
\begin{equation*}
|a_{m}|_{F}r^{m}\leq\max\{|a_{n}|_{F}r^{n}:n\geq0\}=\sup\{|f(x)|_{F}:|x|_{F}\leq r\}\leq M.
\end{equation*}
This holds for every $r\in|F^{\times}|_{F}$ and so $a_{m}=0$ leaving $f=a_{0}$ for all $x\in F$.
\end{proof}
For a field $F$ with the trivial valuation we note that $F$ is locally compact and that there are bounded non-constant polynomials from $F$ to $F$, where we take polynomials to be the analog of globally analytic functions in this case.
\section{Banach {\it F}-algebras}
\label{sec:FAABFA}
We begin this section with the following definitions.
\begin{definition}
Let $F$ be a complete valued field.
\begin{enumerate}
\item[(i)]
A {\em general Banach ring} is a normed ring $R$ that is complete with respect to its norm which is required to be sub-multiplicative, i.e.
\begin{equation*}
\|ab\|_{R}\leq\|a\|_{R}\|b\|_{R}\quad\mbox{for all }a,b\in R.
\end{equation*}
We do not assume that $R$ has a multiplicative identity or that its multiplication is commutative, we merely assume it is associative.
\item[(ii)]
A {\em Banach ring} is a general Banach ring $R$ that has a left/right multiplicative identity satisfying $\|1_{R}\|_{R}=1=\|-1_{R}\|_{R}$.
\item[(iii)]
A {\em Banach $F$-algebra} is a general Banach ring $A$ that is also a normed vector space over $F$, with respect to the ring's addition operation and norm, and such that the ring's multiplication operation is a bilinear map over $F$, i.e. respectively
\begin{equation*}
\|\alpha a\|_{A}=|\alpha|_{F}\|a\|_{A}\mbox{ and }(\alpha a)b=a(\alpha b)=\alpha(ab)\quad\mbox{for all }a,b\in A\mbox{ and }\alpha\in F.
\end{equation*}
\item[(iv)]
A {\em unital Banach $F$-algebra} is a Banach $F$-algebra that is also a Banach ring opposed to being merely a general Banach ring.
\item[(v)]
By {\em commutative general Banach ring} and {\em commutative Banach $F$-algebra} etc. we mean that the multiplication is commutative in these cases. By {\em $F$-algebra} we mean the structure of a Banach $F$-algebra but without the requirement of a norm.
\end{enumerate}
\label{def:FAABA}
\end{definition}
\begin{remark}
In Definition \ref{def:FAABA} we always require a multiplicative identity to be different to the additive identity. As standard we will usually dispense with the subscript when denoting elements of the structures defined in Definition \ref{def:FAABA} and in the Archimedean setting we will call a Banach $\mathbb{C}$-algebra a complex Banach algebra and a Banach $\mathbb{R}$-algebra a real Banach algebra.
\label{rem:FAABA}
\end{remark}
\subsection{Spectrum of an element}
\label{subsec:FAASE}
The following discussion concerns the spectrum of an element.
\begin{definition}
Let $F$ be a complete valued field and let $A$ be a unital Banach $F$-algebra. Then for $a\in A$ we call the set
\begin{equation*}
\mbox{Sp}(a):=\{\lambda\in F:\lambda-a\mbox{ is not invertible in }A\}
\end{equation*}
the spectrum of $a$.
\label{def:FAASP}
\end{definition}
\begin{theorem}
Every element of every unital complex Banach algebra has non-empty spectrum.
\label{thr:FAANES}
\end{theorem}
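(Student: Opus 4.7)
The plan is to argue by contradiction using the classical resolvent-plus-Liouville technique. Suppose $a \in A$ has empty spectrum, so that the resolvent $R(\lambda) := (\lambda - a)^{-1}$ is defined for every $\lambda \in \mathbb{C}$; the goal is to extract a contradiction by showing $R$ must be both a nonzero and a zero $A$-valued function on $\mathbb{C}$.

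First I would establish boundedness at infinity. For $|\lambda|_{\mathbb{C}} > \|a\|_A$ one can write
\begin{equation*}
(\lambda - a)^{-1} = \lambda^{-1}\sum_{n=0}^{\infty}\left(\frac{a}{\lambda}\right)^{n},
\end{equation*}
which converges in $A$ by submultiplicativity and completeness. This Neumann-series expansion gives $\|R(\lambda)\|_A \leq |\lambda|_{\mathbb{C}}^{-1}(1 - \|a\|_A/|\lambda|_{\mathbb{C}})^{-1}$, hence $R(\lambda) \to 0$ as $|\lambda|_{\mathbb{C}} \to \infty$. In particular $R$ is bounded on $\mathbb{C}$.

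Next I would show that $R$ is holomorphic in the $A$-valued sense. The standard manipulation yields the resolvent identity
\begin{equation*}
R(\lambda) - R(\mu) = R(\lambda)\bigl[(\mu - a) - (\lambda - a)\bigr]R(\mu) = -(\lambda - \mu)\,R(\lambda)R(\mu),
\end{equation*}
so dividing by $\lambda - \mu$ and letting $\mu \to \lambda$ (using continuity of $R$, which follows from the Neumann series globally via the geometric-series trick $R(\mu) = R(\lambda)\sum_{n\geq 0}((\lambda-\mu)R(\lambda))^n$ valid for $|\lambda-\mu|_{\mathbb{C}}\|R(\lambda)\|_A < 1$) gives $R'(\lambda) = -R(\lambda)^2$ at every $\lambda \in \mathbb{C}$.

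Finally I would pass to the scalar setting via duality. For any continuous linear functional $\phi \in A^{*}$, the composition $\phi \circ R : \mathbb{C} \to \mathbb{C}$ is entire (by the chain rule, since $\phi$ is $\mathbb{C}$-linear and continuous) and bounded (since $R$ is bounded in $A$ and $\phi$ is bounded). The classical complex Liouville theorem therefore forces $\phi \circ R$ to be constant; but $(\phi \circ R)(\lambda) \to 0$ as $|\lambda|_{\mathbb{C}} \to \infty$, so $\phi \circ R \equiv 0$. Since this holds for every $\phi \in A^{*}$, the Hahn--Banach theorem yields $R \equiv 0$ in $A$, which is absurd because $R(\lambda)(\lambda - a) = 1 \neq 0$. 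The contradiction proves that $\mathrm{Sp}(a)$ is non-empty.

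The main obstacle, and the reason this theorem fails in the non-Archimedean setting foreshadowed by Remark \ref{rem:CVFOS}, is the appeal to Liouville's theorem together with Hahn--Banach: Liouville genuinely uses the Archimedean nature of $\mathbb{C}$ (see Theorem \ref{thr:FAAULT}, where the non-Archimedean analogue requires the field not to be locally compact and operates only on globally analytic functions). Everything else --- the Neumann series, the resolvent identity, and the vector-valued differentiability --- transfers verbatim; it is precisely the Liouville step that confines the conclusion to the complex case.
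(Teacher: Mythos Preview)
Your proof is correct and follows the same classical route the paper sketches (citing \cite[p11]{Stout}): compose the resolvent with continuous linear functionals, apply Liouville to the resulting bounded entire scalar functions, and derive a contradiction via Hahn--Banach. The only cosmetic difference is that the paper first extracts a single functional $\ell$ with $\ell(-a^{-1})\neq 0$ and then shows $f_{\ell}\equiv 0$, whereas you show $\phi\circ R\equiv 0$ for every $\phi\in A^{*}$ and invoke Hahn--Banach at the end to conclude $R\equiv 0$.
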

Theorem \ref{thr:FAANES} is very well known. A proof can be found in \cite[p11]{Stout} and relies on Liouville's theorem and the Hahn-Banach theorem in the complex setting. We will confirm that this result is unique among unital Banach $F$-algebras and I will give details of where the proof from the complex setting fails for other complete valued fields. Let us first recall the Gelfand-Mazur theorem which demonstrates the importance of Theorem \ref{thr:FAANES} in the complex setting and supports Remark \ref{rem:CVFOS} of Chapter \ref{cha:CVF}.
\begin{theorem}
A unital complex Banach algebra that is also a division ring is isometrically isomorphic to the complex numbers.
\label{thr:FAAGM}
\end{theorem}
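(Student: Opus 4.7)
The plan is to show that the natural map $\phi:\mathbb{C}\to A$ given by $\phi(\lambda):=\lambda\cdot 1_A$ is an isometric isomorphism; essentially all the substantive content is packaged in Theorem \ref{thr:FAANES}, so this is really just a matter of assembling the pieces.

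First I would verify the easy formal properties of $\phi$. It is a ring homomorphism because, by the bilinearity of multiplication in Definition \ref{def:FAABA}(iii) and the fact that $1_A$ is a multiplicative identity, we have $\phi(\lambda)\phi(\mu)=(\lambda 1_A)(\mu 1_A)=\lambda\mu \cdot 1_A=\phi(\lambda\mu)$, and $\phi$ is plainly additive and sends $1$ to $1_A$. Using the norm axioms in Definition \ref{def:FAABA}(iii) together with the requirement $\|1_A\|_A=1$ from Definition \ref{def:FAABA}(ii), we obtain $\|\phi(\lambda)\|_A=|\lambda|_{\mathbb{C}}\|1_A\|_A=|\lambda|_{\mathbb{C}}$, so $\phi$ is an isometry. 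In particular $\phi$ is injective (it has trivial kernel), and its image is a copy of $\mathbb{C}$ sitting inside $A$.

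The remaining step, and the only one that uses something nontrivial, is surjectivity. Fix $a\in A$. By Theorem \ref{thr:FAANES} the spectrum $\mathrm{Sp}(a)$ is non-empty, so there exists $\lambda\in\mathbb{C}$ such that $\lambda\cdot 1_A-a$ fails to be invertible in $A$. But $A$ is a division ring, so the only non-invertible element is $0$; hence $\lambda\cdot 1_A-a=0$, giving $a=\phi(\lambda)$. Thus $\phi$ is surjective, and combined with the previous paragraph it is an isometric isomorphism from $\mathbb{C}$ onto $A$.

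There is no real obstacle once Theorem \ref{thr:FAANES} is granted; the care lies entirely in invoking the hypotheses correctly. The argument uses, in an essential way, both that the scalar field is $\mathbb{C}$ (so that every element has non-empty spectrum, which in turn leans on Liouville's theorem and Hahn--Banach as noted in the excerpt) and that $A$ is a division ring (so that the spectral element $\lambda$ actually forces $a=\lambda 1_A$ rather than merely witnessing a proper ideal). If either hypothesis is dropped the conclusion fails, so the proof really is just a compression of these two inputs into one statement.
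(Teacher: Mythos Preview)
Your proof is correct and follows essentially the same approach as the paper's: use Theorem~\ref{thr:FAANES} to obtain some $\lambda\in\mathrm{Sp}(a)$, invoke the division-ring hypothesis to conclude $a=\lambda 1_A$, and then use $\|1_A\|_A=1$ to see that $\lambda 1_A\leftrightarrow\lambda$ is an isometry. The only cosmetic difference is that you phrase the map as $\mathbb{C}\to A$ while the paper writes it as $A\to\mathbb{C}$.
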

\begin{proof}
Let $A$ be a unital complex Banach algebra that is also a division ring and let $a\in A$. Since in this case $\mbox{Sp}(a)$ is non-empty, there is some $\lambda\in\mbox{Sp}(a)$. Hence because $A$ is a division ring  $\lambda-a=0$ giving $a=\lambda$. More accurately we have $a=\lambda 1_{A}$ but because $A$ is unital we have $\|a\|_{A}=\|\lambda 1_{A}\|_{A}=|\lambda|_{\infty}\|1_{A}\|_{A}=|\lambda|_{\infty}$ and so the map from $A$ onto $\mathbb{C}$ given by $\lambda 1_{A}\mapsto\lambda$ is an isometric isomorphism.
\end{proof}
\begin{remark}
In the Archimedean setting it follows immediately from Theorem \ref{thr:FAAGM} that any complete valued field containing the complex numbers as a valued subfield will coincide with the complex numbers. Note that the proof of Theorem \ref{thr:FAAGM} is very well known.
\label{rem:FAAGM}
\end{remark}
In contrast to Theorem \ref{thr:FAANES} we have the following lemma. The result is certainly known but we give full details in lieu of a reference.
\begin{lemma}
Let $F$ be a complete valued field other than the complex numbers. Then there exists a unital Banach $F$-algebra $A$ such that $\mbox{Sp}(a)=\emptyset$ for some $a\in A$.
\label{lem:FAAES}
\end{lemma}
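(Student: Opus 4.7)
The strategy is uniform across both the Archimedean and non-Archimedean settings: exhibit a proper complete valued field extension $L$ of $F$, take $A := L$ viewed as a Banach $F$-algebra via its own field multiplication and its own valuation $|\cdot|_{L}$, and pick any $a \in L \setminus F$. For such $a$ and any $\lambda \in F \subseteq L$ we have $\lambda - a \neq 0$ in $L$, and because $L$ is a field this element is invertible in $A = L$. Hence $\mathrm{Sp}(a) = \emptyset$ by Definition \ref{def:FAASP}.

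The split into cases is only a matter of citing the right existence result for $L$. If $F = \mathbb{R}$, take $L = \mathbb{C}$ with the usual modulus, so that $a = i$ witnesses the claim. If $F$ is non-Archimedean, Corollary \ref{cor:CVFEE} delivers a proper extension $L$ of $F$ together with a complete valuation on $L$ extending $|\cdot|_{F}$, and any chosen $a \in L \setminus F$ works.

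It remains to verify that $A = L$ genuinely satisfies the axioms of a unital Banach $F$-algebra from Definition \ref{def:FAABA}. The valuation $|\cdot|_{L}$ restricts to $|\cdot|_{F}$ on $F$, so scalar multiplication satisfies $|\alpha a|_{L} = |\alpha|_{F}|a|_{L}$ for $\alpha \in F$ and $a \in L$; multiplicativity of $|\cdot|_{L}$ gives sub-multiplicativity $|ab|_{L} = |a|_{L}|b|_{L}$; we have $|1|_{L} = |-1|_{L} = 1$; and $L$ is complete with respect to $|\cdot|_{L}$ by construction. Each of these points is immediate from $|\cdot|_{L}$ being a valuation extending $|\cdot|_{F}$, so no real computation is needed.

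There is essentially no obstacle: the substantive input is the existence of a proper complete valued extension of $F$, which for Archimedean $F \neq \mathbb{C}$ is the elementary fact $\mathbb{R} \subsetneq \mathbb{C}$, and for non-Archimedean $F$ is precisely Corollary \ref{cor:CVFEE} (a consequence of Krull's extension theorem and the completion construction of Theorem \ref{thr:CVFCOM}). The fact that this approach fails only when $F = \mathbb{C}$ mirrors exactly why the Gel'fand--Mazur theorem (Theorem \ref{thr:FAAGM}) is a peculiarity of the complex field, and ties back to the commentary in Remark \ref{rem:CVFOS}.
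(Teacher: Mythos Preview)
Your proposal is correct and follows essentially the same approach as the paper: both take $A=L$ for a proper complete valued field extension $L$ of $F$ (using $\mathbb{C}$ over $\mathbb{R}$ in the Archimedean case and Corollary~\ref{cor:CVFEE} in the non-Archimedean case), and observe that any $a\in L\setminus F$ has empty spectrum since $\lambda-a$ is a nonzero element of the field $L$ for every $\lambda\in F$. Your additional verification that $L$ satisfies the axioms of a unital Banach $F$-algebra is more explicit than the paper's one-line remark, but the underlying argument is identical.
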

\begin{proof}
Let $F$ be a complete valued field other that the complex numbers. By Corollary \ref{cor:CVFEE} in the non-Archimedean setting, and since $\mathbb{R}$ is the only complete valued field other than $\mathbb{C}$ in the Archimedean setting, we can always find a complete valued field $L$ that is a proper extension of $F$. Let $a\in L\backslash F$ and note that $L$ is a unital Banach $F$-algebra. Then for every $\lambda\in F$ we have $\lambda-a\not=0$ and so $\lambda-a$ is invertible in $L$ since $L$ is a field. Hence $\mbox{Sp}(a)=\emptyset$.
\end{proof}
Whilst not considering every case, we now consider where the proof of Theorem \ref{thr:FAANES} fails when applying it to unital Banach $F$-algebras with $F\not=\mathbb{C}$. For $F=\mathbb{R}$ the Hahn-Banach theorem holds but Liouville's theorem does not with the trigonometric $\sin$ function restricted to $\mathbb{R}$ as an example of a non-constant, bounded, analytic function from $\mathbb{R}$ to $\mathbb{R}$. In the non-Archimedean setting we do have the ultrametric Liouville theorem, Theorem \ref{thr:FAAULT} for $F$ not locally compact, and there is also an ultrametric Hahn-Banach theorem for spherically complete fields, as follows.
\begin{definition}
An ultrametric space, see Definition \ref{def:CVFV}, is {\em spherically complete} if each nested sequence of balls has a non-empty intersection.
\label{def:FAASC}
\end{definition}
\begin{theorem}
Let $F$ be a spherically complete non-Archimedean field and let $V$ be an $F$-vector space, $s$ a seminorm on $V$ and $V_{0}\subseteq V$ a vector subspace. Then for every linear functional $\ell_{0}:V_{0}\rightarrow F$ such that $|\ell_{0}(v)|_{F}\leq s(v)$ for all $v\in V_{0}$ there is a linear functional $\ell:V\rightarrow F$ such that $\ell|_{V_{0}}=\ell_{0}$ and $|\ell(v)|_{F}\leq s(v)$ for all $v\in V$.
\label{thr:FAAHB}
\end{theorem}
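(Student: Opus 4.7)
My plan is to reduce to a one-dimensional extension problem via Zorn's lemma, and then use spherical completeness to show that the requisite extension exists.

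First I would invoke Zorn's lemma on the poset of pairs $(W,\ell)$ where $V_{0}\subseteq W\subseteq V$ is an $F$-subspace and $\ell:W\to F$ is a linear functional extending $\ell_{0}$ with $|\ell(w)|_{F}\leq s(w)$ for all $w\in W$, ordered by extension. Chains have upper bounds given by the obvious union, so there is a maximal element $(W^{*},\ell^{*})$. The main work is then to show $W^{*}=V$: suppose for contradiction that there were some $v\in V\setminus W^{*}$, and show that $\ell^{*}$ extends to $W^{*}+Fv$, contradicting maximality.

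For this extension step, setting $\ell(v):=c\in F$ determines $\ell$ on $W^{*}+Fv$ by linearity. The required domination $|\ell^{*}(w)+\lambda c|_{F}\leq s(w+\lambda v)$ for all $w\in W^{*}$ and $\lambda\in F$ is automatic for $\lambda=0$; for $\lambda\neq0$, after dividing through by $|\lambda|_{F}$ and replacing $w$ by $-w/\lambda$, it is equivalent to
\begin{equation*}
|c-\ell^{*}(w)|_{F}\leq s(v-w)\quad\text{for all }w\in W^{*}.
\end{equation*}
Thus I need to find $c$ in the intersection of the closed balls $\bar{B}_{s(v-w)}(\ell^{*}(w))\subseteq F$ indexed by $w\in W^{*}$.

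Next I would verify that this family of balls is pairwise intersecting. For $w_{1},w_{2}\in W^{*}$,
\begin{equation*}
|\ell^{*}(w_{1})-\ell^{*}(w_{2})|_{F}=|\ell^{*}(w_{1}-w_{2})|_{F}\leq s(w_{1}-w_{2})\leq\max(s(v-w_{1}),s(v-w_{2})),
\end{equation*}
so the two balls $\bar{B}_{s(v-w_{i})}(\ell^{*}(w_{i}))$ meet. By the ultrametric principle (as noted in Remark \ref{rem:CVFTD}), any two non-disjoint balls in $F$ are comparable under inclusion, so the entire family is totally ordered by inclusion. Spherical completeness of $F$ (applied, if necessary, to a cofinal nested sequence extracted from this chain) then yields a point $c$ in the intersection, producing the desired extension.

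The main obstacle I anticipate is the gap between Definition \ref{def:FAASC}, which only guarantees non-empty intersection for nested \emph{sequences}, and the a priori uncountable chain of balls arising in the extension step. I would handle this by either extracting a cofinal nested sequence whose radii decrease to the infimum of the radii in the chain (possible because $s(v-w)$ depends only on the coset $w+Fv\cap W^{*}$, so the distinct radii form a well-behaved subset of $[0,\infty)$), and noting that in the ultrametric setting a point in the intersection of the cofinal subsequence already lies in the intersection of the whole chain; or by adding a separate Zorn step within the one-dimensional extension to reduce to a countable cofinal subfamily. Once $c$ is produced, the verification that $\ell^{*}$ extended by $\ell(v):=c$ lies in our poset and strictly enlarges $W^{*}$ is routine, and the resulting contradiction forces $W^{*}=V$, completing the proof.
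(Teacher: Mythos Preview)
The paper does not supply its own proof of this theorem; immediately after the statement it refers the reader to \cite[p51]{Schneider} and \cite[p288]{Schikhof}. Your outline is precisely the standard argument found in those references: Zorn's lemma reduces the problem to a one-step extension, and that one-step extension amounts to showing that a certain family of closed balls in $F$ has non-empty intersection, which is exactly where spherical completeness enters.

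Two small remarks. First, your inequality $s(w_{1}-w_{2})\leq\max(s(v-w_{1}),s(v-w_{2}))$ tacitly assumes that $s$ satisfies the strong triangle inequality; this is the standing convention for seminorms over non-Archimedean fields in the cited references, but it is not made explicit in the paper's statement, so you should flag it. Second, the gap you identify between Definition~\ref{def:FAASC} (nested \emph{sequences}) and arbitrary chains is genuine, and your proposed fix is the correct one. Concretely: either the infimum $\rho:=\inf_{w}s(v-w)$ is attained by some $w^{*}$, in which case the ball $\bar{B}_{\rho}(\ell^{*}(w^{*}))$ is contained in every other ball of the family (two intersecting ultrametric balls are comparable, and this one has minimal radius), so the intersection is non-empty; or $\rho$ is not attained, in which case any sequence $(w_{n})$ with $s(v-w_{n})\downarrow\rho$ yields a nested sequence of balls that is cofinal in the chain, and spherical completeness applied to that sequence produces a point lying in every ball of the family.
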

\begin{remark}
We note that Theorem \ref{thr:FAAHB} is exactly the same as the Hahn-Banach theorem from the Archimedean setting, see \cite[p472]{Stout}, except with $\mathbb{R}$ and $\mathbb{C}$ replaced by any spherically complete non-Archimedean field. A proof can be found in both \cite[p51]{Schneider} and \cite[p288]{Schikhof} the latter of which further states that Theorem \ref{thr:FAAHB} becomes a falsity if $F$ is replaced by a non-spherically complete field. It is immediate that spherically complete ultrametric spaces are complete.
\label{rem:FAAHB}
\end{remark}
A proof of the following lemma can be found in \cite[p6]{Schneider}.
\begin{lemma}
All complete non-Archimedean fields with a discrete valuation are spherically complete. In particular if $F$ is a complete non-Archimedean fields that is locally compact then $F$ is spherically complete.
\label{lem:FAASC}
\end{lemma}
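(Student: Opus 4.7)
\textbf{Proof proposal for Lemma \ref{lem:FAASC}.}

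The plan is to take an arbitrary nested sequence of balls $B_{1}\supseteq B_{2}\supseteq\cdots$ in $F$ and produce a point lying in every $B_{n}$ by a dichotomy on whether the radii collapse to zero or not, exploiting the discreteness of $|F^{\times}|_{F}$ in the non-collapsing case and completeness of $F$ in the collapsing case. For each $n$ choose any $a_{n}\in B_{n}$; by the argument in the proof of Lemma \ref{lem:CVFTD} every point of a non-Archimedean ball is its center, so I may assume $B_{n}=B_{r_{n}}(a_{n})$ or $\bar{B}_{r_{n}}(a_{n})$, and the nesting forces $r_{1}\geq r_{2}\geq\cdots\geq 0$.

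The first case is $r_{n}\to 0$. For any $m\geq n$, both $a_{m}$ and $a_{n}$ lie in $B_{n}$, so by the strong triangle inequality $|a_{m}-a_{n}|_{F}\leq r_{n}$, making $(a_{n})$ Cauchy in $F$. By completeness there is a limit $a\in F$. Fixing $n$ and noting that $a_{m}\in B_{n}$ for every $m\geq n$, Remark \ref{rem:CVFTD} gives that $B_{n}$ is clopen in $F$, hence closed, so $a\in B_{n}$. This holds for each $n$, giving $a\in\bigcap_{n}B_{n}$ as required.

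The second case is $r_{n}\not\to 0$. Since the $r_{n}$ form a non-increasing sequence of positive reals, they converge to some $r>0$. Here I would argue that only finitely many distinct balls actually occur: because $|F^{\times}|_{F}$ is a discrete subset of $(0,\infty)$ by Lemma \ref{lem:CVFDV}, and because any ball in $F$ has an ``effective radius'' in $|F^{\times}|_{F}\cup\{0\}$ (the distance $|x-a|_{F}$ between two distinct points of a ball lies in $|F^{\times}|_{F}$), the sequence of effective radii is a non-increasing sequence in a discrete subset of $(0,\infty)$ bounded below by $r>0$, so it stabilises at some index $N$. Since two non-Archimedean balls of equal effective radius are either equal or disjoint, the nesting $B_{n+1}\subseteq B_{n}$ then forces $B_{n}=B_{N}$ for all $n\geq N$, giving $\bigcap_{n}B_{n}=B_{N}\neq\emptyset$.

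For the second assertion, if $F$ is complete, non-Archimedean and locally compact, I would dispose of the trivial-valuation-infinite-field exception (where the only balls are singletons and $F$ itself, so spherical completeness is automatic) and then invoke condition (ii) of Theorem \ref{thr:CVFHB} to conclude that the valuation on $F$ is discrete, at which point the first part applies. The one delicate point I expect to have to write carefully is the passage from a prescribed ``radius'' $r_{n}$ to an ``effective radius'' lying in the discrete set $|F^{\times}|_{F}\cup\{0\}$, since a priori the $r_{n}$ are only real numbers; I would handle this by replacing each $r_{n}$ with $\sup\{|x-a_{n}|_{F}:x\in B_{n}\}$ (or by observing that $B_{r_{n}}(a_{n})=B_{r'_{n}}(a_{n})$ whenever no value of $|F^{\times}|_{F}$ lies between $r'_{n}$ and $r_{n}$), making the effective radii genuinely live in $|F^{\times}|_{F}\cup\{0\}$ so that discreteness applies.
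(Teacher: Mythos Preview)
Your argument is correct and follows the standard route. The paper itself does not prove this lemma but simply cites \cite[p6]{Schneider}, so there is no in-text proof to compare against; the dichotomy on whether the radii tend to zero, using completeness in the first case and discreteness of $|F^{\times}|_{F}$ in the second, is exactly the usual approach.

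One small slip to tighten in the non-collapsing case: you claim the effective radii are ``bounded below by $r>0$'' where $r=\lim_{n} r_{n}$, but the effective radius of $B_{n}$ (the largest element of $|F^{\times}|_{F}$ dominated by $r_{n}$) can sit strictly below $r$. What you actually need, and what holds once the trivial valuation is handled separately, is a fixed positive lower bound $s_{0}\in|F^{\times}|_{F}$ with $s_{0}<r$; since $r_{n}\geq r>s_{0}$ every effective radius is at least $s_{0}$, so the effective radii live in the finite set $|F^{\times}|_{F}\cap[s_{0},r_{1}]$ and must stabilise. The trivial-valuation case is immediate since the only balls are singletons and $F$ itself. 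With that adjustment your proof is complete.
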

From the above details we see that for both Theorem \ref{thr:FAAULT} and Theorem \ref{thr:FAAHB} to be applicable we need a non-locally compact, spherically complete, non-Archimedean field. This restricts the possibilities since for example, for any prime $p$, a finite extension of $\mathbb{Q}_{p}$ is locally compact and $\mathbb{C}_{p}$ whilst not locally compact is also not spherically complete, see \cite[p5]{Schneider}. However, with reference to (ii) of Example \ref{exa:CVFA}, the complete non-Archimedean field $\mathbb{C}\{\{T\}\}$ is not locally compact since having an infinite residue field and it is spherically complete since its valuation is discrete. Moreover the totally ramified, see Remark \ref{rem:CVFEE}, simple extension $\mathbb{C}\{\{T\}\}(\sqrt{T})$ is a unital Banach $\mathbb{C}\{\{T\}\}$-algebra with complete valuation given by Theorem \ref{thr:CVFEE}. But by the proof of Lemma \ref{lem:FAAES} we have $\mbox{Sp}(\sqrt{T})=\emptyset$. So let's briefly review how the proof of Theorem \ref{thr:FAANES} from \cite[p11]{Stout} works and then consider where it fails for $\mathbb{C}\{\{T\}\}(\sqrt{T})$.\\
Let $A$ be a unital complex Banach algebra and let $a\in A$. Suppose towards a contradiction that $\mbox{Sp}(a)=\emptyset$. Then $\lambda-a$ is invertible for all $\lambda\in\mathbb{C}$. In particular $a^{-1}$ exists in $A$ and the map $\ell_{0}:\mathbb{C}a^{-1}\rightarrow\mathbb{C}$, given by $\ell_{0}(\lambda a^{-1}):=\lambda\alpha$ for a fixed $\alpha\in\mathbb{C}$ with $0<|\alpha|_{\infty}\leq\|a^{-1}\|_{A}$, is a continuous linear functional on the subspace $\mathbb{C}a^{-1}$ of $A$ to which the Hahn-Banach theorem can be applied directly. Hence there exists a continuous linear functional $\ell:A\rightarrow\mathbb{C}$ such that $\ell(-a^{-1})=-\alpha\not=0$. On the other hand for any continuous linear functional $\varphi:A\rightarrow\mathbb{C}$ we can define a function $f_{\varphi}:\mathbb{C}\rightarrow\mathbb{C}$ by
\begin{equation*}
f_{\varphi}(\lambda):=\varphi((\lambda-a)^{-1}).
\end{equation*}
The proof then shows that $f_{\varphi}$ is differentiable at every point of $\mathbb{C}$ and is therefore an entire function. Moreover $\lim_{\lambda\to\infty}f_{\varphi}(\lambda)=0$ since
\begin{equation*}
|f_{\varphi}(\lambda)|_{\infty}=\left|\frac{1}{\lambda}\varphi((1-\lambda^{-1}a)^{-1})\right|_{\infty}\leq\frac{1}{|\lambda|_{\infty}}\|\varphi\|_{\mbox{\footnotesize op}}\|(1-\lambda^{-1}a)^{-1}\|_{A},
\end{equation*}
where $\|\cdot\|_{\mbox{\footnotesize op}}$ is the standard operator norm. Hence, by Liouville theorem in the complex setting, $f_{\varphi}$ is the zero function. But we have $f_{\ell}(0)=-\alpha\not=0$, a contradiction, and so $\mbox{Sp}(a)\not=\emptyset$ as required. Note however that the function $f_{\varphi}$ is defined on $\mathbb{C}\backslash\mbox{Sp}(a)$.\\
Now for $\mathbb{C}\{\{T\}\}(\sqrt{T})$ the coordinate projection $P:\mathbb{C}\{\{T\}\}(\sqrt{T})\rightarrow\mathbb{C}\{\{T\}\}$ given by $P(\alpha+\beta\sqrt{T}):=\alpha$, where $\alpha,\beta\in\mathbb{C}\{\{T\}\}$, is a continuous linear functional analogous to an evaluation functional noting that convergence in $\mathbb{C}\{\{T\}\}(\sqrt{T})$ is coordinate-wise over $\mathbb{C}\{\{T\}\}$ by Remark \ref{rem:CVFEN}. Hence we can define a function $f_{P}:\mathbb{C}\{\{T\}\}\rightarrow\mathbb{C}\{\{T\}\}$ given by
\begin{equation*}
f_{P}(\lambda):=P((\lambda-\sqrt{T})^{-1})=P((\lambda+\sqrt{T})(\lambda^{2}-T)^{-1})=\lambda(\lambda^{2}-T)^{-1}.
\end{equation*}
The function $f_{P}$ is defined on all of $\mathbb{C}\{\{T\}\}$ since the roots of $\lambda^{2}-T$ are $\sqrt{T}$ and $-\sqrt{T}$. Furthermore $f_{P}$ is not constant and so it is the relative weakness of the ultrametric Liouville theorem in the non-Archimedean setting that allows the argument used in the proof of Theorem \ref{thr:FAANES} to fails in this case. Indeed we will now show that $f_{P}$ is not globally analytic on all of $\mathbb{C}\{\{T\}\}$. The first derivative of $f_{P}$ is
\begin{equation*}
f_{P}^{(1)}(\lambda)=(\lambda^{2}-T)^{-1}-2\lambda^{2}(\lambda^{2}-T)^{-2}
\end{equation*}
and so $f_{P}(0)=0$ and $f_{P}^{(1)}(0)=-\frac{1}{T}$. Continuing in this way we obtain the Taylor expansion of $f_{P}$ about zero as
\begin{equation*}
f_{P}(\lambda)=\sum_{n=0}^{\infty}\alpha_{n}\lambda^{n}=-\left(\frac{\lambda}{T}+\frac{\lambda^{3}}{T^{2}}+\frac{\lambda^{5}}{T^{3}}+\frac{\lambda^{7}}{T^{4}}+\cdots\right),\quad\mbox{for }|\lambda|_{T}<\rho,
\end{equation*}
where $\alpha_{n}:=\frac{f_{P}^{(n)}(0)}{n!}=-\frac{1-(-1)^{n}}{2}T^{-\frac{1}{2}(1+n)}\in\mathbb{C}\{\{T\}\}$ and
\begin{equation*}
\rho=\frac{1}{\limsup_{n\to\infty}\sqrt[n]{|\alpha_{n}|_{T}}}
\end{equation*}
is the radius of convergence of the Taylor series expansion. Hence we show that $\rho$ is finite. Using the rank 1 valuation logarithm, for $\sum_{n\in\mathbb{Z}}a_{n}T^{n}\in\mathbb{C}\{\{T\}\}^{\times}$ we have $|\sum_{n\in\mathbb{Z}}a_{n}T^{n}|_{T}=r^{-\min\{n:a_{n}\not=0\}}$ for some fixed $r>1$. Hence, noting that $\alpha_{2n}=0$ and $\alpha_{2n-1}=-T^{-n}$ for $n\in\mathbb{N}$, we have
\begin{equation*}
\limsup_{n\to\infty}\sqrt[n]{|\alpha_{n}|_{T}}=\lim_{n\to\infty}\sqrt[2n-1]{|\alpha_{2n-1}|_{T}}=\lim_{n\to\infty}\sqrt[2n-1]{r^{n}}=\lim_{n\to\infty}r^{\frac{n}{2n-1}}=r^{\frac{1}{2}}.
\end{equation*}
Hence $\rho=\frac{1}{\sqrt{r}}<1$ since $r>1$. In particular $f_{P}$ is only locally analytic on $\mathbb{C}\{\{T\}\}$ and not globally analytic, consistent with the ultrametric Liouville theorem not being applicable to $f_{P}$ as required.
\begin{definition}
Let $F$ be a complete valued field and let $A$ be a unital Banach $F$-algebra. Define $\mathcal{F}(A)$ as the set of all complete valued fields $L$ contained inside $A$ over which $A$ is also a unital Banach $L$-algebra.
\label{def:FAAMF}
\end{definition}
\begin{remark}
\label{rem:FAAFC}
Concerning the spectrum of an element.
\begin{enumerate}
\item[(i)]
It is tempting to conjecture that a generalisation of Theorem \ref{thr:FAANES} might hold for every complete valued field $F$ provided that, given $F$, we restrict the statement to those unital Banach $F$-algebras $A$ for which $F$ is a maximal element of $\mathcal{F}(A)$. This conjecture is false in both the non-commutative and commutative settings by Lemma \ref{lem:FAAHQ} below. However for a more general version of the conjecture one could permit the elements of $\mathcal{F}(A)$ to be complete normed division rings.
\item[(ii)]
Let $A$ be a unital real Banach algebra. In order to avoid an element $a\in A$ having empty spectrum Kaplansky gave the following alternative definition in this case,
\begin{equation*}
\mbox{Sp}_{\mathcal{K}}(a):=\{\alpha+i\beta\in\mathbb{C}:(a-\alpha)^{2}+\beta^{2}\mbox{ is not invertible in }A\}.
\end{equation*}
We won't investigate this definition here but for more details see \cite[p6]{Kulkarni-Limaye1992}.
\end{enumerate}
\end{remark}
\begin{lemma}
In both the non-commutative and commutative algebra settings one can find a complete valued field $F$, a unital Banach $F$-algebra $A$ and an element $a\in A$ such that $F$ is a maximal element of $\mathcal{F}(A)$ and $\mbox{Sp}(a)=\emptyset$.
\label{lem:FAAHQ}
\end{lemma}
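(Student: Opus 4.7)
The plan is to exhibit explicit examples in each of the two settings, taking $F=\mathbb{R}$ in both cases, since this gives the cleanest obstruction.

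For the non-commutative case, I would take $A=\mathbb{H}$ to be Hamilton's quaternions equipped with the standard (multiplicative) Euclidean norm, and set $a=i$. Since $\mathbb{H}$ is a division ring and $\lambda-i\neq 0$ for every $\lambda\in\mathbb{R}$, we immediately have $\mbox{Sp}(i)=\emptyset$. For maximality of $\mathbb{R}$ in $\mathcal{F}(\mathbb{H})$, note that any complete valued subfield of $\mathbb{H}$ strictly containing $\mathbb{R}$ is a copy of $\mathbb{C}$ generated by some $u\in\mathbb{H}$ with $u^{2}=-1$. However $\mathbb{H}$ fails to be a Banach $\mathbb{C}$-algebra over any such embedded $\mathbb{C}$, because the bilinearity condition $(\alpha a)b=a(\alpha b)$ of Definition \ref{def:FAABA} is violated; concretely, choosing $u=i$, $a=j$, $b=1$ gives $(ij)1=k$ while $j(i\cdot 1)=ji=-k$. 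So no proper extension of $\mathbb{R}$ lies in $\mathcal{F}(\mathbb{H})$.

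For the commutative case, I would take $A=\mathbb{C}$, viewed as a commutative $\mathbb{R}$-algebra, but endowed with the non-Euclidean norm $\|\alpha\|_{A}:=|\mathrm{Re}(\alpha)|+|\mathrm{Im}(\alpha)|$. Two applications of the real triangle inequality to the real and imaginary parts of a product show that $\|\cdot\|_{A}$ is submultiplicative, and finite-dimensionality gives completeness, so $A$ is a commutative unital Banach $\mathbb{R}$-algebra with $\|1\|_{A}=\|-1\|_{A}=1$. Again setting $a=i$, the spectrum is empty because $\mathbb{C}$ is a field. The only subring of $A$ strictly between $\mathbb{R}$ and $\mathbb{C}$ is $\mathbb{C}$ itself, so maximality of $\mathbb{R}$ in $\mathcal{F}(A)$ reduces to showing that $A$ is not a Banach $\mathbb{C}$-algebra with respect to any complete valuation on $\mathbb{C}$. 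If it were, applying $\|\alpha\cdot 1\|_{A}=|\alpha|_{L}\|1\|_{A}$ at all $\alpha$ forces $|\alpha|_{L}=\|\alpha\|_{A}$; but $\|\cdot\|_{A}$ is not multiplicative, as $\|(1+i)(1-i)\|_{A}=\|2\|_{A}=2$ while $\|1+i\|_{A}\|1-i\|_{A}=4$. Hence no such valuation exists and $\mathbb{R}$ is maximal.

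The main subtlety is in the commutative case: the presence of an element with empty spectrum forces a proper field extension of $F$ to sit inside $A$ algebraically, so to exclude such extensions from $\mathcal{F}(A)$ one has to choose a non-canonical algebra norm that is submultiplicative on all of $A$ but whose restriction to the extension fails to be multiplicative. The $\ell^{1}$-type norm on $\mathbb{C}$ accomplishes this with minimum fuss. The non-commutative case is comparatively easy: the obstruction to $\mathbb{H}$ being a Banach $\mathbb{C}$-algebra is purely algebraic, arising from the non-centrality of every embedded $\mathbb{C}$ rather than from any choice of norm.
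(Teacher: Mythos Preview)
Your proof is correct and uses exactly the same two examples as the paper: $\mathbb{H}$ with the Euclidean norm in the non-commutative case, and $(\mathbb{C},\|\cdot\|_{1})$ with $a=i$ in the commutative case, the latter argument being essentially verbatim. The only difference is how you justify maximality of $\mathbb{R}$ in $\mathcal{F}(\mathbb{H})$: the paper invokes the Gelfand--Mazur theorem (a complex Banach division algebra must be $\mathbb{C}$, but $\mathbb{H}\neq\mathbb{C}$), whereas you argue directly that the bilinearity axiom $(\alpha a)b=a(\alpha b)$ forces scalars to lie in the centre, which for $\mathbb{H}$ is $\mathbb{R}$. Your route is more elementary and self-contained; the paper's route is shorter given that Gelfand--Mazur is already on the table. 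One small point: your concrete computation with $u=i$ only exhibits the failure for that particular embedded copy of $\mathbb{C}$, so to cover \emph{every} embedded copy you should state explicitly that any $u\in\mathbb{H}$ with $u^{2}=-1$ is non-central and hence the same obstruction applies---but this is clearly what you intend.
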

\begin{proof}
Hamilton's real quaternions, $\mathbb{H}$, are an example of a non-commutative complete Archimedean division ring and unital real Banach algebra. Viewing $\mathbb{H}$ as a real vector space, the valuation on $\mathbb{H}$ is the Euclidean norm which is complete, Archimedean and indeed a valuation since being multiplicative on $\mathbb{H}$, see \cite[p56,p57]{Lam}. By the Gelfand-Mazur theorem, Theorem \ref{thr:FAAGM}, $\mathbb{H}$ is not a unital complex Banach algebra since being different to $\mathbb{C}$ and so $\mathbb{R}$ is maximal in $\mathcal{F}(\mathbb{H})$. Moreover for $a\in\mathbb{H}\backslash\mathbb{R}$ it is immediate that we have $\mbox{Sp}(a)=\emptyset$.\\
In the commutative setting consider the field of complex numbers $\mathbb{C}$ with the absolute valuation replaced by the $L_{1}$-norm as it applies to the real vector space $\mathbb{R}^{2}$, that is for $a=\alpha+i\beta\in\mathbb{C}$ we have $\|a\|_{1}:=|\alpha|_{\infty}+|\beta|_{\infty}$. Then $\mathbb{C}$ is complete with respect to $\|\cdot\|_{1}$ by the equivalence of norms on finite dimensional $\mathbb{R}$-vector spaces. Expressing complex numbers in their coordinate form it is easy to show that $\|\cdot\|_{1}$ is sub-multiplicative and so $(\mathbb{C},\|\cdot\|_{1})$ is a unital real Banach algebra. However $\|\cdot\|_{1}$ is not multiplicative since $\|(1+i)(1-i)\|_{1}=\|2\|_{1}=2<4=\|1+i\|_{1}\|1-i\|_{1}$ and so $\|\cdot\|_{1}$ is not a valuation on $\mathbb{C}$. Consequently $\mathbb{R}$ is maximal in $\mathcal{F}((\mathbb{C},\|\cdot\|_{1}))$ and over $\mathbb{R}$, $\mbox{Sp}(i)=\emptyset$ which completes the proof.
\end{proof}

	\chapter[Uniform algebras]{Uniform algebras}
\label{cha:UA}
In the first section of this chapter we survey some of the basic facts about complex uniform algebras and recall the close connection with the study of compact Hausdorff spaces, such as Swiss cheese sets, upon which such algebras of functions are defined. An inductive proof by the author of the Feinstein-Heath Swiss cheese ``classicalisation''
theorem is then presented. An article containing this proof has been published by the American Mathematical Society, see \cite{Mason}. In the second section of this chapter we turn our attention to non-complex analogs of uniform algebras. The constraints imposed by the various generalisations of the Stone-Weierstrass theorem are considered and the theory of real function algebras developed by Kulkarni and Limaye is introduced. We will establish the topological requirements of the spaces upon which algebras of functions in the non-Archimedean setting can be defined whilst qualifying as non-complex analogs of uniform algebras. These observations together with some of the details and examples from other chapters have been gathered together by the author into a survey paper which was subsequently accepted for publication by the American Mathematical Society, see \cite{Mason2011}.
\section{Complex uniform algebras}
\label{sec:UAC}
\begin{definition}
\label{def:UACUA}
Let $C_{\mathbb{C}}(X)$ be the unital complex Banach algebra of all continuous complex valued functions, defined on a compact Hausdorff space $X$, with pointwise operations and the sup norm given by
\begin{equation*}
\|f\|_{\infty}:=\sup_{x\in X}|f(x)|_{\infty}\quad\mbox{for all }f\in C_{\mathbb{C}}(X).
\end{equation*}
A {\em{uniform algebra}}, $A$, is a subalgebra of $C_{\mathbb{C}}(X)$ that is complete with respect to the sup norm, contains the constant functions making it a unital complex Banach algebra and separates the points of $X$ in the sense that for all $x_{1}, x_{2}\in X$ with $x_{1}\not= x_{2}$ there is $f\in A$ satisfying $f(x_{1})\not= f(x_{2})$.
\end{definition}
\begin{remark}
Introductions to uniform algebras can be found in \cite{Browder}, \cite{Gamelin} and \cite{Stout}. Some authors take Definition \ref{def:UACUA} to be a representation of uniform algebras and take a uniform algebra $A$ to be a unital complex Banach algebra with a square preserving norm, that is $\|a^{2}\|=\|a\|^{2}$ for all $a\in A$, which they sometimes then referred to as a uniform norm. This is quite legitimate since, as we will discuss at depth in the section on representation theory, the Gelfand transform shows us that every such algebra is isometrically isomorphic to an algebra conforming to Definition \ref{def:UACUA}. In this thesis we mainly introduce generalisations of Definition \ref{def:UACUA} over complete valued fields and then investigate the important representation theory results. Hence for us by {\em uniform norm} we will mean the sup norm.
\label{rem:UASA}
\end{remark}
It is very well known that in the complex setting, for suitable $X$, there exist uniform algebras that are proper subalgebras of $C_{\mathbb{C}}(X)$. However if $A$ is such a uniform algebra then $A$ is not self-adjoint, that is there is $f\in A$ with $\bar{f}\notin A$ where $\bar{f}$ denotes the complex conjugate of $f$. This result is the complex Stone-Weierstrass theorem, generalisations of which we will meet in Section \ref{sec:UANC}. We will also meet several analogs of the following example.
\begin{example}
A standard example is the {\em{disc algebra}} $A(\Delta)\subseteq C_{\mathbb{C}}(\Delta)$, of continuous functions analytic on the interior of $\Delta:=\{z\in\mathbb{C}:|z|\leq1\}$, which is as far from being self-adjoint as possible since if both $f$ and $\bar{f}$ are in $A(\Delta)$ then $f$ is constant, see \cite[p47]{Kulkarni-Limaye1992}. Also $P(\Delta)=A(\Delta)$ where $P(\Delta)$ is the uniform algebra of all functions on $\Delta$ that can be uniformly approximated by polynomials restricted to $\Delta$ with complex coefficients. This largely follows from Remark \ref{rem:FAAGL}, see \cite[p5]{Browder} or \cite[p2]{Stout}.
\label{exa:UADA}
\end{example}
For a compact Hausdorff space $X$ let $R(X)$ denote the uniform algebra of all functions on $X$ that can be uniformly approximated by rational functions from $C_{\mathbb{C}}(X)$. We also generalise to $X$ the uniform algebras introduced in Example \ref{exa:UADA} giving $A(X)$ and $P(X)$. In the theory of uniform approximation it is standard to ask for which $X$ is one or more of the following inclusions non-trivial
\begin{equation*}
P(X)\subseteq R(X)\subseteq A(X)\subseteq C_{\mathbb{C}}(X).
\end{equation*}
Whilst not always the case, this often only depend on $X$ up to homeomorphism. In particular many properties of uniform algebras are topological properties of the spaces upon which they are defined. Hence there is a strong connection between the study of uniform algebras and that of compact Hausdorff spaces. Therefore, in addition to being of interest in their own right, uniform algebras are important in the theory of uniform approximation; as examples of complex Banach algebras; in representation theory and in the study of compact Hausdorff space. With respect to the latter, we now turn our attention to the compact plane sets known as Swiss cheese sets.
\subsection{Swiss cheese sets in the complex plane}
\label{subsec:UASCS}
Throughout subsections \ref{subsec:UASCS} and \ref{subsec:UACT}, all discs in the complex plane are required to have finite positive radius. More generally let $\mathbb{N}_{0}:=\mathbb{N}\cup\{0\}$ from here on throughout the thesis. We begin with the following definitions taken from \cite{Feinstein-Heath}.
\begin{definition}
For a disc $D$ in the plane let $r(D)$ denote the radius of $D$.
\begin{enumerate}
\item[(i)]
A {\em{Swiss cheese}} is a pair ${\bf{D}}:=(\Delta,\mathcal{D})$ for which $\Delta$ is a closed disc and $\mathcal{D}$ is a countable or finite collection of open discs. A Swiss cheese ${\bf{D}}=(\Delta,\mathcal{D})$ is {\em{classical}} if the closures of the discs in $\mathcal{D}$ intersect neither one another nor $\mathbb{C}\backslash\mbox{\rm{int}}\Delta$, and $\sum_{D\in\mathcal{D}}r(D)<\infty$.
\item[(ii)]
The {\em{associated Swiss cheese set}} of a Swiss cheese ${\bf{D}}=(\Delta,\mathcal{D})$ is the plane set $X_{\bf{D}}:=\Delta\backslash\bigcup\mathcal{D}$.\\
A {\em{classical Swiss cheese set}} is a plane set $X$ for which there exists a classical Swiss cheese ${\bf{D}}=(\Delta,\mathcal{D})$ such that $X=X_{\bf{D}}$. 
\item[(iii)]
For a Swiss cheese ${\bf{D}}=(\Delta,\mathcal{D})$, we define $\delta({\bf{D}}):=r(\Delta)-\sum_{D\in\mathcal{D}}r(D)$ so that $\delta({\bf{D}})>-\infty$ if and only if $\sum_{D\in\mathcal{D}}r(D)<\infty$.
\end{enumerate}
\label{def:UASC}
\end{definition}
Figure \ref{fig:UASCS} provides an example for (ii) of Definition \ref{def:UASC}.
\begin{figure}[h]
\begin{center}
\includegraphics{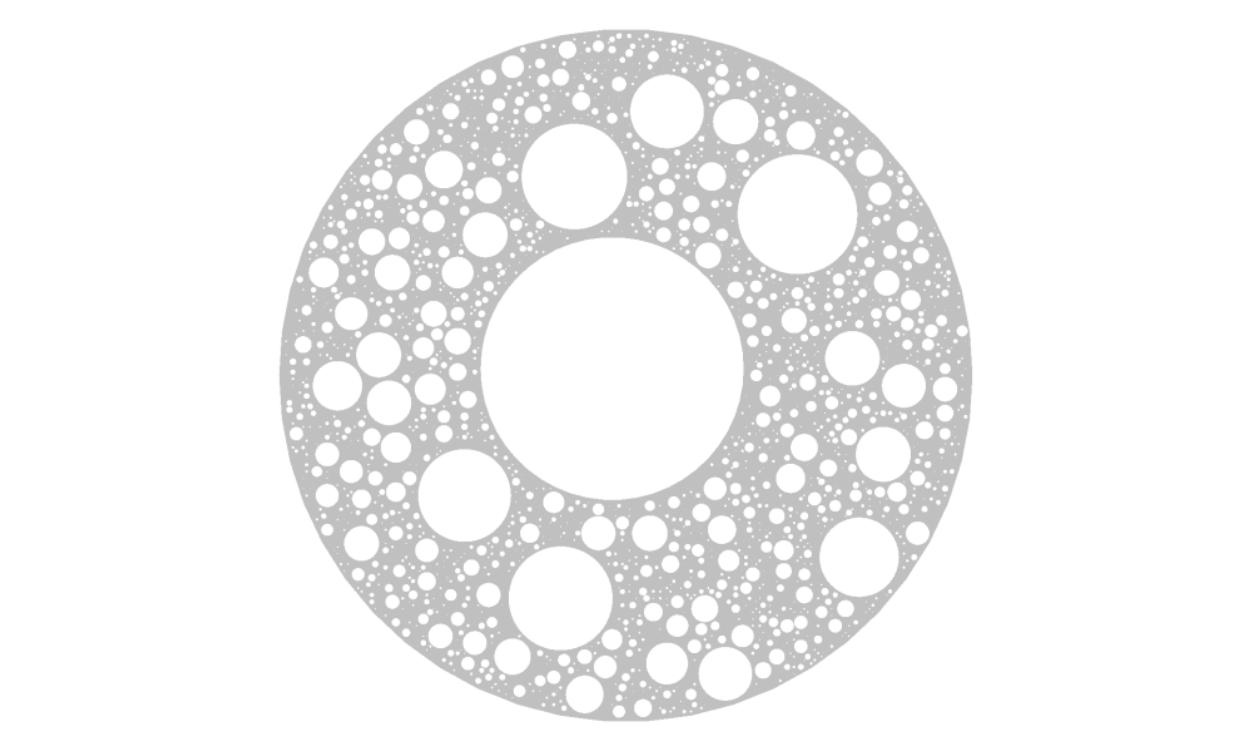}
\end{center}
\caption{A classical Swiss cheese set.}
\label{fig:UASCS}
\end{figure}
Swiss cheese sets are used extensively in the theory of uniform algebras since they provide many examples of uniform algebras with particular properties. For examples see \cite{Feinstein}, \cite[Ch2]{Gamelin} and \cite{Roth}. In particular \cite{Feinstein-Heath} includes a survey of the use of Swiss cheese constructions
in the theory of uniform algebras. The following example is from \cite{Roth}.
\begin{lemma}
For $X\subseteq\mathbb{C}$ non-empty and compact, let $A_{1}\subseteq A_{2}\subseteq C_{\mathbb{C}}(X)$ be uniform algebras with $A_{0}$ uniformly dense in $A_{1}$. Suppose we can find a continuous linear functional $\varphi:C_{\mathbb{C}}(X)\rightarrow\mathbb{C}$ such that $\varphi(A_{0})=\{0\}$ and $\varphi(f)=a\not=0$ for some $f\in A_{2}$. Then $A_{1}\not=A_{2}$.
\label{lem:UARO}
\end{lemma}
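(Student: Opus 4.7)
The plan is to argue by contradiction. Suppose, contrary to the desired conclusion, that $A_{1}=A_{2}$. Then the distinguished element $f\in A_{2}$ with $\varphi(f)=a\neq 0$ actually lies in $A_{1}$, so since $A_{0}$ is uniformly dense in $A_{1}$ there exists a sequence $(f_{n})$ in $A_{0}$ with $\|f_{n}-f\|_{\infty}\to 0$.

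The key step is then just continuity of $\varphi$ on $C_{\mathbb{C}}(X)$ equipped with the sup norm: from $f_{n}\to f$ uniformly on $X$ we get $\varphi(f_{n})\to\varphi(f)=a$. But by hypothesis $\varphi(A_{0})=\{0\}$, so $\varphi(f_{n})=0$ for every $n$, forcing $a=\lim_{n\to\infty}\varphi(f_{n})=0$. This contradicts $a\neq 0$, so the assumption $A_{1}=A_{2}$ fails and $A_{1}\neq A_{2}$.

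There is essentially no obstacle here; the lemma is a direct application of the fact that a continuous linear functional is determined by its values on a dense subset. The only thing worth remarking is that the sup norm topology on $C_{\mathbb{C}}(X)$ restricts to the same topology used to declare $A_{0}$ dense in $A_{1}$ and to declare $\varphi$ continuous, so the approximation argument and the continuity conclusion refer to the same notion of convergence. Given this, the two-line contradiction above is the whole proof.
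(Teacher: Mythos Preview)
Your proof is correct and matches the ``more simply'' remark at the end of the paper's own proof: by continuity of $\varphi$, the hypothesis $\varphi(A_{0})=\{0\}$ extends to $\varphi(A_{1})=\{0\}$, so $f\notin A_{1}$. The paper additionally records the quantitative version $\|f-q\|_{\infty}\geq |a|\,\|\varphi\|_{\mathrm{op}}^{-1}$ for all $q\in A_{0}$, obtained from $|a|=|\varphi(f-q)|\leq\|\varphi\|_{\mathrm{op}}\|f-q\|_{\infty}$, but this extra information is not needed for the statement as given.
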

\begin{proof}
Let $q\in A_{0}$. Then
\begin{equation*}
0<|a|_{\infty}=|\varphi(f)-\varphi(q)|_{\infty}=|\varphi(f-q)|_{\infty}\leq\|f-q\|_{\infty}\|\varphi\|_{\mbox{\footnotesize op}}
\end{equation*}
giving $\|f-q\|_{\infty}\geq|a|_{\infty}\|\varphi\|_{\mbox{\footnotesize op}}^{-1}>0$ for all $q\in A_{0}$. Hence $f$ can not be uniformly approximated by elements of $A_{0}$. More simply, $\varphi(A_{1})=\{0\}$ by continuity.
\end{proof}
\begin{example}
\label{exa:UARO}
It is possible to have $R(X)\not=A(X)$. Let $D_{0}$ be a closed disc and let ${\bf{D}}=(D_{0},\mathcal{D})$ be a classical Swiss cheese with $\delta({\bf{D}})>0$ and $\mathcal{D}$ infinite. Let $(D_{n})$ be a sequence of open discs such that the map $n\mapsto D_{n}$ is a bijection from $\mathbb{N}$ to $\mathcal{D}$. For $n\in\mathbb{N}_{0}$ define $\gamma_{n}:[0,1]\rightarrow\mathbb{C}$ as the circular path
\begin{equation*}
\gamma_{n}(x):=r_{n}\exp(2\pi ix)+a_{n}
\end{equation*}
around the boundary $\partial D_{n}$. Now for a rational function $q\in C_{\mathbb{C}}(X_{\bf{D}})$ we note that on $\mathbb{C}$ the finitely many poles of $q$ lie in the open complement of $X_{\bf{D}}$ and so $X_{\bf{D}}$ is a subset of an open subset of $\mathbb{C}$ upon which $q$ is analytic. Hence by Cauchy's theorem, see \cite[p218]{Rudin}, we have $\varphi(q)=0$ for $\varphi:C_{\mathbb{C}}(X_{\bf{D}})\rightarrow\mathbb{C}$ defined by
\begin{equation*}
\varphi(f):=\int_{\gamma_{0}}f{\mathrm{d}}z-\sum_{n=1}^{\infty}\int_{\gamma_{n}}f{\mathrm{d}}z\quad\mbox{for }f\in C_{\mathbb{C}}(X_{\bf{D}}).
\end{equation*}
We now check that $\varphi$ is a bounded linear functional on $C_{\mathbb{C}}(X_{\bf{D}})$. The following uses the fundamental estimate. Let $f\in C_{\mathbb{C}}(X_{\bf{D}})$ with $\|f\|_{\infty}\leq 1$. Then,
\begin{align*}
|\varphi(f)|_{\infty} &=\left|\int_{\gamma_{0}}f(z){\mathrm{d}}z-\sum_{n=1}^{\infty}\int_{\gamma_{n}}f(z){\mathrm{d}}z\right|_{\infty}\\ &\leq\sum_{n=0}^{\infty}\left|\int_{\gamma_{n}}f(z){\mathrm{d}}z\right|_{\infty}\\
&\leq\sum_{n=0}^{\infty}\|f\|_{\infty}\int_{0}^{1}|\gamma'_{n}(x)|_{\infty}{\mathrm{d}}x\\
&\leq\sum_{n=0}^{\infty}\int_{0}^{1}|r_{n}2\pi i \exp(2\pi i x)|_{\infty}{\mathrm{d}}x\\
&=\sum_{n=0}^{\infty}r_{n}2\pi\int_{0}^{1}{\mathrm{d}}x=2\pi\left(\sum_{n=0}^{\infty}r_{n}\right)<4\pi r_{0}
\end{align*}
where $\sum_{n=0}^{\infty}r_{n}<2r_{0}$ since $\delta({\bf{D}})>0$. Now $4\pi r_{0}$ is an upper bound for the series of absolute terms, hence we have absolute convergence. Since absolute convergence implies convergence we have, for all $f\in C_{\mathbb{C}}(X_{\bf{D}})$,
\begin{equation*} \varphi(f)=\varphi\left(\frac{\|f\|_{\infty}}{\|f\|_{\infty}}f\right)=\|f\|_{\infty}\varphi\left(\frac{f}{\|f\|_{\infty}}\right)=\|f\|_{\infty}a_{f}\in\mathbb{C}
\end{equation*}
for some $a_{f}\in\mathbb{C}$. Moreover, our calculation shows that $\varphi$ is bounded with $\|\varphi\|_{\mbox{\footnotesize op}}<4\pi r_{0}$. The linearity of $\varphi$ follows from the linearity of integrating over a sum of terms and so $\varphi$ is a continuous linear functional on $C_{\mathbb{C}}(X_{\bf{D}})$. Next we note that the function $g:z\mapsto\bar{z}$ on $X_{\bf{D}}$ given by complex conjugation is an element of $C_{\mathbb{C}}(X_{\bf{D}})$. Cauchy's theorem does not imply that $\varphi(g)$ will be zero since $g$ is not analytic on any non-empty open subset of $\mathbb{C}$. We have
\begin{equation*}
\varphi(g)=\int_{\gamma_{0}}g(z){\mathrm{d}}z-\sum_{n=1}^{\infty}\int_{\gamma_{n}}g(z){\mathrm{d}}z=2\pi i\left(r_{0}^{2}-\sum_{n=1}^{\infty}r_{n}^{2}\right),
\end{equation*}
since for each $n\in\mathbb{N}_{0}$
\begin{align*}
\int_{\gamma_{n}}g(z){\mathrm{d}}z &=\int_{0}^{1}g(\gamma_{n})\gamma'_{n}{\mathrm{d}}x\\
&=\int_{0}^{1}(r_{n}\exp(-2\pi ix)+\bar{a}_{n})r_{n}2\pi i\exp(2\pi ix){\mathrm{d}}x\\
&=2\pi i r_{n}\int_{0}^{1}(r_{n}+\bar{a}_{n}\exp(2\pi ix)){\mathrm{d}}x\\
&=2\pi i r_{n}\left(r_{n}\int_{0}^{1}{\mathrm{d}}x+\bar{a}_{n}\int_{0}^{1}\exp(2\pi ix){\mathrm{d}}x\right)\\
&=2\pi i r_{n}\left(r_{n}+\bar{a}_{n}\left[\frac{1}{2\pi i}\exp(2\pi ix)\right]_{0}^{1}\right)\\
&=2\pi i r_{n}^{2}.
\end{align*}
Furthermore $\sum_{n=1}^{\infty}r_{n}^{2}\leq(\sum_{n=1}^{\infty}r_{n})^{2}<r_{0}^{2}$ since $\sum_{n=1}^{\infty}r_{n}<r_{0}$, by $\delta({\bf{D}})>0$, and so $\varphi(g)\not=0$. Therefore by Lemma \ref{lem:UARO} we have $R(X_{\bf{D}})\not=C_{\mathbb{C}}(X_{\bf{D}})$ and $g\not\in R(X_{\bf{D}})$. Certainly this result is immediate in the case where $X_{\bf{D}}$ has interior since then $g$ will not be an element of $A(X_{\bf{D}})$. However this is one of the occasions where the usefulness of Swiss cheese set constructions becomes evident since, with some consideration, it is straightforward to construct a classical Swiss cheese ${\bf{D}}=(D_{0},\mathcal{D})$ with $\delta({\bf{D}})>0$ such that $X_{\bf{D}}$ has empty interior. Since $A(X_{\bf{D}})$ is the uniform algebra of all elements from $C_{\mathbb{C}}(X_{\bf{D}})$ that are analytic on the interior of $X_{\bf{D}}$, in this case we have $A(X_{\bf{D}})=C_{\mathbb{C}}(X_{\bf{D}})$ and so by the above $R(X_{\bf{D}})\not=A(X_{\bf{D}})$ which completes this example.
\end{example}
Let ${\bf{D}}$ be a Swiss cheese as specified in Example \ref{exa:UARO} such that $X_{\bf{D}}$ has empty interior. The following subsection shows that in this case there is actually no need to require ${\bf{D}}$ to be classical in order that $R(X_{\bf{D}})\not=A(X_{\bf{D}})$.
\subsection{Classicalisation theorem}
\label{subsec:UACT}
In this subsection we give a new proof of an existing theorem by J. F. Feinstein and M. J. Heath, see \cite{Feinstein-Heath}. The theorem states that any Swiss cheese set defined by a Swiss cheese ${\bf{D}}$ with $\delta({\bf{D}})>0$ contains a Swiss cheese set as a subset defined by a classical Swiss cheese ${\bf{D^{'}}}$ with $\delta({\bf{D^{'}}})\geq\delta({\bf{D}})$. Feinstein and Heath begin their proof by developing a theory of allocation maps connected to such sets. A partial order on a family of these allocation maps is then introduced and Zorn's lemma applied. We take a more direct approach by using transfinite induction, cardinality and disc assignment functions, where a disc assignment function is a kind of labeled Swiss cheese that defines a Swiss cheese set. An explicit theory of allocation maps is no longer required although we are still using them implicitly. In this regard we will discuss the connections with the original proof of Feinstein and Heath. See \cite[p266]{Kelley} and \cite[p9]{Dales} for useful introductions to ordinals and transfinite induction which has been used in this subsection. We begin with the following definitions.
\begin{definition}
Let $\mathcal{O}$ be the set of all open discs and complements of closed discs in the complex plane.
\begin{enumerate}
\item[(i)]
A {\em{disc assignment function}} $d:S\rightarrow\mathcal{O}$ is a map from a subset $S\subseteq\mathbb{N}_{0}$, with $0\in S$, into $\mathcal{O}$ such that ${\bf{D}}_{d}:=(\mathbb{C}\backslash d(0),d(S\backslash\{0\}))$ is a Swiss cheese. We allow $S\backslash\{0\}$ to be empty since a Swiss cheese ${\bf{D}}=(\Delta,\mathcal{D})$ can have $\mathcal{D}=\emptyset$.
\item[(ii)]
For a disc assignment function $d:S\rightarrow\mathcal{O}$ and $i\in S$ we let $\bar{d}(i)$ denote the closure of $d(i)$ in $\mathbb{C}$, that is $\bar{d}(i):=\overline{d(i)}$. A disc assignment function $d:S\rightarrow\mathcal{O}$ is said to be {\em{classical}} if for all $(i,j)\in S^{2}$ with $i\not= j$ we have $\bar{d}(i)\cap\bar{d}(j)=\emptyset$ and $\sum_{n\in S\backslash\{0\}}r(d(n))<\infty$.
\item[(iii)]
For a disc assignment function $d:S\rightarrow\mathcal{O}$ we let $X_{d}$ denote the associated Swiss cheese set of the Swiss cheese ${\bf{D}}_{d}$.
\item[(iv)]
A disc assignment function $d:S\rightarrow\mathcal{O}$ is said to have the {\em{Feinstein-Heath condition}} when $\sum_{n\in S\backslash\{0\}}r(d(n))<r(\mathbb{C}\backslash d(0))$.
\item[(v)]
Define $H$ as the set of all disc assignment functions with the Feinstein-Heath condition.\\
For $h\in H$, $h:S\rightarrow\mathcal{O}$, define $\delta_{h}:=r(\mathbb{C}\backslash h(0))-\sum_{n\in S\backslash\{0\}}r(h(n))>0$.
\end{enumerate}
\label{def:UAPSC}
\end{definition}
Here is the Feinstein-Heath Swiss cheese ``Classicalisation'' theorem as it appears in \cite{Feinstein-Heath}.
\begin{theorem}
For every Swiss cheese ${\bf{D}}$ with $\delta({\bf{D}})>0$, there is a classical Swiss cheese ${\bf{D^{'}}}$ with $X_{\bf{D^{'}}}\subseteq X_{\bf{D}}$ and $\delta({\bf{D^{'}}})\geq\delta({\bf{D}})$.
\label{thr:UAPFHT}
\end{theorem}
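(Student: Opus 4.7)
The plan is to work throughout with disc assignment functions (Definition \ref{def:UAPSC}) and to build, by transfinite recursion on countable ordinals, a chain $(d_\alpha)$ in $H$ that stabilises at a classical value. Since $\delta(\mathbf{D}) > 0$, the Swiss cheese $\mathbf{D}$ encodes as some $d_0 \in H$ with $X_{d_0} = X_{\mathbf{D}}$ and $\delta_{d_0} = \delta(\mathbf{D})$, so it suffices to produce a classical $d_{\alpha^*} \in H$ with $X_{d_{\alpha^*}} \subseteq X_{d_0}$ and $\delta_{d_{\alpha^*}} \geq \delta_{d_0}$. The engine will be two elementary reductions, each of which removes exactly one index from $S$ while producing $d' \in H$ satisfying $X_{d'} \subseteq X_d$ and $\delta_{d'} \geq \delta_d$. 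For \emph{internal merging}, when distinct $i, j \in S \setminus \{0\}$ satisfy $\bar{d}(i) \cap \bar{d}(j) \neq \emptyset$, elementary plane geometry exhibits an open disc $D$ with $\bar{D} \supseteq \bar{d}(i) \cup \bar{d}(j)$ and $r(D) \leq r(d(i)) + r(d(j))$, since the smallest enclosing closed disc has radius $\tfrac{1}{2}(|c_i - c_j| + r_i + r_j) \leq r_i + r_j$; we replace $d(i)$ and $d(j)$ by this single $D$ at index $\min\{i,j\}$. For \emph{boundary absorption}, when $\bar{d}(j) \cap (\mathbb{C} \setminus d(0)) \neq \emptyset$ for some $j \neq 0$, either $\bar{d}(j) \cap \Delta = \emptyset$ (with $\Delta := \mathbb{C} \setminus d(0)$), in which case deleting $d(j)$ leaves $X_d$ unchanged, or a short computation shifting the centre of $\Delta$ away from $c_j$ along the line through $c_0, c_j$ exhibits a closed disc $\Delta' \subseteq \Delta \setminus \bar{d}(j)$ with $r(\Delta) - r(\Delta') \leq r(d(j))$, after which $d(j)$ is deleted.

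With these reductions in hand, I would fix a canonical enumeration of $\{(i,j) \in \mathbb{N}_0 \times \mathbb{N}_0 : i < j\}$ and define $d_{\alpha+1}$ from $d_\alpha$ by applying the appropriate reduction to the first bad pair of $d_\alpha$ appearing in this enumeration, setting $d_{\alpha+1} = d_\alpha$ if $d_\alpha$ is already classical. At a countable limit ordinal $\lambda$, set $S_\lambda := \bigcap_{\alpha < \lambda} S_\alpha$ and, for $i \in S_\lambda$, take $d_\lambda(i)$ to be the monotone limit of $(d_\alpha(i))_{\alpha < \lambda}$: for $i \neq 0$ the open discs $d_\alpha(i)$ are nested and increasing with radii bounded by $\sigma(d_0) < r(\Delta_0)$, so their union is an open disc of finite positive radius; for $i = 0$ the outer closed discs $\Delta_\alpha$ shrink monotonically and their centres form a Cauchy sequence of total shift at most $\sigma(d_0)$, so their intersection is a closed disc of radius at least $\delta_{d_0}$. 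A monotone convergence argument then yields $d_\lambda \in H$ with $\delta_{d_\lambda} \geq \delta_{d_0}$ and $X_{d_\lambda} \subseteq X_{d_\alpha}$ for every $\alpha < \lambda$.

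The main obstacle is termination. Each non-trivial reduction strictly decreases $|S_\alpha|$ by one, and since $S_0 \subseteq \mathbb{N}_0$ is countable the total number of reductions across the entire transfinite recursion is at most $\aleph_0$. Hence the set of ordinals at which a reduction actually occurs has order type at most $\omega$ and supremum some countable ordinal $\alpha^*$, past which the recursion is constant. At $\alpha^*$ the disc assignment function $d_{\alpha^*}$ must be classical, for otherwise the canonical enumeration would single out a first bad pair and force a further reduction at stage $\alpha^* + 1$, contradicting the fact that no reduction occurs past $\alpha^*$. The genuine subtlety lies in the limit stages: the monotone enlargements of the internal discs and the shrinkage of $\Delta$ may jointly create overlaps that did not exist at any proper stage $\alpha < \lambda$, so fresh reductions may still be required after each limit, and one must confirm that every such newly-revealed bad pair still consumes one index from the shared countable reservoir. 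Once this cardinality bookkeeping is verified, the $d_{\alpha^*}$ delivered by the recursion is classical and the required Swiss cheese $\mathbf{D}'$ is $(\mathbb{C}\setminus d_{\alpha^*}(0), d_{\alpha^*}(S_{\alpha^*} \setminus \{0\}))$.
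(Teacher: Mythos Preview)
Your proposal is essentially the same approach as the paper's: reformulate via disc assignment functions in $H$, define a successor step that either merges two overlapping deleted discs (Lemma \ref{lem:UAP2.4.13}) or absorbs a boundary-touching disc into the complement of the outer disc (Lemma \ref{lem:UAP2.4.14}), take nested unions/intersections at limit ordinals (Lemmas \ref{lem:UAP2.4.11} and \ref{lem:UAP2.4.12}), and terminate by the cardinality argument that only countably many indices can ever be deleted from $S_{0}\subseteq\mathbb{N}_{0}$. One small slip: from ``at most $\aleph_{0}$ reductions'' you cannot conclude that the set of reduction ordinals has order type at most $\omega$ --- new overlaps created at limit stages mean reductions can continue past $\omega$, as you yourself note --- but your actual conclusion, that the supremum $\alpha^{*}$ of that set is a countable ordinal, is correct and is exactly what the paper proves (arguing that otherwise $S_{\omega_{1}}^{c}\subseteq\mathbb{N}$ would be uncountable).
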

From Definition \ref{def:UAPSC} we note that if a disc assignment function $d:S\rightarrow\mathcal{O}$ is classical then the Swiss cheese ${\bf{D}}_{d}$ will also be classical. Similarly if $d$ has the Feinstein-Heath condition then $\delta({\bf{D}}_{d})>0$. The converse of each of these implications will not hold in general because $d$ need not be injective.  However it is immediate that for every Swiss cheese ${\bf{D}}=(\Delta,\mathcal{D})$ with $\delta({\bf{D}})>0$ there exists an injective disc assignment function $h\in H$ such that ${\bf{D}}_{h}={\bf{D}}$. We note that every disc assignment function $h\in H$ has $\delta({\bf{D}}_{h})\ge\delta_{h}$ with equality if and only if $h$ is injective and that classical disc assignment functions are always injective. With these observations it easily follows that Theorem \ref{thr:UAPFHT} is equivalent to the following theorem involving disc assignment function.
\begin{theorem}
For every disc assignment function $h\in H$ there is a classical disc assignment function $h^{'}\in H$ with $X_{h^{'}}\subseteq X_{h}$ and $\delta_{h^{'}}\geq\delta_{h}$.
\label{thr:UAPHT}
\end{theorem}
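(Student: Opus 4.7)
The plan is to construct $h'$ from $h$ by iterating two local ``repair moves'' on disc assignment functions, using transfinite recursion together with a cardinality argument to overcome the fact that each move may create new violations elsewhere. The first move, \emph{Merge}, applies when $h \in H$ has distinct $i,j \in S \setminus \{0\}$ whose open discs $h(i), h(j)$ have intersecting closures: since two closed discs of radii $r_{1}, r_{2}$ whose closures meet lie inside a single closed disc of radius at most $r_{1} + r_{2}$ (a short planar-geometry computation), one replaces $i,j$ by one new index carrying an open disc $D$ with $h(i) \cup h(j) \subseteq D$ and $r(D) \leq r(h(i)) + r(h(j))$. The second move, \emph{Trim}, applies when some $j \in S \setminus \{0\}$ satisfies $\overline{h(j)} \cap \overline{h(0)} \neq \emptyset$, equivalently when $\overline{h(j)}$ is not contained in the interior of the big closed disc $\mathbb{C}\setminus h(0)$: one finds inside $\mathbb{C}\setminus h(0)$ a closed subdisc of radius at least $r(\mathbb{C}\setminus h(0)) - r(h(j))$ disjoint from $\overline{h(j)}$, replaces $h(0)$ by the complement of that subdisc, and deletes the index $j$. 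A direct check shows that either operation yields a new $h^{(1)} \in H$ with $X_{h^{(1)}} \subseteq X_{h}$, $\delta_{h^{(1)}} \geq \delta_{h}$, and strictly fewer indices in its domain than $h$ had.

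Using these repairs I would build a transfinite sequence $(h_{\alpha})_{\alpha \leq \gamma}$ in $H$ by recursion. Set $h_{0} := h$. At a successor stage $\alpha+1$, if $h_{\alpha}$ is classical halt with $\gamma := \alpha$; otherwise pick any single violation of $h_{\alpha}$ and apply the appropriate Merge or Trim to obtain $h_{\alpha+1}$. At a limit stage $\lambda$, one defines $h_{\lambda}$ so that $X_{h_{\lambda}} = \bigcap_{\alpha<\lambda} X_{h_{\alpha}}$, keeping track via a bookkeeping scheme of which original indices have been absorbed into which surviving index — the transfinite analogue of the allocation maps of Feinstein and Heath. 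Monotonicity of $(X_{h_{\alpha}})$ and $(\delta_{h_{\alpha}})$ then forces $X_{h_{\lambda}} \subseteq X_{h_{\alpha}}$ and $\delta_{h_{\lambda}} \geq \sup_{\alpha<\lambda} \delta_{h_{\alpha}}$ for all $\alpha < \lambda$. Because the index set of $h_{\alpha}$ stays countable and its cardinality never grows, a cardinality argument forces the sequence to stabilize at some countable ordinal $\gamma$ with $h_{\gamma}$ classical; taking $h' := h_{\gamma}$ then delivers $X_{h'} \subseteq X_{h}$ and $\delta_{h'} \geq \delta_{h}$ as required.

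The main obstacle, as I see it, is the limit step. One must verify that the nested intersection $\bigcap_{\alpha<\lambda} X_{h_{\alpha}}$ is itself the Swiss cheese set of some $h_{\lambda} \in H$ with $\delta_{h_{\lambda}} \geq \sup_{\alpha<\lambda} \delta_{h_{\alpha}}$, and that the indexing at $\lambda$ can be chosen so that subsequent successor steps continue to strictly reduce the number of indices. A naive intersection of a nested sequence of closed discs need not remain a disc, so the bookkeeping tying each surviving index at stage $\lambda$ to the tail of its sequence — and in particular the choice of the limit outer disc and of the limit inner discs absorbing countably many predecessors — has to be set up with genuine care. It is this combinatorial accounting, combined with the cardinality bound on $\gamma$, that carries the real weight of the argument.
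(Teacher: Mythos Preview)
Your overall strategy matches the paper's: iterate the two local repairs (your Merge and Trim are the paper's Cases~2.1 and~2.2) transfinitely, and terminate by a cardinality argument before~$\omega_{1}$. You have also correctly located the limit step as the crux. But two points in your sketch are not yet right, and the paper's proof resolves both explicitly.

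First, the limit step is not handled by trying to realise $\bigcap_{\alpha<\lambda} X_{h_{\alpha}}$ as a Swiss cheese set after the fact. Instead one defines $h_{\lambda}$ index-by-index: set $S_{\lambda}:=\bigcap_{\alpha<\lambda}S_{\alpha}$ and, for each surviving $n\in S_{\lambda}$, set $h_{\lambda}(n):=\bigcup_{\alpha<\lambda}h_{\alpha}(n)$. This is well-behaved because the successor step is arranged so that, for every index $i$ that survives, one has $h_{\alpha}(i)\subseteq h_{\alpha+1}(i)$; hence $(h_{\alpha}(n))_{\alpha<\lambda}$ is nested increasing. Your worry that a nested intersection of closed discs ``need not remain a disc'' is misplaced: provided the infimum of the radii is positive (guaranteed here by $\delta_{h_{\alpha}}\geq\delta_{h}>0$), the intersection \emph{is} a closed disc, and dually a nested union of open discs with bounded radii is an open disc. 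These are Lemmas~\ref{lem:UAP2.4.11} and~\ref{lem:UAP2.4.12}, and they make the limit disc assignment function immediate.

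Second, your cardinality argument as stated is insufficient. ``The cardinality never grows'' does not force stabilisation: a countably infinite domain can lose one element per step yet remain countably infinite. What is needed is that the domains $S_{\alpha}$ form a strictly decreasing chain of subsets of the \emph{fixed} countable set $\mathbb{N}_{0}$, so that failure to stabilise before~$\omega_{1}$ would remove uncountably many distinct natural numbers. For this you cannot ``replace $i,j$ by one new index'' as you wrote; you must reuse one of the two existing indices (the paper keeps the smaller, via the lexicographic minimum of the set of violating pairs) and delete the other. This choice is exactly what produces the monotonicity $h_{\alpha}(i)\subseteq h_{\alpha+1}(i)$ at surviving indices, and hence what makes both the limit construction and the termination argument go through simultaneously.
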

Several lemmas from \cite{Feinstein-Heath} and \cite[\S 2.4.1]{Heath} will be used in the proof of Theorem \ref{thr:UAPHT} and we consider them now.
\begin{lemma}
Let $D_{1}$ and $D_{2}$ be open discs in $\mathbb{C}$ with radii $r(D_{1})$ and $r(D_{2})$ respectively such that $\bar{D}_{1}\cap\bar{D}_{2}\not=\emptyset$. Then there is an open disc $D$ with $D_{1}\cup D_{2}\subseteq D$ and with radius $r(D)\le r(D_{1})+r(D_{2})$.
\label{lem:UAP2.4.13}
\end{lemma}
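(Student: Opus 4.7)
The plan is to construct $D$ explicitly as the smallest closed disc whose boundary passes through the two points of $\overline{D_1 \cup D_2}$ that are furthest apart along the line joining the centres, and then verify that its open version works.

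First I would extract the key quantitative consequence of the hypothesis $\bar D_1 \cap \bar D_2 \neq \emptyset$. Writing $a_i$ for the centre and $r_i$ for the radius of $D_i$, and picking $p \in \bar D_1 \cap \bar D_2$, the ordinary triangle inequality gives
\[
d := |a_1 - a_2| \;\leq\; |a_1 - p| + |p - a_2| \;\leq\; r_1 + r_2.
\]
If one disc contains the other (equivalently $d + \min(r_1,r_2) \leq \max(r_1,r_2)$), then the larger one already serves as $D$, and its radius is at most $r_1 + r_2$. So from here on I may assume $r_1 \leq d + r_2$ and $r_2 \leq d + r_1$.

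Next I would fix notation by rotating and translating so that $a_1, a_2$ lie on the real axis with $a_1 \leq a_2$. The two extreme points of $\bar D_1 \cup \bar D_2$ on this axis are $p_1 := a_1 - r_1$ and $p_2 := a_2 + r_2$, at distance $p_2 - p_1 = d + r_1 + r_2$. Let $D$ be the open disc centred at $c := (p_1 + p_2)/2$ of radius $R := (d + r_1 + r_2)/2$. The bound $d \leq r_1 + r_2$ gives
\[
R \;=\; \tfrac{d + r_1 + r_2}{2} \;\leq\; \tfrac{(r_1+r_2) + r_1 + r_2}{2} \;=\; r_1 + r_2,
\]
which is the required radius estimate.

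Finally I would verify $D_1 \cup D_2 \subseteq D$. A short computation shows $|a_1 - c| = (d + r_2 - r_1)/2$ and $|a_2 - c| = (d + r_1 - r_2)/2$, both nonnegative by the case assumption. For any $z \in D_1$ (open), the triangle inequality yields
\[
|z - c| \;\leq\; |z - a_1| + |a_1 - c| \;<\; r_1 + \tfrac{d + r_2 - r_1}{2} \;=\; R,
\]
so $z \in D$; the symmetric estimate handles $z \in D_2$. Thus $D$ has all the required properties. The proof is essentially just careful planar geometry, and I do not anticipate a genuine obstacle; the only step requiring any care is the preliminary reduction to the case where neither disc contains the other, so that the formula for $c$ places it between $a_1$ and $a_2$ and the sign bookkeeping in the final inequality works out cleanly.
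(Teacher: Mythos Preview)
Your proof is correct. The paper itself does not supply a proof of this lemma; it is quoted from \cite{Feinstein-Heath} and \cite[\S 2.4.1]{Heath} and merely illustrated by a figure, so there is no in-paper argument to compare against. Your explicit construction --- the open disc whose diameter is the segment $[a_1-r_1,\,a_2+r_2]$ along the line of centres, after first disposing of the nested case --- is the standard one and is exactly what one finds in those references.
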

Figure \ref{fig:UAPdisclemmas}, Example 1 exemplifies the application of Lemma \ref{lem:UAP2.4.13}.
\begin{lemma}
Let $D$ be an open disc and $\Delta$ be a closed disc such that $\bar{D}\not\subseteq\mbox{\textup{int}}\Delta$ and $\Delta\not\subseteq\bar{D}$. Then there is a closed disc $\Delta^{'}\subseteq\Delta$ with $D\cap\Delta^{'}=\emptyset$ and $r(\Delta^{'})\geq r(\Delta)-r(D)$.
\label{lem:UAP2.4.14}
\end{lemma}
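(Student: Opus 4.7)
The plan is to construct $\Delta'$ explicitly on the side of $\Delta$ opposite to $D$, along the line through the two centres, and then verify the three required inequalities by elementary geometry. Write $c_D,c_\Delta$ for the centres and $r,R$ for the radii of $D,\Delta$, and set $d:=|c_\Delta-c_D|$. Translating the two hypotheses into numerical form, $\bar D\not\subseteq\mathrm{int}\,\Delta$ says that the farthest point of $\bar D$ from $c_\Delta$, which lies at distance $d+r$, is at least $R$, so $d+r\ge R$; and $\Delta\not\subseteq\bar D$ says that the farthest point of $\Delta$ from $c_D$, at distance $d+R$, exceeds $r$, so $d+R>r$.

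First I would dispose of two degenerate cases. If $d=0$ then the first inequality gives $r\ge R$, which forces $\Delta\subseteq\bar D$, contradicting the second hypothesis; so in fact $d>0$. If $d\ge R+r$ then $D$ and $\Delta$ are disjoint (the open disc $D$ meets $\Delta$ only if $d<R+r$), so one can simply set $\Delta':=\Delta$ and both conclusions hold trivially.

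The remaining, and main, case is $0<d<R+r$. Put $\hat v:=(c_\Delta-c_D)/d$, define
\[
\rho:=\tfrac{1}{2}(d+R-r),\qquad p:=c_\Delta+(R-\rho)\hat v,\qquad \Delta':=\{z\in\mathbb C:|z-p|\le\rho\}.
\]
Then I would verify the four properties in turn. (a) $\rho>0$ is exactly the second hypothesis $d+R>r$. (b) $R-\rho=\tfrac12(R+r-d)\ge 0$ because $d<R+r$, and the triangle inequality gives $|z-c_\Delta|\le|z-p|+|p-c_\Delta|\le\rho+(R-\rho)=R$ for every $z\in\Delta'$, so $\Delta'\subseteq\Delta$. (c) Since $p$ lies on the ray from $c_D$ through $c_\Delta$ beyond $c_\Delta$, we have $|p-c_D|=d+(R-\rho)$, hence for $z\in\Delta'$
\[
|z-c_D|\ge|p-c_D|-|z-p|\ge d+R-\rho-\rho=d+R-2\rho=r,
\]
by the definition of $\rho$; this keeps $z$ out of the \emph{open} disc $D$, so $\Delta'\cap D=\emptyset$. (d) Finally $\rho\ge R-r$ is equivalent to $d\ge R-r$, which is the first hypothesis $d+r\ge R$.

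The only real obstacle is bookkeeping: one has to pick the centre $p$ so that $\Delta'$ is simultaneously internally tangent to $\Delta$ and externally tangent to $D$, which pins down $\rho$ uniquely as $(d+R-r)/2$, and then check that both hypotheses feed into exactly the two inequalities needed ($\rho>0$ and $\rho\ge R-r$). Once the construction is written down the verifications are three applications of the triangle inequality.
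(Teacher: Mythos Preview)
Your proof is correct. The translation of the two hypotheses into the inequalities $d+r\ge R$ and $d+R>r$ is accurate, the degenerate cases are handled properly, and in the main case the explicit construction with $\rho=\tfrac12(d+R-r)$ and centre $p=c_\Delta+(R-\rho)\hat v$ works exactly as you claim; each of (a)--(d) is a straightforward triangle-inequality check.

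The paper itself does not supply a proof of this lemma: it is one of several auxiliary lemmas quoted from Feinstein--Heath and Heath's thesis and used as black boxes in the proof of the classicalisation theorem. So there is no ``paper's own proof'' to compare against; your explicit geometric construction is precisely the sort of argument those references contain, and it stands on its own.
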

Figure \ref{fig:UAPdisclemmas}, Example 2 exemplifies the application of Lemma \ref{lem:UAP2.4.14}.
\begin{figure}[h]
\begin{center}
\includegraphics{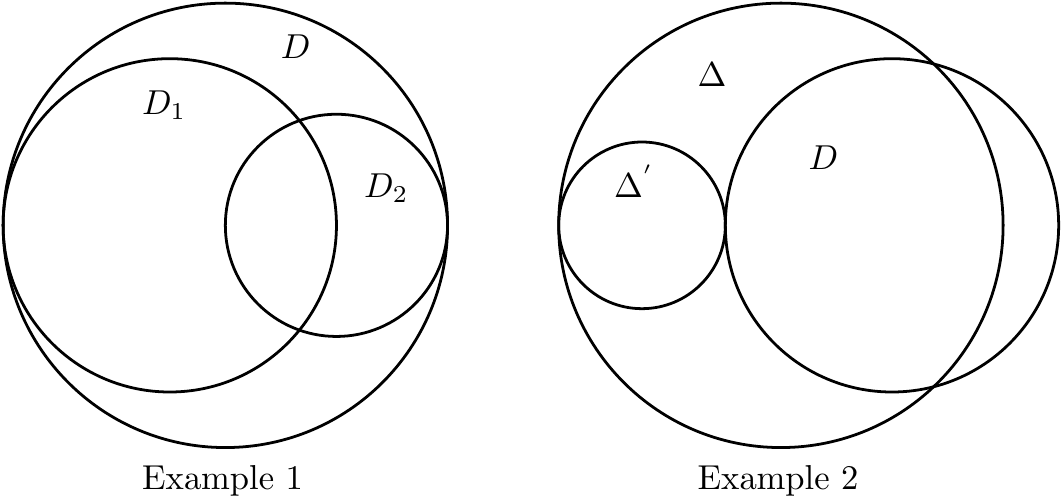}
\end{center}
\caption{Examples for lemmas \ref{lem:UAP2.4.13} and \ref{lem:UAP2.4.14}.}
\label{fig:UAPdisclemmas}
\end{figure}
\begin{lemma}
Let $\mathcal{F}$ be a non-empty, nested collection of open discs in $\mathbb{C}$, such that we have $\sup\{r(E):E\in\mathcal{F}\}<\infty$. Then $\bigcup\mathcal{F}$ is an open disc $D$. Further, for $\mathcal{F}$ ordered by inclusion, $r(D)=\lim_{E\in\mathcal{F}}r(E)=\sup_{E\in\mathcal{F}}r(E)$. 
\label{lem:UAP2.4.11} 
\end{lemma}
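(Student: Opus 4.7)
The plan is to use the nestedness to extract an increasing sequence of discs from $\mathcal{F}$ whose radii approach the supremum, then show that their centres form a Cauchy sequence whose limit is the centre of the desired union. The driving observation is the elementary fact that, for open discs $E_1, E_2 \subseteq \mathbb{C}$ with centres $c_1, c_2$ and radii $r_1, r_2$, one has $E_1 \subseteq E_2$ if and only if $|c_1 - c_2| + r_1 \leq r_2$.

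Setting $R := \sup\{r(E) : E \in \mathcal{F}\}$, I would first pick a sequence $(E_n)$ in $\mathcal{F}$ with $r(E_n) \to R$ and, using that any two elements of $\mathcal{F}$ are comparable, replace $E_n$ by the $\subseteq$-largest of $E_1, \ldots, E_n$ to arrange $E_1 \subseteq E_2 \subseteq \cdots$. Writing $E_n$ with centre $c_n$ and radius $r_n$, the characterisation above gives $|c_n - c_m| \leq r_m - r_n$ for $n \leq m$, so $(c_n)$ is Cauchy and converges to some $c \in \mathbb{C}$. Next I would argue that every $E \in \mathcal{F}$ lies inside some $E_n$: if $r(E) < R$, choose $n$ with $r_n > r(E)$; then nestedness forces $E \subseteq E_n$, since the alternative $E_n \subseteq E$ would give $r_n \leq r(E)$. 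The case that $R$ is attained by some $E_0 \in \mathcal{F}$ is handled separately but is trivial, because then $E_0$ must be the $\subseteq$-maximum of $\mathcal{F}$ and $\bigcup \mathcal{F} = E_0$ is already an open disc of radius $R$. In the remaining case $\bigcup \mathcal{F} = \bigcup_n E_n$.

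It then remains to show $\bigcup_n E_n = \{z \in \mathbb{C} : |z - c| < R\}$. For the forward inclusion, letting $m \to \infty$ in $|c_n - c_m| \leq r_m - r_n$ gives $|c_n - c| \leq R - r_n$, so any $z$ with $|z - c_n| < r_n$ satisfies $|z - c| < R$. For the reverse inclusion, given $z$ with $|z - c| < R$, set $\varepsilon := R - |z - c|$ and choose $n$ with $R - r_n < \varepsilon/2$ and $|c - c_n| < \varepsilon/2$; then $|z - c_n| < R - \varepsilon/2 < r_n$, so $z \in E_n$. This identifies $D := \bigcup \mathcal{F}$ as the open disc of centre $c$ and radius $R$, giving $r(D) = R$. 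The final equalities $r(D) = \lim_{E \in \mathcal{F}} r(E) = \sup_{E \in \mathcal{F}} r(E)$ are immediate because $r$ is monotone along the inclusion-ordered net and bounded above by $R$. There is no serious obstacle; the only care needed is the bookkeeping separating the ``supremum attained'' case from the ``supremum not attained'' case, and verifying that the extracted sequence can be made increasing, which follows since a countable totally ordered subfamily admits a cofinal increasing reindexing.
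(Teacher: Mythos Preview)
Your argument is correct. The characterisation $E_1\subseteq E_2\iff |c_1-c_2|+r_1\le r_2$ for open discs holds, your passage to an increasing subsequence by taking successive maxima is valid since $\mathcal{F}$ is totally ordered by inclusion, and both inclusions in the identification of $\bigcup_n E_n$ with the open disc of centre $c$ and radius $R$ go through as written (indeed, once you have $|c_n-c|\le R-r_n$, the second condition $|c-c_n|<\varepsilon/2$ in the reverse inclusion is automatic from the first).

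There is nothing to compare against: the paper does not supply its own proof of this lemma but instead cites it from Feinstein--Heath and Heath's thesis. Your proof is a clean, self-contained verification of the cited result. One minor remark: your closing parenthetical that ``a countable totally ordered subfamily admits a cofinal increasing reindexing'' is not literally true in general (think of $\omega$ with the reverse order), but your actual construction---replacing $E_n$ by the maximum of $E_1,\ldots,E_n$---does not rely on that claim and is correct as stated.
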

\begin{lemma}
Let $\mathcal{F}$ be a non-empty, nested collection of closed discs in $\mathbb{C}$, such that we have $\inf\{r(E):E\in\mathcal{F}\}>0$. Then $\bigcap\mathcal{F}$ is a closed disc $\Delta$. Further, for $\mathcal{F}$ ordered by reverse inclusion, $r(\Delta)=\lim_{E\in\mathcal{F}}r(E)=\inf_{E\in\mathcal{F}}r(E)$.
\label{lem:UAP2.4.12}
\end{lemma}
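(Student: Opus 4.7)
The plan is to express $\bigcap \mathcal{F}$ explicitly as a closed disc by pinning down its center and radius. Write each $E \in \mathcal{F}$ as $\bar{B}_{r_E}(a_E)$ and set $r := \inf_{E \in \mathcal{F}} r_E > 0$. Since $\mathcal{F}$, ordered by reverse inclusion, is nested, the radii form a monotone decreasing net with infimum $r$, so $\lim_{E \in \mathcal{F}} r_E = r$.

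The key geometric fact I would use is that for closed discs $E \subseteq E'$ in $\mathbb{C}$ one has $|a_E - a_{E'}| + r_E \leq r_{E'}$, because the point of $E$ farthest from $a_{E'}$ lies at distance $|a_E - a_{E'}| + r_E$ from $a_{E'}$ and must still belong to $E'$. From this I would show that the centers $(a_E)_{E \in \mathcal{F}}$ form a Cauchy net in $\mathbb{C}$: given $\varepsilon > 0$, pick $E_\varepsilon \in \mathcal{F}$ with $r_{E_\varepsilon} < r + \varepsilon$; for any $E, E' \in \mathcal{F}$ contained in $E_\varepsilon$, nesting forces (say) $E \subseteq E'$, yielding $|a_E - a_{E'}| \leq r_{E'} - r_E \leq r_{E_\varepsilon} - r < \varepsilon$. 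By completeness of $\mathbb{C}$, the centers converge to some $a \in \mathbb{C}$.

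I would then claim $\bigcap \mathcal{F} = \bar{B}_r(a)$. Passing to the limit along $\mathcal{F}$ in the inequality $|a_{E'} - a_E| \leq r_E - r_{E'}$ (valid for $E' \subseteq E$) gives $|a - a_E| \leq r_E - r$ for each fixed $E \in \mathcal{F}$. Hence if $z \in \bar{B}_r(a)$ then $|z - a_E| \leq |z - a| + |a - a_E| \leq r + (r_E - r) = r_E$, so $z \in E$ for every $E$, yielding $\bar{B}_r(a) \subseteq \bigcap \mathcal{F}$. Conversely, if $z \in \bigcap \mathcal{F}$ then $|z - a_E| \leq r_E$ for every $E$, and taking the limit yields $|z - a| \leq r$, i.e., $z \in \bar{B}_r(a)$. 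The equalities $r(\Delta) = \inf r_E = \lim r_E$ then follow at once from monotone convergence of the net of radii.

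The main obstacle is that $\mathcal{F}$ is an arbitrary nested family rather than a countable sequence, which forces one to work with nets (or to reduce to a cofinal sequence). In a sequential reduction one would pick $E_n \in \mathcal{F}$ with $r_{E_n} \downarrow r$ and verify that $\bigcap_n E_n = \bigcap \mathcal{F}$; the only delicate case is when some $E \in \mathcal{F}$ is strictly contained in every $E_n$, which forces $r_E = r$, after which the identification with $\bar{B}_r(a)$ closes the argument. Everything else is routine triangle-inequality bookkeeping, entirely analogous to (and dual to) the proof of Lemma~\ref{lem:UAP2.4.11} for open discs.
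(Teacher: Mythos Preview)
Your argument is correct: the key containment inequality $|a_E - a_{E'}| + r_E \leq r_{E'}$ for $E \subseteq E'$ gives a Cauchy net of centers, the limit center $a$ satisfies $|a - a_E| \leq r_E - r$ for every $E$, and the two inclusions establishing $\bigcap\mathcal{F} = \bar{B}_r(a)$ follow by the triangle inequality and passage to the limit. The handling of the net versus sequence issue is also fine, since nestedness of $\mathcal{F}$ makes it totally ordered and the radii are monotone along it.

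There is nothing to compare against in the paper itself: Lemma~\ref{lem:UAP2.4.12} is stated without proof and attributed to \cite{Feinstein-Heath} and \cite[\S2.4.1]{Heath}. Your proof is a natural direct argument and is essentially the dual of the standard proof of Lemma~\ref{lem:UAP2.4.11}, as you note.
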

\begin{proof}[Proof of Theorem \ref{thr:UAPHT}]
At the heart of the proof of Theorem \ref{thr:UAPHT} is a completely defined map $f:H\rightarrow H$ which we now define case by case.
\begin{definition}
\label{def:UAPfHtoH}
Let $f:H\rightarrow H$ be the self map with the following construction.\\
{\em{Case 1:}} If $h\in H$ is a classical disc assignment function then define $f(h):=h$.\\
{\em{Case 2:}} If $h\in H$ is not classical then for $h:S\rightarrow\mathcal{O}$ let
\begin{equation*}
I_{h}:=\{(i,j)\in S^{2}:\bar{h}(i)\cap\bar{h}(j)\not=\emptyset, i\not= j\}.
\end{equation*}
We then have lexicographic ordering on $I_{h}$ given by 
\begin{equation*}
(i,j)\lesssim(i^{'},j^{'})\mbox{ if and only if $i<i^{'}$ or ($i=i^{'}$ and $j\le j^{'}$).}
\end{equation*}
Since this is a well-ordering on $I_{h}$, let $(n,m)$ be the minimum element of $I_{h}$ and hence note that $m\not= 0$ since $m>n$. We proceed toward defining $f(h):S^{'}\rightarrow\mathcal{O}$. 
\begin{equation*}
\mbox{Define $S^{'}:=S\backslash\{m\}$ and for $i\in S^{'}\backslash\{n\}$ we define $f(h)(i):=h(i)$.}
\end{equation*}
It remains for the definition of $f(h)(n)$ to be given and to this end we have the following two cases.\\ 
{\em{Case 2.1:}} $n\not=0$. In this case, by Definition \ref{def:UAPSC}, we note that both $h(m)$ and $h(n)$ are open discs. Associating $h(m)$ and $h(n)$ with $D_{1}$ and $D_{2}$ of Lemma \ref{lem:UAP2.4.13} we define $f(h)(n)$ to be the open disc satisfying the properties of $D$ of the lemma. Note in particular that,
\begin{equation}
h(m)\cup h(n)\subseteq f(h)(n)\mbox{ with }n<m.
\label{equ:UAPsset1}
\end{equation}
{\em{Case 2.2:}} $n=0$. In this case, by Definition \ref{def:UAPSC}, we note that $h(m)$ is an open disc and $h(0)$ is the complement of a closed disc. Associate $h(m)$ with $D$ from Lemma \ref{lem:UAP2.4.14} and put $\Delta :=\mathbb{C}\backslash h(0)$. Since $(0,m)\in I_{h}$ we have $\bar{h}(0)\cap\bar{h}(m)\not=\emptyset$ and so $\bar{h}(m)\not\subseteq\mbox{\textup{int}}\Delta$, noting $\mbox{\textup{int}}\Delta=\mathbb{C}\backslash\bar{h}(0)$. Further, since $h\in H$ we have $r(h(m))<r(\Delta)$ and so $\Delta\not\subseteq\bar{h}(m)$. Therefore the conditions of Lemma \ref{lem:UAP2.4.14} are satisfied for $h(m)$ and $\Delta$. Hence we define $f(h)(0)$ to be the complement of the closed disc satisfying the properties of $\Delta^{'}$ of Lemma \ref{lem:UAP2.4.14}. Note in particular that,
\begin{equation}
h(m)\cup h(0)\subseteq f(h)(0)\mbox{ with }0<m.
\label{equ:UAPsset2}
\end{equation}
\end{definition}
For this definition of the map $f$ we have yet to show that $f$ maps into $H$. We now show this together with certain other useful properties of $f$.
\begin{lemma}Let $h\in H$, then the following hold:
\begin{enumerate}
\item[(i)]
$f(h)\in H$ with $\delta_{f(h)}\geq\delta_{h}$;
\item[(ii)]
For $(h:S\rightarrow\mathcal{O})$ $\mapsto$ $(f(h):S^{'}\rightarrow\mathcal{O})$ we have $S^{'}\subseteq S$ with equality if and only if $h$ is classical. Otherwise $S^{'}=S\backslash\{m\}$ for some $m\in S\backslash\{0\}$;
\item[(iii)]
$X_{f(h)}\subseteq X_{h}$;
\item[(iv)]
For all $i\in S^{'}, h(i)\subseteq f(h)(i)$.
\end{enumerate}
\label{lem:UAPf}
\end{lemma}
\begin{proof}
We need only check (i) and (iii) for cases 2.1 and 2.2 of the definition of $f$, as everything else is immediate. Let $h\in H$.\\
(i) It is clear that $f(h)$ is a disc assignment function. It remains to check that $\delta_{f(h)}\geq\delta_{h}$.\\ 
For Case 2.1 we have, by Lemma \ref{lem:UAP2.4.13},
\begin{align*}
\delta_{h}&=r(\mathbb{C}\backslash h(0))-(r(h(m))+r(h(n)))-\sum_{i\in S\backslash\{0,m,n\}}r(h(i))\\
&\leq r(\mathbb{C}\backslash h(0))-r(f(h)(n))-\sum_{i\in S\backslash\{0,m,n\}}r(h(i))=\delta_{f(h)}.
\end{align*}
For Case 2.2 we have, by Lemma \ref{lem:UAP2.4.14},
\begin{align*}
\delta_{h}&=r(\mathbb{C}\backslash h(0))-r(h(m))-\sum_{i\in S\backslash\{0,m\}}r(h(i))\\
&\leq r(\mathbb{C}\backslash f(h)(0))-\sum_{i\in S\backslash\{0,m\}}r(h(i))=\delta_{f(h)}.
\end{align*}
(iii) Since $X_{h}=\mathbb{C}\backslash\bigcup_{i\in S}h(i)$ we require $\bigcup_{i\in S}h(i)\subseteq\bigcup_{i\in S^{'}}f(h)(i)$.\\
For Case 2.1 we have by Lemma \ref{lem:UAP2.4.13} that $h(m)\cup h(n)\subseteq f(h)(n)$, as shown at (\ref{equ:UAPsset1}), giving $\bigcup_{i\in S}h(i)\subseteq\bigcup_{i\in S^{'}}f(h)(i)$.\\ For Case 2.2 put $\Delta:=\mathbb{C}\backslash h(0)$ and $\Delta^{'}:=\mathbb{C}\backslash f(h)(0)$. We have by Lemma \ref{lem:UAP2.4.14} that $\Delta^{'}\subseteq\Delta$ and $h(m)\cap\Delta^{'}=\emptyset$. Hence $h(0)\cup h(m)\subseteq f(h)(0)$, as shown at (\ref{equ:UAPsset2}), and so $\bigcup_{i\in S}h(i)\subseteq\bigcup_{i\in S^{'}}f(h)(i)$ as required.
\end{proof}
We will use $f:H\rightarrow H$ to construct an ordinal sequence of disc assignment functions and then apply a cardinality argument to show that this ordinal sequence must stabilise at a classical disc assignment function. We construct the ordinal sequence so that it has the right properties.
\begin{definition}Let $h\in H$.
\begin{enumerate}
\item[(a)]
Define $h^{0}:S_{0}\rightarrow\bf{D}$ by $h^{0}:=h$.
\end{enumerate}
Now let $\alpha>0$ be an ordinal for which we have defined $h^{\beta}\in H$ for all $\beta<\alpha$.
\begin{enumerate}
\item[(b)]
If $\alpha$ is a successor ordinal then define $h^{\alpha}:S_{\alpha}\rightarrow\mathcal{O}$ by $h^{\alpha}:=f(h^{\alpha-1})$.
\item[(c)]
If $\alpha$ is a limit ordinal then define $h^{\alpha}:S_{\alpha}\rightarrow\mathcal{O}$ as follows. 
\begin{equation*}
\mbox{Set } S_{\alpha}:=\bigcap_{\beta<\alpha}S_{\beta}. \mbox{ Then for } n\in S_{\alpha} \mbox{ define } h^{\alpha}(n):=\bigcup_{\beta<\alpha}h^{\beta}(n).
\end{equation*}
\end{enumerate}
\label{def:UAPh ord}
\end{definition}
Suppose that for every ordinal $\alpha$ for which Definition \ref{def:UAPh ord} can be applied we have $h^{\alpha}\in H$. Then Definition \ref{def:UAPh ord} can be applied for every ordinal $\alpha$ by transfinite induction and therefore defines an ordinal sequence of disc assignment function. We will use transfinite induction to prove Lemma \ref{lem:UAPh ord} below which asserts that $h^{\alpha}$ is an element of $H$ as well as other useful properties of $h^{\alpha}$.
\begin{lemma}
Let $\alpha$ be an ordinal number and let $h\in H$. Then the following hold:
\begin{enumerate}
\item[($\alpha$,1)]
$h^{\alpha}\in H$ with $\delta_{h^{\alpha}}\geq\delta_{h}$;
\begin{enumerate}
\item[($\alpha$,1.1)]
$0\in S_{\alpha}$;
\item[($\alpha$,1.2)]
$h^{\alpha}(0)$ is the complement of a closed disc and\\ 
$h^{\alpha}(n)$ is an open disc for all $n\in S_{\alpha}\backslash\{0\}$;
\item[($\alpha$,1.3)]
$\sum_{n\in S_{\alpha}\backslash \{0\}}r(h^{\alpha}(n))\leq r(\mathbb{C}\backslash h^{\alpha}(0))-\delta_{h}$;
\end{enumerate}
\item[($\alpha$,2)]
For all $\beta\leq\alpha$ we have $S_{\alpha}\subseteq S_{\beta}$; 
\item[($\alpha$,3)]
For all $\beta\leq\alpha$ we have $X_{h^{\alpha}}\subseteq X_{h^{\beta}}$;
\item[($\alpha$,4)]
For all $n\in S_{\alpha}$, $\{h^{\beta}(n):\beta\leq\alpha\}$ is a nested increasing family of open sets.
\end{enumerate}
\label{lem:UAPh ord}
\end{lemma}
\begin{proof} We will use transfinite induction.\\
For $\alpha$ an ordinal number let $P(\alpha)$ be the proposition, Lemma \ref{lem:UAPh ord} holds at $\alpha$.\\
The base case $P(0)$ is immediate and our inductive hypothesis is that for all $\beta<\alpha$, $P(\beta)$ holds.\\
Now for $\alpha$ a successor ordinal we have $h^{\alpha}=f(h^{\alpha-1})$ and so $P(\alpha)$ is immediate by the inductive hypothesis and Lemma \ref{lem:UAPf}. Now suppose $\alpha$ is a limit ordinal. 
We have $S_{\alpha}:=\bigcap_{\beta<\alpha}S_{\beta}$ giving, for all $\beta\le\alpha$, $S_{\alpha}\subseteq S_{\beta}$. Hence ($\alpha$,2) holds. Also for all $\beta<\alpha$ we have $0\in S_{\beta}$ by ($\beta$,1.1). So $0\in S_{\alpha}$ showing that ($\alpha$,1.1) holds. To show ($\alpha$,1.2) we will use lemmas \ref{lem:UAP2.4.11} and \ref{lem:UAP2.4.12}.
\begin{enumerate}
\item[(i)]
Now for all $n\in S_{\alpha}\backslash\{0\}$, $\{h^{\beta}(n):\beta<\alpha\}$ is a nested increasing family of open discs by ($\beta$,1.2) and ($\beta$,4).
\item[(ii)]
Further, $\{\mathbb{C}\backslash h^{\beta}(0):\beta<\alpha\}$ is a nested decreasing family of closed discs by ($\beta$,1.2) and ($\beta$,4).
\item[(iii)]
Now for $n\in S_{\alpha}\backslash\{0\}$ and $\beta<\alpha$ we have\\ $r(h^{\beta}(n))\leq\sum_{m\in S_{\beta}\backslash\{0\}}r(h^{\beta}(m))=r(\mathbb{C}\backslash h^{\beta}(0))-\delta_{h^{\beta}}\leq r(\mathbb{C}\backslash h(0))-\delta_{h}$, by ($\beta$,1) and (ii). Hence $\sup\{r(h^{\beta}(n)):\beta<\alpha\}\leq r(\mathbb{C}\backslash h(0))-\delta_{h}$.
So by (i) and Lemma \ref{lem:UAP2.4.11} we have for $n\in S_{\alpha}\backslash\{0\}$ that 
\begin{equation*}
h^{\alpha}(n):=\bigcup_{\beta<\alpha}h^{\beta}(n) 
\end{equation*}
is an open disc with, 
\begin{equation*}
r(h^{\alpha}(n))=\sup_{\beta<\alpha}r(h^{\beta}(n))\leq r(\mathbb{C}\backslash h(0))-\delta_{h}.
\end{equation*}
\item[(iv)]
Now for $\beta<\alpha$ we have $r(\mathbb{C}\backslash h^{\beta}(0))\geq\delta_{h}$ by ($\beta$,1.3).\\
Hence $\inf\{r(\mathbb{C}\backslash h^{\beta}(0)):\beta<\alpha\}\geq\delta_{h}$. So by De Morgan, (ii) and Lemma \ref{lem:UAP2.4.12} we have 
\begin{equation*}
\mathbb{C}\backslash h^{\alpha}(0):=\mathbb{C}\backslash\bigcup_{\beta<\alpha}h^{\beta}(0)=\bigcap_{\beta<\alpha}\mathbb{C}\backslash h^{\beta}(0) 
\end{equation*}
is a closed disc with, 
\begin{equation*}
r(\mathbb{C}\backslash h^{\alpha}(0))=\inf_{\beta<\alpha}r(\mathbb{C}\backslash h^{\beta}(0))\geq\delta_{h}. 
\end{equation*}
Hence $h^{\alpha}(0)$ is the complement of a closed disc and so ($\alpha$,1.2) holds.
\end{enumerate}
We now show that ($\alpha$,4) holds. By ($\beta$,4) we have, for all $n\in S_{\alpha}$, $\{h^{\beta}(n):\beta<\alpha\}$ is a nested increasing family of open sets. We also have $h^{\alpha}(n)=\bigcup_{\beta<\alpha}h^{\beta}(n)$ so, for all $\beta\leq\alpha$, $h^{\beta}(n)\subseteq h^{\alpha}(n)$ and $h^{\alpha}(n)$ is an open set since ($\alpha$,1.2) holds. Hence ($\alpha$,4) holds. 
We will now show that ($\alpha$,1.3) holds. We first prove that, for all $\lambda<\alpha$, we have
\begin{equation}
\sum_{m\in S_{\alpha}\backslash\{0\}}r(h^{\alpha}(m))\leq r(\mathbb{C}\backslash h^{\lambda}(0))-\delta_{h}.
\label{equ:UAPinequ1}
\end{equation}
Let $\lambda<\alpha$, and suppose, towards a contradiction, that 
\begin{equation}
\sum_{m\in S_{\alpha}\backslash\{0\}}r(h^{\alpha}(m))>r(\mathbb{C}\backslash h^{\lambda}(0))-\delta_{h},
\label{equ:UAPcont}
\end{equation}
noting that the right hand side of (\ref{equ:UAPcont}) is non-negative by ($\lambda$,1.3).\\ 
Set
\begin{equation*}
\varepsilon:=\frac{1}{2}\left(\sum_{m\in S_{\alpha}\backslash\{0\}}r(h^{\alpha}(m))-(r(\mathbb{C}\backslash h^{\lambda}(0))-\delta_{h})\right)>0.
\end{equation*}
Then there exists $n\in S_{\alpha}\backslash\{0\}$ such that for $S_{\alpha}|_{1}^{n}:=\{m\in S_{\alpha}\backslash\{0\}:m\leq n\}$ we have 
\begin{equation}
\sum_{m\in S_{\alpha}|_{1}^{n}}r(h^{\alpha}(m))>r(\mathbb{C}\backslash h^{\lambda}(0))-\delta_{h}+\varepsilon>0. 
\label{equ:UAPinequ2}
\end{equation}
Further for each $m\in S_{\alpha}|_{1}^{n}$ we have, by (iii), $r(h^{\alpha}(m))=\sup_{\beta<\alpha}r(h^{\beta}(m))$. Hence for each $m\in S_{\alpha}|_{1}^{n}$ there exists $\beta_{m}<\alpha$ such that $r(h^{\beta_{m}}(m))\geq r(h^{\alpha}(m))-\frac{1}{2k}\varepsilon$, for $k:=|S_{\alpha}|_{1}^{n}|$, $k\not=0$ by (\ref{equ:UAPinequ2}). Let $\lambda^{'}:=\max\{\beta_{m}:m\in S_{\alpha}|_{1}^{n}\}<\alpha$ and note that this is a maximum over a finite set of elements since $S_{\alpha}|_{1}^{n}\subseteq\mathbb{N}$ is finite. Now for any $\gamma$ with $\max\{\lambda,\lambda^{'}\}\leq\gamma<\alpha$ we have,
\begin{align*}
\sum_{m\in S_{\gamma}\backslash\{0\}}r(h^{\gamma}(m))&\geq\sum_{m\in S_{\alpha}\backslash\{0\}}r(h^{\gamma}(m))& &(\mbox{since }S_{\alpha}\subseteq S_{\gamma})&\\
&\geq\sum_{m\in S_{\alpha}|_{1}^{n}}r(h^{\gamma}(m))& &&\\
&\geq\sum_{m\in S_{\alpha}|_{1}^{n}}r(h^{\beta_{m}}(m))& &(\mbox{by ($\gamma$,4)})&\\
&\geq\sum_{m\in S_{\alpha}|_{1}^{n}}(r(h^{\alpha}(m))-\frac{\varepsilon}{2k})& &(\mbox{by the above})&\\
&>r(\mathbb{C}\backslash h^{\lambda}(0))-\delta_{h}+\varepsilon-k\frac{\varepsilon}{2k}& &(\mbox{by (\ref{equ:UAPinequ2}) and }k:=|S_{\alpha}|_{1}^{n}|)&\\
&>r(\mathbb{C}\backslash h^{\lambda}(0))-\delta_{h}& &&\\
&\geq r(\mathbb{C}\backslash h^{\gamma}(0))-\delta_{h}& &(\mbox{by (ii)}).&
\end{align*}
This contradicts ($\gamma$,1.3). Hence we have shown that, for all $\lambda<\alpha$, (\ref{equ:UAPinequ1}) holds.\\ Now by (iv) we have $r(\mathbb{C}\backslash h^{\alpha}(0))=\inf_{\lambda<\alpha}r(\mathbb{C}\backslash h^{\lambda}(0))$.\\ Hence we have $\sum_{m\in S_{\alpha}\backslash\{0\}}r(h^{\alpha}(m))\leq r(\mathbb{C}\backslash h^{\alpha}(0))-\delta_{h}$ and so ($\alpha$,1.3) holds.

We now show that ($\alpha$,3) holds. We will show that for all ordinals $\beta<\alpha$,\\ $\bigcup_{i\in S_{\beta}}h^{\beta}(i)\subseteq\bigcup_{i\in S_{\alpha}}h^{\alpha}(i)$. Let $\beta<\alpha$ and $z\in\bigcup_{i\in S_{\beta}}h^{\beta}(i)$. Define,
\begin{equation*}
m:=\min\{i\in\mathbb{N}_{0}:\mbox{ there exists }\lambda<\alpha\mbox{ with } i\in S_{\lambda}\mbox{ and }z\in h^{\lambda}(i)\}.
\end{equation*}
By the definition of $m$ there exists $\zeta<\alpha$ with $m\in S_{\zeta}$ and $z\in h^{\zeta}(m)$. We claim that the set $\{\lambda<\alpha:m\not\in S_{\lambda}\}$ is empty. To prove this suppose towards a contradiction that we can define,
\begin{equation*}
\lambda^{'}:=\min\{\lambda<\alpha:m\not\in S_{\lambda}\}.
\end{equation*}
Then $\lambda^{'}>0$ since, by ($\zeta$,2), $S_{\zeta}\subseteq S_{0}$ with $m\in S_{\zeta}$. If $\lambda^{'}$ is a limit ordinal then $m\not\in S_{\lambda^{'}}=\bigcap_{\gamma<\lambda^{'}}S_{\gamma}$ giving $m\not\in S_{\gamma}$, for some $\gamma<\lambda^{'}$, and this contradicts the definition of $\lambda^{'}$. If $\lambda^{'}$ is a successor ordinal then $h^{\lambda^{'}}=f(h^{\lambda^{'}-1})$ with $m\in S_{\lambda^{'}-1}$ by the definition of $\lambda^{'}$. By $m\not\in S_{\lambda^{'}}$ and Definition \ref{def:UAPfHtoH} of $f:H\rightarrow H$, $h^{\lambda^{'}-1}$ is not classical. Therefore by (\ref{equ:UAPsset1}) and (\ref{equ:UAPsset2}) of Definition \ref{def:UAPfHtoH} there is $n\in S_{\lambda^{'}}$ with $n<m$ and $h^{\lambda^{'}-1}(m)\subseteq h^{\lambda^{'}}(n)$. Further for all $\lambda$ with $\lambda^{'}\leq\lambda<\alpha$ we have $m\not\in S_{\lambda}$ since $m\not\in S_{\lambda^{'}}$ and, by ($\lambda$,2), $S_{\lambda}\subseteq S_{\lambda^{'}}$. Hence we have $\zeta<\lambda^{'}$. Now, by ($\lambda^{'}-1$, 4), $\{h^{\gamma}(m):\gamma\leq\lambda^{'}-1\}$ is a nested increasing family of sets giving $z\in h^{\zeta}(m)\subseteq h^{\lambda^{'}-1}(m)\subseteq h^{\lambda^{'}}(n)$ with $n\in S_{\lambda^{'}}$. This contradicts the definition of $m$ since $n<m$. Hence we have shown that $\{\lambda<\alpha:m\not\in S_{\lambda}\}$ is empty giving $m\in S_{\alpha}=\bigcap_{\lambda<\alpha}S_{\lambda}$. Therefore, by Definition \ref{def:UAPh ord} and the definition of $\zeta$, we have $z\in h^{\zeta}(m)\subseteq\bigcup_{\lambda<\alpha}h^{\lambda}(m)=h^{\alpha}(m)\subseteq\bigcup_{i\in S_{\alpha}}h^{\alpha}(i)$ as required. Hence ($\alpha$,3) holds. Therefore we have shown, by the principal of transfinite induction, that $P(\alpha)$ holds and this concludes the proof of Lemma \ref{lem:UAPh ord}.
\end{proof}
Recall that our aim is to prove that for every $h\in H$ there is a classical disc assignment function $h^{'}\in H$ with $X_{h^{'}}\subseteq X_{h}$ and $\delta_{h^{'}}\geq\delta_{h}$. We have the following closing argument using cardinality. By ($\alpha$,2) of Lemma \ref{lem:UAPh ord} we obtain a nested ordinal sequence of domains $(S_{\alpha})$,\\
$\mathbb{N}_{0}\supseteq S\supseteq S_{1}\supseteq S_{2}\supseteq\cdots\supseteq S_{\omega}\supseteq S_{\omega+1}\supseteq\cdots\supseteq\{0\}$.\\
Now setting $S_{\alpha}^{c}:=\mathbb{N}_{0}\backslash S_{\alpha}$ gives a nested ordinal sequence $(S_{\alpha}^{c})$,\\
$\emptyset\subseteq S^{c}\subseteq S_{1}^{c}\subseteq S_{2}^{c}\subseteq\cdots\subseteq S_{\omega}^{c}\subseteq S_{\omega+1}^{c}\subseteq\cdots\subseteq\mathbb{N}$. 
\begin{lemma}
For the disc assignment function $h^{\beta}$ we have,\\
$h^{\beta}$ is classical if and only if $(S_{\alpha})$ has stabilised at $\beta$, i.e. $S_{\beta+1}=S_{\beta}$.
\label{lem:UAPstab}
\end{lemma}
\begin{proof}
The proof follows directly from (ii) of Lemma \ref{lem:UAPf}.
\end{proof}
Now let $\omega_{1}$ be the first uncountable ordinal. Suppose towards a contradiction that, for all $\beta<\omega_{1}$, $(S_{\alpha})$ has not stabilised at $\beta$. Then for each $\beta<\omega_{1}$ there exists some $n_{\beta+1}\in\mathbb{N}$ such that $n_{\beta+1}\in S_{\beta+1}^{c}$ but $n_{\beta+1}\not\in S_{\alpha}^{c}$ for all $\alpha\leq\beta$. Hence since there are uncountably many $\beta<\omega_{1}$ we have $S_{\omega_{1}}^{c}$ uncountable with $S_{\omega_{1}}^{c}\subseteq\mathbb{N}$, a contradiction. Therefore there exists $\beta<\omega_{1}$ such that $(S_{\alpha})$ has stabilised at $\beta$ and so, by Lemma \ref{lem:UAPstab}, $h^{\beta}$ is classical. Now by ($\beta$,1) of Lemma \ref{lem:UAPh ord} we have $h^{\beta}\in H$ with $\delta_{h^{\beta}}\geq\delta_{h}$ and by ($\beta$,3) we have $X_{h^{\beta}}\subseteq X_{h}$. In particular this completes the proof of Theorem \ref{thr:UAPHT} and the Feinstein-Heath Swiss cheese ``Classicalisation'' theorem.
\end{proof}
The proof of Theorem \ref{thr:UAPFHT} as presented above proceeded without reference to a theory of allocation maps. In the original proof of Feinstein and Heath, \cite{Feinstein-Heath}, allocation maps play a central role. We will recover a key allocation map from the original proof using the map $f:H\rightarrow H$ of Definition \ref{def:UAPfHtoH}. Here is the definition of an allocation map as it appears in \cite{Feinstein-Heath}.
\begin{definition}
Let ${\bf{D}}=(\Delta,\mathcal{D})$ be a Swiss cheese. We define 
\begin{equation*}
\widetilde{{\bf{D}}}=\mathcal{D}\cup\{\mathbb{C}\backslash\Delta\}.
\end{equation*}
Now let ${\bf{E}}=(\mathsf{E},\mathcal{E})$ be a second Swiss cheese, and let $f:\widetilde{{\bf{D}}}\rightarrow\widetilde{{\bf{E}}}$. We define $\mathcal{G}(f)=f^{-1}(\mathbb{C}\backslash\mathsf{E})\cap\mathcal{D}$. We say that $f$ is an {\bf{allocation map}} if the following hold:
\begin{enumerate}
\item[(A1)] for each $U\in\widetilde{{\bf{D}}}$, $U\subseteq f(U)$;
\item[(A2)]
\begin{equation*}
\sum_{D\in\mathcal{G}(f)}r(D)\geq r(\Delta)-r(\mathsf{E});
\end{equation*}
\item[(A3)] for each $E\in\mathcal{E}$,
\begin{equation*}
\sum_{D\in f^{-1}(E)}r(D)\geq r(E).
\end{equation*}
\end{enumerate}
\label{def:UAPAllo}
\end{definition}
Let ${\bf{D}}$ be the Swiss cheese of Theorem \ref{thr:UAPFHT} and let $\mathcal{S}({\bf{D}})$ be the family of allocation maps defined on $\widetilde{{\bf{D}}}$. In \cite{Feinstein-Heath} a partial order is applied to $\mathcal{S}({\bf{D}})$ and subsequently a maximal element $f_{\mbox{\scriptsize{max}}}$ is obtained using Zorn's lemma. The connection between allocation maps and Swiss cheeses is then exploited. Towards a contradiction the non-existence of the desired classical Swiss cheese ${\bf{D^{'}}}$ of Theorem \ref{thr:UAPFHT} is assumed. This assumption implies the existence of an allocation map $f'\in\mathcal{S}({\bf{D}})$ that is higher in the partial order applied to $\mathcal{S}({\bf{D}})$ than $f_{\mbox{\scriptsize{max}}}$, a contradiction. The result follows. It is at the last stage of the original proof where a connection to the new version can be found. In the construction of Feinstein and Heath the allocation map $f'$ factorizes as $f'=g\circ f_{\mbox{\scriptsize{max}}}$ where $g$ is also an allocation map. Let ${\bf{E}}=(\mathsf{E},\mathcal{E})$ be a non-classical Swiss cheese with $\delta({\bf{E}})>0$. Using the same method of construction that Feinstein and Heath use for $g$, an allocation map $g_{\mbox{\tiny{E}}}$ defined on $\widetilde{{\bf{E}}}$ can be obtained without contradiction. Clearly $\widetilde{{\bf{E}}}\not=f_{\mbox{\scriptsize{max}}}(\widetilde{{\bf{D}}})$. We will obtain $g_{\mbox{\tiny{E}}}$ using the map $f:H\rightarrow H$ of Definition \ref{def:UAPfHtoH}. Let $h\in H$, $h:S\rightarrow\mathcal{O}$, be an injective disc assignment function such that ${\bf{D}}_{h}={\bf{E}}$ and recall from Definition \ref{def:UAPfHtoH} that $f(h):S^{'}\rightarrow\mathcal{O}$ has $S^{'}=S\backslash\{m\}$ where $(n,m)$ is the minimum element of $I_{h}$. Set ${\bf{E'}}:={\bf{D}}_{f(h)}$. By Definitions \ref{def:UAPSC} and \ref{def:UAPAllo} we have
\begin{equation*}
\widetilde{{\bf{E}}}=\widetilde{{\bf{D}}_{h}}=h(S)\mbox{ and }\widetilde{{\bf{E'}}}=\widetilde{{\bf{D}}_{f(h)}}=f(h)(S^{'}).
\end{equation*}
Now define a map $\iota:\widetilde{{\bf{E}}}\rightarrow S^{'}$ by,
\begin{equation*}
\mbox{for } U\in\widetilde{{\bf{E}}}\mbox{, }\quad\iota(U):=\begin{cases} h^{-1}(U) &\mbox{ if }\quad h^{-1}(U)\not=m\\ n &\mbox{ if }\quad h^{-1}(U)=m \end{cases},
\end{equation*}
and note that this is well defined since $h$ is injective.
\begin{figure}[h]
\begin{equation*}
\xymatrix{
\widetilde{{\bf{E}}}\ar[r]^{\mbox{{\small{$g$}}}_{\mbox{\tiny{E}}}}\ar[d]_{\iota}&\widetilde{{\bf{E'}}}\\
S^{'}\ar[ru]_{f(h)}&
}
\end{equation*}
\caption{$g_{\mbox{\tiny{E}}}=f(h)\circ\iota$.}
\label{fig:UAPgEandf}
\end{figure}
The commutative diagram in Figure \ref{fig:UAPgEandf} show how $g_{\mbox{\tiny{E}}}$ is obtained using $f:H\rightarrow H$. The construction of $f$ in Definition \ref{def:UAPfHtoH} was developed from the construction that Feinstein and Heath used for $g$. The method of combining discs in Lemma \ref{lem:UAP2.4.13} also appears in \cite{Zhang}.
\begin{remark}
\label{rem:UASCS}
Concerning classicalisation.
\begin{enumerate}
\item[(i)]
Interestingly, as Heath points out in \cite{Feinstein-Heath}, every classical Swiss cheese set in $\mathbb{C}$ with empty interior is homeomorphic to the Sierpi\'{n}ski carpet. Hence up to homeomorphism there is only 1 Swiss cheese set of this type. In particular if $X_{\bf{D}}$ is a Swiss cheese contained in $\mathbb{C}$ with empty interior then either one of the conditions $X_{\bf{D}}$ is classical or $\delta({\bf{D}})>0$ is enough for $R(X_{\bf{D}})\not=A(X_{\bf{D}})$.
\item[(ii)]
I also anticipate the possibility of an analog of Theorem \ref{thr:UAPHT} on the sphere. Let $S\subseteq\mathbb{R}^{3}$ be a sphere of finite positive radius $r_{s}$ and center $c\in\mathbb{R}^{3}$. For $a,b\in S$ let $d_{s}(a,b):=r_{s}\angle acb$ be the length of the geodesic path in $S$ from $a$ to $b$. Now $d_{s}$ is a metric with respect to which we will define open and closed $S$-discs contained in $S$. With analogy to Definition \ref{def:UASC} let ${\bf{D}}_{s}:=(\Delta,\mathcal{D})$ be a Swiss cheese on $S$. Then either $\Delta=S$ or ${\bf{D}}_{s}^{'}:=(S,\mathcal{D}\cup\{S\backslash\Delta\})$ is a Swiss cheese on $S$, since $S\backslash\Delta$ is an open $S$-disc, for which $X_{{\bf{D}}_{s}^{'}}=X_{{\bf{D}}_{s}}$ in $S$. Further we have
\begin{equation*}
\delta({\bf{D}}_{s}):=r(\Delta)-\sum_{D\in\mathcal{D}}r(D)=\pi r_{s}-r(S\backslash\Delta)-\sum_{D\in\mathcal{D}}r(D)=\delta({\bf{D}}_{s}^{'})
\end{equation*}
and so for our choice of metric on $S$ we note that $\delta$ is independent of whether we use ${\bf{D}}_{s}$ or ${\bf{D}}_{s}^{'}$. Hence the situation for the sphere is a little simpler than that for the plane since we can allow all Swiss cheeses on the sphere to have the form ${\bf{D}}_{s}:=(S,\mathcal{D})$ and avoid the need to handle a special closed $S$-disc $\Delta$. Therefore on the sphere analogs of lemmas \ref{lem:UAP2.4.14} and \ref{lem:UAP2.4.12} are not required. We will not establish here whether the condition $\delta({\bf{D}}_{s})>0$ is sufficient for the analog of Theorem \ref{thr:UAPHT} on $S$ to hold since the next step in generalising this theorem should be to establish the class of all metric spaces for which a general version of the theorem holds. However the sphere is of particular interest in the context of uniform algebras since, less one point, the sphere is homeomorphic to the plane allowing many examples of uniform algebras to be defined on subsets of the sphere.
\end{enumerate}
\end{remark}
\section{Non-complex analogs of uniform algebras}
\label{sec:UANC}
The most obvious non-complex analog of Definition \ref{def:UACUA} is obtained by simply replacing the complex numbers in the definition by some other complete valued field $F$. In this case, whilst $C_{F}(X)$ will be complete and contain the constants, we need to take care concerning the topology on $X$ when $F$ is non-Archimedean, e.g. $C_{\mathbb{Q}_{p}}([0,1])$ only contains the constant.
\begin{theorem}
Let $F$ be a complete non-Archimedean valued field and let $C_{F}(X)$ be the unital Banach $F$-algebra of all continuous $F$-valued functions defined on a compact, Hausdorff space $X$. Then $C_{F}(X)$ separates the points of $X$ if and only if $X$ is totally disconnected.
\label{thr:UAXtop}
\end{theorem}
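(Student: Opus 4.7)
The plan is to prove the two implications separately, with each direction reducing to a standard topological fact.

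For the forward direction, I would argue by contrapositive. Suppose $X$ is not totally disconnected, so there is a connected subset $C \subseteq X$ containing two distinct points $x_1, x_2$. For any $f \in C_F(X)$, the image $f(C)$ is a connected subset of $F$ under the continuous map $f$. By Lemma \ref{lem:CVFTD}, $F$ is totally disconnected, so every connected subset of $F$ is a singleton. Hence $f(x_1) = f(x_2)$ for every $f \in C_F(X)$, showing that $C_F(X)$ does not separate the points of $X$.

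For the reverse direction, suppose $X$ is totally disconnected and let $x_1, x_2 \in X$ with $x_1 \neq x_2$. The key step is to produce a clopen subset $U \subseteq X$ with $x_1 \in U$ and $x_2 \notin U$. This follows from the standard fact that in a compact Hausdorff totally disconnected space, the clopen sets form a base for the topology: since $X$ is compact Hausdorff, it is normal, and one shows that the quasi-component of $x_1$ coincides with its connected component $\{x_1\}$; then $\{x_1\} = \bigcap\{V : V \text{ clopen}, x_1 \in V\}$, and a compactness argument applied to the closed set $\{x_2\}$ disjoint from this intersection yields a single clopen $U$ separating $x_1$ from $x_2$. Given such a $U$, define $f := \chi_U \colon X \to F$ by $f(x) = 1_F$ if $x \in U$ and $f(x) = 0_F$ otherwise. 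Since $U$ and $X \setminus U$ are both clopen, the preimage of any subset of $F$ is clopen in $X$, so $f$ is continuous. Thus $f \in C_F(X)$ and $f(x_1) = 1 \neq 0 = f(x_2)$, proving that $C_F(X)$ separates the points of $X$.

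The main obstacle, if any, is the topological lemma that a compact Hausdorff totally disconnected space has a base of clopen sets; this is classical (essentially the characterization of Stone spaces), but a self-contained justification requires the component/quasi-component argument together with compactness. Everything else is routine: the forward direction uses only continuity of $f$ and the total disconnectedness of $F$, while the reverse direction uses only that locally constant $\{0,1\}$-valued functions are automatically continuous and lie in $C_F(X)$ since $F$ contains $0_F$ and $1_F$.
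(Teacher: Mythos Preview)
Your proposal is correct and follows essentially the same route as the paper. The paper packages the reverse direction into a preliminary Urysohn-type lemma (Lemma~\ref{lem:UAUrys}), using the same neighborhood-base-of-clopens fact you cite, and then defines a two-valued function on a clopen set exactly as you do; for the forward direction the paper argues directly (any connected subset must be a singleton) rather than by contrapositive, but the underlying argument---continuous images of connected sets are connected and $F$ is totally disconnected---is identical.
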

Before giving a proof of Theorem \ref{thr:UAXtop} we have the following version of Urysohn's lemma which will certainly already be known in some form because of its simplicity.
\begin{lemma}
Let $X$ be a totally disconnected, compact, Hausdorff space with finite subset $\{x,y_{1},y_{2},y_{3}, \cdots, y_{n}\}\subseteq X$, $x\not= y_{i}$ for all $i\in\{1, \cdots, n\}$ where $n\in\mathbb{N}$. Let $L$ be any non-empty topological space and $a,b\in L$. Then there exists a continuous map $h:X\longrightarrow L$ such that $h(x)=a$ and $h(y_{1})=h(y_{2})=h(y_{3})= \cdots =h(y_{n})=b$.
\label{lem:UAUrys}
\end{lemma}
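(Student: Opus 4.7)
The plan is to exploit the fact that a compact Hausdorff totally disconnected space is zero-dimensional, meaning the clopen sets form a basis for the topology. Once that is available, the desired $h$ can be built as a two-valued step function.

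First I would establish (or invoke) the following standard fact: in a compact Hausdorff space $X$, total disconnectedness is equivalent to zero-dimensionality. The nontrivial direction is that for any point $x \in X$ and closed set $C \subseteq X$ with $x \notin C$, there exists a clopen set $U$ with $x \in U$ and $U \cap C = \emptyset$. The standard argument uses that the quasi-component of $x$ in a compact Hausdorff space equals the connected component of $x$, which by total disconnectedness is $\{x\}$; hence the intersection of all clopen neighborhoods of $x$ is $\{x\}$, and a compactness argument against the closed set $C$ produces a single clopen neighborhood of $x$ disjoint from $C$.

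Applying this with $C = \{y_1, \ldots, y_n\}$ (which is closed since $X$ is Hausdorff) yields a clopen set $U \subseteq X$ with $x \in U$ and $y_i \notin U$ for each $i$. Equivalently, I could produce clopen sets $U_i$ separating $x$ from each $y_i$ individually and then set $U := \bigcap_{i=1}^{n} U_i$, which is again clopen as a finite intersection of clopen sets. Either route gives the same $U$.

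Now define $h : X \to L$ by
\begin{equation*}
h(z) := \begin{cases} a & \text{if } z \in U, \\ b & \text{if } z \in X \setminus U. \end{cases}
\end{equation*}
By construction $h(x) = a$ and $h(y_i) = b$ for every $i$. For continuity, note that for any open set $V \subseteq L$ the preimage $h^{-1}(V)$ is one of $\emptyset$, $U$, $X \setminus U$, or $X$, depending on which of $a, b$ lie in $V$; since $U$ is clopen, all four sets are open in $X$, so $h$ is continuous. The main (very mild) obstacle is simply recalling the equivalence between total disconnectedness and zero-dimensionality in the compact Hausdorff setting; once that is in hand the construction of $h$ is immediate.
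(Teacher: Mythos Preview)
Your proof is correct and follows essentially the same approach as the paper: obtain a clopen set separating $x$ from $\{y_1,\dots,y_n\}$ using that a totally disconnected compact Hausdorff space has a base of clopen sets, then define $h$ as the obvious two-valued function. The paper's version first uses the Hausdorff property to get an open $U$ containing $x$ and missing each $y_i$, then shrinks $U$ to a clopen $W$ via the clopen neighborhood base at $x$, but this is only a cosmetic difference from your argument.
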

\begin{proof}
Since $X$ is a Hausdorff space, for each $i\in\{1, \cdots, n\}$ there are disjoint open subsets $U_{i}$ and $V_{i}$ of $X$ with $x\in U_{i}$ and $y_{i}\in V_{i}$. Hence $U:=\bigcap_{i\in\{1, \cdots, n\}}U_{i}$ is an open subset of $X$ with $x\in U$ and $U\cap V_{i}=\emptyset$ for all $i\in\{1, \cdots, n\}$. Now since $X$ is a totally disconnected, compact, Hausdorff space, $x$ has a neighborhood base of clopen sets, see \cite[Theorem 29.7]{Willard} noting that $X$ is locally compact by Theorem \ref{thr:CVFHL}. Hence there is a clopen subset $W$ of $X$ with $x\in W\subseteq U$. The function $h:X\longrightarrow L$ given by $h(W):=\{a\}$ and $h(X\backslash W):=\{b\}$ is continuous as required.
\end{proof}
We now give the proof of Theorem \ref{thr:UAXtop}.
\begin{proof}
With reference to Lemma \ref{lem:UAUrys} it remains to show that $C_{F}(X)$ separates the points of $X$ only if $X$ is totally disconnected. Let $X$ be a compact, Hausdorff space such that $C_{F}(X)$ separates the points of $X$. Let $U$ be a non-empty connected subset of $X$ and let $f\in C_{F}(X)$. We note that $f(U)$ is a connected subset of $F$ since $f$ is continuous. Now, since $F$ is non-Archimedean it is totally disconnected i.e. its connected subsets are singletons. Hence $f(U)$ is a singleton and so $f$ is constant on $U$. Therefore, since $C_{F}(X)$ separates the points of $X$, $U$ is a singleton and $X$ is totally disconnected.
\end{proof}
We next consider the constraints on $C_{F}(X)$ revealed by generalisations of the Stone-Weierstrass theorem. In the real case the Stone-Weierstrass theorem for $C_{\mathbb{R}}(X)$ says that for every compact Hausdorff space $X$, $C_{\mathbb{R}}(X)$ is without a proper subalgebra that is uniformly closed, contains the real numbers and separates the points of $X$. A proof can be found in \cite[p50]{Kulkarni-Limaye1992}. The non-Archimedean case is given by a theorem of Kaplansky, see \cite[p157]{Berkovich} or \cite{Kaplansky}.
\begin{theorem}
Let $F$ be a complete non-Archimedean valued field, let $X$ be a totally disconnected compact Hausdorff space, and let $A$ be a $F$-subalgebra of $C_{F}(X)$ which satisfies the following conditions:
\begin{enumerate}
\item[(i)]
the elements of $A$ separate the points of $X$;
\item[(ii)]
for each $x\in X$ there exists $f\in A$ with $f(x)\not=0$.
\end{enumerate}
Then $A$ is everywhere dense in $C_{F}(X)$.
\label{thr:UAKapl}
\end{theorem}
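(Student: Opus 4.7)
\textbf{The strategy} is a two-step reduction followed by a non-Archimedean approximation argument. First I would reduce the theorem to showing that $\bar A$ contains the characteristic function $\chi_U$ of every clopen $U\subseteq X$; then I would build such characteristic functions as uniform limits of elements of $A$ by exploiting hypotheses (i) and (ii) and the ultrametric inequality on $F$.

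For the reduction, I would show that the locally constant $F$-valued functions are uniformly dense in $C_F(X)$. Given $f\in C_F(X)$ and $\varepsilon>0$, compactness of $X$ makes $f$ uniformly continuous and, since $X$ is totally disconnected compact Hausdorff, each point has a neighbourhood base of clopen sets (cf.\ Lemma~\ref{lem:UAUrys} or \cite[Theorem~29.7]{Willard}). Hence $X$ can be covered by finitely many clopen $V_1,\dots,V_n$ on each of which $f$ oscillates by less than $\varepsilon$; disjointifying to $W_i:=V_i\setminus\bigcup_{j<i}V_j$, still clopen, and choosing $x_i\in W_i$, the locally constant function $\sum_i f(x_i)\chi_{W_i}$ lies within $\varepsilon$ of $f$ in the sup norm. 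Since every locally constant function is an $F$-linear combination of characteristic functions of clopen sets and $\bar A$ is a closed $F$-subspace, it suffices to show $\chi_U\in\bar A$ for each clopen $U\subseteq X$.

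The core construction is a ``separating idempotent-like'' element of $A$. For each $x\neq y$ in $X$, pick $f_1\in A$ with $f_1(x)\neq f_1(y)$ by (i) and $f_2\in A$ with $f_2(x)\neq 0$ by (ii); then $h:=f_1 f_2-f_1(y)f_2\in A$ has $h(x)=f_2(x)(f_1(x)-f_1(y))\neq 0$ and $h(y)=0$, so $g_{x,y}:=h/h(x)\in A$ takes value $1$ at $x$ and $0$ at $y$. Now fix a clopen $U\subseteq X$ and $\varepsilon\in(0,1)$. For $x\in U$ and each $y\in X\setminus U$, continuity of $g_{x,y}$ provides a clopen $W_y\ni y$ with $|g_{x,y}|_F<\varepsilon$ on $W_y$ and simultaneously a clopen neighbourhood of $x$ on which $|g_{x,y}-1|_F<\varepsilon$. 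Compactness of $X\setminus U$ yields a finite subcover $W_{y_1},\dots,W_{y_n}$, and $F_x:=\prod_{j=1}^n g_{x,y_j}\in A$ then satisfies $|F_x|_F\leq\varepsilon\,M^{n-1}$ on $X\setminus U$ (with $M:=\max_j\|g_{x,y_j}\|_\infty$) and $|F_x-1|_F<\varepsilon$ on a clopen neighbourhood $V_x\subseteq U$ of $x$; here the second estimate uses the ultrametric identity $|a|_F=1$ whenever $|a-1|_F<1$, whence $|\prod a_j-1|_F\leq\max_j|a_j-1|_F$ for factors close to~$1$.

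The hard part will be patching the $F_x$ into a single uniform approximant of $\chi_U$. Using compactness, cover the clopen set $U$ by finitely many $V_{x_1},\dots,V_{x_m}\subseteq U$, and seek to combine $F_{x_1},\dots,F_{x_m}$ into an element of $\bar A$ uniformly close to $\chi_U$. Because each $F_{x_i}$ is controlled only on $V_{x_i}$ and on $X\setminus U$, not on the rest of $U$, plain products or sums will not suffice; the standard non-Archimedean device is to iterate through high powers (exploiting that $|a|_F<1$ implies $a^n\to 0$) and through polynomial expressions converging to idempotents in $\mathcal{O}_F$, thereby upgrading each $F_x$ to an element of $\bar A$ that approximates the clopen characteristic function $\chi_{V_x}$. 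Once individual $\chi_{V_x}\in\bar A$ are in hand, the inclusion-exclusion identity $\chi_{U_1\cup U_2}=\chi_{U_1}+\chi_{U_2}-\chi_{U_1}\chi_{U_2}$, which uses only the algebra operations and not the constant $1$, patches them inductively into $\chi_{\bigcup_i V_{x_i}}=\chi_U$, completing the proof.
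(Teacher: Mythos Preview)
The paper does not supply its own proof of this theorem; it is quoted with references to Kaplansky's original paper and to \cite[p157]{Berkovich}, and is followed only by a short remark on the hypotheses. So there is no in-paper argument to compare your proposal against.

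On the proposal itself: the reduction step is correct and clean (locally constant functions are uniformly dense, so it suffices to show $\chi_U\in\bar A$ for every clopen $U$), and the construction of $g_{x,y}\in A$ with $g_{x,y}(x)=1$, $g_{x,y}(y)=0$ from hypotheses (i) and (ii) is the right opening move. The ultrametric identity you use, $\bigl|\prod_j a_j-1\bigr|_F\le\max_j|a_j-1|_F$ when each $|a_j-1|_F<1$, is also valid.

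The genuine gap is precisely where you flag it. Your $F_x=\prod_j g_{x,y_j}$ is controlled only on $V_x$ and on $X\setminus U$; on $U\setminus V_x$ you have no bound whatsoever, and even on $X\setminus U$ your bound is $\varepsilon\,M^{\,n-1}$, where $M$ and $n$ depend on the cover and hence implicitly on $\varepsilon$, so this need not be small. Neither of your suggested fixes closes this. Taking high powers $F_x^{\,k}$ only helps where $|F_x|_F<1$; elsewhere $F_x^{\,k}$ may blow up. The phrase ``polynomial expressions converging to idempotents'' points in the right direction---one wants a polynomial $P\in F[t]$ with $P(0)=0$, $P(1)=1$ that contracts ultrametric neighbourhoods of $0$ and of $1$, so that iterates $P^{(k)}(g)$ converge to a characteristic function---but for this to work the function $g$ must already take values only in $\{|t|_F<1\}\cup\{|t-1|_F<1\}$, and nothing in your construction arranges that. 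The standard argument therefore inserts an additional step: one first produces, for each pair $x\in U$, $y\notin U$, a separating element of $A$ whose values lie in $\mathcal{O}_F$ globally (so that one genuinely controls it on all of $X$), and only then runs the idempotent-iteration and Boolean patching. Without that global control your step~4 remains a sketch rather than a proof.
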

Note that, in Theorem \ref{thr:UAKapl}, $A$ being a $F$-subalgebra of $C_{F}(X)$ means that $A$ is a subalgebra of $C_{F}(X)$ and a vector space over $F$. If we take $A$ to be unital then condition (ii) in Theorem \ref{thr:UAKapl} is automatically satisfied and the theorem is analogous to the real version of the Stone-Weierstrass theorem. In subsection \ref{subsec:UARFA} we will see that real function algebras are a useful example when considering non-complex analogs of uniform algebras with qualifying subalgebras.
\subsection{Real function algebras}
\label{subsec:UARFA}
Real function algebras were introduced by Kulkarni and Limaye in a paper from 1981, see \cite{Kulkarni-Limaye1981}. For a thorough text on the theory see \cite{Kulkarni-Limaye1992}. The following definition is a little more general than what we need in this subsection.
\begin{definition}
\label{def:UATIAI}
Let $X$ be a topological space and let $\tau:X\rightarrow X$ be a homeomorphism.
\begin{enumerate}
\item[(i)]
We will call $\tau$ a {\em topological involution on} $X$ if $\tau\circ\tau =\mbox{id}$ on $X$.
\item[(ii)]
We will call $\tau$ a {\em topological element of finite order on} $X$ if $\tau$ has finite order but with $\mbox{ord}(\tau)>2$.
\end{enumerate}
Let $F$ and $L$ be complete valued fields such that $L$ is a finite extension of $F$ as a valued field and let $g\in\mbox{Gal}(^{L}/_{F})$. Let $A$ either be an $F$-algebra or an $L$-algebra for which $\sigma:A\rightarrow A$ is a map satisfying $\mbox{ord}(\sigma)=\mbox{ord}(g)$ and for all $a,b\in A$ and scalars $\alpha$:
\begin{enumerate}
\item[]
$\sigma(a+b)=\sigma(a)+\sigma(b)$;
\item[]
$\sigma(ab)=\sigma(b)\sigma(a)$;
\item[]
$\sigma(\alpha a)=g(\alpha)\sigma(a)$.
\end{enumerate}
\begin{enumerate}
\item[(iii)]
We will call $\sigma$ a {\em algebraic involution on} $A$ if $\sigma\circ\sigma =\mbox{id}$ on $A$.
\item[(iv)]
We will call $\sigma$ a {\em algebraic element of finite order on} $A$ if $\sigma$ has $\mbox{ord}(\sigma)>2$.
\end{enumerate}
\end{definition}
Note, in Definition \ref{def:UATIAI} the requirement that $\tau$ be a homeomorphism is satisfied if $\tau$ is continuous.
\begin{definition}
\label{def:UARefa}
Let $X$ be a compact Hausdorff space and $\tau$ a topological involution on $X$. A {\em{real function algebra on}} $(X,\tau)$ is a real subalgebra $A$ of
\begin{equation*}
C(X,\tau):=\{f\in C_{\mathbb{C}}(X):f(\tau(x))=\bar{f}(x)\mbox{ for all }x\in X\}
\end{equation*}
that is complete with respect to the sup norm, contains the real numbers and separates the points of $X$.
\end{definition}
\begin{remark}
Concerning real function algebras.
\begin{enumerate}
\item[(i)]
Later, Theorem \ref{thr:CGGUA} will confirm that $C(X,\tau)$ in Definition \ref{def:UARefa} is itself always a real function algebra on $(X,\tau)$ and in some sense it is to real function algebras as $C_{\mathbb{C}}(X)$ is to complex uniform algebras.
\item[(ii)]
Let $X$ be a compact Hausdorff space and $Y$ a closed non-empty subset of $X$. Then $C_{Y}:=\{f\in C_{\mathbb{C}}(X):f(Y)\subseteq\mathbb{R}\}$ is a commutative real Banach algebra. As pointed out in \cite[p2]{Kulkarni-Limaye1992}, every such $C_{Y}$ can be transformed into a real function algebra but the converse of this is false. Hence Definition \ref{def:UARefa} is a more general object.
\item[(iii)]
With reference to Definition \ref{def:UARefa} we have $C(X,\tau)=\{f\in C_{\mathbb{C}}(X):\sigma(f)=f\}$ where $\sigma(f):=\bar{f}\circ\tau$. Moreover $\sigma$ is an algebraic involution on $C_{\mathbb{C}}(X)$ and every algebraic involution on $C_{\mathbb{C}}(X)$ arises from a topological involution on $X$ in this way, see \cite[p29]{Kulkarni-Limaye1992} for a proof.
\end{enumerate}
\label{rem:UARefa}
\end{remark}
The following example is a useful standard.
\begin{example}
\label{exa:UARdal}
Recall from Example \ref{exa:UADA} the disc algebra $A(\Delta)$ on the closed unit disc and let $\tau:\Delta\longrightarrow\Delta$ be the map $\tau(z):=\bar{z}$ given by complex conjugation, which we note is a Galois automorphism on $\mathbb{C}$. Now let 
\begin{equation*}
B(\Delta):=A(\Delta)\cap C(\Delta,\tau). 
\end{equation*}
We see that $B(\Delta)$ is complete since both $A(\Delta)$ and $C(\Delta,\tau)$ are, and similarly $B(\Delta)$ contains the real numbers. Further by the definition of $C(\Delta,\tau)$ and the fact that $A(\Delta)=P(\Delta)$ we have that $B(\Delta)$ is the $\mathbb{R}$-algebra of all uniform limits of polynomials on $\Delta$ with real coefficients. Hence $B(\Delta)$ separates the points of $\Delta$ since it contains the function $f(z):=z$. However whilst $\tau$ is in $C(\Delta,\tau)$ it is not an element of $A(\Delta)$. Therefore $B(\Delta)$ is a real function algebra on $(\Delta,\tau)$ and a proper subalgebra of $C(\Delta,\tau)$. It is referred to as the {\em{real disc algebra}}.
\end{example}
Finally for each compact Hausdorff space $X$, $C_{\mathbb{C}}(X)$ can be put into the form of a real function algebra as the following example shows. In particular $\mathbb{C}$ can be expressed as a real function algebra on a two point set.
\begin{example}
\label{exa:UATPS}
Let $X$ be a compact Hausdorff space, let $Y:=\{i,-i\}\subseteq\mathbb{C}$ have the trivial topology and give $X\times Y$ the product topology. We note that the subspace given by $X_{i}:=\{(x,y)\in X\times Y:y=i\}$ is homeomorphic to $X$ and similarly so is $X_{-i}$. Define a topological involution $\tau:X\times Y\rightarrow X\times Y$ by $(x,y)\mapsto (x,\bar{y})$. Then $C_{\mathbb{C}}(X_{i})$ is isometrically isomorphic to $C(X\times Y,\tau)$ by way of the mapping $f\mapsto h_{f}$ where
\begin{equation*}
h_{f}(z):=\begin{cases} f(z) &\mbox{ if }\quad z\in X_{i}\\ \bar{f}(\tau(z)) &\mbox{ if }\quad z\in X_{-i} \end{cases},\quad\mbox{for } f\in C_{\mathbb{C}}(X_{i}),
\end{equation*}
so that for $z\in X_{i}$ we have
\begin{equation*}
h_{f}(\tau(z))=\bar{f}(\tau(\tau(z)))=\bar{f}(z)=\bar{h_{f}}(z)
\end{equation*}
and for $z\in X_{-i}$
\begin{equation*}
h_{f}(\tau(z))=f(\tau(z))=\bar{\bar{f}}(\tau(z))=\bar{h_{f}}(z)
\end{equation*}
showing that $h_{f}\in C(X\times Y,\tau)$. The inverse mapping from $C(X\times Y,\tau)$ to $C_{\mathbb{C}}(X_{i})$ is given by the restriction map $h\mapsto h|_{X_{i}}$. One might suspect that such a mapping exists for every $C(Z,\tau)$ by restricting its elements to a compact subspace of equivalence class representatives for the forward orbits of $\tau$. But this is not the case in general since there can be $z\in Z$ with $\mbox{ord}(\tau,z)=1$ forcing all of the functions to be real valued at $z$ preventing the representation of the complex constants in $C(Z,\tau)$.
\end{example}

	\chapter[Commutative generalisation over complete valued fields]{Commutative generalisation over\\ complete valued fields}
\label{cha:CG}
If $J$ is a maximal ideal of a commutative unital complex Banach algebra $A$ then $J$ has codimension one since $A/J$ with the quotient norm is isometrically isomorphic to the complex numbers. This follows from the Gelfand-Mazur theorem noting that $A/J$ with the quotient norm is unital since $J$ is closed as a subset of $A$ and $J$ is different to $A$, see Lemma \ref{lem:RTCBR} and \cite[p16]{Stout}.\\
In contrast, for a complete non-Archimedean field $F$, if $I$ is a maximal ideal of a commutative unital Banach $F$-algebra then $I$ may have large finite or infinite codimension, note Corollary \ref{cor:CVFEE}. Hence, with Gelfand transform theory in mind, it makes sense to consider non-Archimedean analogs of uniform algebras in the form suggested by real function algebras where the functions take values in a complete extension of the underlying field of scalars. Moreover when there is a lattice of intermediate fields then these fields provide a way for a lattice of extensions of the algebra to occur. See \cite[Ch1]{Berkovich} and \cite[Ch15]{Escassut} for one form of the Gelfand transform in the non-Archimedean setting. This chapter introduces the main definitions of interest in the thesis. We will generalise the definitions made by Kulkarni and Limaye to all complete valued fields, show that the algebras obtained all qualify as generalisations of uniform algebras and that restricting attention to the Archimedean setting recovers the complex uniform algebras and real function algebras. Non-Archimedean examples and residue algebras are also introduced.
\section{Main definitions}
\label{sec:CGMD}
The following definition gives the requirements for those commutative algebras that are to be considered as generalisations of uniform algebras.
\newpage
\begin{definition}
Let $F$ and $L$ be complete valued fields such that $L$ is an extension of $F$ as a valued field. Let $X$ be a compact Hausdorff space and let $C_{L}(X)$ be the unital Banach $L$-algebra of all continuous $L$-valued functions on $X$ with pointwise operations and the sup norm. If a subset $A$ of $C_{L}(X)$ satisfies:
\begin{enumerate}
\item[(i)]
$A$ is an $F$-algebra under pointwise operations;
\item[(ii)]
$A$ is complete with respect to $\|\cdot\|_{\infty}$;
\item[(iii)]
$F\subseteq A$;
\item[(iv)]
$A$ separates the points of $X$,
\end{enumerate}
then we will call $A$ an $^{L}/_{F}$ {\em uniform algebra} or just a {\em uniform algebra} when convenient.
\label{def:CGLFUA}
\end{definition}
In the language of Definition \ref{def:CGLFUA}, an $^{L}/_{F}$ uniform algebra is a Banach $F$-algebra of $L$-valued functions, also every $^{L}/_{L}$ uniform algebra is an $^{L}/_{F}$ uniform algebra. We now generalise, in two parts, Kulkarni and Limaye's definition of a real function algebra.
\begin{definition}
Let $F$ and $L$ be complete valued fields such that $L$ is a finite extension of $F$ as a valued field. Let $X$ be a compact Hausdorff space and totally disconnected if $F$ is non-Archimedean. Define $C(X,\tau,g)\subseteq C_{L}(X)$ as the subset of elements $f\in C_{L}(X)$ for which the diagram in Figure \ref{fig:CGBFA} commutes.
\begin{figure}[h]
\begin{equation*}
\xymatrix{
X\ar@{->}[rr]^{f}\ar@{->}[dd]_{\tau}&&L\ar@{->}[dd]^{g}&&\mbox{(i) $g\in\mbox{Gal}(^{L}/_{F})$;}\hspace{28mm}\\
&&&\mbox{Where:}&\mbox{(ii) $\tau:X\rightarrow X$ with $\mbox{ord}(\tau)|\mbox{ord}(g)$;}\\
X\ar@{->}[rr]_{f}&&L&&\mbox{(iii) $g$ and $\tau$ are continuous.}\hspace{13.5mm}
}
\end{equation*}
\caption{Commutative diagram for $f\in C(X,\tau,g)$.}
\label{fig:CGBFA}
\end{figure}
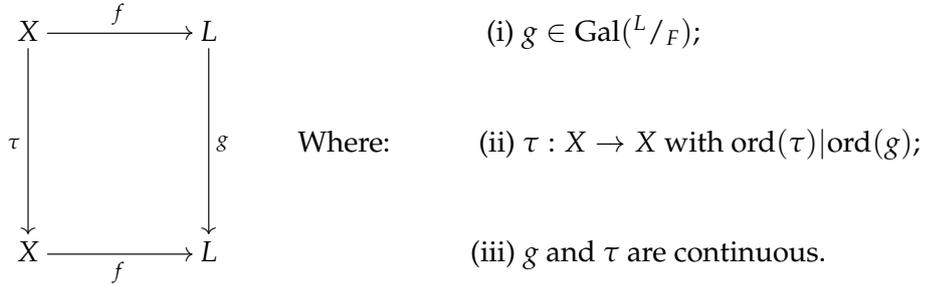

We will call $C(X,\tau,g):=\{f\in C_{L}(X): f(\tau(x))=g(f(x)) \mbox{ for all } x\in X \}$ the {\em basic $^{L}/_{L^{g}}$ function algebra on $(X,\tau,g)$} or just a {\em basic function algebra} when convenient.
\label{def:CGBFA}
\end{definition}
\begin{definition}
Let $F$ and $L$ be complete valued fields such that $L$ is a finite extension of $F$ as a valued field. Let $(X,\tau,g)$ conform to the conditions of Definition \ref{def:CGBFA} and let $A$ be a subset of the basic $^{L}/_{L^{g}}$ function algebra on $(X,\tau,g)$. If $A$ is also an $^{L}/_{L^{g}}$ uniform algebra then we will call $A$ an {\em $^{L}/_{L^{g}}$ function algebra on $(X,\tau,g)$}.
\label{def:CGLGFA}
\end{definition}
\begin{remark}
In definitions \ref{def:CGBFA} and \ref{def:CGLGFA} the valued field $L^{g}$ is complete by Remark \ref{rem:CVFEN}. The continuity of $g$ in Definition \ref{def:CGBFA} is only an observation since $g$ is an isometry on $L$ by Remark \ref{rem:CVFUT}. In fact $g$ also acts as an isometric automorphism on $C(X,\tau,g)$.
\label{rem:CGBFA}
\end{remark}
\section{Generalisation theorems}
\label{sec:CGGT}
With Definition \ref{def:CGLGFA} in mind the following theorem, which is the main theorem of this chapter, clarifies why an algebra conforming to the conditions of Definition \ref{def:CGBFA} is to be called a basic $^{L}/_{L^{g}}$ function algebra on $(X,\tau,g)$.
\begin{theorem}
Let $(X,\tau,g)$ conform to the conditions of Definition \ref{def:CGBFA}. Then the basic $^{L}/_{L^{g}}$ function algebra on $(X,\tau,g)$ is always an $^{L}/_{L^{g}}$ uniform algebra.
\label{thr:CGGUA}
\end{theorem}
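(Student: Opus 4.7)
The plan is to verify the four defining properties of an $^{L}/_{L^{g}}$ uniform algebra (Definition \ref{def:CGLFUA}) for $A := C(X,\tau,g)$. Properties (i), (ii) and (iii) are essentially routine. For (i), closure under pointwise sums and products follows because $g$ is a field homomorphism, and $L^{g}$-linearity from $g(\alpha a) = g(\alpha)g(a) = \alpha g(a)$ for $\alpha \in L^{g}$. For (ii), if $f_{n} \to f$ uniformly with each $f_{n} \in A$, then $f(\tau(x)) = \lim f_{n}(\tau(x)) = \lim g(f_{n}(x)) = g(f(x))$, using that $g$ is continuous (indeed an isometry, by Remark \ref{rem:CVFUT}). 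For (iii), any constant $\alpha \in L^{g}$ satisfies $\alpha = g(\alpha)$. The substance of the theorem lies in (iv), separation of points.

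Write $N := \mbox{ord}(g)$ and note that $\mbox{ord}(\tau)$ divides $N$, so in particular $\tau^{N} = \mbox{id}$. I would introduce the averaging operator $T : C_{L}(X) \to C_{L}(X)$ defined by
\begin{equation*}
T(h)(x) := \sum_{k=0}^{N-1} g^{-k}\bigl(h(\tau^{k}(x))\bigr).
\end{equation*}
A direct reindexing $j = k+1$, together with $g^{N} = \mbox{id} = \tau^{N}$ to fold the $j = N$ term back in as the $j = 0$ term, yields $T(h)(\tau(x)) = g(T(h)(x))$, so $T$ maps $C_{L}(X)$ into $C(X,\tau,g)$. Thus (iv) reduces to showing that for any distinct $x_{1}, x_{2} \in X$ one can find $h \in C_{L}(X)$ with $T(h)(x_{1}) \neq T(h)(x_{2})$.

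Fix such distinct $x_{1}, x_{2}$, set $d_{1} := \mbox{ord}(\tau,x_{1})$, and put $H := \langle g^{d_{1}} \rangle$. By Artin's theorem $L/L^{H}$ is Galois (with group $H$) and hence separable, so the trace $\mbox{Tr}_{L/L^{H}} : L \to L^{H}$ is surjective. Using classical Urysohn in the Archimedean setting, or Lemma \ref{lem:UAUrys} in the non-Archimedean setting (where $X$ is totally disconnected), I would choose $h \in C_{L}(X)$ with $h(x_{1}) = c$ for a $c \in L$ to be determined, with $h$ vanishing on $\{\tau^{i}(x_{1}) : 1 \leq i < d_{1}\}$, and, in the case that the orbits of $x_{1}$ and $x_{2}$ are disjoint, also vanishing on the orbit of $x_{2}$. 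Only the indices $k \equiv 0 \pmod{d_{1}}$ then contribute in the sum defining $T(h)(x_{1})$, giving
\begin{equation*}
T(h)(x_{1}) = \sum_{l=0}^{N/d_{1}-1} g^{-l d_{1}}(c) = \mbox{Tr}_{L/L^{H}}(c).
\end{equation*}

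If the orbits are disjoint then $T(h)(x_{2}) = 0$, and surjectivity of the trace lets one pick $c$ with $\mbox{Tr}_{L/L^{H}}(c) \neq 0$. If instead $x_{2} = \tau^{j}(x_{1})$ with $0 < j < d_{1}$, then $T(h)(x_{2}) = g^{j}(T(h)(x_{1}))$ since $T(h) \in C(X,\tau,g)$, and one must arrange $\mbox{Tr}_{L/L^{H}}(c) \in L^{H} \setminus L^{g^{j}}$. Such a value exists by the Galois correspondence (Theorem \ref{thr:CVFFTG}): the inclusion $L^{H} \subseteq L^{g^{j}}$ would force $\langle g^{j}\rangle \subseteq H = \langle g^{d_{1}}\rangle$, hence $d_{1} \mid j$, contradicting $0 < j < d_{1}$; combining this with surjectivity of the trace onto $L^{H}$ produces a suitable $c$. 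The main obstacle I anticipate is precisely this same-orbit case, where naive averaging collapses $T(h)(x_{1})$ and $T(h)(x_{2})$ unless $c$ is chosen with an eye to the lattice of fixed subfields $L^{g^{j}}$; the Galois-theoretic fact that $L/L^{H}$ is separable with surjective trace is what drives the construction through.
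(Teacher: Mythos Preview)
Your proof is correct and takes a genuinely different route from the paper's.

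Both proofs handle (i)--(iii) the same way, and both use the additive averaging $T(h)=\sum_{k}\sigma^{k}(h)$ to produce elements of $C(X,\tau,g)$. The divergence is in how the separation property is forced through. The paper treats the disjoint-orbit case with a \emph{multiplicative} construction $f=h\sigma(h)\cdots\sigma^{N-1}(h)$, and for the same-orbit case it picks $a\in L$ with $\mbox{ord}(g,a)=\mbox{ord}(\tau,x_{1})$ and computes $T(h)$ explicitly; this forces a split into $\mbox{char}(L)=0$ (where the factor $m'=N/d_{1}$ is a unit and $T(h)(x_{1})=m'a\neq m'g^{j}(a)=T(h)(x_{2})$) and $\mbox{char}(L)=p$ (where $m'$ may vanish and a more intricate product-then-sum construction using the Frobenius is needed). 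Your argument bypasses all of this by observing, via Artin's theorem, that $L/L^{H}$ is automatically separable, so the trace $\mbox{Tr}_{L/L^{H}}$ is surjective onto $L^{H}$ regardless of characteristic; combined with the Galois-correspondence step $L^{H}\not\subseteq L^{g^{j}}$ this lets you hit any prescribed value in $L^{H}\setminus L^{g^{j}}$. The payoff is a uniform, characteristic-free argument with no case analysis beyond ``same orbit versus different orbit''. The paper's approach, by contrast, is more explicit and self-contained (it does not invoke Artin or trace non-degeneracy), and along the way it actually establishes the full equivalence of the three conditions in Figure~\ref{fig:CGED}, which is slightly more than the theorem statement requires.
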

\begin{remark}
We will see in the proof of Theorem \ref{thr:CGGUA} that ord$(\tau)|$ord$(g)$ is an optimum condition in Definition \ref{def:CGBFA} since if we do not include it in the definition then $C(X,\tau,g)$ separates the points of $X$ if and only if ord$(\tau)|$ord$(g)$ as per Figure \ref{fig:CGED}.
\label{rem:CGOC}
\end{remark}
\begin{figure}[h]
\begin{equation*}
\xymatrix{
&\mbox{ord}(\tau)|\mbox{ord}(g)\ar@2{->}[ld]_{1}&\\
\mbox{ord}(\tau,X)\subseteq\mbox{ord}(g,L)\ar@2{->}[rr]^{2}&&C(X,\tau,g)\mbox{ separates }X\ar@2{->}[lu]_{3}
}
\end{equation*}
\caption{Equivalence diagram for Definition \ref{def:CGBFA}.}
\label{fig:CGED}
\end{figure}
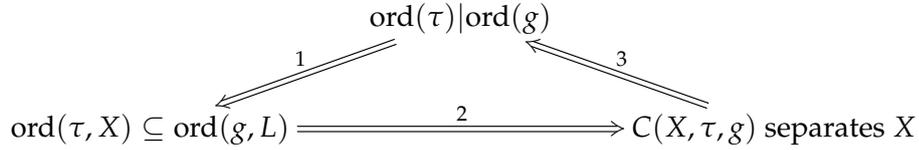
\begin{proof}[Proof of Theorem \ref{thr:CGGUA}]
Let $(X,\tau,g)$ conform to the conditions of Definition \ref{def:CGBFA}. It is immediate that $C(X,\tau,g)$ is a ring under pointwise operations and $L^{g}\subseteq C(X,\tau,g)$. We now show that $C(X,\tau,g)$ is complete with respect to the sup norm. First note that
\begin{equation*}
C(X,\tau,g)=\{f\in C_{L}(X):\sigma(f)=f\}
\end{equation*}
where $\sigma(f):=g^{(\mbox{ord}(g)-1)}\circ f\circ\tau$ is an isometry on $C_{L}(X)$ since $\tau$ is surjective and $g$ is an isometry on $L$ by Remark \ref{rem:CVFUT}. Further $\sigma$ is either an algebraic involution or a algebraic element of finite order on $C_{L}(X)$. Hence since $C_{L}(X)$ is commutative $\sigma$ is in fact an isometric automorphism on $C_{L}(X)$. Now let $(f_{n})$ be a Cauchy sequence in $C(X,\tau,g)$ and let $f$ be its limit in $C_{L}(X)$. Then for each $\varepsilon>0$ there exists $N\in\mathbb{N}$ such that for all $n>N$ we have $\|f-f_{n}\|_{\infty}=\|\sigma(f_{n})-\sigma(f)\|_{\infty}<\frac{\varepsilon}{2}$. But then $\|f-\sigma(f)\|_{\infty}=\|f-\sigma(f_{n})+\sigma(f_{n})-\sigma(f)\|_{\infty}\leq \|f-f_{n}\|_{\infty}+\|\sigma(f_{n})-\sigma(f)\|_{\infty}<\varepsilon$. Hence $\|f-\sigma(f)\|_{\infty}=0$ giving $\sigma(f)=f$ and so $f\in C(X,\tau,g)$. Hence $C(X,\tau,g)$ is complete. It remains to show that $C(X,\tau,g)$ separates the points of $X$ and to this end we now show each of the implications in Figure \ref{fig:CGED}.\\
{\em{1:}} Let $n\in\mbox{ord}(\tau,X)$. It is immediate that $n|\mbox{ord}(\tau)$ and since $\mbox{ord}(\tau)|\mbox{ord}(g)$ we have $n\in\mbox{ord}(g,L)$ by Lemma \ref{lem:CVFOSET}. We also note that the converse is immediate since for each $n\in\mbox{ord}(g,L)$ we have $n|\mbox{ord}(g)$ and so $\mbox{ord}(\tau)|\mbox{ord}(g)$.\\
{\em{2:}} Note that $\mbox{ord}(\sigma)=\mbox{ord}(g)$ and so, like a norm map, for every $h\in C_{L}(X)$ we have
\begin{align*}
&h\sigma(h)\sigma^{(2)}(h)\cdots\sigma^{(\mbox{ord}(g)-1)}(h)\in C(X,\tau,g)\quad\mbox{and}\\
&h+\sigma(h)+\sigma^{(2)}(h)+\cdots+\sigma^{(\mbox{ord}(g)-1)}(h)\in C(X,\tau,g).
\end{align*}
Now if $g=\mbox{id}$ is the identity then $C(X,\tau,g)=C_{L}(X)$ which separates the points of $X$ when $L$ is Archimedean by Urysohn's lemma, since $X$ is locally compact, and when $L$ is non-Archimedean by Theorem \ref{thr:UAXtop} since we required $X$ to be totally disconnected in this case. So now suppose $\mbox{ord}(g)>1$ and let $x,y\in X$ with $x\not=y$. We need to check two cases.\\
{\em{Case 1:}} In this case $y\not=\tau^{(n)}(x)$ for all $n\in\mathbb{N}$ with $n\leq\mbox{ord}(\tau,x)$. By Urysohn's lemma in the Archimedean setting or Lemma \ref{lem:UAUrys} otherwise, there is $h\in C_{L}(X)$ with $h(y)=0$ and $h(\tau^{(n)}(x))=1$ for all $n\in\mathbb{N}_{0}$. Let $f:=h\sigma(h)\sigma^{(2)}(h)\cdots\sigma^{(\mbox{ord}(g)-1)}(h)$ so that $f\in C(X,\tau,g)$ with $f(y)=0$, by construction, and $f(x)=1$ by construction given that $g(1)=1$. Then in this case $x$ and $y$ are separated by $f$.\\
{\em{Case 2:}} In this case $y=\tau^{(n)}(x)$ for some $n\in\mathbb{N}$ with $n<\mbox{ord}(\tau,x)$. Let $m:=\mbox{ord}(g)$ and $k:=\mbox{ord}(\tau,x)$ and note therefore that we have $1\leq n\leq k-1$, since $y\not=x$, and $m=km'$ for some $m'\in\mathbb{N}$. Further since $\mbox{ord}(\tau,X)\subseteq\mbox{ord}(g,L)$ there is $a\in L$ with $\mbox{ord}(g,a)=k$. By Urysohn's lemma in the Archimedean setting or Lemma \ref{lem:UAUrys} otherwise, there is $h\in C_{L}(X)$ with $h(x)=a$ and $h(\tau^{(i)}(x))=0$ for $1\leq i\leq k-1$. We will now check two sub-cases.\\
{\em{Case 2.1:}} The characteristic of the field $L$ is zero, i.e. $\mbox{char}(L)=0$.\\
Let $f:=h+\sigma(h)+\sigma^{(2)}(h)+\cdots+\sigma^{(\mbox{ord}(g)-1)}(h)$ so that we have $f\in C(X,\tau,g)$ with $f=h+g^{(m-1)}\circ h\circ\tau+g^{(m-2)}\circ h\circ\tau^{(2)}+\cdots+g\circ h\circ\tau^{(m-1)}$. This gives
\begin{align*}
f(x)=&h(x)+g^{(m-k)}\circ h(\tau^{(k)}(x))+g^{(m-2k)}\circ h(\tau^{(2k)}(x))+\cdots\\
     &\cdots+g^{(m-(m'-1)k)}\circ h(\tau^{((m'-1)k)}(x))\\
    =&h(x)+g^{((m'-1)k)}\circ h(\tau^{(k)}(x))+g^{((m'-2)k)}\circ h(\tau^{(2k)}(x))+\cdots\\
     &\cdots+g^{(k)}\circ h(\tau^{((m'-1)k)}(x))\\
    =&a+a+a+\cdots+a,\quad m'\mbox{ times,}\\
    =&m'a\quad\mbox{and}
\end{align*}
\begin{align*}
f(y)=&f(\tau^{(n)}(x))\\
    =&g^{(m-(k-n))}\circ h(\tau^{(k)}(x))+g^{(m-(2k-n))}\circ h(\tau^{(2k)}(x))+\cdots\\
     &\cdots+g^{(m-((m'-1)k-n))}\circ h(\tau^{((m'-1)k)}(x))+g^{(m-(m'k-n))}\circ h(\tau^{(m'k)}(x))\\
    =&g^{((m'-1)k+n)}\circ h(\tau^{(k)}(x))+g^{((m'-2)k+n)}\circ h(\tau^{(2k)}(x))+\cdots\\
     &\cdots+g^{(k+n)}\circ h(\tau^{((m'-1)k)}(x))+g^{(n)}\circ h(\tau^{(m)}(x))\\
    =&m'g^{(n)}(a)\quad\mbox{with }1\leq n\leq k-1.
\end{align*}
Hence since $\mbox{ord}(g,a)=k$ we have $f(x)\not=f(y)$.\\
{\em{Case 2.2:}} The characteristic of $L$ is $p$, i.e. $\mbox{char}(L)=p$, for some prime $p\in\mathbb{N}$. In this case the proof for Case 2.1 breaks down when $m'=p^{t}s$ with $s,t\in\mathbb{N}$, $p\nmid s$. So with respect to such circumstances define $f':=h\sigma^{(sk)}(h)\sigma^{(2sk)}(h)\cdots\sigma^{((p^{t}-1)sk)}(h)$ and
\begin{equation*}
f:=f'+\sigma(f')+\sigma^{(2)}(f')+\cdots+\sigma^{(sk-1)}(f').
\end{equation*}
We will now show that $\sigma(f)=f$ so that $f\in C(X,\tau,g)$ and note that this is satisfied if $\sigma^{(sk)}(f')=f'$ since $\sigma(f)=\sigma(f')+\sigma^{(2)}(f')+\sigma^{(3)}(f')+\cdots+\sigma^{(sk)}(f')$. Indeed we have that $\sigma^{(sk)}(f')=\sigma^{(sk)}(h)\sigma^{(2sk)}(h)\sigma^{(3sk)}(h)\cdots\sigma^{(p^{t}sk)}(h)$ with $\sigma^{(p^{t}sk)}(h)=\sigma^{(m)}(h)=h$ so that $\sigma^{(sk)}(f')=f'$ giving $f\in C(X,\tau,g)$. Now, for $0\leq i\leq sk-1$, we have
\begin{align*}
\sigma^{(i)}(f')(x)=&\sigma^{(i)}(h)(x)\sigma^{(sk+i)}(h)(x)\sigma^{(2sk+i)}(h)(x)\cdots\sigma^{((p^{t}-1)sk+i)}(h)(x)\\
                   =&\begin{cases} a^{p^{t}} &\mbox{ if }\quad k|i\\ 0 &\mbox{ if }\quad k\nmid i \end{cases}
\end{align*}
since $\sigma^{(kj)}(h)(x)=g^{(m-kj)}\circ h(\tau^{(kj)}(x))=g^{((p^{t}s-j)k)}\circ h(x)=a$ for $kj<m$, $j\in\mathbb{N}_{0}$, and $\sigma^{(j)}(h)(x)=g^{(m-j)}\circ h(\tau^{(j)}(x))=g^{(m-j)}(0)=0$ for $j<m$, $k\nmid j$. Hence
\begin{align*}
f(x)=&f'(x)+\sigma(f')(x)+\sigma^{(2)}(f')(x)+\cdots+\sigma^{(sk-1)}(f')(x)\\
    =&f'(x)+\sigma^{(k)}(f')(x)+\sigma^{(2k)}(f')(x)+\cdots+\sigma^{((s-1)k)}(f')(x)\\
    =&sa^{p^{t}}.
\end{align*}
But for $0\leq i\leq sk-1$ we also have
\begin{align*}
\sigma^{(i)}(f')(y)=&\sigma^{(i)}(h)(y)\sigma^{(sk+i)}(h)(y)\sigma^{(2sk+i)}(h)(y)\cdots\sigma^{((p^{t}-1)sk+i)}(h)(y)\\
                   =&\begin{cases} g^{(n)}(a)^{p^{t}} &\mbox{ if }\quad k|(i+n)\\ 0 &\mbox{ if }\quad k\nmid(i+n) \end{cases}
\end{align*}
since if $k|(i+n)$ then $i$ has the form $kj-n$ and the exponents of $\sigma$ therefore also have the form $kj-n<m$, $j\in\mathbb{N}$, giving
\begin{align*}
\sigma^{(kj-n)}(h)(y)=&g^{(m-(kj-n))}\circ h(\tau^{(kj-n)}(y))\\
                     =&g^{((p^{t}s-j)k+n)}\circ h(\tau^{(kj-n)}(\tau^{(n)}(x)))\\
                     =&g^{((p^{t}s-j)k+n)}\circ h(x)=g^{(n)}(a)
\end{align*}
and if $k\nmid(i+n)$ then for $j<m$ an exponents of $\sigma$ we also have $k\nmid(j+n)$ giving
\begin{align*}
\sigma^{(j)}(h)(y)=&g^{(m-j)}\circ h(\tau^{(j)}(y))\\
                  =&g^{(m-j)}\circ h(\tau^{(j+n)}(x))\\
                  =&g^{(m-j)}(0)=0.
\end{align*}
Hence
\begin{align*}
f(y)=&f'(y)+\sigma(f')(y)+\sigma^{(2)}(f')(y)+\cdots+\sigma^{(sk-1)}(f')(y)\\
    =&\sigma^{(k-n)}(f')(y)+\sigma^{(2k-n)}(f')(y)+\cdots+\sigma^{(sk-n)}(f')(y)\\
    =&sg^{(n)}(a)^{p^{t}}.
\end{align*}
Now since $p\nmid s$ we have $s\in L^{\times}$. Furthermore recall that since $\mbox{ord}(g,a)=k$ and $1\leq n\leq k-1$ we have $g^{(n)}(a)\not=a$. Therefore it remains to show that $g^{(n)}(a)^{p^{t}}\not=a^{p^{t}}$ in order to conclude that $f(y)\not=f(x)$. Recall that $p=\mbox{char}(L)\in\mathbb{N}$ is a prime. For $b\in L$ the Frobenius $\mbox{Frob}:L\rightarrow L$, $\mbox{Frob}(b):=b^{p}$, is an injective endomorphism on $L$. We show that the Frobenius is injective on $L$. Let $b_{1},b_{2}\in L$ with $b_{1}^{p}=b_{2}^{p}$.\\
The case $p>2$ gives $(b_{1}-b_{2})^{p}=b_{1}^{p}-b_{2}^{p}=0$.\\
The case $p=2$ gives $(b_{1}-b_{2})^{2}=b_{1}^{2}+b_{2}^{2}=2b_{1}^{2}=0$.\\
In each case $L$ is an integral domain and so $b_{1}-b_{2}=0$ giving $b_{1}=b_{2}$ as required. Therefore $\mbox{Frob}^{(t)}:L\rightarrow L$, $\mbox{Frob}^{(t)}(b):=b^{p^{t}}$, is also injective giving $g^{(n)}(a)^{p^{t}}\not=a^{p^{t}}$ since $g^{(n)}(a)\not=a$ and this finishes the proof of implications 2 in Figure \ref{fig:CGED}.\\
{\em{3:}} We now show implication 3 in Figure \ref{fig:CGED} by showing the contrapositive. Suppose $\mbox{ord}(\tau)\nmid\mbox{ord}(g)$. Then there exists some $x\in X$ such that $\tau^{(\mbox{ord}(g))}(x)\not=x$. Let $y:=\tau^{(\mbox{ord}(g))}(x)$. Now for all $f\in C(X,\tau,g)$ we have for all $i\in\mathbb{N}$ that
\begin{equation*}
f\circ\tau^{(i)}=f\circ\tau\circ\tau^{(i-1)}=g\circ f\circ\tau^{(i-1)}=\cdots=g^{(i)}\circ f.
\end{equation*}
Therefore $f(y)=f\circ\tau^{(\mbox{ord}(g))}(x)=g^{(\mbox{ord}(g))}\circ f(x)=f(x)$. Hence for all $f\in C(X,\tau,g)$ we have $f(x)=f(y)$ as required.\\
Hence having shown each of the implications in Figure \ref{fig:CGED}, $\mbox{ord}(\tau)|\mbox{ord}(g)$ is a necessary and sufficient condition in Definition \ref{def:CGBFA} in order that $C(X,\tau,g)$ separates the points of $X$ and this completes the proof of Theorem \ref{thr:CGGUA}.
\end{proof}
\begin{remark}
It's worth noting that for $\mbox{char}(L)=p$ the Frobenius is also a endomorphism on $C(X,\tau,g)$. Moreover for $L$ of any characteristic we have seen in the proof of Theorem \ref{thr:CGGUA} that $\sigma$, given by $\sigma(f):=g^{(\mbox{ord}(g)-1)}\circ f\circ\tau$, is an isometric automorphism on $C_{L}(X)$ with fixed elements $C(X,\tau,g)$.
\label{rem:CGFROB}
\end{remark}
With reference to the complex, real and non-Archimedean Stone-Weierstrass theorems from Chapter \ref{cha:UA}, the following combined Stone-Weierstrass theorem is immediate.
\begin{theorem}
Let $L$ be a complete valued field. Let $X$ conform to Definition \ref{def:CGBFA} and let $A$ be an $^{L}/_{L}$ function algebra on ($X$,id,id). Then either $A=C_{L}(X)$ or $L=\mathbb{C}$ and $A$ is not self adjoint, that is there is $f\in A$ with $\bar{f}\notin A$.
\label{thr:CGGSW}
\end{theorem}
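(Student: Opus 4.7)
The plan is to reduce the claim to the three separate Stone-Weierstrass results already collected in the excerpt, one for each kind of complete valued field $L$. First I would unpack the hypothesis: when $\tau=\mathrm{id}_X$ and $g=\mathrm{id}_L$, the defining relation $f\circ\tau=g\circ f$ is vacuous, so $C(X,\mathrm{id},\mathrm{id})=C_L(X)$, and an $^L/_L$ function algebra on $(X,\mathrm{id},\mathrm{id})$ is exactly a sup-norm closed $L$-subalgebra $A\subseteq C_L(X)$ containing the scalar field $L$ (hence the constant $1$) and separating the points of $X$. This is precisely the common input needed by all three Stone-Weierstrass statements, so no further translation work is required.

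Next I would split into cases according to the nature of $L$. If $L=\mathbb{R}$, the real Stone-Weierstrass theorem, quoted just before Theorem \ref{thr:UAKapl}, immediately forces $A=C_{\mathbb{R}}(X)$. If $L$ is non-Archimedean, then Definition \ref{def:CGBFA} insists that $X$ be totally disconnected and compact Hausdorff, so Kaplansky's theorem (Theorem \ref{thr:UAKapl}) applies: condition (i) is the point-separation hypothesis, and condition (ii) is satisfied because $1\in L\subseteq A$. Kaplansky gives that $A$ is uniformly dense in $C_L(X)$, and then completeness of $A$ under the sup norm upgrades density to equality $A=C_L(X)$.

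The only case in which the conclusion can branch is $L=\mathbb{C}$. Here I would invoke the classical complex Stone-Weierstrass theorem: any uniformly closed, unital, point-separating complex subalgebra of $C_{\mathbb{C}}(X)$ that is self-adjoint (i.e.\ closed under pointwise complex conjugation) must equal $C_{\mathbb{C}}(X)$. Contrapositively, if $A\neq C_{\mathbb{C}}(X)$ then $A$ fails to be self-adjoint, so some $f\in A$ has $\bar f\notin A$, which is exactly the alternative asserted in the statement. Since each of the three cases is dispatched by a named theorem already available in the excerpt, there is no genuine obstacle; the only care required is the opening observation that the $(X,\mathrm{id},\mathrm{id})$ hypothesis really does unpack to precisely the data fed into each respective Stone-Weierstrass version.
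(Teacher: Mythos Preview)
Your proposal is correct and matches the paper's approach exactly: the paper states that this theorem is immediate from the complex, real, and non-Archimedean Stone-Weierstrass theorems of Chapter~\ref{cha:UA}, and you have simply written out that immediate deduction by unpacking $C(X,\mathrm{id},\mathrm{id})=C_L(X)$ and then splitting into the three exhaustive cases $L=\mathbb{R}$, $L=\mathbb{C}$, and $L$ non-Archimedean.
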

\section{Examples}
\label{sec:CGEXA}
Our first example considers $^{L}/_{L^{g}}$ function algebras in the Archimedean setting.
\begin{example}
Let $F=\mathbb{R}$, $L=\mathbb{C}$ and $X$ be a compact Hausdorff space. We have $\mbox{Gal}(^{\mathbb{C}}/_{\mathbb{R}})=\langle\mbox{id}, \bar{z}\rangle$.\\
Setting $g=\mbox{id}$ in Definition \ref{def:CGBFA} forces $\tau$ to be the identity on $X$. In this case it's immediate that $C(X,\tau,g)=C_{\mathbb{C}}(X)$ and each $^{L}/_{L^{g}}$ function algebra on $(X,\tau,g)$ is a complex uniform algebra.\\
On the other hand, setting $g=\bar{z}$ forces $\tau$ to be a topological involution on $X$. In this case the $^{L}/_{L^{g}}$ function algebras on $(X,\tau,g)$ are precisely the real function algebras of Kulkarni and Limaye.
\label{exa:CGAEFA}
\end{example}
Our first non-Archimedean example is very straightforward involving the trivial valuation.
\begin{example}
\label{exa:CGTVFA}
Let $F=\mathbb{Q}$, but with the trivial valuation instead of the absolute valuation, and let $L=\mathbb{Q}(a)$ with the trivial valuation where $a=\exp(\frac{1}{10}2\pi i)$. With reference to Theorem \ref{thr:CVFIR}, and having factorised $x^{10}-1$ in $F[x]$, we have $\mbox{Irr}_{F,a}(x)=x^{4}-x^{3}+x^{2}-x+1$ which gives $[L,F]=\mbox{degIrr}_{F,a}(x)=4$. The roots of $\mbox{Irr}_{F,a}(x)$ are the elements of $S:=\{a,a^{3},a^{7},a^{9}\}$ and so, with reference to Definition \ref{def:CVFSN}, $L$ is a normal extension of $F$. Moreover with reference to Remark \ref{rem:CVFSN} $L$ is a separable extension of $F$ and so $L$ is also a Galois extension of $F$ with $\#\mbox{Gal}(^{L}/_{F})=[L,F]=4$ by Theorem \ref{thr:CVFGL}. Now for $g\in\mbox{Gal}(^{L}/_{F})$ we must have $g:S\rightarrow S$ since, for $b\in S$, $0=g(0)=g(\mbox{Irr}_{F,a}(b))=\mbox{Irr}_{F,a}(g(b))$. Putting $g(a):=a^{3}$ makes $g$ a generator of $\mbox{Gal}(^{L}/_{F})$ and we have
\begin{equation*}
g(a)=a^{3},\quad g^{(2)}(a)=a^{9},\quad g^{(3)}(a)=a^{7}\mbox{ and }g^{(4)}(a)=a.
\end{equation*}
Hence $L$ is a cyclic extension of $F$ meaning that $\mbox{Gal}(^{L}/_{F})$ is a cyclic group. Moreover
\begin{align*}
(a+a^{9}-a^{3}-a^{7})^{2}=&4-a^{2}-a^{4}-a^{6}-a^{8}\\
=&4-(a^{4}-a^{3}+a^{2}-a)\\
=&4-(\mbox{Irr}_{F,a}(a)-1)=5
\end{align*}
giving $a+a^{9}-a^{3}-a^{7}=\sqrt{5}$ noting that the real part of each term is positive. Further
\begin{align*}
g(\sqrt{5})=&g(a+a^{9}-a^{3}-a^{7})\\
=&g(a+g^{(2)}(a)-g(a)-g^{(3)}(a))\\
=&g(a)+g^{(3)}(a)-g^{(2)}(a)-a=-\sqrt{5}
\end{align*}
and so we have the intermediate field $\mathbb{Q}(a)^{\langle g^{(2)}\rangle}=\mathbb{Q}(\sqrt{5})$. Now let $S_{1}\subseteq\mathbb{N}\times\{1\}$, $S_{2}\subseteq\mathbb{N}\times\{\sqrt{5},-\sqrt{5}\}$, $S_{3}\subseteq\mathbb{N}\times\{a,a^{3},a^{7},a^{9}\}$ and $X:=S_{1}\cup S_{2}\cup S_{3}$ all be non-empty finite sets such that for $(x,y)\in X$ we have $(x,g(y))\in X$. Put the trivial topology on $X$ so that $X$ is a totally disconnected compact Hausdorff space noting that a set with the trivial topology is compact if and only if it is finite. Define a topological element of finite order $\tau$ on $X$ by $\tau((x,y)):=(x,g(y))$ and note that for our choice of topology every self map on $X$ is continuous as is every map from $X$ to $L$. Hence $C_{L}(X)$ is the $^{\mathbb{Q}(a)}/_{\mathbb{Q}}$ uniform algebra of all functions from $X$ to $L$ and we also have $\mbox{ord}(\tau)|\mbox{ord}(g)$ by construction. Hence with reference to Definition \ref{def:CGBFA} we have $C(X,\tau,g)$ as an example of a basic $^{\mathbb{Q}(a)}/_{\mathbb{Q}}$ function algebra. For $z\in X$ each $f\in C(X,\tau,g)$ is such that
\begin{align*}
\hspace{4cm}&f(z)\in\mathbb{Q}(a)        &\mbox{if }&z\in S_{3},\\
            &f(z)\in\mathbb{Q}(\sqrt{5}) &\mbox{if }&z\in S_{2}\mbox{ and}\\
            &f(z)\in\mathbb{Q}           &\mbox{if }&z\in S_{1}\mbox{ since}\hspace{4cm}
\end{align*}
$f(z)=f(\tau^{(\mbox{ord}(\tau,z))}(z))=g^{(\mbox{ord}(\tau,z))}(f(z))$ giving $\mbox{ord}(g,f(z))|\mbox{ord}(\tau,z)$. Furthermore $C(X,\tau,g)$ extends to $C(X,\tau^{(2)},g^{(2)})$ which is a basic $^{\mathbb{Q}(a)}/_{\mathbb{Q}(\sqrt{5})}$ function algebra. We will look at such extensions in the next section. Finally note that in general if we use the trivial valuation on $L$ then for every totally disconnected compact Hausdorff space $X$ the sup norm $\|\cdot\|_{\infty}$ is the trivial norm on $C_{L}(X)$.
\end{example}
We now look at some non-Archimedean examples involving an order two extension of the 5-adic numbers.
\begin{example}
\label{exa:CGEXONE}
Let $F:=\mathbb{Q}_{5}$ and $L:=\mathbb{Q}_{5}(\sqrt{2})$. Suppose towards a contradiction that $\sqrt{2}$ is already an element of $\mathbb{Q}_{5}$. With reference to Chapter \ref{cha:CVF}, we would have $1=|2|_{5}=|\sqrt{2}^{2}|_{5}=|\sqrt{2}|_{5}^{2}$ giving $|\sqrt{2}|_{5}=1$. But then $\sqrt{2}$ would have a 5-adic expansion over $\mathcal{R}_{5}:=\{0,1,\cdots,4\}$ of the form $\sum_{i=0}^{\infty}a_{i}5^{i}$ with $a_{0}\not=0$. Hence
\begin{equation}
a_{0}^{2} +2a_{0}\sum_{i=1}^{\infty}a_{i}5^{i}+\left(\sum_{i=1}^{\infty}a_{i}5^{i}\right)^{2}
\label{equ:CGsqrt}
\end{equation}
should be equal to 2. In particular $a_{0}^{2}$ should have the form $2+b$ where $b$ is a natural number, with a factor of 5, that cancels with the remaining terms of (\ref{equ:CGsqrt}). But since $a_{0}\in\{1,2,3,4\}$ we have $a_{0}^{2}\in\{1,4,4+5,1+3\cdot5\}$, a contradiction. Therefore we have $\mbox{Irr}_{F,\sqrt{2}}(x)=x^{2}-2$ giving $[L,F]=2$ and so $\mathbb{Q}_{5}(\sqrt{2})=\mathbb{Q}_{5}\oplus\sqrt{2}\mathbb{Q}_{5}$ as a $\mathbb{Q}_{5}$-vector space. It is immediate that $L$ is a Galois extension of $F$ with $\mbox{Gal}(^{L}/_{F})=\langle\mbox{id}, g\rangle$ where $g(\sqrt{2})=-\sqrt{2}$. The complete valuation on $F$ has a unique extension to a complete valuation on $L$, see Theorem \ref{thr:CVFEE}. Explicitly we have, for all $a\in L$,
\begin{equation*}
|a|_{L}=\sqrt{|a|_{L}|g(a)|_{L}}=\sqrt{|ag(a)|_{L}}=\sqrt{|ag(a)|_{5}},
\end{equation*}
noting that $ag(a)\in F$. Moreover in terms of the valuation logarithm $\nu_{5}$ on $F$ we have $\sqrt{|ag(a)|_{5}}=5^{-\frac{1}{2}\nu_{5}(ag(a))}$ and so the valuation logarithm for $L$ is $\omega(a):=\frac{1}{2}\nu_{5}(ag(a))$ for $a\in L$. Hence for $a+\sqrt{2}b\in L^{\times}$, with $a,b\in F$, we have $\omega(a+\sqrt{2}b)=\frac{1}{2}\nu_{5}(a^{2}-2b^{2})$. We show that this in fact gives
\begin{equation}
\omega(a+\sqrt{2}b)=\min\{\nu_{5}(a),\nu_{5}(b)\}.
\label{equ:CGINF}
\end{equation}
First recall that $\nu_{5}(0)=\infty$. If $b=0$ then $\omega(a)=\frac{1}{2}\nu_{5}(a^{2})=\frac{1}{2}2\nu_{5}(a)=\nu_{5}(a)$.\\
If $a=0$ then $\omega(\sqrt{2}b)=\frac{1}{2}\nu_{5}(-2b^{2})=\frac{1}{2}(\nu_{5}(-2)+2\nu_{5}(b))=\frac{1}{2}2\nu_{5}(b)=\nu_{5}(b)$, noting that $\nu_{5}(-2)=0$ since $-2=3+\sum_{i=1}^{\infty}4\cdot5^{i}$.\\
If $a,b\in F^{\times}$ and $\nu_{5}(a)\not=\nu_{5}(b)$ then by the above $\nu_{5}(a^{2})\not=\nu_{5}(-2b^{2})$. Hence, by Lemma \ref{lem:CVFEQ}, $\omega(a+\sqrt{2}b)=\frac{1}{2}\nu_{5}(a^{2}-2b^{2})=\frac{1}{2}\min\{\nu_{5}(a^{2}),\nu_{5}(-2b^{2})\}=\min\{\nu_{5}(a),\nu_{5}(b)\}$.\\
If $a,b\in F^{\times}$ and $\nu_{5}(a)=\nu_{5}(b)=n$ for some $n\in\mathbb{Z}$ then the expansion $a=\sum_{i=n}^{\infty}a_{i}5^{i}$ over $\mathcal{R}_{5}$ has $a_{n}\in\{1,2,3,4\}$ and so the expansion $a^{2}=\sum_{i=2n}^{\infty}a'_{i}5^{i}$ has $\overline{a'_{2n}}=\overline{a_{n}^{2}}$ in the residue field $\overline{F}=\mathbb{F}_{5}$ giving $a'_{2n}\in\{1,4\}$. Similarly the expansion $b=\sum_{i=n}^{\infty}b_{i}5^{i}$ has $b_{n}\in\{1,2,3,4\}$ and so $-2b^{2}=\left(3+\sum_{i=1}^{\infty}4\cdot5^{i}\right)\left(\sum_{i=n}^{\infty}b_{i}5^{i}\right)^{2}=\sum_{i=2n}^{\infty}b'_{i}5^{i}$ has $\overline{b'_{2n}}=\overline{3b_{n}^{2}}$ in $\overline{F}$ giving $b'_{2n}\in\{2,3\}$. Hence the expansion $a^{2}-2b^{2}=\sum_{i=2n}^{\infty}c_{i}5^{i}$ has $\overline{c_{2n}}=\overline{a'_{2n}+b'_{2n}}$ in $\overline{F}$ giving $c_{2n}\in\{1,2,3,4\}$. In particular $c_{2n}\not=0$ and so
\begin{equation*}
\omega(a+\sqrt{2}b)=\frac{1}{2}\nu_{5}(a^{2}-2b^{2})=\frac{1}{2}\nu_{5}(\sum_{i=2n}^{\infty}c_{i}5^{i})=\frac{1}{2}2n=n
\end{equation*}
and this completes the proof of (\ref{equ:CGINF}). With reference to Remark \ref{rem:CVFEE} it follows that $L$ is an unramified extension of $F$ with $|a+\sqrt{2}b|_{L}=\max\{|a|_{L},|b|_{L}\}=\max\{|a|_{F},|b|_{F}\}$ for $a,b\in F$. Further it follows easily from (\ref{equ:CGINF}) that $\mathcal{R}_{L}:=\{a+\sqrt{2}b:a,b\in\{0,\cdots,4\}\}$ is a set of representatives in $L$ of the elements in the residue field $\overline{L}$. Hence $\overline{L}=\mathbb{F}_{25}$ since $\#\mathcal{R}_{L}=25$ and $[\overline{L},\overline{F}]=2$. Therefore, by Theorem \ref{thr:CVFHB}, $L$ is locally compact and the unit ball $\Delta_{L}:=\{x\in L:|x|_{L}\leq1\}=\{x\in L:\omega(x)\geq0\}$ is a totally disconnected compact Hausdorff space with respect to $|\cdot|_{L}$. Further if we take $\tau_{1}$ to be the restriction of $g$ to $\Delta_{L}$ then, since $g$ is an isometry on $L$, $\tau_{1}$ is a topological involution on $\Delta_{L}$ and the basic $^{L}/_{F}$ function algebra
\begin{equation*}
C(\Delta_{L},\tau_{1},g)=\{f\in C_{L}(\Delta_{L}):f(\tau_{1}(x))=g(f(x))\mbox{ for all }x\in\Delta_{L}\} 
\end{equation*}
is a non-Archimedean analog of the real disc algebra. Now let $f(x)=\sum_{n=0}^{\infty}a_{n}x^{n}$ be a power series in $C(\Delta_{L},\tau_{1},g)$. Then for $x\in\Delta_{L}$ and $\sigma$ from Remark \ref{rem:CGFROB} we have
\begin{equation*}
\sum_{n=0}^{\infty}a_{n}x^{n}=f(x)=\sigma(f)(x)=g\left(\sum_{n=0}^{\infty}a_{n}g(x)^{n}\right)=\sum_{n=0}^{\infty}g(a_{n})x^{n}
\end{equation*}
where the last equality follows because the two series have identical sequences of partial sums. Hence similarly we have, for $x\in\Delta_{L}$, $\sum_{n=0}^{\infty}(a_{n}-g(a_{n}))x^{n}=0$. In the general case of such circumstance we can not immediately assume that all the pairs of coefficients $a_{n}$ and $g(a_{n})$ are equal since $\Delta_{L}$ could be a set of roots of the series $\sum_{n=0}^{\infty}(a_{n}-g(a_{n}))x^{n}$ whilst there being an element of $L$ in the region of convergence of the series that is not a root. However since $0\in\Delta_{L}$ we have $a_{0}=g(a_{0})$. Now let $m\in\mathbb{N}$ be such that for all $i\in\mathbb{N}_{0}$ with $i<m$ we have $a_{i}=g(a_{i})$. Then $x^{m}\sum_{n=m}^{\infty}(a_{n}-g(a_{n}))x^{n-m}=0$ on $\Delta_{L}$ and $\sum_{n=m}^{\infty}(a_{n}-g(a_{n}))x^{n-m}=0$ on $\Delta_{L}\backslash\{0\}$. Let $\rho$ be the radius of convergence of $\sum_{n=m}^{\infty}(a_{n}-g(a_{n}))x^{n-m}$. Then with reference to Theorem \ref{thr:FAACP}, since $1\in\Delta_{L}\backslash\{0\}$ with $|1|_{L}=1$ and $\sum_{n=m}^{\infty}(a_{n}-g(a_{n}))x^{n-m}$ converges on $\Delta_{L}\backslash\{0\}$ we have $\rho\geq1$. Hence $\sum_{n=m}^{\infty}(a_{n}-g(a_{n}))x^{n-m}$ converges uniformly on the ball $\bar{B}_{\frac{1}{5}}(0)=\{x\in L:\omega(x)\geq 1\}$ by Theorem \ref{thr:FAACP}. Therefore $\sum_{n=m}^{\infty}(a_{n}-g(a_{n}))x^{n-m}$ is continuous on $\bar{B}_{\frac{1}{5}}(0)$ and so $\sum_{n=m}^{\infty}(a_{n}-g(a_{n}))x^{n-m}=0$ at $0\in\bar{B}_{\frac{1}{5}}(0)$. Hence $a_{m}=g(a_{m})$ and by induction we have shown that $a_{n}=g(a_{n})$ for all $n\in\mathbb{N}_{0}$. In particular all the power series in $C(\Delta_{L},\tau_{1},g)$ only have $F$ valued coefficients. However since $\Delta_{L}\not\subseteq F$ these functions take values in $L$.
\end{example}
Whilst the last example of a basic function algebra included many globally analytic functions the only globally analytic functions in the following example are constants. However many locally analytic functions are included.
\begin{example}
\label{exa:CGEXTWO}
Let $F$, $L$, $\Delta_{L}$, $\omega$ and $g$ be as in Example \ref{exa:CGEXONE} and therefore note that $\omega|_{\Delta_{L}}:\Delta_{L}\rightarrow\mathbb{N}_{0}\cup\{\infty\}$. Define $\tau_{2}(0):=0$ and for $x\in\Delta_{L}\backslash\{0\}$, 
\begin{equation}
\tau_{2}(x):= \left\{ \begin{array} {l@{\quad\mbox{if}\quad}l}
5x & 2\mid\omega(x) \\
5^{-1}x & 2\nmid\omega(x).
\end{array} \right.
\label{equ:CGTAUTWO}
\end{equation}
Let $x\in\Delta_{L}$ with $\omega(x)\in\mathbb{N}_{0}$. Then $\omega(\tau_{2}(x))=\omega(5x)=\omega(x)+\omega(5)=\omega(x)+1$ if $2|\omega(x)$. Similarly $\omega(\tau_{2}(x))=\omega(x)-1$ if $2\nmid\omega(x)$. Hence when $\omega(x)\in\mathbb{N}_{0}$ we have $\omega(\tau_{2}(x))\in\mathbb{N}_{0}$ giving $\tau_{2}(x)\in\Delta_{L}$. Further $\tau_{2}:\Delta_{L}\rightarrow\Delta_{L}$ since $\tau_{2}(0)=0$. Moreover $\mbox{ord}(\tau_{2})=2$ and so to show that $\tau_{2}$ is a topological involution on $\Delta_{L}$ it remains to show that $\tau_{2}$ is continuous. Let $x\in\Delta_{L}$ and $(x_{n})$ be a sequence in $\Delta_{L}$ such that $\lim_{n\to\infty}x_{n}=x$. Let $\varepsilon>0$. For $x\not=0$ there exists $N\in\mathbb{N}$ such that for all $n\geq N$ we have $\omega(x_{n})=\omega(x)$ since convergence in $L$ is from the side, see Lemma \ref{lem:FAACS}. With reference to (\ref{equ:CGTAUTWO}) this gives, for all $n\geq N$, $\tau_{2}(x_{n})=\tau_{2}(x_{n})x_{n}^{-1}x_{n}=\tau_{2}(x)x^{-1}x_{n}$. Further since $\lim_{n\to\infty}x_{n}=x$ there exists $M\in\mathbb{N}$ such that, for all $m\geq M$, $|\tau_{2}(x)x^{-1}|_{L}|x-x_{m}|_{L}<\varepsilon$. Hence for all $n\geq\max\{M,N\}$ we have
\begin{equation*}
|\tau_{2}(x)-\tau_{2}(x_{n})|_{L}=|\tau_{2}(x)x^{-1}(x-x_{n})|_{L}=|\tau_{2}(x)x^{-1}|_{L}|x-x_{n}|_{L}<\varepsilon.
\end{equation*}
On the other hand for $x=0$ note that $\omega(\tau_{2}(x_{n}))\geq\omega(x_{n})-1$ for all $n\in\mathbb{N}$. In this case since $\lim_{n\to\infty}x_{n}=0$ there exists $N'\in\mathbb{N}$ such that for all $n\geq N'$ we have $5|x_{n}|_{L}<\varepsilon$ giving
\begin{equation*}
|\tau_{2}(x_{n})|_{L}=5^{-\omega(\tau_{2}(x_{n}))}\leq5^{-(\omega(x_{n})-1)}=5|x_{n}|_{L}<\varepsilon
\end{equation*}
as required. Hence $\tau_{2}$ is a topological involution on $\Delta_{L}$. We now consider the basic $^{L}/_{F}$ function algebra
\begin{equation*}
C(\Delta_{L},\tau_{2},g)=\{f\in C_{L}(\Delta_{L}):f(\tau_{2}(x))=g(f(x))\mbox{ for all }x\in\Delta_{L}\}.
\end{equation*}
We begin by proving that the only power series in $C(\Delta_{L},\tau_{2},g)$ are the constants belonging to $F$. Let $f(x):=\sum_{n=0}^{\infty}a_{n}x^{n}$ be a power series in $C(\Delta_{L},\tau_{2},g)$. Since $\tau_{2}(0)=0$ and $f(\tau_{2}(0))=g(f(0))$ we have $a_{0}=g(a_{0})$ giving $a_{0}\in F$ and so $a_{0}\in C(\Delta_{L},\tau_{2},g)$. Hence $h:=f-a_{0}$ is also in $C(\Delta_{L},\tau_{2},g)$. Suppose towards a contradiction that $h$ is not identically zero on $\Delta_{L}$. Since $1\in\Delta_{L}$, $\sum_{n=1}^{\infty}a_{n}$ converges and so by Lemma \ref{lem:FAACS} we have $\lim_{n\to\infty}\omega(a_{n})=\infty$. Hence we can define $M:=\min\{\omega(a_{n}):n\in\mathbb{N}\}$. Also let $m:=\min\{n\in\mathbb{N}:a_{n}\not=0\}$. Now since $\Delta_{L}=\{x\in L:\omega(x)\geq0\}$ we can find $y\in\Delta_{L}\backslash\{0\}$ such that $2|\omega(y)$ and $M+\omega(y)>\omega(a_{m})$. Hence for every $n>m$ we have
\begin{equation*}
\omega(a_{m}y^{m})=\omega(a_{m})+m\omega(y)<M+\omega(y)+m\omega(y)\leq\omega(a_{n})+n\omega(y)=\omega(a_{n}y^{n}).
\end{equation*}
So, by Lemma \ref{lem:FAACS}, $\omega\left(\sum_{n=m+1}^{\infty}a_{n}y^{n}\right)\geq\min\{\omega(a_{n}y^{n}):n\geq m+1\}>\omega(a_{m}y^{m})$. Hence $\omega\left(\sum_{n=m}^{\infty}a_{n}y^{n}\right)=\omega\left(a_{m}y^{m}+\sum_{n=m+1}^{\infty}a_{n}y^{n}\right)=\omega(a_{m}y^{m})$ by Lemma \ref{lem:CVFEQ}. Similarly for every $n>m$ we have
\begin{align*}
\omega(a_{m}5^{m}y^{m})=&\omega(a_{m})+m(\omega(y)+1)\\
                       <&M+\omega(y)+1+m(\omega(y)+1)\\
                    \leq&\omega(a_{n})+n(\omega(y)+1)=\omega(a_{n}5^{n}y^{n}),
\end{align*}
giving $\omega\left(\sum_{n=m}^{\infty}a_{n}5^{n}y^{n}\right)=\omega(a_{m}5^{m}y^{m})=\omega(5^{m})+\omega(a_{m}y^{m})=m+\omega(a_{m}y^{m})$. Now $h(\tau_{2}(y))=g(h(y))$ and $2|\omega(y)$ hence $\sum_{n=m}^{\infty}a_{n}5^{n}y^{n}=g\left(\sum_{n=m}^{\infty}a_{n}y^{n}\right)$. But, since $g$ is an isometry, $\omega\left(g\left(\sum_{n=m}^{\infty}a_{n}y^{n}\right)\right)=\omega(a_{m}y^{m})$ and $\omega\left(\sum_{n=m}^{\infty}a_{n}5^{n}y^{n}\right)=m+\omega(a_{m}y^{m})$ with $m\in\mathbb{N}$ which is a contradiction. Therefore $h$ is identically zero on $\Delta_{L}$ as required. However, whilst the only power series in $C(\Delta_{L},\tau_{2},g)$ are constants belonging to $F$, it is easy to construct locally analytic elements of $C(\Delta_{L},\tau_{2},g)$ using power series. Define
\begin{equation*}
\mathcal{C}(n):=\{x\in\Delta_{L}:\omega(x)=n\}\mbox{ for }n\in\omega(\Delta_{L}),
\end{equation*}
let $(e_{n})_{n\in\mathbb{N}}$ be the even sequence in $\omega(\Delta_{L})$ given by $e_{n}:=2(n-1)$ and let $a\in F$. Now let $(f_{n})_{n\in\mathbb{N}}$ be a sequence of power series with the following properties:
\begin{enumerate}
\item[(i)]
for all $n\in\mathbb{N}$ the coefficients of $f_{n}$ are elements of $L$;
\item[(ii)]
for all $n\in\mathbb{N}$ we have $5^{-e_{n}}<\rho_{n}$ where $\rho_{n}$ is the radius of convergence of $f_{n}$ so that $f_{n}$ is convergent on $\mathcal{C}(e_{n})$;
\item[(iii)]
we have $\lim_{n\to\infty}\inf\{\omega(f_{n}(x)-a):x\in\mathcal{C}(e_{n})\}=\infty$.
\end{enumerate}
We then define $f:\Delta_{L}\rightarrow L$ by
\begin{equation*}
f(x):= \left\{ \begin{array} {l@{\quad\mbox{if}\quad}l}
f_{n}(x) & x\in\mathcal{C}(e_{n})\\
g(f_{n}(\tau_{2}(x))) & x\in\mathcal{C}(e_{n}+1)\\
a & x=0.
\end{array} \right.
\end{equation*}
We show that $f$ is continuous. Let $x\in\Delta_{L}$ and let $(x_{n})$ be a sequence in $\Delta_{L}$ such that $\lim_{n\to\infty}x_{n}=x$.\\
If $x\not=0$ then by Lemma \ref{lem:FAACS} there is $N\in\mathbb{N}$ such that for all $n\geq N$ we have $\omega(x_{n})=\omega(x)$. If for some $m\in\mathbb{N}$ we have $x\in\mathcal{C}(e_{m})$ then $f(x)=f_{m}(x)$ and for all $n\geq N$ we have $f(x_{n})=f_{m}(x_{n})$ since $x_{n}\in\mathcal{C}(e_{m})$. Hence by the continuity of $f_{m}$ on $\mathcal{C}(e_{m})$ we have $\lim_{n\to\infty}f(x_{n})=f(x)$.\\
If for some $m\in\mathbb{N}$ we have $x\in\mathcal{C}(e_{m}+1)$ then $f(x)=g(f_{m}(\tau_{2}(x)))$, with $\tau_{2}(x)\in\mathcal{C}(e_{m})$, and for all $n\geq N$ we have $f(x_{n})=g(f_{m}(\tau_{2}(x_{n})))$, since $x_{n}\in\mathcal{C}(e_{m}+1)$, with $\tau_{2}(x_{n})\in\mathcal{C}(e_{m})$. Now $\tau_{2}$ is continuous on $\Delta_{L}$, $f_{m}$ is continuous on $\mathcal{C}(e_{m})$ and $g$ is an isometry on $L$. Hence again we have $\lim_{n\to\infty}f(x_{n})=f(x)$.\\
If $x=0$ then by the definition of $f$ we have $f(x)=a$. Let $\varepsilon<\infty$. We need to show that there is $N\in\mathbb{N}$ such that for all $n\geq N$ we have $\omega(f(x_{n})-a)>\varepsilon$. By property (iii) given in the construction of $f$ there is $M\in\mathbb{N}$ such that for all $m\geq M$ we have $\inf\{\omega(f_{m}(y)-a):y\in\mathcal{C}(e_{m})\}>\varepsilon$. Since $\lim_{n\to\infty}\omega(x_{n})=\infty$ there is $N\in\mathbb{N}$ such that for all $n\geq N$ we have $\omega(x_{n})\geq e_{M}$. So let $n\geq N$. Then either $x_{n}=0$, noting that $\omega(0)=\infty$, or there is $m\geq M$ with either $x_{n}\in\mathcal{C}(e_{m})$ or $x_{n}\in\mathcal{C}(e_{m}+1)$.\\
For $x_{n}=0$ we have $\omega(f(0)-a)=\omega(a-a)=\infty>\varepsilon$.\\
For $x_{n}\in\mathcal{C}(e_{m})$ we have $\omega(f(x_{n})-a)=\omega(f_{m}(x_{n})-a)>\varepsilon$ since $m\geq M$.\\
For $x_{n}\in\mathcal{C}(e_{m}+1)$ define $y:=\tau_{2}(x_{n})$ and note that $y\in\mathcal{C}(e_{m})$. Then
\begin{align*}
\omega(f(x_{n})-a)=&\omega(g(f_{m}(\tau_{2}(x_{n})))-a)&&\\
                  =&\omega(g(f_{m}(y)-a)) &&(\mbox{since }a\in F)\\
                  =&\omega(f_{m}(y)-a) &&(\mbox{since }g\mbox{ is an isometry on }L)\\
                  >&\varepsilon &&(\mbox{since }m\geq M\mbox{ and }y\in\mathcal{C}(e_{m})).
\end{align*}
Hence $f$ is continuous. We now show that $f\in C(\Delta_{L},\tau_{2},g)$. Let $x\in\Delta_{L}$.\\
For $x=0$ we have $f(\tau_{2}(0))=f(0)=a=g(a)=g(f(0))$ since $a\in F$.\\
For $x\in\mathcal{C}(e_{n})$, for some $n\in\mathbb{N}$, we have $f(x)=f_{n}(x)$. Define $y:=\tau_{2}(x)$ giving $y\in\mathcal{C}(e_{n}+1)$. Then we have
\begin{equation*}
f(\tau_{2}(x))=f(y)=g(f_{n}(\tau_{2}(y)))=g(f_{n}(\tau_{2}(\tau_{2}(x))))=g(f_{n}(x))=g(f(x)).
\end{equation*}
For $x\in\mathcal{C}(e_{n}+1)$, for some $n\in\mathbb{N}$, we have $f(x)=g(f_{n}(\tau_{2}(x)))$. Put $y:=\tau_{2}(x)$ giving $y\in\mathcal{C}(e_{n})$. Then we have
\begin{equation*}
f(\tau_{2}(x))=f(y)=f_{n}(y)=f_{n}(\tau_{2}(x))=g(g(f_{n}(\tau_{2}(x))))=g(f(x)).
\end{equation*}
Hence $f\in C(\Delta_{L},\tau_{2},g)$ as required. Now suppose there is $N\in\mathbb{N}$ such that for all $n\geq N$ we have $f_{n}=a$. Then $f$ will be locally analytic on $\Delta_{L}$ noting that convergence in $\Delta_{L}$ is from the side, in particular see the proof of Lemma \ref{lem:FAACS}.
\end{example}
\begin{remark}
Concerning examples \ref{exa:CGEXONE} and \ref{exa:CGEXTWO}.
\begin{enumerate}
\item[(i)]
Since $g$ is an isometry on $\Delta_{L}$ and $5\in F$ we note that $\tau_{1}\circ\tau_{2}$ is also a topological involution on $\Delta_{L}$ with $\mbox{ord}(\tau_{1}\circ\tau_{2})=2$. For $f\in C(\Delta_{L},\tau_{1},g)\cap C(\Delta_{L},\tau_{2},g)$ and $x\in\Delta_{L}$ we have $f(\tau_{1}\circ\tau_{2}(x))=g(f(\tau_{2}(x)))=g(g(f(x)))=f(x)$ which gives $f\in C(\Delta_{L},\tau_{1}\circ\tau_{2},\mbox{id})$. But, with reference to Figure \ref{fig:CGED}, $C(\Delta_{L},\tau_{1}\circ\tau_{2},\mbox{id})$ is not a basic function algebra since it fails to separate the points of $\Delta_{L}$ noting that we have $\mbox{ord}(\tau_{1}\circ\tau_{2})\nmid\mbox{ord}(\mbox{id})$. Hence $C(\Delta_{L},\tau_{1},g)\cap C(\Delta_{L},\tau_{2},g)$ is not a $^{L}/_{F}$ function algebra on $(\Delta_{L},\tau_{1},g)$. However $C(\Delta_{L},\tau_{1}\circ\tau_{2},g)$ is a basic function algebra.
\item[(ii)]
We note that by Theorem \ref{thr:CGGSW} every element $f\in C(\Delta_{L},\tau_{2},g)$ can be uniformly approximated by polynomials belonging to $C_{L}(\Delta_{L})$. However, apart from the elements of $F$, none of these polynomials belong to $C(\Delta_{L},\tau_{2},g)$.
\end{enumerate}
\end{remark}
In the next section we look at ways of obtaining more basic function algebras.
\section{Non-Archimedean new basic function algebras from old}
\label{sec:CGNANFO}
\subsection{Basic extensions}
\label{subsec:CGBE}
The following theorem concerns extensions of basic function algebras resulting from the field structure involved.
\begin{theorem}
\label{thr:CGBET}
Let the basic $^{L}/_{L^{g}}$ function algebra on $(X,\tau,g)$ be such that $\mbox{Gal}(^{L}/_{L^{g}})$ and $\langle\mbox{id}\rangle$ are respectively at the top and bottom of a lattice of groups with intermediate elements. Then $C_{L}(X)$ and $C(X,\tau,g)$ are respectively at the top and bottom of a particular lattice of basic function algebras with intermediate elements and there is a one-one correspondence between the subgroups of $\mbox{Gal}(^{L}/_{L^{g}})$ and the elements of this lattice which we will call the lattice of basic extensions of $C(X,\tau,g)$.
\end{theorem}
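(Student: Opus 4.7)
The plan is to construct the lattice of basic extensions directly from the subgroup lattice, following the Galois-theoretic template. The first step is to identify $\mbox{Gal}(^{L}/_{L^{g}})$ as cyclic of order $n:=\mbox{ord}(g)$. Applying Corollary \ref{cor:CVFLFTG} with $G:=\mbox{Gal}(^{L}/_{F})$ shows $L$ is a finite Galois extension of $L^{G}$, and the fundamental theorem of Galois theory (Theorem \ref{thr:CVFFTG}) for $L/L^{G}$ then gives $\mbox{Gal}(^{L}/_{L^{G'}})=G'$ for every subgroup $G'\leqslant G$. Taking $G'=\langle g\rangle$ yields $\mbox{Gal}(^{L}/_{L^{g}})=\langle g\rangle$, cyclic of order $n$, whose subgroups are exactly $H_{d}:=\langle g^{(d)}\rangle$ for positive divisors $d$ of $n$, with $H_{1}=\mbox{Gal}(^{L}/_{L^{g}})$ at the top and $H_{n}=\langle\mbox{id}\rangle$ at the bottom.

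Next I would assign to each such $d$ the algebra $A_{d}:=C(X,\tau^{(d)},g^{(d)})$ and verify that it is a basic $^{L}/_{L^{g^{(d)}}}$ function algebra in the sense of Definition \ref{def:CGBFA}. The requirements on $X$ and continuity are inherited directly, so the only nontrivial check is the order condition $\mbox{ord}(\tau^{(d)})\,|\,\mbox{ord}(g^{(d)})$. Writing $m:=\mbox{ord}(\tau)$ this reduces to $m/\gcd(m,d)\,|\,n/d$, equivalently $\mbox{lcm}(m,d)\,|\,n$, which holds because $m\,|\,n$ (by the hypothesis $\mbox{ord}(\tau)\,|\,\mbox{ord}(g)$) and $d\,|\,n$. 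Theorem \ref{thr:CGGUA} then ensures each $A_{d}$ is a basic function algebra, with $A_{1}=C(X,\tau,g)$ and $A_{n}=C(X,\mbox{id},\mbox{id})=C_{L}(X)$ landing correctly at the bottom and top of the candidate lattice.

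Order reversal of the correspondence is immediate. If $d_{1}\,|\,d_{2}$ and $f\in A_{d_{1}}$, a short induction on $k$ from $f\circ\tau^{(d_{1})}=g^{(d_{1})}\circ f$ gives $f\circ\tau^{(kd_{1})}=g^{(kd_{1})}\circ f$ for every $k\in\mathbb{N}$, and $k=d_{2}/d_{1}$ places $f$ in $A_{d_{2}}$; hence $H_{d_{1}}\supseteq H_{d_{2}}$ forces $A_{d_{1}}\subseteq A_{d_{2}}$.

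The step where the construction could in principle collapse is injectivity of $d\mapsto A_{d}$, which I would secure through constant functions. A constant function of value $c\in L$ lies in $A_{d}$ if and only if $g^{(d)}(c)=c$, that is, if and only if $c\in L^{g^{(d)}}=L^{H_{d}}$; so the subfield of $L$-valued constants sitting inside $A_{d}$ recovers the Galois-theoretic fixed field $L^{H_{d}}$. By the fundamental theorem of Galois theory applied to $L/L^{g}$, distinct subgroups of $\mbox{Gal}(^{L}/_{L^{g}})$ give distinct fixed fields and hence distinct algebras $A_{d}$. This provides the required bijection, in fact a lattice anti-isomorphism, between the subgroup lattice of $\mbox{Gal}(^{L}/_{L^{g}})$ and the lattice of basic extensions of $C(X,\tau,g)$.
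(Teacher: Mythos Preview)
Your proof is correct and follows essentially the same route as the paper: identify $\mbox{Gal}(^{L}/_{L^{g}})=\langle g\rangle$ as cyclic, associate to each divisor $d$ of $\mbox{ord}(g)$ the algebra $C(X,\tau^{(d)},g^{(d)})$, verify the order condition so that Theorem~\ref{thr:CGGUA} applies, and use the constant functions together with the fundamental theorem of Galois theory to get injectivity. The only presentational differences are that the paper packages everything via the automorphism $\sigma(f)=g^{(\mathrm{ord}(g)-1)}\circ f\circ\tau$ on $C_{L}(X)$ and its fixed subalgebras $C_{L}(X)^{\langle\sigma^{(n)}\rangle}$, and that it deduces $\mbox{ord}(\tau^{(d)})\mid\mbox{ord}(g^{(d)})$ from the order-set inclusion $\mbox{ord}(\tau,X)\subseteq\mbox{ord}(g,L)$ of Figure~\ref{fig:CGED} rather than from your direct $\mathrm{lcm}$ computation.
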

\begin{proof}
With reference to Remark \ref{rem:CGFROB}, the automorphism $\sigma(f)=g^{(\mbox{ord}(g)-1)}\circ f\circ\tau$, for $f\in C_{L}(X)$, is such that $C_{L}(X)^{\langle\sigma\rangle}=C(X,\tau,g)$ where
\begin{equation*}
C_{L}(X)^{\langle\sigma\rangle}:=\{f\in C_{L}(X):\sigma(f)=f\}.
\end{equation*}
Now by the fundamental theorem of Galois theory we have $\mbox{Gal}(^{L}/_{L^{g}})=\langle g\rangle$ and so $\mbox{Gal}(^{L}/_{L^{g}})$ is a cyclic group. Moreover we have $\mbox{ord}(\sigma)=\mbox{ord}(g)$ giving $\langle\sigma\rangle\cong\langle g\rangle$ as cyclic groups. It is standard from group theory that a subgroup of a cyclic group is cyclic. In particular, for $n|\mbox{ord}(\sigma)$, $\langle\sigma^{(n)}\rangle$ is the unique cyclic subgroup of $\langle\sigma\rangle$ of size $\#\langle\sigma^{(n)}\rangle=\mbox{ord}(\sigma^{(n)})=\frac{\mbox{ord}(\sigma)}{n}$. Moreover for $G$ a subgroup of $\langle\sigma\rangle$ we have $\frac{\mbox{ord}(\sigma)}{\#G}\in\mathbb{N}$, by Lagrange's theorem, and so for $n=\frac{\mbox{ord}(\sigma)}{\#G}$ we have $\langle\sigma^{(n)}\rangle=G$ with $n|\mbox{ord}(\sigma)$. Hence we define a map $\varsigma:\{\langle\sigma^{(n)}\rangle:n|\mbox{ord}(\sigma)\}\rightarrow\{C_{L}(X)^{\langle\sigma^{(n)}\rangle}:n|\mbox{ord}(\sigma)\}$ by
\begin{equation*}
\varsigma(\langle\sigma^{(n)}\rangle):=C_{L}(X)^{\langle\sigma^{(n)}\rangle}:=\{f\in C_{L}(X):\sigma^{(n)}(f)=f\}=C(X,\tau^{(n)},g^{(n)}).
\end{equation*}
Now let $n|\mbox{ord}(\sigma)$. Since $\mbox{ord}(\tau)|\mbox{ord}(g)$ we also have $\mbox{ord}(\tau,X)\subseteq\mbox{ord}(g,L)$, see Figure \ref{fig:CGED}. Hence $\mbox{ord}(\tau^{(n)},X)\subseteq\mbox{ord}(g^{(n)},L)$ giving $\mbox{ord}(\tau^{(n)})|\mbox{ord}(g^{(n)})$ and so $\varsigma(\langle\sigma^{(n)}\rangle)$ is a basic function algebra. Moreover since
\begin{equation*}
C(X,\tau^{(n)},g^{(n)})=\{f\in C_{L}(X):f(\tau^{(n)}(x))=g^{(n)}(f(x))\mbox{ for all }x\in X\}
\end{equation*}
the constants in $\varsigma(\langle\sigma^{(n)}\rangle)$ are the elements of the field $L^{\langle g^{(n)}\rangle}$ and so $\varsigma$ is injective by the fundamental theorem of Galois theory. Finally it is immediate that the elements of $\{C_{L}(X)^{\langle\sigma^{(n)}\rangle}:n|\mbox{ord}(\sigma)\}$ form a lattice as described in the theorem and this completes the proof.
\end{proof}
\begin{example}
\label{exa:CGBEL}
Let $F=\mathbb{Q}$ and let $L=\mathbb{Q}(a)$ where $a=\exp(\frac{1}{14}2\pi i)$. Having factorised $x^{14}-1$ in $F[x]$, we have $\mbox{Irr}_{F,a}(x)=x^{6}-x^{5}+x^{4}-x^{3}+x^{2}-x+1$ with roots $S:=\{a,a^{3},a^{5},a^{9},a^{11},a^{13}\}$. Hence $L$ is the splitting field of $\mbox{Irr}_{F,a}(x)$ over $F$ and so $L$ is a Galois extension of $F$ with $\#\mbox{Gal}(^{L}/_{F})=[L,F]=6$ by Theorem \ref{thr:CVFGL}. In fact putting $g(a):=a^{3}$ makes $g$ a generator of $\mbox{Gal}(^{L}/_{F})$ and so $L$ is a cyclic extension of $F$ and $F=L^{g}$ since $L$ is a Galois extension. We can take $C(X,\tau,g)$ to be a basic $^{L}/_{L^{g}}$ function algebra constructed by analogy with Example \ref{exa:CGTVFA} where $X\subseteq\mathbb{N}\times L$ is non-empty and finite with $\tau((x,y))=(x,g(y))\in X$ for all $(x,y)\in X$. In this case Figure \ref{fig:CGBE} shows the lattice of basic extensions of $C(X,\tau,g)$ as given by Theorem \ref{thr:CGBET}.
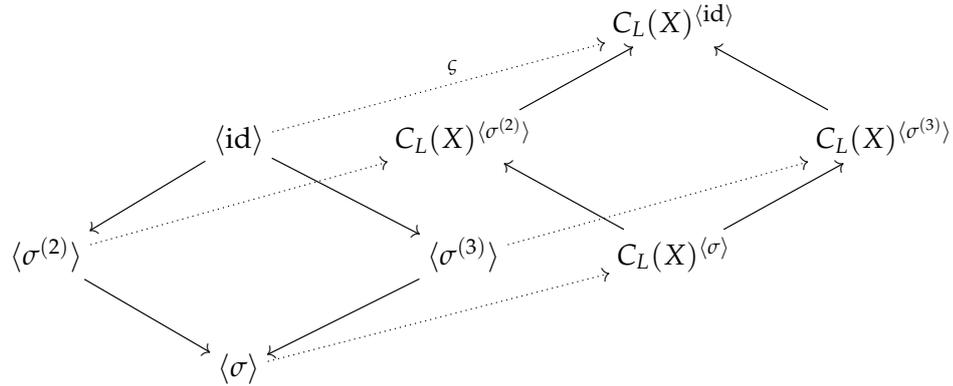
\begin{figure}[h]
\begin{equation*}
\xymatrix{
&&&C_{L}(X)^{\langle\mbox{\footnotesize{id}}\rangle}&\\
&\langle\mbox{id}\rangle\ar@{.>}[rru]^{\varsigma}\ar@{->}[ld]\ar@{->}[rd]&C_{L}(X)^{\langle\sigma^{(2)}\rangle}\ar@{->}[ru]&&C_{L}(X)^{\langle\sigma^{(3)}\rangle}\ar@{->}[lu]\\
\langle\sigma^{(2)}\rangle\ar@{.>}[rru]\ar@{->}[rd]&\hspace{20mm}&\langle\sigma^{(3)}\rangle\ar@{.>}[rru]\ar@{->}[ld]&C_{L}(X)^{\langle\sigma\rangle}\ar@{->}[lu]\ar@{->}[ru]&\\
&\langle\sigma\rangle\ar@{.>}[rru]&&&
}
\end{equation*}
\caption{Lattice of basic extensions.}
\label{fig:CGBE}
\end{figure}
Finally we note that in this example $F=\mathbb{Q}$, requiring the trivial valuation, was chosen just to keep things simple.
\end{example}
\subsection{Residue algebras}
\label{subsec:CGRA}
We begin with an analog of Definition \ref{def:CVFRF}.
\begin{definition}
\label{def:CGRA}
For $C(X,\tau,g)$ a basic $^{L}/_{L^{g}}$ function algebra in the non-Archimedean setting, with valuation logarithm $\omega$ on $L$, we define:
\begin{enumerate}
\item[(i)]
$\mathcal{O}(X,\tau,g):=\{f\in C(X,\tau,g):\inf_{x\in X}\omega(f(x))\geq0,\mbox{\small{ equivalently }}\|f\|_{\infty}\leq1\}$;
\item[(ii)]
$\mathcal{O}^{\times}(X,\tau,g):=\{f\in C(X,\tau,g):\omega(f(x))=0,\mbox{\small{ equivalently }}|f(x)|_{L}=1,\forall x\in X\}$;
\item[(iii)]
$\mathcal{J}(X,\tau,g):=\{f\in C(X,\tau,g):\inf_{x\in X}\omega(f(x))>0,\mbox{\small{ equivalently }}\|f\|_{\infty}<1\}$;
\item[(iv)]
$\mathcal{M}^{y}(X,\tau,g):=\{f\in\mathcal{O}(X,\tau,g):\omega(f(y))>0,\mbox{\small{ equivalently }}|f(y)|_{L}<1\}$ for $y\in X$.
\end{enumerate}
\end{definition}
In this subsection we will mainly be interested in the following two theorems and their proofs. The main theorem is Theorem \ref{thr:CGRAT} which concerns the residue algebra of particular basic function algebras. Before proving these theorems we will need to prove several other results that are also of interest in their own right.
\begin{theorem}
\label{thr:CGRUI}
If $C(X,\tau,g)$ is a basic $^{L}/_{L^{g}}$ function algebra in the non-Archimedean setting then:
\begin{enumerate}
\item[(i)]
$\mathcal{O}(X,\tau,g)$ is a ring;
\item[(ii)]
$\mathcal{O}^{\times}(X,\tau,g)$ is the multiplicative group of units of $\mathcal{O}(X,\tau,g)$;
\item[(iii)]
$\mathcal{J}(X,\tau,g)$ is an ideal of $\mathcal{O}(X,\tau,g)$;
\item[(iv)]
$\mathcal{M}^{y}(X,\tau,g)$ is a maximal ideal of $\mathcal{O}(X,\tau,g)$ for each $y\in X$.
\end{enumerate}
\end{theorem}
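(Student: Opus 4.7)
The overall plan is to transfer the non-Archimedean ring-theoretic structure of $\mathcal{O}_L$, $\mathcal{O}_L^\times$ and $\mathcal{M}_L$ from $L$ to the algebra of continuous $L$-valued functions, working pointwise and exploiting both the strong triangle inequality on $L$ and the fact that $g$ is an isometric field automorphism. Statements (i) and (iii) reduce to pointwise estimates. Since $C(X,\tau,g)$ is already closed under pointwise operations by Theorem \ref{thr:CGGUA}, it suffices to check the norm bounds. For $f,h\in\mathcal{O}(X,\tau,g)$ the strong triangle inequality gives $|f(x)+h(x)|_L\leq\max(|f(x)|_L,|h(x)|_L)\leq 1$ and multiplicativity gives $|f(x)h(x)|_L=|f(x)|_L|h(x)|_L\leq 1$, so $\mathcal{O}(X,\tau,g)$ is a subring containing the constant $1\in L^g$. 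Running the same computation with the strict inequalities $\|f\|_{\infty}<1$ and $\|r\|_{\infty}\leq 1$ yields $\|f+h\|_{\infty}<1$ and $\|rf\|_{\infty}\leq\|r\|_{\infty}\|f\|_{\infty}<1$, proving that $\mathcal{J}(X,\tau,g)$ is an ideal.

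For (ii), I would observe that any $f\in\mathcal{O}^{\times}(X,\tau,g)$ is nowhere zero, so the pointwise reciprocal $x\mapsto f(x)^{-1}$ is continuous on $X$, has sup norm $1$, and satisfies $f^{-1}(\tau(x))=g(f(x))^{-1}=g(f^{-1}(x))$ because $g$ is a field automorphism; hence it lies in $\mathcal{O}^{\times}(X,\tau,g)$. Conversely, any unit $f$ of $\mathcal{O}(X,\tau,g)$ with inverse $h\in\mathcal{O}(X,\tau,g)$ satisfies $|f(x)|_L|h(x)|_L=1$ with both factors at most $1$, forcing equality at every $x$. The ideal part of (iv) is the same pointwise argument as for $\mathcal{J}(X,\tau,g)$, applied only at the single point $y$, and $\mathcal{M}^{y}(X,\tau,g)$ is proper because it does not contain the constant $1$.

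The main obstacle is the maximality in (iv). My plan is to introduce the evaluation-reduction map $\phi_y:\mathcal{O}(X,\tau,g)\to\overline{L}$ sending $f$ to the residue class of $f(y)$, which is a ring homomorphism with kernel precisely $\mathcal{M}^y(X,\tau,g)$; maximality then amounts to showing that the image is a field. Because $g^{(k)}(f(y))=f(\tau^{(k)}(y))=f(y)$ for $k:=\mbox{ord}(\tau,y)$, the image is contained in the residue field $\overline{L^{\langle g^{(k)}\rangle}}$ of the intermediate fixed field $L^{\langle g^{(k)}\rangle}$, and this residue field is itself a field. The difficulty is the surjectivity of $\phi_y$ onto $\overline{L^{\langle g^{(k)}\rangle}}$, which I would handle by a lifting-and-symmetrising construction modelled on the point-separation argument in the proof of Theorem \ref{thr:CGGUA}: given $a\in\mathcal{O}_{L^{\langle g^{(k)}\rangle}}$, use Lemma \ref{lem:UAUrys} to produce a continuous $L$-valued function $h_0$ on $X$ with $h_0(y)=a$ that vanishes on $\tau^{(i)}(y)$ for $1\leq i<k$, then symmetrise $h_0$ under the isometric automorphism $\sigma$ of Remark \ref{rem:CGFROB} (via a sum in characteristic zero, or via the product construction used in Case 2.2 of Theorem \ref{thr:CGGUA} in characteristic $p$) to obtain an element of $\mathcal{O}(X,\tau,g)$ whose value at $y$ is a nonzero scalar multiple of $a$, after which a further adjustment by elements of $\mathcal{O}_{L^g}$ produces $\phi_y$-preimages of every class in $\overline{L^{\langle g^{(k)}\rangle}}$, completing the proof.
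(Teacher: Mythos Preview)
Your treatment of (i)--(iii) is correct and essentially matches the paper's argument, which likewise reduces everything to pointwise estimates together with the fact that $g$ is an isometric field automorphism.

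For (iv), however, your surjectivity construction has a genuine gap in the mixed-characteristic case. Suppose $\mathrm{char}(L)=0$ but the residue field $\overline{L}$ has characteristic $p>0$, and write $m:=\mathrm{ord}(g)$, $k:=\mathrm{ord}(\tau,y)$, $m':=m/k$. Your symmetrised sum $h:=\sum_{i=0}^{m-1}\sigma^{(i)}(h_{0})$ does land in $\mathcal{O}(X,\tau,g)$ and does satisfy $h(y)=m'a$, but the ``nonzero scalar multiple'' $m'$ is nonzero only in $L$; if $p\mid m'$ then $|m'|_{L}<1$, so $\phi_{y}(h)=\overline{m'a}=\bar{0}$ in $\overline{L}$, and you cannot cancel $m'$ by multiplying by an element of $\mathcal{O}_{L^{g}}$. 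The product construction from Case~2.2 of Theorem~\ref{thr:CGGUA} does not rescue this either, since that argument relies on the Frobenius endomorphism in characteristic $p$, not on residue characteristic $p$. You could salvage the overall strategy by abandoning surjectivity and instead observing that the image of $\phi_{y}$ is an integral domain contained in $\overline{L^{\langle g^{(k)}\rangle}}$, which is finite-dimensional over $\overline{L^{g}}$; any $\overline{L^{g}}$-subalgebra that is an integral domain is then automatically a field. But this is not the argument you proposed.

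The paper avoids all of this with a much more direct construction. Given any ideal $\mathcal{I}$ strictly containing $\mathcal{M}^{y}(X,\tau,g)$, pick $f\in\mathcal{I}$ with $\omega(f(y))=0$ and define the indicator-type function
\[
f'(x):=\begin{cases}0 & \text{if }\omega(f(x))=0,\\ 1 & \text{if }\omega(f(x))>0.\end{cases}
\]
One checks that $f'$ is continuous (using convergence from the side, Lemma~\ref{lem:FAACS}), that $f'\circ\tau=g\circ f'$ (since $g$ is an isometry and $f'$ is $\{0,1\}$-valued), and that $f'\in\mathcal{M}^{y}(X,\tau,g)\subseteq\mathcal{I}$. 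Then at every $x\in X$ exactly one of $f(x),f'(x)$ has valuation logarithm $0$ and the other is strictly positive, so by Lemma~\ref{lem:CVFEQ} we get $\omega(f(x)+f'(x))=0$ everywhere; hence $f+f'\in\mathcal{O}^{\times}(X,\tau,g)$ and $\mathcal{I}=\mathcal{O}(X,\tau,g)$. This is insensitive to characteristic and requires no lifting or symmetrisation.
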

\begin{theorem}
\label{thr:CGRAT}
Let $F$ be a locally compact complete non-Archimedean field of characteristic zero with nontrivial valuation. Let $L$ be a finite unramified extension of $F$ with $L^{g}=F$ for some $g\in\mbox{Gal}(^{L}/_{F})$ and let $C(X,\tau,g)$ be a basic $^{L}/_{F}$ function algebra. Then there is an isometric isomorphism
\begin{equation*}
\mathcal{O}(X,\tau,g)/\mathcal{J}(X,\tau,g)\cong C(X,\tau,\bar{g})
\end{equation*}
where $C(X,\tau,\bar{g})$ is the basic $^{\overline{L}}/_{\overline{F}}$ function algebra on $(X,\tau,\bar{g})$. Here $\overline{F}$ and $\overline{L}$ are respectively the residue field of $F$ and $L$ whilst $\bar{g}$ is the residue automorphism on $\overline{L}$ induced by $g$. More generally $L$ need not be an unramified extension of $F$ for the above to hold provided that we impose the condition $\mbox{ord}(\tau)|\mbox{ord}(\bar{g})$ directly.
\end{theorem}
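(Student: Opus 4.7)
The plan is to build an explicit ring homomorphism from $\mathcal{O}(X,\tau,g)$ onto $C(X,\tau,\bar{g})$ whose kernel is $\mathcal{J}(X,\tau,g)$ and then observe that the induced isomorphism is automatically isometric. Define $\Phi:\mathcal{O}(X,\tau,g)\to C(X,\tau,\bar g)$ by $\Phi(f)(x):=\overline{f(x)}$, the residue class of $f(x)$ in $\overline{L}$. The pointwise residue map $\mathcal{O}_{L}\to\overline{L}$ is continuous when $\overline{L}$ carries the trivial topology (each point has clopen preimage a ball in $\mathcal{O}_{L}$), and since $F$ is locally compact with $L$ finite over $F$, the residue field $\overline{L}$ is finite by Remark~\ref{rem:CVFEE} and Theorem~\ref{thr:CVFHB}. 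Reducing the identity $f(\tau(x))=g(f(x))$ modulo $\mathcal{M}_{L}$ gives $\Phi(f)(\tau(x))=\bar g(\Phi(f)(x))$, so $\Phi(f)\in C(X,\tau,\bar g)$; that $\Phi$ is a ring homomorphism is inherited from the ring structure on $\overline{L}$.

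For the kernel, $\Phi(f)=0$ means $|f(x)|_{L}<1$ for every $x\in X$; since $X$ is compact the continuous function $|f|_{L}$ attains its supremum, hence $\|f\|_{\infty}<1$, i.e.\ $f\in\mathcal{J}(X,\tau,g)$. The reverse inclusion is immediate. Thus $\mathcal{O}(X,\tau,g)/\mathcal{J}(X,\tau,g)$ injects into $C(X,\tau,\bar g)$. The principal obstacle is surjectivity, which I would handle using the fact (signposted in the introduction and established earlier in Chapter~5) that for such $F$ and $L$ one can choose a system $\mathcal{R}_{L,g}\subseteq\mathcal{O}_{L}$ of residue class representatives on which $g$ restricts to a self-map. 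Write $s:\overline{L}\to\mathcal{R}_{L,g}$ for the inverse of the residue bijection; the compatibility $g\circ s=s\circ\bar g$ is precisely the assertion that $g$ is an endofunction on $\mathcal{R}_{L,g}$.

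Given $h\in C(X,\tau,\bar g)$, the image $h(X)$ is compact in the discrete space $\overline{L}$, hence finite, so $h$ is locally constant; define $f:=s\circ h$. Then $f$ is locally constant and takes values in $\mathcal{R}_{L,g}\subseteq\mathcal{O}_{L}$, so $f$ is continuous as a map into $L$ and lies in $\mathcal{O}(X,\tau,g)$ once the intertwining is checked. Indeed
\begin{equation*}
f(\tau(x))=s(h(\tau(x)))=s(\bar g(h(x)))=g(s(h(x)))=g(f(x)),
\end{equation*}
so $f\in\mathcal{O}(X,\tau,g)$ and $\Phi(f)=h$. This gives surjectivity.

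Finally, both sides carry a sup norm arising from a valuation; on $\overline{L}$ this is the trivial valuation, so the sup norm on $C(X,\tau,\bar g)$ only takes the values $0$ and $1$, and a nonzero coset in $\mathcal{O}(X,\tau,g)/\mathcal{J}(X,\tau,g)$ has quotient norm exactly $1$ because any representative must have sup norm $1$ (sup norm $<1$ would place it in $\mathcal{J}$). Hence the induced algebra isomorphism is isometric. The "more generally" clause needs no new work: in the ramified case one loses only the automatic equality $\operatorname{ord}(\bar g)=\operatorname{ord}(g)$, and so the hypothesis $\operatorname{ord}(\tau)\mid\operatorname{ord}(\bar g)$ is imposed directly to guarantee via Theorem~\ref{thr:CGGUA} that $C(X,\tau,\bar g)$ is a genuine basic function algebra; the construction of $\Phi$ and of the lifting $s\circ h$ is unchanged, since the existence of an appropriate $\mathcal{R}_{L,g}$ is the only ingredient required and is available in this generality.
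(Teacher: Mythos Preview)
Your proof is correct and follows the same essential strategy as the paper: define the map as the pointwise residue $\Phi(f)(x)=\overline{f(x)}$, identify the kernel as $\mathcal{J}(X,\tau,g)$, and achieve surjectivity by lifting along the $g$-stable set of representatives $\mathcal{R}_{L,g}$ supplied by Lemma~\ref{lem:CGIRC}.

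The one notable difference is in presentation. The paper takes a longer route: it invokes the $\pi$-adic expansion Lemma~\ref{lem:CGSLCF} to write each $f\in\mathcal{O}(X,\tau,g)$ as $f_{0}+h$ with $f_{0}$ a locally constant $\mathcal{R}_{L,g}$-valued function and $h\in\mathcal{J}(X,\tau,g)$, thereby producing an explicit set $\mathcal{R}(X,\tau,g)$ of coset representatives, and then defines $\phi(\tilde{f}):=\overline{f_{0}}$. Of course $\overline{f_{0}(x)}=\overline{f(x)}$, so this is the same map as yours; the expansion merely furnishes a canonical representative in each coset. Your argument bypasses Lemma~\ref{lem:CGSLCF} entirely by working directly with the quotient, using compactness of $X$ (or, equivalently, discreteness of the valuation) to pass from $|f(x)|_{L}<1$ pointwise to $\|f\|_{\infty}<1$. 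This is cleaner for the purposes of Theorem~\ref{thr:CGRAT} itself, though the paper's explicit representatives are reused later in Corollary~\ref{cor:CGLCUD} to obtain series expansions of elements of $C(X,\tau,g)$, which your approach does not immediately yield.
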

\begin{remark}
\label{rem:CGRAT}
Concerning Theorem \ref{thr:CGRAT}.
\begin{enumerate}
\item[(i)]
The conditions in Theorem \ref{thr:CGRAT} imply that $F$ contains a $p$-adic field, up to a positive exponent of the valuation, since by Theorem \ref{thr:CVFHB} the residue field $\overline{F}$ is finite and so the valuation on $F$ when restricted to $\mathbb{Q}$ can not be trivial.
\item[(ii)]
Since $\overline{L}$ is finite the valuation on $\overline{L}$ is the trivial valuation. In general the quotient norm on a residue field is the trivial valuation. In particular for $\bar{a}\in\overline{L}$ with residue class representative $a\in\mathcal{O}_{L}$ we have, by Lemma \ref{lem:CVFEQ}, that
\begin{equation*}
\min\{|a-b|_{L}:b\in\mathcal{M}_{L}\}=\left\{ \begin{array} {l@{\quad\mbox{if}\quad}l}
1 & a\notin\mathcal{M}_{L} \\
0 & a\in\mathcal{M}_{L}.
\end{array} \right.
\end{equation*}
\item[(iii)]
To be thorough for the reader we show that $\bar{g}$ is well defined, although this is covered in \cite[p52]{Fesenko}. Let $\bar{a}\in\overline{L}$ with residue class representative $a\in\mathcal{O}_{L}$. For $g\in\mbox{Gal}(^{L}/_{F})$ we obtain $\bar{g}\in\mbox{Gal}(^{\overline{L}}/_{\overline{F}})$ by $\bar{g}(\bar{a}):=\overline{g(a)}$. Now let $b\in\mathcal{O}_{L}$ with $b\not=a$ but $\bar{b}=\bar{a}$ so that $a-b\in\mathcal{M}_{L}$. By Remark \ref{rem:CVFUT}, $g$ is an isometry on $L$ and so $g(a)-g(b)=g(a-b)\in\mathcal{M}_{L}$ giving $\bar{g}(\bar{b})=\overline{g(b)}=\overline{g(a)}=\bar{g}(\bar{a})$ and so $\bar{g}$ is well defined.
\item[(iv)]
The map $g\mapsto\bar{g}$ is a homomorphism from $\mbox{Gal}(^{L}/_{F})$ to $\mbox{Gal}(^{\overline{L}}/_{\overline{F}})$. Indeed for $\bar{a}\in\overline{L}$ and $g_{1},g_{2}\in\mbox{Gal}(^{L}/_{F})$ we have
\begin{equation*}
\overline{g_{1}\circ g_{2}}(\bar{a})=\overline{g_{1}\circ g_{2}(a)}=\overline{g_{1}(g_{2}(a))}=\bar{g}_{1}\left(\overline{g_{2}(a)}\right)=\bar{g}_{1}(\bar{g}_{2}(\bar{a}))=\bar{g}_{1}\circ\bar{g}_{2}(\bar{a}).
\end{equation*}
Under the conditions of Theorem \ref{thr:CGRAT} this homomorphism becomes an isomorphism as per Lemma \ref{lem:CGORDG} below, see \cite[p52]{Fesenko}. In particular this ensures that $C(X,\tau,\bar{g})$, in Theorem \ref{thr:CGRAT}, is a basic function algebra since $\mbox{ord}(\bar{g})=\mbox{ord}(g)$ gives $\mbox{ord}(\tau)|\mbox{ord}(\bar{g})$.
\end{enumerate}
\end{remark}
\begin{lemma}
\label{lem:CGORDG}
Let $F$ be a local field, as per Remark \ref{rem:CVFHB}, and let $L$ be a finite unramified Galois extension of $F$. Then $\mbox{Gal}(^{L}/_{F})\cong\mbox{Gal}(^{\overline{L}}/_{\overline{F}})$ giving $\mbox{ord}(\bar{g})=\mbox{ord}(g)$ for all $g\in\mbox{Gal}(^{L}/_{F})$.
\end{lemma}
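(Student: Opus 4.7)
The plan is to show that the homomorphism $g \mapsto \bar{g}$ from $\mbox{Gal}(^{L}/_{F})$ to $\mbox{Gal}(^{\overline{L}}/_{\overline{F}})$, whose well-definedness and multiplicativity were already verified in Remark \ref{rem:CGRAT}(iii),(iv), is in fact bijective. Once bijectivity is established, $\mbox{ord}(\bar g)=\mbox{ord}(g)$ follows immediately from the general fact that group isomorphisms preserve element orders.

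First I would match the cardinalities of the two groups. Since $L/F$ is a finite Galois extension, Theorem \ref{thr:CVFGL} gives $\#\mbox{Gal}(^{L}/_{F})=[L:F]$. By the unramified hypothesis (ramification index $e=1$) together with the degree relation $[L:F]=e[\overline{L}:\overline{F}]$ recorded in Remark \ref{rem:CVFEE}(iii), this equals $[\overline{L}:\overline{F}]$. Now $\overline F$ is finite (since $F$ is a local field), so $\overline{L}/\overline{F}$ is a finite extension of finite fields; such extensions are automatically Galois with cyclic Galois group generated by the Frobenius, and $\#\mbox{Gal}(^{\overline{L}}/_{\overline{F}})=[\overline{L}:\overline{F}]$. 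Thus the two finite groups have the same order, and it suffices to prove that $g\mapsto\bar g$ is injective.

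For injectivity I would use a Hensel-type lifting of a primitive element. Since $\overline{L}/\overline{F}$ is simple (being an extension of finite fields), choose $\bar\alpha\in\overline L$ with $\overline L=\overline F(\bar\alpha)$, and let $\bar f(x)\in\overline F[x]$ be its minimal polynomial. Pick any monic lift $f(x)\in\mathcal O_F[x]$ of $\bar f$ of the same degree. Because $\bar f$ is separable (finite fields are perfect) and $F$ is complete, Hensel's lemma produces a root $\alpha\in\mathcal O_L$ of $f$ reducing to $\bar\alpha$. Then
\begin{equation*}
[F(\alpha):F]\le\deg f=[\overline F(\bar\alpha):\overline F]=[\overline L:\overline F]=[L:F],
\end{equation*}
so $L=F(\alpha)$. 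Now suppose $g\in\mbox{Gal}(^{L}/_{F})$ satisfies $\bar g=\mbox{id}$. Then $\overline{g(\alpha)}=\bar g(\bar\alpha)=\bar\alpha$, so $g(\alpha)\equiv\alpha\pmod{\mathcal M_L}$. On the other hand $g(\alpha)$ is itself a root of $f\in F[x]$, and the distinct roots of $f$ in $\mathcal O_L$ have distinct reductions (again because $\bar f$ is separable, so any two roots of $f$ with the same residue would collapse the factorization of $\bar f$ into distinct linear factors). Hence $g(\alpha)=\alpha$, and since $g$ fixes $F$ and $L=F(\alpha)$ we conclude $g=\mbox{id}$.

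The main obstacle is arranging the Hensel-lifting step so that the lifted primitive element genuinely generates $L$ over $F$; this is where the unramified hypothesis does essential work, since it is what forces $[L:F]=[\overline L:\overline F]$ and hence makes the degree inequality $[F(\alpha):F]\le[\overline L:\overline F]$ tight. Everything else is bookkeeping: the homomorphism property is already in the preceding remark, the order count reduces to the $e=1$ identity, and the order-preservation conclusion is automatic from the established isomorphism.
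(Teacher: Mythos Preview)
The paper does not actually supply its own proof of Lemma~\ref{lem:CGORDG}; it merely records the statement and refers the reader to \cite[p52]{Fesenko} (see the sentence just before the lemma in Remark~\ref{rem:CGRAT}(iv)). So there is nothing to compare against directly, and your write-up effectively fills in what the thesis outsources to a reference. The argument you give---cardinality matching via $[L:F]=e[\overline L:\overline F]$ with $e=1$, followed by injectivity of $g\mapsto\bar g$ using a Hensel lift of a residue-field primitive element---is the standard one and is correct in substance.

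One small presentational gap: from the displayed chain $[F(\alpha):F]\le\deg f=[\overline L:\overline F]=[L:F]$ together with $F(\alpha)\subseteq L$ you cannot yet conclude $L=F(\alpha)$; you need the reverse inequality. This comes from noting that the residue field of $F(\alpha)$ contains $\overline F(\bar\alpha)=\overline L$, so $[\overline{F(\alpha)}:\overline F]=[\overline L:\overline F]$, and then Remark~\ref{rem:CVFEE}(iii) gives $[F(\alpha):F]\ge[\overline{F(\alpha)}:\overline F]=[L:F]$. You gesture at this with the word ``tight'' in your final paragraph, but it would be cleaner to state it explicitly in the body of the argument.
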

The following definition and lemma will be useful when proving Theorem \ref{thr:CGRUI}. Note that the first part of Lemma \ref{lem:CGVLF} makes sense even though we have yet to show that $\mathcal{O}(X,\tau,g)$ is a ring.
\begin{definition}
\label{def:CGVLF}
Let $L$ be a complete valued field with valuation logarithm $\omega$ and let $X$ be a totally disconnected compact Hausdorff space.
\begin{enumerate}
\item[(i)]
We call a map $\iota:X\rightarrow\omega(\mathcal{O}_{L})$ a {\em value level function}.
\item[(ii)]
We place a partial order on the set of all value level function by setting
\begin{equation*}
\iota_{1}\geq\iota_{2}\mbox{ if and only if for all }x\in X\mbox{ we have }\iota_{1}(x)\geq\iota_{2}(x).
\end{equation*}
\end{enumerate}
\end{definition}
\begin{lemma}
\label{lem:CGVLF}
Let $C(X,\tau,g)$ be a basic $^{L}/_{L^{g}}$ function algebra in the non-Archimedean setting with valuation logarithm $\omega$ on $L$ and let $\iota:X\rightarrow\omega(\mathcal{O}_{L})$ be a value level function. Then:
\begin{enumerate}
\item[(i)]
$\mathcal{M}_{\iota}(X,\tau,g):=\{f\in C(X,\tau,g):\omega(f(x))\geq\iota(x)\mbox{ for all }x\in X\}$ is an ideal of $\mathcal{O}(X,\tau,g)$;
\item[(ii)]
for $\iota'$ another value level function with $\iota\geq\iota'$ we have $\mathcal{M}_{\iota}(X,\tau,g)\subseteq\mathcal{M}_{\iota'}(X,\tau,g)$.
\end{enumerate}
\end{lemma}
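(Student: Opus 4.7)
The plan is to verify the ideal axioms directly from the properties of the valuation logarithm $\omega$ established in Remark \ref{rem:CVFVL}, together with the strong triangle inequality. Since $\iota$ takes values in $\omega(\mathcal{O}_{L}) \subseteq [0,\infty]$, the first step is to observe that any $f \in \mathcal{M}_{\iota}(X,\tau,g)$ satisfies $\omega(f(x)) \geq \iota(x) \geq 0$ for every $x \in X$, which shows immediately that $\mathcal{M}_{\iota}(X,\tau,g) \subseteq \mathcal{O}(X,\tau,g)$ and hence that calling $\mathcal{M}_{\iota}(X,\tau,g)$ an ideal of $\mathcal{O}(X,\tau,g)$ makes sense.

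Next I would check closure under the additive group operations. For $f_{1},f_{2} \in \mathcal{M}_{\iota}(X,\tau,g)$, property (1) of Remark \ref{rem:CVFVL} yields
\begin{equation*}
\omega(f_{1}(x)+f_{2}(x)) \geq \min(\omega(f_{1}(x)),\omega(f_{2}(x))) \geq \iota(x),
\end{equation*}
so $f_{1}+f_{2} \in \mathcal{M}_{\iota}(X,\tau,g)$. Negation is clear because $\omega(-f(x)) = \omega(f(x)) \geq \iota(x)$, and the zero function belongs to $\mathcal{M}_{\iota}(X,\tau,g)$ since $\omega(0) = \infty$ dominates every value of $\iota$. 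For the absorption property, given $h \in \mathcal{O}(X,\tau,g)$ and $f \in \mathcal{M}_{\iota}(X,\tau,g)$, the multiplicativity of $\omega$ gives
\begin{equation*}
\omega(h(x)f(x)) = \omega(h(x)) + \omega(f(x)) \geq 0 + \iota(x) = \iota(x)
\end{equation*}
for all $x \in X$, and since $hf$ lies in $C(X,\tau,g)$ (which is a ring under pointwise operations by Theorem \ref{thr:CGGUA}), I conclude $hf \in \mathcal{M}_{\iota}(X,\tau,g)$.

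Part (ii) is then immediate from the definitions: if $\iota \geq \iota'$ and $f \in \mathcal{M}_{\iota}(X,\tau,g)$, then $\omega(f(x)) \geq \iota(x) \geq \iota'(x)$ for every $x \in X$, so $f \in \mathcal{M}_{\iota'}(X,\tau,g)$.

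I do not anticipate any real obstacle here; the argument is a direct calculation in which the strong triangle inequality and the multiplicativity of $\omega$ cooperate to make sets defined by pointwise lower bounds on $\omega$ automatically closed under the ring operations of $C(X,\tau,g)$. The only subtlety worth flagging is the convention $\omega(0) = \infty$, which is precisely what allows the zero function and the constant ideal elements to be included without case analysis.
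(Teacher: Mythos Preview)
Your proof is correct and follows essentially the same route as the paper: closure under addition via $\omega(f_{1}(x)+f_{2}(x))\geq\min(\omega(f_{1}(x)),\omega(f_{2}(x)))\geq\iota(x)$, absorption via $\omega(h(x)f(x))=\omega(h(x))+\omega(f(x))\geq\iota(x)$, and part (ii) by direct comparison. You include a few extra sanity checks (the containment $\mathcal{M}_{\iota}\subseteq\mathcal{O}$, negation, the zero function) that the paper leaves implicit, but the substance is identical.
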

\begin{proof}
For (i), let $f_{1},f_{2}\in\mathcal{M}_{\iota}(X,\tau,g)$ and $f\in\mathcal{O}(X,\tau,g)$. Then for each $x\in X$ we have $\omega(f_{1}(x)+f_{2}(x))\geq\min\{\omega(f_{1}(x)),\omega(f_{2}(x))\}\geq\iota(x)$ giving $f_{1}+f_{2}\in\mathcal{M}_{\iota}(X,\tau,g)$ and $\omega(f_{1}(x)f(x))=\omega(f_{1}(x))+\omega(f(x))\geq\omega(f_{1}(x))\geq\iota(x)$ giving $f_{1}f\in\mathcal{M}_{\iota}(X,\tau,g)$ as required. For (ii), this is immediate.
\end{proof}
Note, the mapping $\iota\mapsto\mathcal{M}_{\iota}(X,\tau,g)$ is not assumed to be injective. We now prove Theorem \ref{thr:CGRUI}.
\begin{proof}[Proof of Theorem \ref{thr:CGRUI}]
For (i), note that since $\omega(1)=0$, $\omega(0)=\infty$ and $1,0\in F$ we have $1,0\in\mathcal{O}(X,\tau,g)$. Further $\mathcal{O}(X,\tau,g)$ is closed under multiplication and addition by Lemma \ref{lem:CGVLF} since for the value level function that is constantly zero we have $\mathcal{O}(X,\tau,g)=\mathcal{M}_{0}(X,\tau,g)$. Hence $\mathcal{O}(X,\tau,g)$ is a ring.\\
For (ii), we need to show that $\mathcal{O}^{\times}(X,\tau,g)=\mathcal{O}(X,\tau,g)^{\times}$. Let $f\in\mathcal{O}(X,\tau,g)^{\times}$. Then for all $x\in X$ we have $\omega(f(x))\geq0$ and $\omega(f^{-1}(x))\geq0$ since $f,f^{-1}\in\mathcal{O}(X,\tau,g)^{\times}$ but we also have $\omega(f^{-1}(x))=\omega((f(x))^{-1})=-\omega(f(x))$ giving $\omega(f(x))=0$. Hence $\mathcal{O}(X,\tau,g)^{\times}\subseteq\mathcal{O}^{\times}(X,\tau,g)$. Now let $f\in\mathcal{O}^{\times}(X,\tau,g)$. We have
\begin{equation}
\label{equ:CGUNIT}
\omega(f^{-1}(x))=-\omega(f(x))=0
\end{equation}
for all $x\in X$ and so it remains to show that $f^{-1}$ is an element of $C(X,\tau,g)$. We have $1=g(1)=g(f^{-1}f)=g(f^{-1})g(f)$ giving $g(f^{-1})=(g(f))^{-1}$ and so
\begin{equation*}
f^{-1}(\tau)=(f(\tau))^{-1}=(g(f))^{-1}=g(f^{-1}).
\end{equation*}
For continuity let $x\in X$ and $(x_{n})$ be a sequence in $X$ with $\lim_{n\to\infty}x_{n}=x$ in $X$. Then by (\ref{equ:CGUNIT}) we have
\begin{align*}
\omega(f^{-1}(x_{n})-f^{-1}(x))=&\omega(f^{-1}(x_{n})-f^{-1}(x))+\omega(f(x))\\
=&\omega((f^{-1}(x_{n})-f^{-1}(x))f(x))\\
=&\omega(f^{-1}(x_{n})f(x)-1)\\
=&\omega(f^{-1}(x_{n})(f(x)-f(x_{n})))\\
=&\omega(f^{-1}(x_{n}))+\omega(f(x)-f(x_{n}))\\
=&\omega(f(x)-f(x_{n})).
\end{align*}
Therefore by the continuity of $f$ we have $\lim_{n\to\infty}f^{-1}(x_{n})=f^{-1}(x)$ from which it follows that $\mathcal{O}^{\times}(X,\tau,g)\subseteq\mathcal{O}(X,\tau,g)^{\times}$ as required.\\
For (iii), taking into account that the valuation on $L$ could be dense, $\mathcal{J}(X,\tau,g)$ is an ideal of $\mathcal{O}(X,\tau,g)$ by Lemma \ref{lem:CGVLF} since $\mathcal{J}(X,\tau,g)=\bigcup_{n\in\mathbb{N}}\mathcal{M}_{\frac{1}{n}}(X,\tau,g)$ noting that $1\not\in\mathcal{M}_{\frac{1}{n}}(X,\tau,g)$ for all $n\in\mathbb{N}$ and that a union of nested ideals is an ideal.\\
For (iv), $\mathcal{M}^{y}(X,\tau,g)$ is an ideal of $\mathcal{O}(X,\tau,g)$ by Lemma \ref{lem:CGVLF} since $\mathcal{M}^{y}(X,\tau,g)=\bigcup_{n\in\mathbb{N}}\mathcal{M}_{\frac{1}{n}\chi_{\{y\}}}(X,\tau,g)$ where $\chi_{\{y\}}$ is the indicator function. Also by Lemma \ref{lem:CGVLF}, since $\frac{1}{n}\geq\frac{1}{n}\chi_{\{y\}}$ for all $n\in\mathbb{N}$, we have $\mathcal{J}(X,\tau,g)\subseteq\mathcal{M}^{y}(X,\tau,g)$. We now show that $\mathcal{M}^{y}(X,\tau,g)$ is a maximal ideal of $\mathcal{O}(X,\tau,g)$. Let $\mathcal{I}(X,\tau,g)$ be a, not necessarily proper, ideal of $\mathcal{O}(X,\tau,g)$ with $\mathcal{M}^{y}(X,\tau,g)\subsetneqq\mathcal{I}(X,\tau,g)$. Then there is $f\in\mathcal{I}(X,\tau,g)$ with $\omega(f(y))=0$. Define on $X$
\begin{equation*}
f'(x):=\left\{ \begin{array} {l@{\quad\mbox{if}\quad}l}
0 & \omega(f(x))=0 \\
1 & \omega(f(x))>0.
\end{array} \right.
\end{equation*}
We show that $f'$ is an element of $\mathcal{M}^{y}(X,\tau,g)$ and so $f'\in\mathcal{I}(X,\tau,g)$. For continuity let $x\in X$ and $(x_{n})$ be a sequence of elements of $X$ with $\lim_{n\to\infty}x_{n}=x$ in $X$. Since $f$ is continuous we have $\lim_{n\to\infty}f(x_{n})=f(x)$ with respect to $\omega$. Hence if $f(x)=0$ then there exists $N\in\mathbb{N}$ such that for all $n\geq N$ we have $\omega(f(x_{n}))=\omega(f(x_{n})-f(x))>0$. If $f(x)\not=0$ then since convergence in $L$ is from the side, see Lemma \ref{lem:FAACS}, there exists $N\in\mathbb{N}$ such that for all $n\geq N$ we have $\omega(f(x_{n}))=\omega(f(x))$. Hence in every case there exists $N\in\mathbb{N}$ such that for all $n\geq N$ we have $f'(x_{n})=f'(x)$ and so $f'$ is continuous. We need to show that $f'(\tau(x))=g(f'(x))$. Since $g$ is an isometry on $L$ we have $\omega(f(\tau(x)))=\omega(g(f(x)))=\omega(f(x))$ giving
\begin{align*}
f'(\tau(x))=&\left\{ \begin{array} {l@{\quad\mbox{if}\quad}l}
0 & \omega(f(\tau(x)))=0 \\
1 & \omega(f(\tau(x)))>0
\end{array} \right.\\
=&f'(x)\\
=&g(f'(x))
\end{align*}
noting that $f'$ takes values only in $\{0,1\}\subseteq F$. Now $\omega(1)=0$ and $\omega(0)=\infty$ so that for all $x\in X$ we have $\omega(f'(x))\geq0$ giving $f'\in\mathcal{O}(X,\tau,g)$. Further since $\omega(f(y))=0$ we have $\omega(f'(y))=\omega(0)=\infty$ and so we have shown that $f'\in\mathcal{M}^{y}(X,\tau,g)\subsetneqq\mathcal{I}(X,\tau,g)$. Now since $\mathcal{I}(X,\tau,g)$ is an ideal we have $f+f'\in\mathcal{I}(X,\tau,g)$. Moreover by the definition of $f'$, for each $x\in X$, if $\omega(f(x))=0$ then $\omega(f'(x))=\omega(0)=\infty$ and if $\omega(f(x))>0$ then $\omega(f'(x))=\omega(1)=0$. Hence for all $x\in X$, $\omega(f(x)+f'(x))=0$ by Lemma \ref{lem:CVFEQ} and so $f+f'\in\mathcal{O}^{\times}(X,\tau,g)$ giving $\mathcal{I}(X,\tau,g)=\mathcal{O}(X,\tau,g)$. Therefore $\mathcal{M}^{y}(X,\tau,g)$ is a maximal ideal of $\mathcal{O}(X,\tau,g)$ and this completes the proof of Theorem \ref{thr:CGRUI}.
\end{proof}
The following two lemmas will be used in the proof of Theorem \ref{thr:CGRAT}. The first of these, Lemma \ref{lem:CGSLCF}, will be known but we provide a proof in the absence of a reference. The second, Lemma \ref{lem:CGIRC}, may be new, since I have not seen it in the literature, however it could be known to some number theorists.
\begin{lemma}
\label{lem:CGSLCF}
Let $F$ be a complete non-Archimedean field with a nontrivial, discrete valuation and valuation logarithm $\nu$. Let $\pi$ be a prime element and $\mathcal{R}$ be a set of residue class representatives for $F$, as shown in Theorem \ref{thr:CVFSE}. Then, for $X$ a compact Hausdorff space, each $f\in C_{F}(X)$ has a unique expansion, as a series of locally constant $\mathcal{R}$-valued functions, of the form
\begin{equation*}
f=\sum_{i=n}^{\infty}f_{i}\pi^{i},\quad\mbox{for some }n\in\mathbb{Z}.
\end{equation*}
Moreover, for $j\geq n$ and $x,y\in X$ with $\nu(f(x)-f(y))>j\nu(\pi)$, we have $f_{i}(x)=f_{i}(y)$ for all $i$ in the interval $n\leq i\leq j$.
\end{lemma}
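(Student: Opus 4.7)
The plan is to derive the ``Moreover'' clause first, since it is the technical engine that forces the pointwise coefficient maps to be locally constant; the rest then follows by bookkeeping. For each $x\in X$, Theorem \ref{thr:CVFSE} supplies a unique $\mathcal{R}$-expansion $f(x)=\sum_{i}a_{i}(x)\pi^{i}$, with the convention that $a_{i}(x)=0$ for $i$ smaller than the starting index (and all $a_{i}(x)=0$ if $f(x)=0$). Define candidate functions by $f_{i}(x):=a_{i}(x)$.

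For the ``Moreover'' clause, assume $\nu(f(x)-f(y))>j\nu(\pi)$ and, towards a contradiction, let $i_{0}$ be the least index in $[n,j]$ with $a_{i_{0}}(x)\neq a_{i_{0}}(y)$. Since $\mathcal{R}$ contains exactly one representative per residue class, distinct elements of $\mathcal{R}$ differ by a unit, so $\nu(a_{i_{0}}(x)-a_{i_{0}}(y))=0$, whence $\nu((a_{i_{0}}(x)-a_{i_{0}}(y))\pi^{i_{0}})=i_{0}\nu(\pi)$. All higher-index terms of $f(x)-f(y)=\sum_{i}(a_{i}(x)-a_{i}(y))\pi^{i}$ have valuation strictly exceeding $i_{0}\nu(\pi)$, so Lemma \ref{lem:CVFEQ} yields $\nu(f(x)-f(y))=i_{0}\nu(\pi)\leq j\nu(\pi)$, contradicting the hypothesis.

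Now I would use compactness: $f(X)$ is compact hence bounded in $F$, so there exists $n\in\mathbb{Z}$ with $\nu(f(x))\geq n\nu(\pi)$ for every $x$, allowing the expansion to start uniformly at $i=n$. Local constancy of each $f_{i}$ then drops straight out of the ``Moreover'' clause combined with continuity of $f$: given $x\in X$ and $i\geq n$, pick a neighborhood $U$ of $x$ on which $\nu(f(x)-f(\cdot))>i\nu(\pi)$; the clause above gives $f_{k}|_{U}\equiv f_{k}(x)$ for all $n\leq k\leq i$, so each $f_{k}$ is locally constant.

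Uniform convergence of $\sum_{i=n}^{\infty}f_{i}\pi^{i}$ to $f$ is immediate, since $|f_{i}(x)\pi^{i}|_{F}\leq|\pi|_{F}^{i}\to 0$ gives $\|f_{i}\pi^{i}\|_{\infty}\to 0$, and Lemma \ref{lem:FAACS}(ii) then applies pointwise (and in fact uniformly). Uniqueness reduces at each $x$ to the uniqueness in Theorem \ref{thr:CVFSE}. The only delicate point I anticipate is bookkeeping around zeros: the chosen uniform starting index $n$ may be strictly less than the actual starting index of $f(x)$ at some $x$ (and $f$ may even vanish at some points), so one must allow $f_{i}(x)=0\in\mathcal{R}$ as a legitimate value and check that this is still compatible with the uniqueness clause of Theorem \ref{thr:CVFSE}, which it is since $0$ is taken to represent $\bar{0}$.
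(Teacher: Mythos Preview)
Your proposal is correct and follows essentially the same route as the paper: pointwise expansion via Theorem \ref{thr:CVFSE}, the ``Moreover'' clause via the least differing index and Lemma \ref{lem:CVFEQ}, a uniform starting index $n$ from compactness, and local constancy by feeding continuity back through the ``Moreover'' clause. The only differences are cosmetic---you prove the ``Moreover'' clause first and add explicit remarks on uniform convergence and the handling of zeros, which the paper leaves implicit.
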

\begin{proof}
Let $f\in C_{F}(X)$ and note that since $X$ is compact, $f$ is bounded. Hence there is $n\in\mathbb{Z}$ such that, for all $x\in X$, $\nu(f(x))\geq n\nu(\pi)$. Therefore by allowing terms to be zero where necessary and by using the unique $\pi$-power series expansion over $\mathcal{R}$ for elements of $F^{\times}$, as shown in Theorem \ref{thr:CVFSE}, we have for each $x\in X$
\begin{equation*}
f(x)=\sum_{i=n}^{\infty}f_{i}(x)\pi^{i}\in F.
\end{equation*}
Hence for each $i\geq n$ we have obtained a function $f_{i}:X\rightarrow\mathcal{R}$ and the resulting expansion $f=\sum_{i=n}^{\infty}f_{i}\pi^{i}$ is unique. Now for $j\geq n$ let $x,y\in X$ be such that we have $\nu(f(x)-f(y))>j\nu(\pi)$. If we do not have $f_{k}(x)=f_{k}(y)$ for all $k\geq n$ then let $k\geq n$ be the first integer for which $f_{k}(x)\not=f_{k}(y)$. Therefore $f_{k}(x)$ and $f_{k}(y)$ are representatives in $\mathcal{O}_{F}$ of two different residue classes. Hence $f_{k}(x)-f_{k}(y)\not\in\mathcal{M}_{F}$ showing that $\nu(f_{k}(x)-f_{k}(y))=0$. Therefore by Lemma \ref{lem:CVFEQ} and the definition of $k$ we have
\begin{align*}
k\nu(\pi)=&\nu(f_{k}(x)-f_{k}(y))+\nu(\pi^{k})\\
=&\nu((f_{k}(x)-f_{k}(y))\pi^{k})\\
=&\nu\left((f_{k}(x)-f_{k}(y))\pi^{k}+\sum_{i=k+1}^{\infty}f_{i}(x)\pi^{i}-\sum_{i=k+1}^{\infty}f_{i}(y)\pi^{i}\right)\\
=&\nu\left(\sum_{i=n}^{\infty}f_{i}(x)\pi^{i}-\sum_{i=n}^{\infty}f_{i}(y)\pi^{i}\right)\\
=&\nu(f(x)-f(y))>j\nu(\pi).
\end{align*}
Hence $k>j$ giving $f_{i}(x)=f_{i}(y)$ for all $i$ in the interval $n\leq i\leq j$. Finally we show, for all $j\geq n$, that $f_{j}$ is a locally constant function. For $x\in X$ define the following ball in $F$
\begin{equation*}
B_{j\nu(\pi)}(f(x)):=\{a\in F:\nu(f(x)-a)>j\nu(\pi)\}.
\end{equation*}
Then since $f$ is continuous there exists an open subset $U$ of $X$ with $x\in U$ such that $f(U)\subseteq B_{j\nu(\pi)}(f(x))$. Hence for each $y\in U$ we have $\nu(f(x)-f(y))>j\nu(\pi)$ and so $f_{j}(x)=f_{j}(y)$. In particular $f_{j}$ is constant on $U$ and this completes the proof of Lemma \ref{lem:CGSLCF}.
\end{proof}
Lemma \ref{lem:CGSLCF} has the following corollary which, for locally constant functions, goes slightly further than Theorem \ref{thr:CGGSW} since it does not assume that $X$ is totally disconnected.
\begin{corollary}
\label{cor:CGSLCF}
Let $F$ and $X$ be as in Lemma \ref{lem:CGSLCF} and let $\mathrm{LC}_{F}(X)$ be the set of all locally constant $F$ valued functions defined on $X$. Then $\mathrm{LC}_{F}(X)$ is uniformly dense in $C_{F}(X)$.
\end{corollary}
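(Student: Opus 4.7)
The plan is to obtain the density directly from the explicit series expansion provided by Lemma \ref{lem:CGSLCF} by truncating it.

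Fix $f\in C_{F}(X)$ and let $\pi$ be a prime element of $F$ with $\mathcal{R}\subseteq\mathcal{O}_{F}^{\times}\cup\{0\}$ a set of residue class representatives as in Theorem \ref{thr:CVFSE}. By Lemma \ref{lem:CGSLCF} we may write
\begin{equation*}
f=\sum_{i=n}^{\infty}f_{i}\pi^{i}
\end{equation*}
for some $n\in\mathbb{Z}$, where each $f_{i}:X\rightarrow\mathcal{R}$ is locally constant. For each integer $N\geq n$ define the partial sum $s_{N}:=\sum_{i=n}^{N}f_{i}\pi^{i}$.

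First I would observe that $s_{N}\in\mathrm{LC}_{F}(X)$. Indeed, if $x\in X$ and each $f_{i}$ ($n\leq i\leq N$) is constant on an open neighbourhood $U_{i}$ of $x$, then $s_{N}$ is constant on the finite intersection $\bigcap_{i=n}^{N}U_{i}$, which is still an open neighbourhood of $x$. So a finite sum of locally constant functions is locally constant.

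Next I would estimate $\|f-s_{N}\|_{\infty}$. For any $x\in X$ we have $f(x)-s_{N}(x)=\sum_{i=N+1}^{\infty}f_{i}(x)\pi^{i}$. Since $\mathcal{R}\subseteq\mathcal{O}_{F}$, each coefficient satisfies $|f_{i}(x)|_{F}\leq 1$, so by Lemma \ref{lem:FAACS}(ii),
\begin{equation*}
|f(x)-s_{N}(x)|_{F}\leq\max_{i\geq N+1}|f_{i}(x)\pi^{i}|_{F}\leq|\pi|_{F}^{N+1}.
\end{equation*}
This bound is independent of $x\in X$, so $\|f-s_{N}\|_{\infty}\leq|\pi|_{F}^{N+1}$. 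Because the valuation is nontrivial, $|\pi|_{F}<1$, and therefore $\|f-s_{N}\|_{\infty}\rightarrow 0$ as $N\rightarrow\infty$. Hence $f$ lies in the uniform closure of $\mathrm{LC}_{F}(X)$, completing the proof.

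There is essentially no obstacle here: the whole content of the corollary is already encoded in Lemma \ref{lem:CGSLCF}, and the only thing to verify beyond that lemma is the trivial observation that finite sums of locally constant functions remain locally constant, together with the standard ultrametric tail estimate.
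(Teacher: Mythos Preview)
Your proof is correct and follows essentially the same approach as the paper: truncate the expansion from Lemma \ref{lem:CGSLCF}, observe that finite sums of locally constant functions are locally constant, and bound the tail by $|\pi|_{F}^{N+1}$. The paper phrases the tail estimate in terms of the valuation logarithm $\nu$ rather than $|\cdot|_{F}$, but the argument is the same.
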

\begin{proof}
For $f\in C_{F}(X)$ let $f=\sum_{i=n}^{\infty}f_{i}\pi^{i}$ be the expansion from Lemma \ref{lem:CGSLCF}. We note that a finite sum of locally constant functions is locally constant. Hence, for each $m\geq n$, $f_{i\leq m}:=\sum_{i=n}^{m}f_{i}\pi^{i}$ is an element of $\mathrm{LC}_{F}(X)$. Let $\varepsilon<\infty$ and $m>\frac{\varepsilon}{\nu(\pi)}$. Then we have $\inf_{x\in X}\nu(f(x)-f_{i\leq m}(x))=\inf_{x\in X}\nu(\sum_{i=m+1}^{\infty}f_{i}(x)\pi^{i})\geq\nu(\pi^{m+1})>m\nu(\pi)>\varepsilon$ as required.
\end{proof}
Here is the second of the two lemmas that will be used in the proof of Theorem \ref{thr:CGRAT}.
\begin{lemma}
\label{lem:CGIRC}
Let $F$ and $L$ be non-Archimedean fields with $L$ a finite extension of $F$ as a valued field such that the following holds:
\begin{enumerate}
\item[(i)]
we have $\mathbb{Q}\subseteq F$ and the valuation logarithm $\nu$ on $F$ when restricted to $\mathbb{Q}$ is a $p$-adic valuation logarithm;
\item[(ii)]
the residue field $\overline{F}$ is finite and so $\overline{L}$ is also finite;
\item[(iii)]
the elements of $\mbox{Gal}(^{L}/_{F})$ are isometric on $L$, noting that this is automatically satisfied if $F$ is complete.
\end{enumerate}
Then for each $g\in\mbox{Gal}(^{L}/_{F})$ there exists a set $\mathcal{R}_{L,g}\subseteq\mathcal{O}_{L}^{\times}\cup\{0\}$ of residue class representatives for $L$ such that the restriction of $g$ to $\mathcal{R}_{L,g}$ is an endofunction $g|_{\mathcal{R}_{L,g}}:\mathcal{R}_{L,g}\rightarrow\mathcal{R}_{L,g}$.
\end{lemma}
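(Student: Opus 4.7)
The plan is to construct, once and for all, a single set of residue class representatives that is simultaneously invariant under every $g\in\mbox{Gal}(^{L}/_{F})$; we then take $\mathcal{R}_{L,g}$ to be this common set, independent of $g$. Concretely, the plan is to use the standard Teichm\"uller representatives obtained by Hensel-lifting the $(q-1)$-th roots of unity from $\overline{L}$ up to $\mathcal{O}_{L}$, where $q:=\#\overline{L}$.

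First I would observe that $q$ is finite by (ii), and by (i) the residue characteristic is $p$, so $q=p^{k}$ for some $k\in\mathbb{N}$ and hence $\gcd(q-1,p)=1$. Since $\overline{L}^{\times}$ is cyclic of order $q-1$, every $\bar{\alpha}\in\overline{L}^{\times}$ is a root of the reduction of the polynomial $\varphi(x):=x^{q-1}-1\in\mathcal{O}_{L}[x]$, and it is a simple root because $\varphi'(\bar{\alpha})=(q-1)\bar{\alpha}^{q-2}\not=0$ in $\overline{L}$. Applying Hensel's lemma, each such $\bar{\alpha}$ lifts to a unique $\omega(\bar{\alpha})\in\mathcal{O}_{L}$ satisfying $\omega(\bar{\alpha})^{q-1}=1$ and reducing to $\bar{\alpha}$. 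Setting
\begin{equation*}
\mathcal{R}_{L}:=\{\omega(\bar{\alpha}):\bar{\alpha}\in\overline{L}^{\times}\}\cup\{0\}
\end{equation*}
then produces a set of residue class representatives for $\overline{L}$ contained in $\mathcal{O}_{L}^{\times}\cup\{0\}$.

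To conclude I would verify that $g(\mathcal{R}_{L})\subseteq\mathcal{R}_{L}$ for every $g\in\mbox{Gal}(^{L}/_{F})$. For nonzero $a\in\mathcal{R}_{L}$ we have $g(a)^{q-1}=g(a^{q-1})=g(1)=1$, so $g(a)$ is again a $(q-1)$-th root of unity; it is nonzero because $g$ is an automorphism; and $|g(a)|_{L}^{q-1}=|1|_{L}=1$ forces $|g(a)|_{L}=1$, so $g(a)\in\mathcal{O}_{L}^{\times}$. Hence $g(a)\in\mathcal{R}_{L}$, and trivially $g(0)=0$, so $g$ restricts to an endofunction on $\mathcal{R}_{L,g}:=\mathcal{R}_{L}$ as required.

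The main obstacle is the applicability of Hensel's lemma, which in its classical form requires $L$ to be complete, or at least $(\mathcal{O}_{L},\mathcal{M}_{L})$ to be Henselian. The hypotheses of the lemma do not explicitly demand completeness, but condition (iii) is closely tied to uniqueness of the valuation extension (cf. Remark \ref{rem:CVFUT}), which is precisely a Henselian-type property. In the intended application (the proof of Theorem \ref{thr:CGRAT}) $F$ is complete and hence $L$ is complete by Theorem \ref{thr:CVFEN}, so the Hensel step is automatic; a careful write-up should either invoke completeness from the surrounding context or argue directly that (iii) supplies the Henselian property used to produce the lifts.
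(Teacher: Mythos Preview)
Your Teichm\"uller-lift approach is genuinely different from the paper's and, when it applies, considerably cleaner: you build a single $g$-independent set of representatives, whereas the paper constructs $\mathcal{R}_{L,g}$ orbit-by-orbit for each fixed $g$, using an averaging trick $a=\tfrac{1}{m}\sum_{i=0}^{m-1}g^{(i)}(a')$ when $p\nmid m:=\mbox{ord}(g,a')$ and a more delicate norm-like product $(a_{0}g'(a_{0})\cdots g'^{(p-1)}(a_{0}))^{p^{k-1}}$ to handle the $p\mid m$ case. Your argument that $g$ permutes the $(q-1)$-th roots of unity is flawless once those roots are known to lie in $\mathcal{O}_{L}$.

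The gap you flag is real, however, and not obviously repairable from hypothesis (iii). The lemma is deliberately stated without completeness (the parenthetical in (iii) makes this explicit), and in that generality the Teichm\"uller lifts need not live in $L$: already for $F=L=\mathbb{Q}$ with the $5$-adic valuation, conditions (i)--(iii) hold trivially, yet the nontrivial $4$th roots of unity lie in $\mathbb{Q}_{5}\setminus\mathbb{Q}$. Condition (iii) asserts only that the given valuation on $L$ is fixed by $\mbox{Gal}(^{L}/_{F})$; it does not by itself force $(\mathcal{O}_{L},\mathcal{M}_{L})$ to be Henselian. The paper's averaging/product constructions are purely finite algebraic manipulations inside $L$, so they sidestep this issue entirely --- that is what the extra work buys. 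Your proof is complete and preferable in the Henselian case (in particular for the application to Theorem~\ref{thr:CGRAT}), but as a proof of the lemma in its stated generality it does not go through.
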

\begin{proof}
Let $\omega$ be the extension of $\nu$ to $L$ and let $\mathcal{R}_{L}$ with $0\in\mathcal{R}_{L}$ be an arbitrary set of residue class representatives for $L$. Fix $g\in\mbox{Gal}(^{L}/_{F})$ and for $a\in\mathcal{O}_{L}^{\times}$ denote the orbit of $a$ with respect to $g$ by
\begin{equation*}
\langle g\rangle(a):=\{g^{(n)}(a):n\in\{1,\cdots,\mbox{ord}(g,a)\}\}.
\end{equation*}
Also denote $\overline{\langle g\rangle(a)}:=\{\overline{g^{(n)}(a)}:n\in\{1,\cdots,\mbox{ord}(g,a)\}\}=\langle \bar{g}\rangle(\bar{a})\subseteq\overline{L}$. We will show that $\mathcal{R}_{L,g}\subseteq\mathcal{O}_{L}^{\times}\cup\{0\}$ can be constructed from $\mathcal{R}_{L}$. Clearly we can let $0$ represent $\bar{0}$ and so include $0$ in $\mathcal{R}_{L,g}$. More generally we need to make sure that:
\begin{enumerate}
\item[(1)]
for each $a'\in\mathcal{R}_{L}$ there is precisely one element $a\in\mathcal{R}_{L,g}$ such that $\bar{a}=\overline{a'}$, that is $a=a'+b$ for some $b\in L$ with $\omega(b)>0$;
\item[(2)]
for each $a\in\mathcal{R}_{L,g}$ we have $g(a)\in\mathcal{R}_{L,g}$.
\end{enumerate}
To this end we will show that the following useful facts hold for Lemma \ref{lem:CGIRC}.
\begin{enumerate}
\item[(a)]
For $a_{1},a_{2}\in\mathcal{O}_{L}^{\times}$ either $\overline{\langle g\rangle(a_{1})}\cap\overline{\langle g\rangle(a_{2})}=\emptyset$ or $\overline{\langle g\rangle(a_{1})}=\overline{\langle g\rangle(a_{2})}$. Clearly, since $\mbox{ord}(g)$ is finite, if $\langle g\rangle(a_{1})\cap\langle g\rangle(a_{2})\not=\emptyset$ then $\langle g\rangle(a_{1})=\langle g\rangle(a_{2})$.
\item[(b)]
Let $a'\in\mathcal{R}_{L}\backslash\{0\}$. Then there exists $a\in\mathcal{O}_{L}^{\times}$ with $\bar{a}=\overline{a'}$ such that if $a_{1},a_{2}\in\langle g\rangle(a)$ with $a_{1}\not=a_{2}$ then $\overline{a_{1}}\not=\overline{a_{2}}$. Further since $g$ is an isometry we have $\omega(a_{1})=0$ for all $a_{1}\in\langle g\rangle(a)$. This ensures that every residue class that has a representative in $\langle g\rangle(a)$ has only one representative in $\langle g\rangle(a)$.
\end{enumerate}
Hence by applying (a) and (b) above we obtain $\mathcal{R}_{L,g}$ as a disjoint union of the orbits of finitely many elements form $\mathcal{O}_{L}^{\times}\cup\{0\}$. Note if $\mathcal{R}_{F}$ is a set of residue class representatives for $F$ and $\mathcal{R}_{F}\subseteq\mathcal{R}_{L}$ then with the above construction we can choose to have $\mathcal{R}_{F}\subseteq\mathcal{R}_{L,g}$ since $g$ restricts to the identity map on $\mathcal{R}_{F}$. Also note that (b) is not in general satisfied for all $a\in\mathcal{O}_{L}^{\times}$ with $\bar{a}=\overline{a'}$. Indeed, in the case of Example \ref{exa:CGEXONE} where $L=\mathbb{Q}_{5}(\sqrt{2})$, for $a=1+5\sqrt{2}$ we have $g(a)=1-5\sqrt{2}\not=a$ and yet $\overline{g(a)}=\bar{a}=\bar{1}$. We will now prove that (a) and (b) above hold.\\
For (a) it is enough to confirm that $\overline{\langle g\rangle(a)}=\langle\bar{g}\rangle(\bar{a})$ for all $a\in\mathcal{O}_{L}^{\times}$. Since $g$ is an isometry, (iii) and (iv) of Remark \ref{rem:CGRAT} are applicable and so we have for each $n\in\mathbb{N}$ that $\overline{g^{(n)}(a)}=\overline{g^{(n)}}(\bar{a})=\bar{g}^{(n)}(\bar{a})$. Hence the result follows.\\
For (b) we first note that, for each $a'\in\mathcal{R}_{L}\backslash\{0\}$, $g$ maps residue class to residue class. That is $g$ restricts to a bijection $g|_{\overline{a'}}:\overline{a'}\rightarrow\overline{g(a')}$ and $g$ also restricts to a bijection $g|_{\overline{g(a')}}:\overline{g(a')}\rightarrow\overline{g^{(2)}(a')}$ and so forth. This is because $g$ restricts to a bijective endofunction on $\mathcal{M}_{L}$ since $g$ has finite order and is an isometry. Now for (b) to hold we need to check that for each $a'\in\mathcal{R}_{L}\backslash\{0\}$ there is an $a\in\overline{a'}$ such that when the forward orbit of $a$, with respect to $g$, returns to a residue class it has visited before then it returns to the same element of that residue class. Let $a'\in\mathcal{R}_{L}\backslash\{0\}$ and let $n$ be the first element of $\{1,2,\cdots,\mbox{ord}(g,a')\}$ such that there exists an $i\in\{0,1,\cdots,n-1\}$ with $g^{(i)}(a')$ in the same residue class as $g^{(n)}(a')$. Hence $\omega(g^{(i)}(a')-g^{(n)}(a'))>0$ and since $g$ is an isometry we have $\omega(a'-g^{(n-i)}(a'))>0$ giving $i=0$ by the definition of $n$. Therefore $g^{(n)}(a')=a'+b$ for some $b\in\mathcal{M}_{L}$ and $g^{(n)}$ restricts to $g^{(n)}|_{\overline{a'}}:\overline{a'}\rightarrow\overline{a'}$.\\
Hence for (b) to hold it is enough to show that there is $a\in\overline{a'}$ which is a fixed point with respect to $g^{(n)}$. To this end we more generally show that for each $g\in\mbox{Gal}(^{L}/_{F})$ with $g|_{\overline{a'}}:\overline{a'}\rightarrow\overline{a'}$ there is a fixed point $a\in\overline{a'}$ of $g$. So for such a $g\in\mbox{Gal}(^{L}/_{F})$ let $m:=\mbox{ord}(g,a')$. Now recall that we have $\mathbb{Q}\subseteq F$ and that $\nu$ on $F$ when restricted to $\mathbb{Q}$ is a $p$-adic valuation logarithm for some prime $p$. Hence we have two cases, $p\nmid m$ and $p|m$.\\
Suppose $p\nmid m$. We have $g(a')=a'+b$ for some $b\in\mathcal{M}_{L}$. Further then we have
\begin{align*}
g^{(2)}(a')=&g(a'+b)=g(a')+g(b)=a'+b+g(b),\\
g^{(3)}(a')=&g(a'+b+g(b))=g(a')+g(b)+g^{(2)}(b)=a'+b+g(b)+g^{(2)}(b),\\
     \vdots &\\
g^{(m-1)}(a')=&a'+b+g(b)+g^{(2)}(b)+\cdots+g^{(m-2)}(b).
\end{align*}
Hence consider
\begin{align*}
a:=&\frac{1}{m}(a'+g(a')+g^{(2)}(a')+\cdots+g^{(m-1)}(a'))\\
  =&\frac{1}{m}(ma'+(m-1)b+(m-2)g(b)+\cdots+(m-(m-1))g^{(m-2)}(b)).
\end{align*}
Since $\mathbb{Q}\subseteq F$ we have $g(\frac{1}{m})=\frac{1}{m}$ giving $g(a)=a$. Moreover since $p\nmid m$ we have $\omega(m^{-1})=\nu(m^{-1})=-\nu(m)=0$. Therefore
\begin{align*}
\omega(a-a')=&\omega\left(\frac{1}{m}((m-1)b+(m-2)g(b)+(m-3)g^{(2)}(b)+\cdots+g^{(m-2)}(b))\right)\\
            =&0+\omega((m-1)b+(m-2)g(b)+(m-3)g^{(2)}(b)+\cdots+g^{(m-2)}(b))\\
        \geq &\min\{\omega((m-1)b),\omega((m-2)g(b)),\omega((m-3)g^{(2)}(b)),\cdots,\omega(g^{(m-2)}(b))\}\\
            =&\omega(g^{(m-2)}(b))=\omega(b)>0.
\end{align*}
Hence $a$ is an element of $\overline{a'}$ with $g(a)=a$ as required.\\
Suppose $p|m$. Then there is $n,m'\in\mathbb{N}$ such that $m=p^{n}m'$ with $p\nmid m'$. Hence $\mbox{ord}(g^{(p^{n-1}m')},a')=p$. Now suppose that the following holds.
\begin{enumerate}
\item[(b2)]
Let $a_{0}\in\mathcal{O}_{L}^{\times}$. Then for each $g'\in\mbox{Gal}(^{L}/_{F})$ with $g'|_{\overline{a_{0}}}:\overline{a_{0}}\rightarrow\overline{a_{0}}$ and $\mbox{ord}(g',a_{0})=p$ there is a fixed point $a_{1}\in\overline{a_{0}}$ of $g'$.
\end{enumerate}
Then by applying (b2) there is $a_{1}\in\overline{a'}$ which is a fixed point of $g^{(p^{n-1}m')}$ and so we have $\mbox{ord}(g,a_{1})|p^{n-1}m'$. By repeated application of (b2) we can obtain an element $a_{n}\in\overline{a'}$ such that $\mbox{ord}(g,a_{n})|m'$. Now, since the set $\mathcal{R}_{L}$ was an arbitrary set of residue class representatives for $L$ and $a_{n}$ represents $\overline{a'}$, we can apply the case $p\nmid m$ for $a_{n}$ to obtain $a\in\overline{a_{n}}=\overline{a'}$ with $g(a)=a$ as required.\\
It remains to show that (b2) holds. So let $a_{0}\in\mathcal{O}_{L}^{\times}$ and $g'\in\mbox{Gal}(^{L}/_{F})$ satisfy the conditions of (b2). Hence for some $b\in\mathcal{M}_{L}$ we have
\begin{align*}
g'(a_{0})=&a_{0}+b,\\
g'^{(2)}(a_{0})=&a_{0}+b+g'(b),\\
     \vdots &\\
g'^{(p-1)}(a_{0})=&a_{0}+b+g'(b)+\cdots+g'^{(p-2)}(b).
\end{align*}
Define $b_{1}:=b$, $b_{2}:=b+g'(b)$, $\cdots$, $b_{p-1}:=b+g'(b)+\cdots+g'^{(p-2)}(b)$ and note that for all $i\in\{1,\cdots,p-1\}$ we have
\begin{equation}
\label{equ:CGWBI}
\omega(b_{i})\geq\min\{\omega(b),\omega(g'(b)),\cdots,\omega(g'^{(p-2)}(b))\}=\omega(b)>0.
\end{equation}
Now since $\omega|_{\mathbb{Q}}$ is the $p$-adic valuation logarithm $\nu_{p}$ we have $\mathbb{F}_{p}\subseteq\overline{L}$. Therefore since $\overline{L}$ is a finite field we have $\#\overline{L}=p^{k}$ for some $k\in\mathbb{N}$. Hence we consider
\begin{align*}
a_{1}:=&(a_{0}g'(a_{0})g'^{(2)}(a_{0})\cdots g'^{(p-1)}(a_{0}))^{p^{k-1}}\\
      =&(a_{0}(a_{0}+b_{1})(a_{0}+b_{2})\cdots(a_{0}+b_{p-1}))^{p^{k-1}}\\ =&(a_{0}^{p}+a_{0}b_{1}(a_{0}+b_{2})\cdots(a_{0}+b_{p-1})+a_{0}^{2}b_{2}(a_{0}+b_{3})\cdots(a_{0}+b_{p-1})+\cdots\\
       &\cdots+a_{0}^{p-1}b_{p-1})^{p^{k-1}}.
\end{align*}
Now, by Lemma \ref{lem:CVFEQ} and (\ref{equ:CGWBI}), we have
\begin{align*}
\omega(a_{0}b_{1}(a_{0}+b_{2})\cdots(a_{0}+b_{p-1}))=&\omega(a_{0})+\omega(b_{1})+\omega(a_{0}+b_{2})+\cdots\\
&\cdots+\omega(a_{0}+b_{p-1})\\
=&0+\omega(b_{1})+0+\cdots+0\\
\geq&\omega(b)>0.
\end{align*}
The same inequality holds for later terms in the above expansion on $a_{1}$, hence for
\begin{equation*}
c:=a_{0}b_{1}(a_{0}+b_{2})\cdots(a_{0}+b_{p-1})+a_{0}^{2}b_{2}(a_{0}+b_{3})\cdots(a_{0}+b_{p-1})+\cdots+a_{0}^{p-1}b_{p-1}
\end{equation*}
we have $\omega(c)>0$. This gives
\begin{equation*}
a_{1}=(a_{0}^{p}+c)^{p^{k-1}}=a_{0}^{p^{k}}+\sum_{i=1}^{p^{k-1}}\binom{p^{k-1}}{i}a_{0}^{p(p^{k-1}-i)}c^{i}
\end{equation*}
such that for each $i\in\{1,\cdots,p^{k-1}\}$ we have
\begin{align*}
\omega\left(\binom{p^{k-1}}{i}a_{0}^{p(p^{k-1}-i)}c^{i}\right)=&\omega\left(\binom{p^{k-1}}{i}\right)+p(p^{k-1}-i)\omega(a_{0})+i\omega(c)\\
=&\omega\left(\binom{p^{k-1}}{i}\right)+0+i\omega(c)>0
\end{align*}
noting that $\omega\left(\binom{p^{k-1}}{i}\right)\geq0$ since $\omega|_{\mathbb{Q}}$ is the $p$-adic valuation logarithm $\nu_{p}$. Hence for $c':=\sum_{i=1}^{p^{k-1}}\binom{p^{k-1}}{i}a_{0}^{p(p^{k-1}-i)}c^{i}$ we have $\omega(c')>0$. Further since $\#\overline{L}^{\times}=p^{k}-1$ we have $\overline{a_{0}}^{p^{k}-1}=\bar{1}$ by Lagrange's theorem. In particular $\overline{a_{1}}=\overline{a_{0}^{p^{k}}}+\overline{c'}=\overline{a_{0}}^{p^{k}}+\bar{0}=\overline{a_{0}}$ giving $a_{1}\in\overline{a_{0}}$ and since $a_{1}=(a_{0}g'(a_{0})g'^{(2)}(a_{0})\cdots g'^{(p-1)}(a_{0}))^{p^{k-1}}$ with $\mbox{ord}(g',a_{0})=p$ we have $g'(a_{1})=a_{1}$ as required. This completes the proof of Lemma \ref{lem:CGIRC}.
\end{proof}
We will now prove Theorem \ref{thr:CGRAT}.
\begin{proof}[Proof of Theorem \ref{thr:CGRAT}]
For $f\in\mathcal{O}(X,\tau,g)$ let $\tilde{f}:=f+\mathcal{J}(X,\tau,g)$ denote the quotient class to which $f$ belongs and let $\pi$ be a prime element of $L$. Note, $\mathcal{O}(X,\tau,g)/\mathcal{J}(X,\tau,g)$ is endowed with the usual quotient operations and the quotient norm which in this case gives the trivial valuation. We begin by establishing a set $\mathcal{R}(X,\tau,g)\subseteq\mathcal{O}(X,\tau,g)$ of quotient class representatives for $\mathcal{O}(X,\tau,g)/\mathcal{J}(X,\tau,g)$. By Lemma \ref{lem:CGIRC} there is a set $\mathcal{R}_{L,g}$ of residue class representatives for $\overline{L}$ such that $g|_{\mathcal{R}_{L,g}}:\mathcal{R}_{L,g}\rightarrow\mathcal{R}_{L,g}$. Furthermore by Lemma \ref{lem:CGSLCF} every $f\in C_{L}(X)$ has a unique expansion of the form
\begin{equation}
\label{equ:CGXRL}
f=\sum_{i=n}^{\infty}f_{i}\pi^{i},\quad\mbox{for some }n\in\mathbb{Z},
\end{equation}
where, for each $i\geq n$, $f_{i}:X\rightarrow\mathcal{R}_{L,g}$ is a locally constant function. Hence, using expansion (\ref{equ:CGXRL}), for $f\in\mathcal{O}(X,\tau,g)$ we have
\begin{equation*}
f_{0}\circ\tau+h\circ\tau=f\circ\tau=g\circ f=g\circ f_{0}+g\circ h
\end{equation*}
where $h:=\sum_{i=1}^{\infty}f_{i}\pi^{i}$ with $\omega(h(x))>0$ for all $x\in X$. Now note that $\tau:X\rightarrow X$ and $g|_{\mathcal{R}_{L,g}}:\mathcal{R}_{L,g}\rightarrow\mathcal{R}_{L,g}$ give $f_{0}\circ\tau:X\rightarrow\mathcal{R}_{L,g}$ and $g\circ f_{0}:X\rightarrow\mathcal{R}_{L,g}$. Further since $g$ is an isometry on $L$ we have $\omega(h\circ\tau(x))>0$ and $\omega(g\circ h(x))>0$ for all $x\in X$. Hence since the expansion of $f\circ\tau$ in the for of (\ref{equ:CGXRL}) is unique we have $f_{0}\circ\tau=g\circ f_{0}$ and $h\circ\tau=g\circ h$. Moreover $f_{0}$ is continuous since locally constant and, for $x\in X$,
\begin{equation*}
\omega(f_{0}(x))=\left\{ \begin{array} {l@{\quad\mbox{if}\quad}l}
\infty & f_{0}(x)=0 \\
0 & f_{0}(x)\not=0.
\end{array} \right.
\end{equation*}
In particular we have $f_{0}\in\mathcal{O}(X,\tau,g)$. Hence we also have $h=f-f_{0}\in\mathcal{O}(X,\tau,g)$ since $\mathcal{O}(X,\tau,g)$ is a ring. But since $\omega(h(x))>0$ for all $x\in X$ we in fact have $h\in\mathcal{J}(X,\tau,g)$ giving
\begin{equation*}
\tilde{f}=\widetilde{f_{0}}.
\end{equation*}
Now by the uniqueness of expansions in the form of (\ref{equ:CGXRL}) and since $\mathcal{J}(X,\tau,g)$ is an ideal we have for any other element $f'=f'_{0}+h'\in\mathcal{O}(X,\tau,g)$ that $\widetilde{f'}=\tilde{f}$ if and only if $f'_{0}=f_{0}$. Hence using expansion (\ref{equ:CGXRL}) we define
\begin{equation*}
\mathcal{R}(X,\tau,g):=\left\{f_{0}:f=\sum_{i=0}^{\infty}f_{i}\pi^{i}\in\mathcal{O}(X,\tau,g)\right\}
\end{equation*}
noting that $0\in\mathcal{R}(X,\tau,g)$ since $0\in\mathcal{R}_{L,g}$ and $0\in\mathcal{O}(X,\tau,g)$. We now define a map $\phi:\mathcal{O}(X,\tau,g)/\mathcal{J}(X,\tau,g)\rightarrow C(X,\tau,\bar{g})$ by
\begin{equation*}
\phi(\tilde{f})=\phi(\widetilde{f_{0}})=\phi(f_{0}+\mathcal{J}(X,\tau,g)):=\overline{f_{0}}
\end{equation*}
where for $x\in X$ we define $\overline{f_{0}}(x):=\overline{f_{0}(x)}=f_{0}(x)+\mathcal{M}_{L}$. We show that $\phi$ is a ring isomorphism by checking that:
\begin{enumerate}
\item[(i)]
for all $\tilde{f}\in\mathcal{O}(X,\tau,g)/\mathcal{J}(X,\tau,g)$ we have $\overline{f_{0}}\in C(X,\tau,\bar{g})$;
\item[(ii)]
$\phi$ is multiplicative, linear and $\phi(\tilde{1})=\bar{1}$;
\item[(iii)]
$\mbox{ker}(\phi)=\{\tilde{0}\}$ ensuring that $\phi$ is injective;
\item[(iv)]
$\phi$ is surjective.
\end{enumerate}
For (i), since $f_{0}$ is a locally constant function on $X$ we have $\overline{f_{0}}\in C_{\overline{L}}(X)$. Furthermore we have already shown above that $f_{0}\circ\tau=g\circ f_{0}$. Hence for each $x\in X$ we have $\overline{f_{0}}(\tau(x))=\overline{f_{0}(\tau(x))}=\overline{g(f_{0}(x))}=\bar{g}\left(\overline{f_{0}(x)}\right)=\bar{g}\left(\overline{f_{0}}(x)\right)$ and so $\overline{f_{0}}\in C(X,\tau,\bar{g})$.\\
For (ii), let $\tilde{f},\widetilde{f'}\in\mathcal{O}(X,\tau,g)/\mathcal{J}(X,\tau,g)$. We show that $\phi$ is multiplicative. Set $h:=f_{0}f'_{0}$ giving $f_{0}f'_{0}=h_{0}+h'$ with $h'\in\mathcal{J}(X,\tau,g)$ and $f_{0},f'_{0},h_{0}\in\mathcal{R}(X,\tau,g)$. Hence for each $x\in X$ we have $h'(x)\in\mathcal{M}_{L}$. Therefore for each $x\in X$ we have
\begin{align*}
\phi(\tilde{f}\widetilde{f'})(x)=\phi(\widetilde{f_{0}}\widetilde{f'_{0}})(x)=&\phi(\widetilde{f_{0}f'_{0}})(x)\\
=&\phi(\widetilde{h_{0}})(x)\\
=&\overline{h_{0}}(x)\\
=&\overline{h_{0}(x)}\\
=&\overline{h_{0}(x)+h'(x)}\\
=&\overline{f_{0}(x)f'_{0}(x)}\\
=&\overline{f_{0}}(x)\overline{f'_{0}}(x)\\
=&(\phi(\widetilde{f_{0}})\phi(\widetilde{f'_{0}}))(x)=(\phi(\tilde{f})\phi(\widetilde{f'}))(x).
\end{align*}
Linearity, $\phi(\tilde{f}+\widetilde{f'})=\phi(\tilde{f})+\phi(\widetilde{f'})$, is shown in much the same way. Showing that $\phi(\tilde{1})=\bar{1}$ is almost immediate. Let $1_{0}$ be the representative in $\mathcal{R}_{L,g}$ of $\bar{1}$. Then we have $1_{0}\in\mathcal{R}(X,\tau,g)$ giving $\phi(\tilde{1})=\phi(\widetilde{1_{0}})=\overline{1_{0}}=\bar{1}$ as required. In fact we can always choose $\mathcal{R}_{L,g}$ such that $1_{0}=1$.\\
For (iii), let $f\in\mathcal{O}(X,\tau,g)$. If for all $x\in X$ we have $\phi(\widetilde{f_{0}})(x)=\overline{f_{0}(x)}=\bar{0}$ then $\omega(f_{0}(x))>0$ for all $x\in X$ giving $f_{0}\in\mathcal{J}(X,\tau,g)$. Hence $\widetilde{f_{0}}=\tilde{0}$ and so $\mbox{ker}(\phi)=\{\tilde{0}\}$. In fact since $f_{0}$ is an element of $\mathcal{R}(X,\tau,g)$ we have $f_{0}=0$ in this case.\\
For (iv), given $\bar{f}\in C(X,\tau,\bar{g})$ and $x\in X$ we have $\bar{f}(x)=a_{0}(x)+\mathcal{M}_{L}$ for some element $a_{0}(x)$ of $\mathcal{R}_{L,g}$ since $\mathcal{R}_{L,g}$ is a set of residue class representatives for $\overline{L}$. Since the valuation on $\overline{L}$ is the trivial valuation, $\bar{f}$ is a locally constant function and hence, when viewed as a function on $X$, so is $a_{0}$. Therefore $a_{0}$ is a continuous $\mathcal{O}_{L}$ valued function noting that $\mathcal{R}_{L,g}\subseteq\mathcal{O}_{L}$. Further since $\bar{f}\in C(X,\tau,\bar{g})$ we have for each $x\in X$ that
\begin{equation*}
\overline{a_{0}(\tau(x))}=\bar{f}(\tau(x))=\bar{g}(\bar{f}(x))=\bar{g}\left(\overline{a_{0}(x)}\right)=\overline{g(a_{0}(x))}.
\end{equation*}
Now because $g|_{\mathcal{R}_{L,g}}:\mathcal{R}_{L,g}\rightarrow\mathcal{R}_{L,g}$ we have $g(a_{0}(x))\in\mathcal{R}_{L,g}$ giving $a_{0}(\tau(x))=g(a_{0}(x))$ for all $x\in X$. Hence, as a function on $X$, $a_{0}\in\mathcal{O}(X,\tau,g)$ and so $a_{0}\in\mathcal{R}(X,\tau,g)$ with $\phi(\widetilde{a_{0}})=\overline{a_{0}}=\bar{f}$ as required. Finally, since the valuation on $\overline{L}$ is the trivial valuation, the sup norm on $C(X,\tau,\bar{g})$ is the trivial norm. Therefore it is immediate that $\phi$ is an isometry and this completes the proof of Theorem \ref{thr:CGRAT}.
\end{proof}
The last result of this section follows easily from the preceding results.
\begin{corollary}
\label{cor:CGLCUD}
Let $F$, $L$ and $g\in\mbox{Gal}(^{L}/_{F})$ conform to Lemma \ref{lem:CGIRC} with $F$ and $L$ having complete nontrivial discrete valuations and let $C(X,\tau,g)$ be a basic $^{L}/_{L^{g}}$ function algebra. Further let $\mathcal{R}(X,\tau,g)\subseteq\mathcal{O}(X,\tau,g)$ be the subset of all locally constant $\mathcal{R}_{L,g}$ valued functions. If there is a prime element $\pi$ of $L$ such that $g(\pi)=\pi$ then each $f\in C(X,\tau,g)\backslash\{0\}$ has a unique series expansion of the form
\begin{equation*}
f=\sum_{i=n}^{\infty}f_{i}\pi^{i},\quad\mbox{for some }n\in\mathbb{Z},
\end{equation*}
where for each $i\geq n$ we have $f_{i}\in\mathcal{R}(X,\tau,g)$. In particular the subset of all locally constant functions, $\mathrm{LC}(X,\tau,g)\subseteq C(X,\tau,g)$, is uniformly dense in $C(X,\tau,g)$.
\end{corollary}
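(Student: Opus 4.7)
The plan is to leverage the expansion from Lemma \ref{lem:CGSLCF} and then use the invariance condition defining $C(X,\tau,g)$ to show that each coefficient function lies in $\mathcal{R}(X,\tau,g)$.

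First, since $L$ has a complete nontrivial discrete valuation and $\pi$ is a prime element of $L$, Lemma \ref{lem:CGSLCF} applied with the set $\mathcal{R}_{L,g}$ of residue class representatives provided by Lemma \ref{lem:CGIRC} gives, for any $f \in C_L(X)$, a unique expansion $f = \sum_{i=n}^\infty f_i \pi^i$ with each $f_i : X \to \mathcal{R}_{L,g}$ locally constant (hence continuous into $\mathcal{O}_L$). I would simply specialise this expansion to $f \in C(X,\tau,g) \setminus \{0\}$ and show each coefficient $f_i$ satisfies the compatibility condition $f_i \circ \tau = g \circ f_i$.

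The key step is the following. Because $g$ is an isometric field automorphism of $L$ fixing $\pi$ (by hypothesis) and hence every power $\pi^i$, and because convergence in $L$ is coordinate-wise in the series, one obtains
\begin{equation*}
g \circ f \;=\; g\!\left(\sum_{i=n}^\infty f_i \pi^i\right) \;=\; \sum_{i=n}^\infty (g\circ f_i)\,\pi^i,
\end{equation*}
where the interchange of $g$ with the infinite sum is justified by continuity of $g$ together with Lemma \ref{lem:FAACS}. On the other hand, clearly $f \circ \tau = \sum_{i=n}^\infty (f_i \circ \tau)\,\pi^i$. Since $f \in C(X,\tau,g)$ these two series are equal. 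By Lemma \ref{lem:CGIRC}, $g$ restricts to a self-map on $\mathcal{R}_{L,g}$, so both $g \circ f_i$ and $f_i \circ \tau$ take values in $\mathcal{R}_{L,g}$; and both are locally constant (for $f_i \circ \tau$, composition of a continuous map with a locally constant one remains locally constant). The uniqueness clause of Lemma \ref{lem:CGSLCF} then forces $f_i \circ \tau = g \circ f_i$ coefficient-by-coefficient, so each $f_i \in \mathcal{R}(X,\tau,g)$. Uniqueness of the resulting expansion is inherited from the uniqueness already proved in Lemma \ref{lem:CGSLCF}.

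For the density conclusion, the partial sums $s_N := \sum_{i=n}^{N} f_i \pi^i$ are finite sums of locally constant functions from $C(X,\tau,g)$, hence themselves locally constant and in $\mathrm{LC}(X,\tau,g)$. The uniform convergence $s_N \to f$ follows exactly as in the proof of Corollary \ref{cor:CGSLCF}: for $N > \varepsilon/\nu(\pi)$ one has $\inf_{x\in X}\omega(f(x)-s_N(x)) \geq (N{+}1)\nu(\pi) > \varepsilon$. The main obstacle I anticipate is purely notational — verifying carefully that the residue-representative-valued expansion of $g\circ f$ coincides, term by term, with the expansion obtained by applying $g$ inside each coefficient; this is where the hypotheses $g(\pi)=\pi$ and $g(\mathcal{R}_{L,g}) \subseteq \mathcal{R}_{L,g}$ (from Lemma \ref{lem:CGIRC}) work in tandem to keep the expansion in the canonical form required by the uniqueness statement.
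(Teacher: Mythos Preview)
Your proposal is correct and follows essentially the same route as the paper: expand $f$ via Lemma~\ref{lem:CGSLCF} over $\mathcal{R}_{L,g}$, use $g(\pi)=\pi$ and $g|_{\mathcal{R}_{L,g}}:\mathcal{R}_{L,g}\to\mathcal{R}_{L,g}$ to see that both $\sum(g\circ f_i)\pi^i$ and $\sum(f_i\circ\tau)\pi^i$ are expansions of the required canonical form, and invoke uniqueness to conclude $f_i\circ\tau=g\circ f_i$. The density argument via partial sums is likewise the same as the paper's.
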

\begin{remark}
\label{rem:CGLCUD}
For $L$ an unramified extension of $F$, every prime element $\pi\in F$ is a prime element of $L$ with $g(\pi)=\pi$. In particular Corollary \ref{cor:CGLCUD} holds when $L$ is a finite unramified extension of $\mathbb{Q}_{p}$ as is the case for examples \ref{exa:CGEXONE} and \ref{exa:CGEXTWO}.
\end{remark}
\begin{proof}[Proof of Corollary \ref{cor:CGLCUD}]
Let $f$ be an element of $C(X,\tau,g)\backslash\{0\}$ and let $\pi$ be a prime element of $L$ with $g(\pi)=\pi$. By Lemma \ref{lem:CGSLCF} $f$ has a unique series expansion of the form
\begin{equation*}
f=\sum_{i=n}^{\infty}f_{i}\pi^{i},\quad\mbox{for some }n\in\mathbb{Z},
\end{equation*}
with $f_{n}\not=0$ and $f_{i}:X\rightarrow\mathcal{R}_{L,g}$ a locally constant function for all $i\geq n$. Hence
\begin{equation*}
\sum_{i=n}^{\infty}f_{i}\circ\tau\pi^{i}=f\circ\tau=g\circ f=\sum_{i=n}^{\infty}(g\circ f_{i})(g(\pi))^{i}=\sum_{i=n}^{\infty}g\circ f_{i}\pi^{i}.
\end{equation*}
Therefore since $g$ restricts to an endofunction on $\mathcal{R}_{L,g}$ and by the uniqueness of the expansion we have $f_{i}\circ\tau=g\circ f_{i}$ for all $i\geq n$. Hence $f_{i}$ is an element of $\mathcal{R}(X,\tau,g)$ for all $i\geq n$ and for each $m\in\mathbb{N}$ we have $\sum_{i=n}^{n+m-1}f_{i}\pi^{i}\in C(X,\tau,g)$. Finally $\left(\sum_{i=n}^{n+m-1}f_{i}\pi^{i}\right)_{m}$ is a sequence of locally constant functions which converges uniformly to $f$ as required.
\end{proof}
This brings us to the end of Chapter \ref{cha:CG}. In the next chapter we will see that $^{L}/_{L^{g}}$ function algebras have a part to play in representation theory.
	\chapter[Representation theory]{Representation theory}
\label{cha:RT}
The first section of this chapter introduces several results from the theory of Banach rings and Banach $F$-algebras that we will use later in the chapter. These results have been taken from \cite[Ch1]{Berkovich}. However I have provided a thorough proof of each result in order to give significantly more detail than \cite{Berkovich} since some of them may not be widely known. The second section begins by recalling which Banach $F$-algebras can be represented by complex uniform algebras or real function algebras in the Archimedean setting and one such result in the non-Archimedean setting provided by \cite{Berkovich} is also noted. We then develop this theory further by identifying a large class of Banach $F$-algebras that can be represented by $^{L}/_{L^{g}}$ function algebras. The resulting representation theorem is the main result of interest in this chapter and the rest of the chapter is given over to the proof of the theorem.
\section{Further Banach rings and Banach {\it F}-algebras}
\label{sec:RTBR}
Since the definition of a Banach ring was given in Definition \ref{def:FAABA} we begin with the first lemma.
\begin{lemma}
\label{lem:RTRRT}
Let $R$ be a Banach ring and let $r\in\mathbb{R}$ be positive. Define
\begin{equation*}
R\langle r^{-1}T\rangle:=\{f=\sum_{i=0}^{\infty}a_{i}T^{i}:a_{i}\in R\mbox{ and }\sum_{i=0}^{\infty}\|a_{i}\|_{R}r^{i}<\infty\}.
\end{equation*}
Then with the Cauchy product and usual addition:
\begin{enumerate}
\item[(i)]
we have that $R\langle r^{-1}T\rangle$ is a Banach ring with respect to the norm
\begin{equation*}
\|f\|_{R,r}:=\sum_{i=0}^{\infty}\|a_{i}\|_{R}r^{i};
\end{equation*}
\item[(ii)]
for $a\in R$ we have $1-aT$ invertible in $R\langle r^{-1}T\rangle$ if and only if $\sum_{i=0}^{\infty}\|a^{i}\|_{R}r^{i}<\infty$.
\end{enumerate}
\end{lemma}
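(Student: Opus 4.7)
The plan is to establish (i) by verifying in order: well-definedness of the norm, closure of $R\langle r^{-1}T\rangle$ under the Cauchy product, sub-multiplicativity, completeness, and $\|1\|_{R,r} = 1$; then establish (ii) by constructing the explicit geometric-series inverse in one direction and extracting the coefficients of the inverse via coefficient comparison in the other.

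For the core estimate in (i), given $f = \sum_i a_i T^i$ and $g = \sum_j b_j T^j$ in $R\langle r^{-1}T\rangle$, the $k$-th Cauchy coefficient $c_k := \sum_{i+j=k} a_i b_j$ satisfies $\|c_k\|_R \le \sum_{i+j=k}\|a_i\|_R\|b_j\|_R$ by sub-multiplicativity of $\|\cdot\|_R$. Multiplying by $r^k = r^i r^j$ and summing over $k$, the double series is a product of two absolutely convergent series, so
\begin{equation*}
\|fg\|_{R,r} \le \sum_{k=0}^{\infty}\sum_{i+j=k}\|a_i\|_R\|b_j\|_R r^i r^j = \|f\|_{R,r}\|g\|_{R,r}.
\end{equation*}
This simultaneously gives closure under multiplication and sub-multiplicativity. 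The axiom $\|1\|_{R,r}=1$ is immediate from $\|1\|_R = 1$. For completeness, a Cauchy sequence $(f^{(n)})$ with $f^{(n)} = \sum_i a_i^{(n)} T^i$ satisfies $\|a_i^{(n)} - a_i^{(m)}\|_R r^i \le \|f^{(n)} - f^{(m)}\|_{R,r}$, so for each $i$ the sequence $(a_i^{(n)})_n$ is Cauchy in $R$ and converges to some $a_i \in R$. I would then show that $f := \sum_i a_i T^i$ lies in $R\langle r^{-1}T\rangle$ and that $f^{(n)} \to f$ in $\|\cdot\|_{R,r}$ by the standard trick: fix $\varepsilon > 0$ and $N$ with $\|f^{(n)} - f^{(m)}\|_{R,r} < \varepsilon$ for all $n,m \ge N$, then for each $K$ take $m \to \infty$ in the finite sum $\sum_{i=0}^{K}\|a_i^{(n)} - a_i^{(m)}\|_R r^i$ (only finitely many limits, no interchange issues) to get $\sum_{i=0}^{K}\|a_i^{(n)} - a_i\|_R r^i \le \varepsilon$, and finally let $K \to \infty$.

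For (ii), assume first $\sum_{i=0}^{\infty}\|a^i\|_R r^i < \infty$, so $g := \sum_{i=0}^{\infty} a^i T^i$ is a well-defined element of $R\langle r^{-1}T\rangle$. The Cauchy product telescopes:
\begin{equation*}
(1-aT)g = \sum_{i=0}^{\infty} a^i T^i - \sum_{i=0}^{\infty} a^{i+1} T^{i+1} = 1,
\end{equation*}
and identically $g(1-aT) = 1$, so $1-aT$ is invertible. Conversely, if $g = \sum_i b_i T^i$ is an inverse of $1-aT$, then matching coefficients in $(1-aT)g = 1$ yields $b_0 = 1$ and $b_i = ab_{i-1}$ for $i \ge 1$, whence $b_i = a^i$ by induction. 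Since $g \in R\langle r^{-1}T\rangle$, $\sum \|a^i\|_R r^i = \sum \|b_i\|_R r^i = \|g\|_{R,r} < \infty$.

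The main obstacle is the completeness step, where one must avoid any unjustified interchange of the double limit (in $i$ summation and in $n$ indexing). The truncate-then-pass-to-limit device above handles this cleanly, using only convergence of finitely many coordinates at a time. A secondary delicate point is that $R$ is not assumed commutative, so throughout the argument (especially in the coefficient-comparison step in (ii)) I must preserve the left-right ordering $ab_{i-1}$ rather than $b_{i-1}a$; this does not cause any real difficulty provided one is consistent in writing $(1-aT)g$ on the correct side.
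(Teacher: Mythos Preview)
Your proposal is correct and follows essentially the same approach as the paper: sub-multiplicativity via the Cauchy-product double sum (the paper cites Mertens' theorem, you rearrange directly), completeness via coordinate-wise limits, and part (ii) via the explicit geometric-series inverse in one direction and coefficient comparison in the other. Your completeness argument is in fact tidier than the paper's, which uses an $\varepsilon/4$ decomposition rather than your truncate-then-let-$m\to\infty$-then-$K\to\infty$ device; note only that the paper's definition of Banach ring also requires $\|-1\|=1$, which you should mention alongside $\|1\|_{R,r}=1$.
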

\begin{proof}
For (i), let $f_{1}=\sum_{i=0}^{\infty}a_{i}T^{i}$ and $f_{2}=\sum_{i=0}^{\infty}b_{i}T^{i}$ be elements of $R\langle r^{-1}T\rangle$. Then
\begin{align*}
\|f_{1}+f_{2}\|_{R,r}=&\left\|\sum_{i=0}^{\infty}(a_{i}+b_{i})T^{i}\right\|_{R,r}\\
=&\sum_{i=0}^{\infty}\|a_{i}+b_{i}\|_{R}r^{i}\\
\leq&\sum_{i=0}^{\infty}\left(\|a_{i}\|_{R}+\|b_{i}\|_{R}\right)r^{i}\\
=&\sum_{i=0}^{\infty}\|a_{i}\|_{R}r^{i}+\sum_{i=0}^{\infty}\|b_{i}\|_{R}r^{i}\\
=&\|f_{1}\|_{R,r}+\|f_{2}\|_{R,r}<\infty
\end{align*}
showing that $R\langle r^{-1}T\rangle$ is closed under addition and that the triangle inequality holds for $\|\cdot\|_{R,r}$. Clearly $\|f\|_{R,r}=0$ if and only if $f=0$. Further
\begin{align*}
\|f_{1}f_{2}\|_{R,r}=&\left\|\sum_{i=0}^{\infty}\left(\sum_{k=0}^{i}a_{k}b_{i-k}\right)T^{i}\right\|_{R,r}\\
=&\sum_{i=0}^{\infty}\left\|\sum_{k=0}^{i}a_{k}b_{i-k}\right\|_{R}r^{i}\\
\leq&\sum_{i=0}^{\infty}\left(\sum_{k=0}^{i}\|a_{k}\|_{R}\|b_{i-k}\|_{R}\right)r^{i}\\
=&\left(\sum_{i=0}^{\infty}\|a_{i}\|_{R}r^{i}\right)\left(\sum_{i=0}^{\infty}\|b_{i}\|_{R}r^{i}\right),\mbox{ by Mertens' Theorem, see \cite[p204]{Apostol}}\\
=&\|f_{1}\|_{R,r}\|f_{2}\|_{R,r}<\infty
\end{align*}
showing that $R\langle r^{-1}T\rangle$ is closed under multiplication and $\|\cdot\|_{R,r}$ is sub-multiplicative. Furthermore we have $1_{R,r}=1_{R}T^{0}$ which gives $\|1_{R,r}\|_{R,r}=\|1_{R}\|_{R}r^{0}=1$ and similarly $\|-1_{R,r}\|_{R,r}=\|-1_{R}\|_{R}r^{0}=1$.\\
We now show that $R\langle r^{-1}T\rangle$ is complete. Let $\left(\sum_{i=0}^{\infty}a_{i,n}T^{i}\right)_{n}$ be a Cauchy sequence in $R\langle r^{-1}T\rangle$. Then for $k\in \mathbb{N}_{0}$ and $\varepsilon>0$ there exists $M\in\mathbb{N}$ such that for all $m,m'\geq M$ we have $\|\sum_{i=0}^{\infty}a_{i,m}T^{i}-\sum_{i=0}^{\infty}a_{i,m'}T^{i}\|_{R,r}=\sum_{i=0}^{\infty}\|a_{i,m}-a_{i,m'}\|_{R}r^{i}<\varepsilon r^{k}$. Hence for all $m,m'\geq M$ we have $\|a_{k,m}-a_{k,m'}\|_{R}<\varepsilon$ and so for all $k\in \mathbb{N}_{0}$, $(a_{k,n})_{n}$ is a Cauchy sequence in $R$. Since $R$ is a Banach ring $(a_{k,n})_{n}$ converges to some $b_{k}\in R$.\\
We show that $\sum_{i=0}^{\infty}b_{i}T^{i}$ is an element of $R\langle r^{-1}T\rangle$. Let $\varepsilon_{0}>0$. Then there exists $M\in\mathbb{N}$ such that for all $m\geq M$ we have
\begin{align*}
\left\|\sum_{i=0}^{\infty}a_{i,m}T^{i}\right\|_{R,r}\leq&\left\|\sum_{i=0}^{\infty}a_{i,m}T^{i}-\sum_{i=0}^{\infty}a_{i,M}T^{i}\right\|_{R,r}+\left\|\sum_{i=0}^{\infty}a_{i,M}T^{i}\right\|_{R,r}\\
<&\varepsilon_{0}+\left\|\sum_{i=0}^{\infty}a_{i,M}T^{i}\right\|_{R,r}<\infty.
\end{align*}
Now let $N\in\mathbb{N}_{0}$ and $\varepsilon>0$. Since for each $k\in\mathbb{N}_{0}$, $(a_{k,n})_{n}$ is a Cauchy sequence in $R$ with limit $b_{k}$, there is $M'\in\mathbb{N}$ such that for all $m'\geq M'$ we have $\sum_{i=0}^{N}\|b_{i}-a_{i,m'}\|_{R}r^{i}<\varepsilon$. Hence letting $m_{0}\geq\max\{M,M'\}$ gives
\begin{align*}
\left\|\sum_{i=0}^{N}b_{i}T^{i}\right\|_{R,r}=&\left\|\sum_{i=0}^{N}b_{i}T^{i}-\sum_{i=0}^{N}a_{i,m_{0}}T^{i}+\sum_{i=0}^{N}a_{i,m_{0}}T^{i}\right\|_{R,r}\\
\leq&\left\|\sum_{i=0}^{N}(b_{i}-a_{i,m_{0}})T^{i}\right\|_{R,r}+\left\|\sum_{i=0}^{\infty}a_{i,m_{0}}T^{i}\right\|_{R,r}\\
<&\varepsilon+\varepsilon_{0}+\left\|\sum_{i=0}^{\infty}a_{i,M}T^{i}\right\|_{R,r}.
\end{align*}
Since $\varepsilon>0$ was arbitrary we have $\|\sum_{i=0}^{N}b_{i}T^{i}\|_{R,r}\leq\varepsilon_{0}+\|\sum_{i=0}^{\infty}a_{i,M}T^{i}\|_{R,r}$. Since this holds for each $N\in\mathbb{N}_{0}$ we have $\|\sum_{i=0}^{\infty}b_{i}T^{i}\|_{R,r}\leq\varepsilon_{0}+\|\sum_{i=0}^{\infty}a_{i,M}T^{i}\|_{R,r}$ giving $\sum_{i=0}^{\infty}b_{i}T^{i}\in R\langle r^{-1}T\rangle$ as required.\\
Let $\varepsilon>0$. We will show, for large enough $n\in\mathbb{N}$, that $\|\sum_{i=0}^{\infty}a_{i,n}T^{i}-\sum_{i=0}^{\infty}b_{i}T^{i}\|_{R,r}<\varepsilon$ and so $R\langle r^{-1}T\rangle$ is complete. Since $\left(\sum_{i=0}^{\infty}a_{i,n}T^{i}\right)_{n}$ is a Cauchy sequence there exists $M_{1}\in\mathbb{N}$ such that for all $m,n\geq M_{1}$ we have $\|\sum_{i=0}^{\infty}(a_{i,n}-a_{i,m})T^{i}\|_{R,r}<\varepsilon/4$.\\
Let $n\geq M_{1}$. Since $\sum_{i=0}^{\infty}a_{i,n}T^{i}$ and $\sum_{i=0}^{\infty}b_{i}T^{i}$ are elements of $R\langle r^{-1}T\rangle$ there exists $N\in\mathbb{N}$ such that $\|\sum_{i=N+1}^{\infty}a_{i,n}T^{i}\|_{R,r}<\varepsilon/4$ and $\|\sum_{i=N+1}^{\infty}b_{i}T^{i}\|_{R,r}<\varepsilon/4$.\\
Since for each $i\in\mathbb{N}_{0}$, $(a_{i,m})_{m}$ is a Cauchy sequence in $R$ with limit $b_{i}$, there is $M_{2}\in\mathbb{N}$ such that for all $m\geq M_{2}$ we have $\|\sum_{i=0}^{N}(a_{i,m}-b_{i})T^{i}\|_{R,r}=\sum_{i=0}^{N}\|a_{i,m}-b_{i}\|_{R}r^{i}<\varepsilon/4$. Let $m=\max\{M_{1},M_{2}\}$ and define $c_{n}:=\|\sum_{i=0}^{\infty}a_{i,n}T^{i}-\sum_{i=0}^{\infty}b_{i}T^{i}\|_{R,r}$, then
\begin{align*}
c_{n}=&\left\|\sum_{i=N+1}^{\infty}a_{i,n}T^{i}+\sum_{i=0}^{N}(a_{i,n}-a_{i,m})T^{i}+\sum_{i=0}^{N}(a_{i,m}-b_{i})T^{i}-\sum_{i=N+1}^{\infty}b_{i}T^{i}\right\|_{R,r}\\
<&\varepsilon/4+\varepsilon/4+\varepsilon/4+\varepsilon/4=\varepsilon\mbox{ as required.}
\end{align*}
For (ii), the result is obvious for $a=0$ and so suppose $a\not=0$. If $\sum_{i=0}^{\infty}\|a^{i}\|_{R}r^{i}<\infty$ then $\sum_{i=0}^{\infty}a^{i}T^{i}$ is an element of $R\langle r^{-1}T\rangle$ and by the definition of the Cauchy product we have
\begin{align*}
\left(\sum_{i=0}^{\infty}a^{i}T^{i}\right)(1-aT)=&(a^{0}1_{R})T^{0}+\sum_{i=1}^{\infty}(a^{i}1_{R}+a^{i-1}(-a))T^{i}\\
=&1_{R}T^{0}+\sum_{i=1}^{\infty}a^{i-1}(a+(-a))T^{i}\\
=&1_{R}T^{0}=1_{R,r}=1.
\end{align*}
Similarly this holds for $(1-aT)(\sum_{i=0}^{\infty}a^{i}T^{i})$ and so $1-aT$ is invertible.\\
Now conversely if $1-aT$ is invertible in $R\langle r^{-1}T\rangle$ then for $\sum_{i=0}^{\infty}b_{i}T^{i}$ the inverse of $1-aT$ in $R\langle r^{-1}T\rangle$ we have by the definition of the Cauchy product
\begin{equation*}
1_{R,r}=\left(\sum_{i=0}^{\infty}b_{i}T^{i}\right)(1-aT)=(b_{0}1_{R})T^{0}+\sum_{i=1}^{\infty}(b_{i}1_{R}+b_{i-1}(-a))T^{i}.
\end{equation*}
Hence $b_{0}=1_{R}=a^{0}$ and, for each $i\in\mathbb{N}$, $0=b_{i}+b_{i-1}(-a)$ giving
\begin{equation*}
b_{i-1}a=b_{i}+b_{i-1}a+b_{i-1}(-a)=b_{i}+b_{i-1}(a+(-a))=b_{i}.
\end{equation*}
Therefore for each $i\in\mathbb{N}$, $b_{i}=b_{i-1}a$ with $b_{0}=1_{R}$ giving $b_{i}=a^{i}$ by induction. Hence $\sum_{i=0}^{\infty}a^{i}T^{i}=\sum_{i=0}^{\infty}b_{i}T^{i}$ is an element of $R\langle r^{-1}T\rangle$ and so $\sum_{i=0}^{\infty}\|a^{i}\|_{R}r^{i}<\infty$ as required.
\end{proof}
\begin{remark}
\label{rem:RTRRT}
Since $R\langle r^{-1}T\rangle$ extends $R$ as a ring and by the definition of the Cauchy product it is immediate that $R\langle r^{-1}T\rangle$ is commutative if and only if $R$ is commutative. Similarly by the definition of the norm $\|\cdot\|_{A,r}$ and the Cauchy product, if $A$ is a unital Banach $F$-algebra then $A\langle r^{-1}T\rangle$ is also a unital Banach $F$-algebra. These details are easily checked.
\end{remark}
The following definitions will be used many times in this chapter.
\begin{definition}
\label{def:RTBMS}
Let $R$ be a Banach ring. A {\em bounded multiplicative seminorm} on $R$ is a map $|\cdot|:R\rightarrow\mathbb{R}$ taking non-negative values that is:
\begin{enumerate}
\item[(1)]
bounded, $|a|\leq\|a\|_{R}$ for all $a\in R$, but not constantly zero on $R$;
\item[(2)]
multiplicative, $|ab|=|a||b|$ for all $a,b\in R$ and hence $|1_{R}|=1$ by setting $a=1_{R}$ and $b\not\in\mbox{ker}(|\cdot|)$;
\item[(3)]
a seminorm and so $|\cdot|$ also satisfies the triangle inequality and $0\in\mbox{ker}(|\cdot|)$ but the kernel is not assumed to be a singleton.
\end{enumerate}
\end{definition}
\begin{definition}
\label{def:RTMOA}
Let $F$ be a complete non-Archimedean field and let $A$ be a commutative unital Banach $F$-algebra. In this chapter $\mathcal{M}_{0}(A)$ will denote the set of all proper closed prime ideals of $A$ that are the kernels of bounded multiplicative seminorms on $A$. For $x_{0}\in\mathcal{M}_{0}(A)$, or any proper closed ideal of $A$, we will denote the quotient norm on $A/x_{0}$ by $|\cdot|_{x_{0}}$ that is $|a+x_{0}|_{x_{0}}:=\inf\{\|a+b\|_{A}:b\in x_{0}\}$ for $a\in A$.
\end{definition}
We now proceed with a number of Lemmas. In particular towards the end of Section \ref{sec:RTBR} it will be show that $\mathcal{M}_{0}(A)$ is always nonempty.
\begin{lemma}
\label{lem:RTBMS}
Let $A$ be a unital Banach $F$-algebra. If $|\cdot|$ is a bounded multiplicative seminorm on $A$ as a Banach ring then we have $|\alpha|=|\alpha|_{F}$ for all $\alpha\in F$. Hence since $|\cdot|$ is multiplicative it is also a vector space seminorm, that is $|\alpha a|=|\alpha|_{F}|a|$ for all $a\in A$ and $\alpha\in F$.
\end{lemma}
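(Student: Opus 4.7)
The plan is to exploit the multiplicativity of $|\cdot|$ on units of the image of $F$, combined with the boundedness estimate coming from the Banach ring axiom $\|1_A\|_A=1$. I identify $F$ with its image $F\cdot 1_A\subseteq A$ throughout, and recall that for a multiplicative seminorm that is not identically zero, choosing $b\notin\ker(|\cdot|)$ in the identity $|1_A\cdot b|=|1_A||b|$ forces $|1_A|=1$.

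First I would establish the upper bound. For any $\alpha\in F$, bilinearity of the $F$-algebra structure (together with $\|1_A\|_A=1$ from Definition \ref{def:FAABA}(ii)) gives
\begin{equation*}
|\alpha|=|\alpha 1_A|\leq\|\alpha 1_A\|_A=|\alpha|_F\|1_A\|_A=|\alpha|_F.
\end{equation*}
Next I would show that $|\cdot|$ does not vanish on $F^\times$: if $\alpha\in F^\times$ then $|\alpha||\alpha^{-1}|=|\alpha\alpha^{-1}|=|1_A|=1$, so $|\alpha|>0$ and in fact $|\alpha^{-1}|=|\alpha|^{-1}$. Applying the upper bound of the previous step to $\alpha^{-1}$ in place of $\alpha$ yields
\begin{equation*}
|\alpha|^{-1}=|\alpha^{-1}|\leq|\alpha^{-1}|_F=|\alpha|_F^{-1},
\end{equation*}
which rearranges to $|\alpha|\geq|\alpha|_F$. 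Combined with the first inequality, this gives $|\alpha|=|\alpha|_F$ on $F^\times$; the case $\alpha=0$ is trivial since $|0|=0=|0|_F$.

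For the second assertion, given $a\in A$ and $\alpha\in F$, bilinearity gives $\alpha a=(\alpha 1_A)\cdot a$ in the ring $A$, so by multiplicativity of $|\cdot|$ and the equality just proved,
\begin{equation*}
|\alpha a|=|(\alpha 1_A)\cdot a|=|\alpha 1_A|\,|a|=|\alpha|_F\,|a|.
\end{equation*}
Hence $|\cdot|$ is a vector space seminorm over $F$ as claimed.

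There is essentially no obstacle here: the argument is purely formal, relying only on multiplicativity, the normalisation $|1_A|=1=\|1_A\|_A$, and the fact that every nonzero element of $F$ is invertible. The only point worth flagging is the need to invoke $\|1_A\|_A=1$ (i.e.\ that $A$ is a Banach ring rather than merely a general Banach ring) in order to get the sharp upper bound $|\alpha|\leq|\alpha|_F$ rather than $|\alpha|\leq|\alpha|_F\|1_A\|_A$; without unitality the statement would fail in general.
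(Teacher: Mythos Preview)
Your proof is correct and essentially identical to the paper's: both use boundedness together with $\|1_A\|_A=1$ to get $|\alpha|\leq|\alpha|_F$, multiplicativity to get $|\alpha^{-1}|=|\alpha|^{-1}$, and then apply the bound to $\alpha^{-1}$ for the reverse inequality. The only differences are cosmetic ordering and that you spell out the vector-space seminorm step and the role of unitality more explicitly than the paper does.
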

\begin{proof}
For $\alpha\in F^{\times}$ we first note that $1=|1_{A}|=|\alpha\alpha^{-1}|=|\alpha||\alpha^{-1}|$ and so $|\alpha|\not=0$ and $|\alpha^{-1}|=|\alpha|^{-1}$. Similarly $|\alpha^{-1}|_{F}=|\alpha|_{F}^{-1}$ since $|\cdot|_{F}$ is a valuation. Moreover since $|\cdot|$ is bounded we have $|\alpha|\leq\|\alpha 1_{A}\|_{A}=|\alpha|_{F}\|1_{A}\|_{A}=|\alpha|_{F}$. Since this holds for all $\alpha\in F^{\times}$ we also have $|\alpha^{-1}|\leq |\alpha^{-1}|_{F}$ giving $|\alpha|_{F}\leq|\alpha|$ and so $|\alpha|=|\alpha|_{F}$ for all $\alpha\in F$.
\end{proof}
\begin{lemma}
\label{lem:RTQN}
Let $F$ and $A$ be as in Definition \ref{def:RTMOA}. For $x_{0}\in\mathcal{M}_{0}(A)$, or any proper closed ideal of $A$, the following holds:
\begin{enumerate}
\item[(i)]
the quotient ring $A/x_{0}$ has $F\subseteq A/x_{0}$ and is an integral domain if $x_{0}$ is prime;
\item[(ii)]
the quotient norm is such that $|\alpha+x_{0}|_{x_{0}}=|\alpha|_{F}$ for all $\alpha\in F$;
\item[(iii)]
the quotient norm $|\cdot|_{x_{0}}$ is an $F$-vector space norm on $A/x_{0}$, opposed to being merely a seminorm, and it is sub-multiplicative;
\item[(iv)]
if $\|\cdot\|_{A}$ is square preserving, that is $\|a^{2}\|_{A}=\|a\|_{A}^{2}$ for all $a\in A$, then both $\|\cdot\|_{A}$ and $|\cdot|_{x_{0}}$ observe the strong triangle inequality noting that $F$ is non-Archimedean;
\item[(v)]
by way of the map $a\mapsto |a+x_{0}|_{x_{0}}$, as a seminorm on $A$, $|\cdot|_{x_{0}}$ is bounded.
\end{enumerate}
\end{lemma}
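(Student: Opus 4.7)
The plan is to verify each of (i)--(v) more or less independently, with (iv) being the only part that is not essentially routine. My first step would be the algebraic part (i): since $x_{0}$ is proper, $1_{A}\notin x_{0}$, so the map $\alpha\mapsto\alpha 1_{A}+x_{0}$ embeds $F$ in $A/x_{0}$ (if $\alpha\not=0$ and $\alpha 1_{A}\in x_{0}$, then $1_{A}=\alpha^{-1}(\alpha 1_{A})\in x_{0}$, a contradiction), and if $x_{0}$ is prime then $A/x_{0}$ is an integral domain by the standard commutative ring theory fact. Part (v) is immediate on choosing $b=0$ in the infimum defining the quotient norm.

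For (ii), the bound $|\alpha+x_{0}|_{x_{0}}\leq|\alpha|_{F}$ is obtained from $b=0$ together with $\|1_{A}\|_{A}=1$. For the reverse I would argue by contradiction: if $|\alpha+x_{0}|_{x_{0}}<|\alpha|_{F}$ for some $\alpha\in F^{\times}$, then some $b\in x_{0}$ satisfies $\|\alpha 1_{A}+b\|_{A}<|\alpha|_{F}$, equivalently $\|1_{A}-(-\alpha^{-1}b)\|_{A}<1$. A Neumann series argument (using completeness of $A$ and sub-multiplicativity of $\|\cdot\|_{A}$) then shows $-\alpha^{-1}b$ is invertible in $A$; but $-\alpha^{-1}b\in x_{0}$, forcing $1_{A}\in x_{0}$ and contradicting properness. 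Part (iii) then splits into three checks: $|\cdot|_{x_{0}}$ separates points (here I use that $x_{0}$ is closed, so $|a+x_{0}|_{x_{0}}=0$ gives $a\in\overline{x_{0}}=x_{0}$); it is $F$-homogeneous (using $\alpha x_{0}\subseteq x_{0}$ for $\alpha\in F^{\times}$, which holds because $x_{0}$ is an ideal, so one may substitute $b=\alpha b'$ in the infimum); and it is sub-multiplicative (by multiplying approximating representatives, since $(a_{1}+b_{1})(a_{2}+b_{2})-a_{1}a_{2}\in x_{0}$).

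The hard part will be (iv): deducing the strong triangle inequality from square-preservation. I would use the classical $2^{n}$-trick. Iterating the square-preserving identity gives $\|a^{2^{n}}\|_{A}=\|a\|_{A}^{2^{n}}$ for every $n$; since $A$ is commutative the binomial expansion of $(a_{1}+a_{2})^{2^{n}}$ is valid, and since $F$ is non-Archimedean Theorem \ref{thr:CVFCHAR} combined with the strong triangle inequality in $F$ yields $|m|_{F}\leq 1$ for every $m\in\mathbb{Z}$, so each binomial coefficient has $F$-norm at most $1$. Therefore
\begin{equation*}
\|a_{1}+a_{2}\|_{A}^{2^{n}}=\|(a_{1}+a_{2})^{2^{n}}\|_{A}\leq\sum_{k=0}^{2^{n}}\|a_{1}\|_{A}^{k}\|a_{2}\|_{A}^{2^{n}-k}\leq(2^{n}+1)\max(\|a_{1}\|_{A},\|a_{2}\|_{A})^{2^{n}}.
\end{equation*}
Taking $2^{n}$-th roots and letting $n\to\infty$, with $(2^{n}+1)^{1/2^{n}}\to 1$, gives $\|a_{1}+a_{2}\|_{A}\leq\max(\|a_{1}\|_{A},\|a_{2}\|_{A})$.

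To transfer this to $|\cdot|_{x_{0}}$, I would approximate: given $\varepsilon>0$, pick $b_{1},b_{2}\in x_{0}$ with $\|a_{i}+b_{i}\|_{A}<|a_{i}+x_{0}|_{x_{0}}+\varepsilon$; then
\begin{equation*}
|a_{1}+a_{2}+x_{0}|_{x_{0}}\leq\|(a_{1}+b_{1})+(a_{2}+b_{2})\|_{A}\leq\max(\|a_{1}+b_{1}\|_{A},\|a_{2}+b_{2}\|_{A}),
\end{equation*}
and letting $\varepsilon\to 0$ finishes (iv). The only genuine subtlety anywhere in the proof is the use of commutativity together with $|m|_{F}\leq 1$ to control the binomial coefficients in the non-Archimedean setting; everything else is essentially bookkeeping about quotients by closed ideals in a unital Banach algebra.
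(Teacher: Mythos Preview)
Your proposal is correct and follows essentially the same route as the paper. The only cosmetic differences are: in (ii) the paper first proves $|1+x_{0}|_{x_{0}}=1$ and then scales (showing $1$ is a limit of elements of $x_{0}$, using closedness), whereas you argue invertibility of an element of $x_{0}$ directly via Neumann series; and in (iv) the paper simply cites Schikhof for the $2^{n}$-trick which you write out explicitly --- your version is exactly that argument.
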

\begin{proof}
For (i), if $a_{1}+x_{0}$, $a_{2}+x_{0}\in A/x_{0}$ with $(a_{1}+x_{0})(a_{2}+x_{0})=a_{1}a_{2}+x_{0}=0+x_{0}$ then we have $a_{1}a_{2}\in x_{0}$. Hence if $x_{0}$ is a prime ideal of $A$ then at least one of $a_{1}+x_{0}$ and $a_{2}+x_{0}$ is equal to $0+x_{0}$ and so $A/x_{0}$ is an integral domain. It is immediate that $A/x_{0}$ has a subset that is an isomorphic copy of $F$ since $x_{0}$ is a proper ideal of $A$.\\
For (ii), we first show that $|1+x_{0}|_{x_{0}}=1$. Note that $|1+x_{0}|_{x_{0}}\leq\|1\|_{A}=1$ since $0\in x_{0}$. So now suppose toward a contradiction that there is $b\in x_{0}$ such that $\|1+b\|_{A}<1$. We have for all $n\in\mathbb{N}$ that $b_{n}:=-((1+b)^{n}-1)$ is an element of $x_{0}$ since $x_{0}$ is an ideal of $A$. But $\|1-b_{n}\|_{A}=\|(1+b)^{n}\|_{A}\leq\|1+b\|_{A}^{n}$ with $\lim_{n\to\infty}\|1+b\|_{A}^{n}=0$ and so $1$ is an element of $x_{0}$ since $x_{0}$ is closed which contradicts $x_{0}$ being a proper ideal of $A$. We conclude that $\|1+b\|_{A}\geq1$ for all $b\in x_{0}$ and so $|1+x_{0}|_{x_{0}}\geq1$. Hence $|1+x_{0}|_{x_{0}}=1$ by the above. Now for $\alpha\in F^{\times}$ we have $x_{0}=\alpha x_{0}$ since $\alpha$ is invertible where $\alpha x_{0}:=\{\alpha b:b\in x_{0}\}$. Hence
\begin{align*}
|\alpha+x_{0}|_{x_{0}}=&\inf\{\|\alpha+b\|_{A}:b\in x_{0}\}\\
=&\inf\{\|\alpha+\alpha b\|_{A}:b\in x_{0}\}\\
=&\inf\{|\alpha|_{F}\|1+b\|_{A}:b\in x_{0}\}\\
=&|\alpha|_{F}|1+x_{0}|_{x_{0}}=|\alpha|_{F}
\end{align*}
as required. In a similar way for $a\in A$ one shows that $|\alpha a+x_{0}|_{x_{0}}=|\alpha|_{F}|a+x_{0}|_{x_{0}}$.\\
For (iii), we note that $|\cdot|_{x_{0}}$ is a norm on $A/x_{0}$ because $x_{0}$ is closed as a subset of $A$ so that for $a\in A\backslash x_{0}$ there is $\varepsilon>0$ with $\|a+b\|_{A}\geq\varepsilon$ for all $b\in x_{0}$ giving $|a+x_{0}|_{x_{0}}\geq\varepsilon$. We now show that $|\cdot|_{x_{0}}$ is sub-multiplicative. For $a_{1},a_{2}\in A$ we have
\begin{align*}
|a_{1}a_{2}+x_{0}|_{x_{0}}=&\inf\{\|a_{1}a_{2}+b\|_{A}:b\in x_{0}\}\\
\leq&\inf\{\|a_{1}a_{2}+a_{1}b_{2}+a_{2}b_{1}+b_{1}b_{2}\|_{A}:b_{1},b_{2}\in x_{0}\}\\
\leq&\inf\{\|a_{1}+b_{1}\|_{A}\|a_{2}+b_{2}\|_{A}:b_{1},b_{2}\in x_{0}\}\\
=&|a_{1}+x_{0}|_{x_{0}}|a_{2}+x_{0}|_{x_{0}}.
\end{align*}
For (iv), suppose $\|\cdot\|_{A}$ is square preserving. In this case the proof of Theorem \ref{thr:CVFCHAR} also works for $A$ and so $\|\cdot\|_{A}$ observes the strong triangle inequality, see \cite[p18]{Schikhof} for details. Hence for $a_{1},a_{2}\in A$ we also have
\begin{align*}
|a_{1}+a_{2}+x_{0}|_{x_{0}}=&\inf\{\|a_{1}+a_{2}+b\|_{A}:b\in x_{0}\}\\
=&\inf\{\|a_{1}+b_{1}+a_{2}+b_{2}\|_{A}:b_{1},b_{2}\in x_{0}\}\\
\leq&\inf\{\max\{\|a_{1}+b_{1}\|_{A},\|a_{2}+b_{2}\|_{A}\}:b_{1},b_{2}\in x_{0}\}\\
=&\max\{\inf\{\|a_{1}+b_{1}\|_{A}:b_{1}\in x_{0}\},\inf\{\|a_{2}+b_{2}\|_{A}:b_{2}\in x_{0}\}\}\\
=&\max\{|a_{1}+x_{0}|_{x_{0}},|a_{2}+x_{0}|_{x_{0}}\}.
\end{align*}
For (v), since we have $0\in x_{0}$ it is immediate that $|a+x_{0}|_{x_{0}}\leq\|a\|_{A}$ for all $a\in A$.
\end{proof}
\begin{lemma}
\label{lem:RTCBR}
Let $R$ be a commutative Banach ring. Then:
\begin{enumerate}
\item[(i)]
if $a\in R$ has $\|1-a\|_{R}<1$ then $a$ is invertible in $R$;
\item[(ii)]
for $I$ a proper ideal of $R$ the closure $J$ of $I$, as a subset of $R$, is a proper ideal of $R$;
\item[(iii)]
each non-invertible element of $R$ is an element of some maximal ideal of $R$. The maximal ideals of $R$ are proper, closed and prime.
\end{enumerate}
\end{lemma}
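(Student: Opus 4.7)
The plan is to prove the three parts in sequence, each leaning on the preceding one. The core tool for (i) is the Neumann (geometric) series, and (ii) and (iii) are standard commutative-algebra consequences together with Zorn's lemma.

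For (i), I would set $b := 1 - a$, so $\|b\|_R < 1$, and consider the series $\sum_{n=0}^{\infty} b^n$. Sub-multiplicativity gives $\|b^n\|_R \leq \|b\|_R^n$, and the geometric series $\sum \|b\|_R^n$ converges in $\mathbb{R}$, so the sequence of partial sums $s_N := \sum_{n=0}^{N} b^n$ is Cauchy in $R$; by completeness it converges to some $s \in R$. A direct calculation using $(1-b) s_N = 1 - b^{N+1}$ and the fact that $\|b^{N+1}\|_R \to 0$ shows $(1-b) s = 1$, i.e.\ $a s = 1$. Commutativity gives $s a = 1$ as well, so $a$ is invertible with inverse $s$.

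For (ii), continuity of the ring operations (which follows from sub-multiplicativity and the triangle inequality) shows that the closure $J$ of any ideal $I$ is an ideal: if $a,b \in J$ are limits of $(a_n), (b_n) \subseteq I$ and $r \in R$, then $a_n + b_n \to a+b$ and $r a_n \to r a$, so $a+b, ra \in J$. To see $J$ is proper, suppose toward a contradiction that $1 \in J$. Then there exists $a \in I$ with $\|1 - a\|_R < 1$, and (i) makes $a$ invertible; but $a \in I$ together with $a^{-1} \in R$ yields $1 = a^{-1} a \in I$, contradicting properness of $I$.

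For (iii), given a non-invertible $a \in R$, the principal ideal $aR$ is a proper ideal (if $1 = ba$ for some $b$, then $a$ would be invertible). Let $\mathcal{P}$ be the poset of proper ideals of $R$ containing $aR$, ordered by inclusion. Every chain $\mathcal{C} \subseteq \mathcal{P}$ has the union $\bigcup \mathcal{C}$ as an upper bound, and this union is an ideal; it is proper because no member of the chain contains $1$. Zorn's lemma produces a maximal element $M \in \mathcal{P}$, which is then a maximal ideal of $R$ containing $a$. Such an $M$ must be closed: by (ii) its closure $\overline{M}$ is a proper ideal of $R$ with $M \subseteq \overline{M}$, and maximality forces $\overline{M} = M$. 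Finally, to see $M$ is prime, suppose $xy \in M$ with $x \notin M$; then $M + xR$ is an ideal strictly containing $M$, so by maximality $M + xR = R$, giving $1 = m + xr$ for some $m \in M$, $r \in R$; multiplying by $y$ yields $y = my + xyr \in M$, as required.

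The only point requiring mild care is the verification in (i) that $(1-b)s = 1$ in the limit — one needs sub-multiplicativity to justify continuity of multiplication by the fixed element $1-b$ — but this is routine. No step presents a serious obstacle; the lemma is essentially the standard Banach-algebra argument adapted to the slightly weaker setting of a commutative Banach ring, where commutativity is used only in the prime-ideal verification and to identify left and right inverses in (i).
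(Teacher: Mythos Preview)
Your proof is correct and follows essentially the same route as the paper: Neumann series for (i), continuity of operations plus part (i) for properness in (ii), and Zorn's lemma plus (ii) for (iii). The only cosmetic differences are that the paper phrases the properness argument in (ii) as a direct bound $\|1-a\|_R \geq 1$ for all $a \in J$ rather than as a contradiction, and that the paper simply remarks that $R/M$ is a field (or ``by other means'') for primeness where you spell out the standard $1 = m + xr$ argument.
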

\begin{proof}
For (i), for $a\in R$ with $\|1-a\|_{R}<1$ let $\delta>0$ be such that $\|1-a\|_{R}<\delta<1$. Then setting $b:=1-a$ gives $\|b^{n}\|_{R}\leq\|b\|_{R}^{n}<\delta^{n}<1$ for all $n\in\mathbb{N}$. Therefore we have $\sum_{n=0}^{m}\|b^{n}\|_{R}<\sum_{n=0}^{\infty}\delta^{n}=\frac{1}{1-\delta}$ for each $m\in\mathbb{N}$ and so $\sum_{n=0}^{\infty}b^{n}\in R$ since $R$ is complete. Moreover
\begin{align*}
\left\|(1-b)\sum_{n=0}^{\infty}b^{n}-1\right\|_{R}=&\left\|(1-b)\sum_{n=0}^{\infty}b^{n}-(1-b)\sum_{n=0}^{m}b^{n}+(1-b)\sum_{n=0}^{m}b^{n}-1\right\|_{R}\\
\leq&\left(\sum_{n=m+1}^{\infty}\|b^{n}\|_{R}\right)\|1-b\|_{R}+\|b^{m+1}\|_{R}.
\end{align*}
Hence $a$ is invertible in $R$ since $1-b=1-(1-a)=a$ and
\begin{equation*}
\lim_{m\to\infty}\left(\left(\sum_{n=m+1}^{\infty}\|b^{n}\|_{R}\right)\|1-b\|_{R}+\|b^{m+1}\|_{R}\right)=0.
\end{equation*}
For (ii), let $I$ be a proper ideal of $R$ and $J$ its closure as a subset. For $a,b\in J$ there are sequences $(a_{n})$, $(b_{n})$ of elements of $I$ converging to $a$ and $b$ respectively with respect to $\|\cdot\|_{R}$. Hence for $a'\in R$, $(a'a_{n})$ is a sequence in $I$ with $\|a'a-a'a_{n}\|_{R}\leq\|a'\|_{R}\|a-a_{n}\|_{R}$ for each $n\in\mathbb{N}$ and so $a'a\in J$ since $\lim_{n\to\infty}\|a-a_{n}\|_{R}=0$. Similarly $(a_{n}+b_{n})$ is a sequence in $I$ with $\|(a+b)-(a_{n}+b_{n})\|_{R}\leq\|a-a_{n}\|_{R}+\|b-b_{n}\|_{R}$ for each $n\in\mathbb{N}$ and so $a+b\in J$. Hence $J$ is an ideal of $R$. Now since $I$ is a proper ideal of $R$ each element $a_{n}$ of the sequence $(a_{n})$ is not invertible and so $\|1-a_{n}\|_{R}\geq1$ for all $n\in\mathbb{N}$ by (i). Hence $1\leq\|1-a+a-a_{n}\|_{R}\leq\|1-a\|_{R}+\|a-a_{n}\|_{R}$ for all $n\in\mathbb{N}$ and so $\|1-a\|_{R}\geq1$ for all $a\in J$ giving $1\not\in J$. Hence $J$ is proper.\\
For (iii), let $a$ be a non-invertible element of $R$ noting that we can always take $a=0$. The principal ideal $I_{a}:=aR$ is proper since for all $b\in R$, $ab\not=1$. By Zorn's lemma $I_{a}$ is a subset of some maximal ideal $J_{a}$ of $R$. Every maximal ideal $J$ of $R$ is proper and prime, noting that $R/J$ is a field or by other means, and closed as a subset of $R$ by (ii).
\end{proof}
\begin{remark}
\label{rem:RTCBR}
We note that if $A$ is a commutative unital Banach $F$-algebra then Lemma \ref{lem:RTCBR} applies to $A$ and $A\langle r^{-1}T\rangle$ for each $r>0$.
\end{remark}
\begin{lemma}
\label{lem:RTAUX}
Let $F$ be a complete non-Archimedean field and let $A$ be a commutative unital Banach $F$-algebra with maximal ideal $m_{0}$. Let $S(A)$ denote the set of all norms on the field $A/m_{0}$ that are also unital bounded seminorms on $A$ as a Banach ring. That is if $|\cdot|$ is an element of $S(A)$ then $|1|=1$ and $|\cdot|$ conforms to Definition \ref{def:RTBMS} except it need not be multiplicative merely sub-multiplicative. It follows that:
\begin{enumerate}
\item[(i)]
the set $S(A)$ is non-empty;
\item[(ii)]
for $|\cdot|\in S(A)$, $\overline{A/m_{0}}$ the completion of $A/m_{0}$ with respect to $|\cdot|$, $r>0$ and $a\in A/m_{0}$, if $a-T$ is non-invertible in $\overline{A/m_{0}}\langle r^{-1}T\rangle$ then there is $|\cdot|'\in S(A)$ with $|a|'\leq r$ and $|b|'\leq|b|$ for all $b\in A/m_{0}$.
\end{enumerate}
\end{lemma}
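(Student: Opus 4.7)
For (i), the natural candidate is the quotient norm $|\cdot|_{m_0}$ on $A/m_0$. By Lemma~\ref{lem:RTCBR}(iii) the maximal ideal $m_0$ is proper and closed, so Lemma~\ref{lem:RTQN} applies directly: the quotient $|\cdot|_{m_0}$ is a sub-multiplicative $F$-vector space norm on $A/m_0$, satisfies $|1+m_0|_{m_0}=1$, and is bounded by $\|\cdot\|_A$ on $A$. Hence $|\cdot|_{m_0}\in S(A)$.

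For (ii), the strategy is to realise the failure of $a-T$ to be invertible by passing to a quotient in a larger Banach ring. First I would observe that any $|\cdot|\in S(A)$ restricts to $|\cdot|_F$ on the subfield $F\subseteq A/m_0$: boundedness gives $|\alpha|\leq|\alpha|_F$, while sub-multiplicativity and $|1|=1$ yield $|\alpha||\alpha^{-1}|\geq 1$ and hence $|\alpha|\geq|\alpha|_F$. Consequently the completion $\overline{A/m_0}$ is a commutative unital Banach $F$-algebra, and by Lemma~\ref{lem:RTRRT} together with Remark~\ref{rem:RTRRT} the ring
\begin{equation*}
R:=\overline{A/m_0}\langle r^{-1}T\rangle
\end{equation*}
is again a commutative unital Banach $F$-algebra, with $\|T\|_R=r$ and $\|b\|_R=|b|$ for every $b\in A/m_0$ regarded as a constant in $R$.

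By hypothesis $a-T$ is non-invertible in $R$, so by Lemma~\ref{lem:RTCBR}(iii) it lies in some maximal ideal $M$ of $R$, which is automatically proper and closed. Applying Lemma~\ref{lem:RTQN} to $R$ and $M$ gives a bounded sub-multiplicative quotient norm $|\cdot|_M$ on $R/M$ with $|1+M|_M=1$. Define
\begin{equation*}
|b|':=|b+M|_M\quad\text{for }b\in A/m_0,
\end{equation*}
and pull back to $A$ via the composition $A\to A/m_0\hookrightarrow R\twoheadrightarrow R/M$; this yields a unital sub-multiplicative seminorm on $A$ bounded by $|b|'\leq\|b\|_R=|b|\leq\|b\|_A$. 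Since $a-T\in M$ we have $a+M=T+M$ in $R/M$, and therefore
\begin{equation*}
|a|'=|T+M|_M\leq\|T\|_R=r,
\end{equation*}
which is the required bound.

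The only subtle point is that $|\cdot|'$ must be an honest norm on $A/m_0$, not merely a seminorm, for it to belong to $S(A)$. This is where the field structure of $A/m_0$ is crucial: the kernel of $|\cdot|'$ restricted to $A/m_0$ is an ideal of the field $A/m_0$, hence is either $\{0\}$ or the whole field; but $|1|'=|1+M|_M=1\neq 0$ rules out the latter. Thus $|\cdot|'\in S(A)$ with the two required properties. I do not anticipate any substantial obstacle beyond keeping track of which norms live on which of the three rings $A$, $\overline{A/m_0}$, $R$.
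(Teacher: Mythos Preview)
Your proposal is correct and follows essentially the same route as the paper: for (i) you take the quotient norm $|\cdot|_{m_0}$ via Lemma~\ref{lem:RTCBR}(iii) and Lemma~\ref{lem:RTQN}, and for (ii) you place $a-T$ in a maximal ideal of $\overline{A/m_0}\langle r^{-1}T\rangle$ and pull back the resulting quotient norm to $A/m_0$. Your justification that $|\cdot|'$ is a genuine norm on $A/m_0$ (via the kernel being an ideal of a field) and your verification that $|\cdot|$ restricts to $|\cdot|_F$ on $F$ are slightly more explicit than the paper's, but the argument is otherwise identical.
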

\begin{proof}
For (i), we note that the quotient norm $|\cdot|_{m_{0}}$ is an element of $S(A)$ since (iii) of Lemma \ref{lem:RTCBR} shows that (ii), (iii) and (v) of Lemma \ref{lem:RTQN} apply to $|\cdot|_{m_{0}}$.\\
For (ii), suppose that $a-T$ is non-invertible in $\overline{A/m_{0}}\langle r^{-1}T\rangle$. Then $a-T$ is an element of some maximal ideal $J$ of $\overline{A/m_{0}}\langle r^{-1}T\rangle$ by Lemma \ref{lem:RTCBR}. Hence the quotient norm $|\cdot|_{J}$ on $\overline{A/m_{0}}\langle r^{-1}T\rangle/J$ is an element of $S(\overline{A/m_{0}}\langle r^{-1}T\rangle)$ by Lemma \ref{lem:RTQN}. Therefore since $J$ is closed and $A/m_{0}$ is a field, $|a'|':=|a'+J|_{J}$, for $a'\in A/m_{0}$, defines a norm on $A/m_{0}$. Since $|\cdot|_{J}$ is an element of $S(\overline{A/m_{0}}\langle r^{-1}T\rangle)$ we have that $|\cdot|'$ is unital as a seminorm on $A$. Similarly since $|\cdot|_{J}$ is bounded as a seminorm on $\overline{A/m_{0}}\langle r^{-1}T\rangle$ we have for all $a'\in A$ that
\begin{equation}
\label{equ:RTAUX}
|a'+m_{0}|'=|(a'+m_{0})+J|_{J}\leq\|a'+m_{0}\|_{\overline{A/m_{0}},r}=|a'+m_{0}|\leq\|a'\|_{A}
\end{equation}
noting that $|\cdot|$ is an element of $S(A)$. Hence $|\cdot|'$ is bounded as a seminorm on $A$ and so $|\cdot|'$ is an element of $S(A)$. Further since $a-T$ is an element of $J$ we have
\begin{equation*}
|a|'=|a+J|_{J}=|T+J|_{J}\leq\|T\|_{\overline{A/m_{0}},r}=r.
\end{equation*}
Finally we have $|b|'\leq|b|$ for all $b\in A/m_{0}$ by (\ref{equ:RTAUX}).
\end{proof}
The following Lemma will be particularly useful in Section \ref{sec:RTR}.
\begin{lemma}
\label{lem:RTNMT}
Let $F$ be a complete non-Archimedean field and let $A$ be a commutative unital Banach $F$-algebra. With reference to Definition \ref{def:RTMOA} the following holds:
\begin{enumerate}
\item[(i)]
the set $\mathcal{M}_{0}(A)$ is non-empty since every maximal ideal of $A$ is an element of $\mathcal{M}_{0}(A)$;
\item[(ii)]
an element $a\in A$ is invertible if and only if $a+x_{0}\not=0+x_{0}$ for all $x_{0}\in\mathcal{M}_{0}(A)$.
\end{enumerate}
\end{lemma}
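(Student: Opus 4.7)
The plan is to deduce part (ii) directly from part (i) and focus the main effort on (i). For (ii), if $a \in A$ is invertible then any $x_0 \in \mathcal{M}_0(A)$ is proper and so $a \notin x_0$, giving $a + x_0 \neq 0 + x_0$. Conversely, if $a$ is not invertible then Lemma \ref{lem:RTCBR}(iii) places $a$ inside some maximal ideal $m_0$; once (i) establishes $m_0 \in \mathcal{M}_0(A)$, the inclusion $a \in m_0$ yields $a + m_0 = 0 + m_0$, completing the contrapositive.

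For (i), fix a maximal ideal $m_0$ of $A$. By Lemma \ref{lem:RTCBR}(iii), $m_0$ is proper, closed, and prime, so $A/m_0$ is a field, and by Lemma \ref{lem:RTQN} the quotient norm $|\cdot|_{m_0}$ on $A/m_0$ is a unital sub-multiplicative $F$-algebra norm dominated by $\|\cdot\|_A$. I will construct a bounded multiplicative seminorm on $A$ with kernel $m_0$ by producing a valuation on $A/m_0$ dominated pointwise by $|\cdot|_{m_0}$. Consider the set $S(A)$ from Lemma \ref{lem:RTAUX}, partially ordered by $|\cdot|_1 \preceq |\cdot|_2$ iff $|a|_1 \leq |a|_2$ for every $a \in A/m_0$. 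Every element of $S(A)$ is pointwise below $|\cdot|_{m_0}$, and $|\cdot|_{m_0} \in S(A)$ by Lemma \ref{lem:RTAUX}(i). The plan is to apply Zorn's lemma to find a minimal $|\cdot|^* \in S(A)$ and then exploit minimality to force multiplicativity.

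The Zorn step requires verifying that the pointwise infimum $|a|_\infty := \inf_\alpha |a|_\alpha$ of a chain $\{|\cdot|_\alpha\}$ in $S(A)$ lies in $S(A)$. Unitality, boundedness by $\|\cdot\|_A$, the triangle inequality, and sub-multiplicativity all transfer to $|\cdot|_\infty$ via an $\varepsilon$-argument using the totally ordered structure of the chain. The only subtle point is that $|\cdot|_\infty$ must remain a norm on $A/m_0$: were $|a|_\infty = 0$ for some nonzero $a \in A/m_0$, sub-multiplicativity $|a|_\alpha |a^{-1}|_\alpha \geq |1|_\alpha = 1$ would force $|a^{-1}|_\alpha \to \infty$ along the chain, contradicting the uniform bound $|a^{-1}|_\alpha \leq |a^{-1}|_{m_0}$ valid throughout $S(A)$. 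For the multiplicativity step, fix $a \in (A/m_0)^\times$. The formal inverse of $a - T$ in $\overline{A/m_0}\langle r^{-1}T\rangle$ is the geometric series $\sum_{i \geq 0} a^{-i-1} T^i$, whose norm-series $\sum |a^{-i-1}|^* r^i$ diverges whenever $r > 1/\rho(a^{-1})$, where $\rho(b) := \lim_n (|b^n|^*)^{1/n}$; hence $a - T$ is non-invertible for such $r$. Lemma \ref{lem:RTAUX}(ii) then produces $|\cdot|' \preceq |\cdot|^*$ with $|a|' \leq r$, and minimality gives $|a|^* \leq r$; letting $r \downarrow 1/\rho(a^{-1})$ yields $|a|^* \leq 1/\rho(a^{-1})$, with the reciprocal $\rho(a^{-1}) \geq 1/|a|^*$ guaranteed by sub-multiplicativity so that the bound is not vacuous.

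Applying the same reasoning to $a^{-1}$ yields $|a^{-1}|^* \leq 1/\rho(a)$; combining these with $|a|^*|a^{-1}|^* \geq 1$ and $\rho(a)\rho(a^{-1}) \geq 1$ forces equality throughout, so in particular $|a|^*|a^{-1}|^* = 1$. Multiplicativity now follows from $1 = |ab \cdot (ab)^{-1}|^* \leq |ab|^* \cdot |a^{-1}|^* \cdot |b^{-1}|^*$, which gives $|ab|^* \geq |a|^*|b|^*$, complementing the sub-multiplicative inequality. Thus $|\cdot|^*$ pulled back to $A$ is a bounded multiplicative seminorm with kernel $m_0$, proving $m_0 \in \mathcal{M}_0(A)$. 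The main obstacle I expect is the Zorn step---specifically ruling out degeneration of $|\cdot|_\infty$ to a proper seminorm---while the multiplicativity derivation, though intricate, becomes largely bookkeeping once Lemma \ref{lem:RTAUX}(ii) is set in motion against the minimal seminorm.
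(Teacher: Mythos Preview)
Your proposal is correct and follows the same overall architecture as the paper: apply Zorn's lemma to $S(A)$ to obtain a minimal unital bounded sub-multiplicative norm on $A/m_{0}$, then use Lemma~\ref{lem:RTAUX}(ii) against minimality to force multiplicativity, with part (ii) following from (i) exactly as you indicate.

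There are two minor but genuine differences in execution worth flagging. First, for the Zorn step the paper rules out $|a|_{E}=0$ by a comparability argument (showing that $|a|'<\min\{|a|,|a^{-1}|^{-1}\}$ would make $|\cdot|$ and $|\cdot|'$ incomparable in the chain), whereas your argument via the uniform bound $|a^{-1}|_{\alpha}\leq|a^{-1}|_{m_{0}}$ is shorter and perfectly valid since every element of $S(A)$ is pointwise dominated by the quotient norm. Second, for multiplicativity the paper proceeds in two stages---first establishing power multiplicativity $|a^{n}|=|a|^{n}$ via $r=\sqrt[n]{|a^{n}|}$, then deducing $|a^{-1}|=|a|^{-1}$ via $r=|a^{-1}|^{-1}$---while you collapse this into a single spectral-radius argument, taking $r\downarrow 1/\rho(a^{-1})$ and symmetrising. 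Your route is slightly more streamlined; the paper's is more elementary in that it avoids invoking Fekete's lemma for the existence of $\rho(b)=\lim_{n}(|b^{n}|^{*})^{1/n}$. Both achieve the same end by the same mechanism, namely exhibiting non-invertibility of $a-T$ at a carefully chosen radius and invoking Lemma~\ref{lem:RTAUX}(ii).
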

\begin{proof}
Whilst this proof provides more detail, much of the following has been taken from \cite[Ch1]{Berkovich}. For (i), let $m_{0}$ be a maximal ideal of $A$. Hence the quotient ring $A/m_{0}$ is a field. Let $S(A)$ be as in Lemma \ref{lem:RTAUX} and note therefore that $S(A)$ is non-empty. We put a partial order on $S(A)$ by $|\cdot|\lesssim|\cdot|'$ if and only if $|a+m_{0}|\leq|a+m_{0}|'$ for all $a+m_{0}\in A/m_{0}$. Now let $E$ be a chain in $S(A)$, that is $E$ is a subset of $S(A)$ such that $\lesssim$ restricts to a total order on $E$. Define a map $|\cdot|_{E}:A/m_{0}\rightarrow\mathbb{R}$ by
\begin{equation*}
|a+m_{0}|_{E}:=\inf\{|a+m_{0}|:|\cdot|\in E\}.
\end{equation*}
We will show that $|\cdot|_{E}$ is a lower bound for $E$ in $S(A)$. It is immediate from the definition of $|\cdot|_{E}$ that it is unital and bounded since all of the elements of $E$ are. Hence it suffices to show that $|\cdot|_{E}$ is a sub-multiplicative norm on $A/m_{0}$. Clearly $|0+m_{0}|_{E}=0$ so, simplifying our notation slightly, let $a$ be an element of $A/m_{0}^{\times}$. We show that $|a|_{E}\not=0$. Let $|\cdot|$ be an element of $E$ and suppose towards a contradiction that there is $|\cdot|'\in E$ such that $|a|'<\min\{|a|,|a^{-1}|^{-1}\}$. Then
\begin{equation*}
1=|1|'=|aa^{-1}|'\leq|a|'|a^{-1}|'<|a^{-1}|^{-1}|a^{-1}|'.
\end{equation*}
Hence by the above we have $|a^{-1}|<|a^{-1}|'$ and $|a|'<|a|$ giving $|\cdot|'\not\lesssim|\cdot|$ and $|\cdot|\not\lesssim|\cdot|'$ which contradicts both $|\cdot|$ and $|\cdot|'$ being elements of $E$. Therefore for all $|\cdot|'\in E$ we have $|a|'\geq\min\{|a|,|a^{-1}|^{-1}\}$. In particular $|a|_{E}\not=0$. Now for $a,b\in A/m_{0}$ we have
\begin{align*}
|a+b|_{E}=&\inf\{|a+b|:|\cdot|\in E\}\\
\leq&\inf\{|a|+|b|:|\cdot|\in E\}\\
=&\inf\{|a|+|b|':|\cdot|,|\cdot|'\in E\},\quad\dag\\
=&|a|_{E}+|b|_{E},
\end{align*}
where line $\dag$ follows from the line above it because if $|\cdot|\lesssim |\cdot|'$ then $|a|+|b|\leq |a|+|b|'$. Hence the triangle inequality holds for $|\cdot|_{E}$. Similarly we have $|ab|_{E}\leq |a|_{E}|b|_{E}$ and so $|\cdot|_{E}$ is sub-multiplicative as required. Hence $|\cdot|_{E}$ is a lower bound for $E$ in $S(A)$. Therefore by Zorn's lemma there exists a minimal element of $S(A)$ with respect to $\lesssim$. Let $|\cdot|$ be a minimal element of $S(A)$ and denote by $\overline{A/m_{0}}$ the completion of $A/m_{0}$ with respect to $|\cdot|$. We will show that $|\cdot|$ is multiplicative on $A/m_{0}$ and hence satisfies (i) of Lemma \ref{lem:RTNMT}. Note that for now we should only take $\overline{A/m_{0}}$ to be an integral domain and not a field since we can't apply Theorem \ref{thr:CVFCOM}.\\
Now since $A/m_{0}$ is a field $|\cdot|$ will be multiplicative if $|a^{-1}|=|a|^{-1}$ for all $a\in A/m_{0}^{\times}$ since for $a,b\in A/m_{0}^{\times}$ with $|a^{-1}|=|a|^{-1}$ we have $|b|=|baa^{-1}|\leq|ba||a^{-1}|=|ba||a|^{-1}$ giving $|a||b|\leq|ab|$ and since $|\cdot|$ is sub-multiplicative we have $|ab|=|a||b|$. Hence we will show that $|a^{-1}|=|a|^{-1}$ for all $a\in A/m_{0}^{\times}$. To this end we first show that $|\cdot|$ is power multiplicative that is $|a^{n}|=|a|^{n}$ for all $a\in A/m_{0}$ and $n\in\mathbb{N}$. Suppose towards a contradiction that there is $a\in A/m_{0}$ with $|a^{n}|<|a|^{n}$ for some $n>1$. We claim that $a-T$ is non-invertible in the Banach ring $\overline{A/m_{0}}\langle r^{-1}T\rangle$ with $r:=\sqrt[n]{|a^{n}|}$. By Lemma \ref{lem:RTRRT} it suffices to show that the series $\sum_{i=0}^{\infty}|a^{-i}|r^{i}$ does not converge. Expressing $i$ as $i=pn+q$, for some $q\in\{0,\cdots,n-1\}$, we have $|a^{i}|\leq|a^{n}|^{p}|a^{q}|$ and $|a^{i}|^{-1}\leq|a^{-i}|$ since $1=|a^{i}a^{-i}|\leq|a^{i}||a^{-i}|$. Therefore
\begin{equation*}
|a^{-i}|r^{i}\geq|a^{i}|^{-1}|a^{n}|^{p+\frac{q}{n}}\geq\frac{|a^{n}|^{p}|a^{n}|^{\frac{q}{n}}}{|a^{n}|^{p}|a^{q}|}=\frac{|a^{n}|^{\frac{q}{n}}}{|a^{q}|}.
\end{equation*}
Hence $|a^{-i}|r^{i}\geq\min\{\frac{|a^{n}|^{\frac{q}{n}}}{|a^{q}|}:q\in\{0,\cdots,n-1\}\}>0$ for all $i\geq0$. Therefore $a-T$ is non-invertible in $\overline{A/m_{0}}\langle r^{-1}T\rangle$ with $r:=\sqrt[n]{|a^{n}|}$. Now by Lemma \ref{lem:RTAUX} there exists $|\cdot|'\in S(A)$ such that $|a|'\leq r$ and $|b|'\leq|b|$ for all $b\in A/m_{0}$. But, since $|a^{n}|<|a|^{n}$, this gives $|a|'\leq r=\sqrt[n]{|a^{n}|}<|a|$ which contradicts $|\cdot|$ being a minimal element of $S(A)$. Hence we have shown that $|a^{n}|=|a|^{n}$ for all $a\in A/m_{0}$ and $n\in\mathbb{N}$.\\
Now suppose towards a contradiction that there exists an element $a\in A/m_{0}^{\times}$ with $|a|^{-1}<|a^{-1}|$. We claim that $a-T$ is non-invertible in $\overline{A/m_{0}}\langle r^{-1}T\rangle$ with $r:=|a^{-1}|^{-1}$. Again by Lemma \ref{lem:RTRRT} it suffices to show that the series $\sum_{i=0}^{\infty}|a^{-i}|r^{i}$ does not converge. Indeed since $|\cdot|$ is power multiplicative we have
\begin{equation*}
|a^{-i}|r^{i}=|(a^{-1})^{i}|r^{i}=|a^{-1}|^{i}(|a^{-1}|^{-1})^{i}=|a^{-1}|^{0}=1.
\end{equation*}
Hence $a-T$ is non-invertible in $\overline{A/m_{0}}\langle r^{-1}T\rangle$ with $r:=|a^{-1}|^{-1}$. Now again by Lemma \ref{lem:RTAUX} there exists $|\cdot|'\in S(A)$ such that $|a|'\leq r$ and $|b|'\leq|b|$ for all $b\in A/m_{0}$. But, since $|a|^{-1}<|a^{-1}|$, this gives $|a|'\leq r=|a^{-1}|^{-1}<|a|$ which contradicts $|\cdot|$ being a minimal element of $S(A)$. Hence we have shown that $|a^{-1}|=|a|^{-1}$ for all $a\in A/m_{0}^{\times}$ and so $|\cdot|$ is multiplicative. Finally $m_{0}$ is the kernel of $|\cdot|$ and as a maximal ideal of $A$ it is proper, closed and prime by Lemma \ref{lem:RTCBR}. In particular since $m_{0}$ was an arbitrary maximal ideal of $A$ every maximal ideal of $A$ is an element of $\mathcal{M}_{0}(A)$.\\
For (ii), for $a$ an invertible element of $A$ we have $a\not\in x_{0}$ for all $x_{0}\in\mathcal{M}_{0}(A)$ since $x_{0}$ is a proper ideal of $A$. Hence $a+x_{0}\not=0+x_{0}$ in $A/x_{0}$ for all $x_{0}\in\mathcal{M}_{0}(A)$. On the other hand for $a$ a non-invertible element of $A$ we have by Lemma \ref{lem:RTCBR} that $a$ is an element of a maximal ideal $J_{a}$ of $A$. By (i) above, $J_{a}$ is an element of $\mathcal{M}_{0}(A)$ and $a+J_{a}=0+J_{a}$ in $A/J_{a}$. Therefore for $a$ a non-invertible element of $A$ we do not have $a+x_{0}\not=0+x_{0}$ in $A/x_{0}$ for all $x_{0}\in\mathcal{M}_{0}(A)$.
\end{proof}
With the preceding theory in place we can now turn our attention to the main topic of this chapter.
\section{Representations}
\label{sec:RTR}
\subsection{Established theorems}
\label{subsec:RTRET}
The particular well known representation theorems in the Archimedean setting that we will find an analog of in the non-Archimedean setting are as follows. See \cite[p35]{Kulkarni-Limaye1992} for details of Theorem \ref{thr:RTREPR}.
\begin{theorem}
\label{thr:RTREPC}
Let $A$ be a commutative unital complex Banach algebra with $\|a^{2}\|_{A}=\|a\|_{A}^{2}$ for all $a\in A$. Then $A$ is isometrically isomorphic to a uniform algebra on some compact Hausdorff space $X$, in other words a $^{\mathbb{C}}/_{\mathbb{C}}$ function algebra on $(X,\mbox{id},\mbox{id})$.
\end{theorem}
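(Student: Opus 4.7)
The plan is to use the classical Gelfand representation construction, which fits very naturally into the framework developed in this chapter. First I would take $X := \mathcal{M}_0(A)$, the set of kernels of bounded multiplicative seminorms on $A$, viewed here in the complex Archimedean setting (equivalently, the maximal ideal space, since by the Gelfand--Mazur theorem \ref{thr:FAAGM} every maximal ideal $m$ of a commutative unital complex Banach algebra has $A/m \cong \mathbb{C}$, so each $x \in X$ corresponds to a character $\varphi_x : A \to \mathbb{C}$ with $\ker \varphi_x = x$). I would then topologise $X$ with the weak-$*$ topology inherited from $A^*$, in which a net $x_\alpha \to x$ iff $\varphi_{x_\alpha}(a) \to \varphi_x(a)$ for every $a \in A$.

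Next I would show $X$ is a compact Hausdorff space. Hausdorffness is automatic from the definition of the weak-$*$ topology. For compactness, since every bounded multiplicative seminorm satisfies $|\varphi_x(a)| \le \|a\|_A$ by Lemma \ref{lem:RTBMS} (applied in the Archimedean guise), $X$ embeds into the product $\prod_{a\in A} \overline{B}_{\|a\|_A}(0) \subseteq \mathbb{C}^A$, which is compact by Tychonoff. One checks that $X$ is closed in this product because the defining relations $\varphi(ab) = \varphi(a)\varphi(b)$, $\varphi(a+b) = \varphi(a) + \varphi(b)$, $\varphi(1) = 1$ cut out a closed subset.

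Then I would define the Gelfand transform $\Gamma : A \to C_{\mathbb{C}}(X)$ by $\Gamma(a)(x) := \varphi_x(a)$. Each $\Gamma(a)$ is continuous by definition of the topology, and $\Gamma$ is a unital $\mathbb{C}$-algebra homomorphism because each $\varphi_x$ is. The image $\hat{A} := \Gamma(A)$ automatically contains the constants and separates the points of $X$ (two distinct maximal ideals are distinguished by some $a \in A$). The main computational step is showing $\|\Gamma(a)\|_\infty = \|a\|_A$: using the complex analogue of Lemma \ref{lem:RTNMT}(ii), an element $a \in A$ satisfies $|\lambda| \le \sup_{x\in X}|\Gamma(a)(x)|$ for every $\lambda \in \mathrm{Sp}(a)$, so $\|\Gamma(a)\|_\infty$ equals the spectral radius $r(a) = \lim_n \|a^n\|_A^{1/n}$ (the classical spectral radius formula, proved via Liouville and the geometric series in $\mathbb{C}$). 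The square-preserving hypothesis $\|a^2\|_A = \|a\|_A^2$ iterates to $\|a^{2^n}\|_A = \|a\|_A^{2^n}$, so $r(a) = \lim_n \|a^{2^n}\|_A^{1/2^n} = \|a\|_A$, and therefore $\|\Gamma(a)\|_\infty = \|a\|_A$.

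This isometry property simultaneously shows $\Gamma$ is injective and that $\hat{A}$ is complete with respect to $\|\cdot\|_\infty$ (being the isometric image of the Banach algebra $A$), so $\hat{A}$ is a uniform algebra on $X$ in the sense of Definition \ref{def:UACUA}, equivalently a $^{\mathbb{C}}/_{\mathbb{C}}$ function algebra on $(X, \mathrm{id}, \mathrm{id})$. The main obstacle, and the step I expect to require the most care, is the identification $\|\Gamma(a)\|_\infty = r(a)$: one direction (sup over $X$ $\le$ $r(a)$) follows from $|\varphi_x(a)|^n = |\varphi_x(a^n)| \le \|a^n\|_A$ together with Lemma \ref{lem:RTBMS}, but the reverse inequality genuinely uses that $\mathrm{Sp}(a)$ is nonempty (Theorem \ref{thr:FAANES}) and that every $\lambda \in \mathrm{Sp}(a)$ is realised as $\varphi_x(a)$ for some $x \in X$, which in turn requires that $(a - \lambda)$ being non-invertible forces it into some maximal ideal. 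The remaining verifications (topological compactness, homomorphism properties, separation of points) are routine once this spectral identification is in hand.
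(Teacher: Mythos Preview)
Your proposal is correct and follows essentially the same classical Gelfand-theory route that the paper sketches: character space with the weak-$*$ (Gelfand) topology, Gelfand--Mazur to identify $A/m$ with $\mathbb{C}$, Tychonoff for compactness, and the spectral radius formula combined with the square-preserving hypothesis to obtain the isometry. The paper does not give a full proof of this theorem---it is stated as an established result with a brief outline of the underlying theory and references to Stout and Gamelin---so your write-up in fact supplies more detail than the paper itself, but the underlying argument is the same.
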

\begin{theorem}
\label{thr:RTREPR}
Let $A$ be a commutative unital real Banach algebra with $\|a^{2}\|_{A}=\|a\|_{A}^{2}$ for all $a\in A$. Then $A$ is isometrically isomorphic to a real function algebra on some compact Hausdorff space $X$ with topological involution $\tau$ on $X$, in other words a $^{\mathbb{C}}/_{\mathbb{R}}$ function algebra on $(X,\tau,\bar{z})$.
\end{theorem}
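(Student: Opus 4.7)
The plan is to reduce Theorem \ref{thr:RTREPR} to the already-established complex case, Theorem \ref{thr:RTREPC}, by complexifying $A$. First I would form the complexification $A_{\mathbb{C}} := A \oplus iA$ with the usual componentwise addition, complex-bilinear multiplication $(a+ib)(c+id) := (ac-bd) + i(ad+bc)$, and the conjugation-type involution $\sigma:A_{\mathbb{C}} \to A_{\mathbb{C}}$ defined by $\sigma(a+ib) := a-ib$. Since $A$ is commutative, $\sigma$ is an $\mathbb{R}$-algebra automorphism of order $2$ whose fixed subalgebra is $A$ itself.

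The delicate point is choosing a norm $\|\cdot\|_{A_{\mathbb{C}}}$ making $A_{\mathbb{C}}$ into a commutative unital complex Banach algebra that is square preserving and extends $\|\cdot\|_A$ isometrically on $A$. I would use the standard rotation-sup construction
\[
\|a+ib\|_{A_{\mathbb{C}}} := \sup_{t\in[0,2\pi]} \|a\cos t - b\sin t\|_A,
\]
and verify: (1) it restricts to $\|\cdot\|_A$ on $A$ (by taking $t=0$ and the square-preserving property to bound below, the opposite direction being immediate); (2) it is submultiplicative, using that the real and imaginary parts of $(a+ib)(c+id)$ expand into rotated combinations of $a,b,c,d$; (3) $A_{\mathbb{C}}$ is complete, since convergence is equivalent to componentwise convergence in $A$; and (4) the norm is square preserving on $A_{\mathbb{C}}$, which can be extracted from the identity $\|z^2\|_{A_{\mathbb{C}}} = \|z\|_{A_{\mathbb{C}}}^2$ via the spectral-radius characterisation that the square-preserving property forces $\|z\|_{A_{\mathbb{C}}} = \lim_n \|z^n\|^{1/n}_{A_{\mathbb{C}}}$ to coincide with the norm.

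With $A_{\mathbb{C}}$ so constructed, Theorem \ref{thr:RTREPC} yields a compact Hausdorff space $X$ (the character space of $A_{\mathbb{C}}$) and an isometric $\mathbb{C}$-algebra isomorphism $\Phi : A_{\mathbb{C}} \to \hat{A}_{\mathbb{C}} \subseteq C_{\mathbb{C}}(X)$ onto a complex uniform algebra. I would then push $\sigma$ through $\Phi$ to obtain a topological involution $\tau:X\to X$ as follows: each $x\in X$ corresponds to a character $\varphi_x:A_{\mathbb{C}}\to\mathbb{C}$, and $\tau(x)$ is defined to be the character $\overline{\varphi_x\circ\sigma}$, which is again an element of $X$ because $\sigma$ is an $\mathbb{R}$-algebra automorphism and complex conjugation restores $\mathbb{C}$-linearity. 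One checks $\tau\circ\tau = \mathrm{id}_X$ and continuity of $\tau$ from the weak-$*$ topology definition of $X$ and continuity of $\sigma$. By construction, for every $z\in A_{\mathbb{C}}$ the Gelfand transform satisfies $\widehat{\sigma(z)}(x) = \overline{\hat{z}(\tau(x))}$, so that $z$ is a fixed point of $\sigma$ (equivalently $z\in A$) if and only if $\hat{z}\in C(X,\tau)$ in the sense of Definition \ref{def:UARefa}. Restricting $\Phi$ to $A$ therefore gives an isometric $\mathbb{R}$-algebra embedding of $A$ into $C(X,\tau)$; the image contains the real constants and separates points of $X$ because $\hat{A}_{\mathbb{C}} = \hat{A} + i\hat{A}$ does.

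The step I expect to be the main obstacle is the norm construction together with verifying that $\|\cdot\|_{A_{\mathbb{C}}}$ remains square preserving, since no algebraic identity directly forces this from square-preservation on $A$; the argument passes through the fact that on any commutative unital complex Banach algebra a square-preserving norm automatically equals the spectral radius, so the property on $A_{\mathbb{C}}$ reduces to showing the rotation-sup norm equals $\lim_n\|z^n\|^{1/n}_{A_{\mathbb{C}}}$, which in turn is controlled by $\|z\sigma(z)\|_{A_{\mathbb{C}}} = \|a^2+b^2\|_A$ for $z=a+ib$. Once that is in hand everything else follows routinely from Theorem \ref{thr:RTREPC} and the standard duality between the involution $\sigma$ on $A_{\mathbb{C}}$ and the topological involution $\tau$ on $X$.
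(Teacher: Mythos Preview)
The paper does not give its own proof of Theorem \ref{thr:RTREPR}; it is quoted as an established result with a reference to \cite[p35]{Kulkarni-Limaye1992}. The only hint the paper gives about the method is the remark that ``the functionals are complex-valued but $\mathbb{R}$-linear and $\tau$ maps each such functional to its complex conjugate.'' In other words, the approach in the cited source works \emph{directly} with the space $X$ of nonzero $\mathbb{R}$-linear multiplicative maps $A\to\mathbb{C}$, defines $\tau(\varphi):=\bar\varphi$, and proves that $a\mapsto\hat a$ is an isometry using the square-preserving hypothesis on $\|\cdot\|_A$ itself (so that $\|a\|_A$ equals the spectral radius, hence $\sup_\varphi|\varphi(a)|$).

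Your complexification route is morally the same construction---characters of $A_{\mathbb{C}}$ correspond bijectively to such $\mathbb{R}$-linear characters of $A$---but you have inserted an extra obstacle that the direct approach simply bypasses: producing a \emph{square-preserving} Banach-algebra norm on $A_{\mathbb{C}}$. You correctly flag this as the crux, but your proposed resolution does not close the gap. The rotation-sup norm $\sup_t\|a\cos t-b\sin t\|_A$ is a legitimate Banach-algebra norm on $A_{\mathbb C}$, but nothing you wrote establishes that it is square preserving. Your argument is effectively: ``to show the norm is square preserving, show it equals the spectral radius; and the spectral radius is controlled by $\|z\sigma(z)\|=\|a^2+b^2\|_A$.'' The second step only gives $r(z)^2\ge r(z\sigma(z))=\|a^2+b^2\|_A$, which is an inequality in the wrong direction for bounding the rotation-sup norm above by $r(z)$; you never produce the missing inequality $\|z\|_{A_{\mathbb C}}\le r(z)$. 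Without that, Theorem \ref{thr:RTREPC} cannot be invoked and the whole scheme stalls.

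The cleaner fix is to abandon the attempt to renorm $A_{\mathbb C}$ and instead argue, as the cited source does, directly on $A$: show each $\varphi\in X$ is contractive, then use $\|a^{2^n}\|_A=\|a\|_A^{2^n}$ to force $\|a\|_A=\lim_n\|a^n\|_A^{1/n}=\sup_{\varphi\in X}|\varphi(a)|$. Everything you wrote about transporting the involution to $\tau$ and identifying the image inside $C(X,\tau)$ then goes through unchanged.
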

We will now recall some of the theory behind Theorem \ref{thr:RTREPC}. For more details see \cite[p29]{Stout} or \cite[p4,p11]{Gamelin}. The space $X$ is the character space $\mbox{Car}(A)$ which as a set is the set of all non-zero, complex-valued, multiplicative $\mathbb{C}$-linear functionals on $A$. It turns out that the characters on $A$ are all automatically continuous. Note, in the case of Theorem \ref{thr:RTREPR} the functionals are complex-valued but $\mathbb{R}$-linear and $\tau$ maps each such functional to its complex conjugate. For a commutative unital complex Banach algebra $A$ the Gelfand transform is a homomorphism from $A$ to a space of complex valued functions $\hat{A}$ defined by $a\mapsto\hat{a}$ where $\hat{a}(\varphi):=\varphi(a)$ for all $a\in A$ and $\varphi\in\mbox{Car}(A)$. The topology on $\mbox{Car}(A)$ is the initial topology given by the family of functions $\hat{A}$. Known in this case as the Gelfand topology it is the weakest topology on $\mbox{Car}(A)$ such that all the elements of $\hat{A}$ are continuous giving $\hat{A}\subseteq C_{\mathbb{C}}(\mbox{Car}(A))$. The norm given to $\hat{A}$ is the sup norm.\\
Now for a commutative unital complex Banach algebra $A$ the set of maximal ideals of $A$ and the set of kernels of the elements of $\mbox{Car}(A)$ agree. In Theorem \ref{thr:RTREPC}, $\|\cdot\|_{A}$ being square preserving ensures that $A$ is semisimple, that is that the Jacobson radical of $A$ is $\{0\}$ where the Jacobson radical is the intersection of all maximal ideals of $A$ and so the intersection of all the kernels of elements of $\mbox{Car}(A)$. Forcing $A$ to be semisimple ensures that the Gelfand transform is injective since if $A$ is semisimple then the kernel of the Gelfand transform is $\{0\}$. Similarly to confirm that the Gelfand transform is injective it is enough to show that it is an isometry. Given Theorem \ref{thr:RTREPC} it is immediate that a commutative unital complex Banach algebra $A$ is isometrically isomorphic to a uniform algebra if and only if its norm is square preserving since the sup norm has this property. Hence Theorem \ref{thr:RTREPC} provides a characterisation of uniform algebras.\\
Now in the non-Archimedean setting Berkovich, the author of \cite{Berkovich}, takes the following approach involving Definition \ref{def:RTMACH}.
\begin{definition}
\label{def:RTMACH}
Let $F$ be a complete non-Archimedean field and let $A$ be a commutative unital Banach $F$-algebra. Define $\mathcal{M}_{1}(A)$ to be the set of all bounded multiplicative seminorms on $A$. Further a {\em{character on}} $A$ is a non-zero, multiplicative $F$-linear functional on $A$ that takes values in some complete field extending $F$ as a valued field.
\end{definition}
For an appropriate topology, $\mathcal{M}_{1}(A)$ plays the role for $A$ in Definition \ref{def:RTMACH} that the maximal ideal space, equivalently the character space, plays in the Archimedean setting. For $|\cdot|\in \mathcal{M}_{1}(A)$ let $x_{0}:=\mbox{ker}(|\cdot|)$. Then $x_{0}$ is a proper closed prime ideal of $A$. Hence the quotient ring $A/x_{0}$ is an integral domain. Lemma \ref{lem:RTQRV} is useful here.
\begin{lemma}
\label{lem:RTQRV}
Let $F$ be a complete valued field and let $A$ be a commutative unital Banach $F$-algebra. For $|\cdot|$ a bounded multiplicative seminorm on $A$ with kernel $x_{0}$ the value $|a|$ of $a\in A$ only depends on the quotient class in $A/x_{0}$ to which $a$ belongs. Hence $|\cdot|$ is well defined when used as a valuation on $A/x_{0}$ by setting $|a+x_{0}|:=|a|$. Further $x_{0}$ is a closed subset of $A$.
\end{lemma}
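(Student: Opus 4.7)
The plan is to verify the three assertions in order, each of which reduces to a short observation using only the seminorm axioms together with the boundedness condition $|a|\le\|a\|_A$ from Definition \ref{def:RTBMS}.

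First I would show that $|\cdot|$ is constant on each coset of $x_0$. Given $a,b\in A$ with $a-b\in x_0$, we have $|a-b|=0$ by the definition of the kernel, and then the triangle inequality (part of the seminorm axioms) yields $|a|\le|b|+|a-b|=|b|$ and symmetrically $|b|\le|a|$, so $|a|=|b|$. This makes the rule $|a+x_0|:=|a|$ unambiguous.

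Next I would check that the induced map on $A/x_0$ is a valuation in the sense of Definition \ref{def:CVFV}. Positive definiteness is immediate: $|a+x_0|=0$ iff $|a|=0$ iff $a\in x_0$ iff $a+x_0$ is the zero element of $A/x_0$. Multiplicativity transfers directly, $|(a+x_0)(b+x_0)|=|ab+x_0|=|ab|=|a||b|=|a+x_0||b+x_0|$, and the triangle inequality is inherited in the same way. Together with $|1+x_0|=|1_A|=1$ (which follows from $|\cdot|$ being a nontrivial bounded multiplicative seminorm, as noted in Definition \ref{def:RTBMS}), this confirms that $|\cdot|$ is a valuation on the integral domain $A/x_0$.

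Finally, for the closedness of $x_0$, the key point is that $|\cdot|$ is continuous as a map $A\to\mathbb{R}$ with respect to $\|\cdot\|_A$. Indeed, boundedness gives $|a|\le\|a\|_A$, and combining this with the triangle inequality yields the Lipschitz estimate $\bigl||a|-|b|\bigr|\le|a-b|\le\|a-b\|_A$ for all $a,b\in A$. Hence $x_0=|\cdot|^{-1}(\{0\})$ is the preimage of a closed set under a continuous function, and is therefore closed in $A$. No step here is a real obstacle; the only thing to be slightly careful about is not to invoke multiplicativity where sub-multiplicativity would suffice, so that the argument also clarifies precisely which seminorm axiom each conclusion uses.
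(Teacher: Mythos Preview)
Your proof is correct and follows essentially the same approach as the paper: the coset-independence is obtained via the triangle inequality exactly as the paper does, and your additional explicit verification of the valuation axioms on $A/x_0$ is a harmless elaboration. The only cosmetic difference is in the closedness argument: the paper reuses the first part to note that for $a\notin x_0$ and $b\in x_0$ one has $\|a-b\|_A\ge|a-b|=|a|>0$, whereas you phrase it as continuity of $|\cdot|$ and take a preimage; both rest on the same inequality $|a-b|\le\|a-b\|_A$.
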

\begin{proof}
For $a\in A$ and $b\in x_{0}$ we have $|a|=|a+b-b|\leq|a+b|+|b|=|a+b|$ and $|a+b|\leq|a|+|b|=|a|$ hence $|a+b|=|a|$ as required. Furthermore this also gives an easy way of seeing that $x_{0}$ is a closed subset of $A$. Let $a$ be an element of $A\backslash x_{0}$ then for all $b\in x_{0}$ we have $|a|=|a-b|\leq\|a-b\|_{A}$ since $|\cdot|$ is bounded and so $x_{0}$ is closed.
\end{proof}
Now by Lemma \ref{lem:RTQRV} we can take $|\cdot|$ to be a valuation on $A/x_{0}$ and hence extend it to a valuation on the field of fractions $\mbox{Frac}(A/x_{0})$. Hence an element $|\cdot|\in \mathcal{M}_{1}(A)$ defines a character on $A$ by sending the elements of $A$ to their image in the completion of $\mbox{Frac}(A/x_{0})$ with respect to $|\cdot|$. With these details in place we have the following theorem by Berkovich, see \cite[p157]{Berkovich}.
\begin{theorem}
\label{thr:RTREPF}
Let $F$ be a complete non-Archimedean field. Let $A$ be a commutative unital Banach $F$-algebra with $\|a^{2}\|_{A}=\|a\|_{A}^{2}$ for all $a\in A$. Suppose that all of the characters of $A$ take values in $F$. Then:
\begin{enumerate}
\item[(i)]
the space $\mathcal{M}_{1}(A)$ is totally disconnected;
\item[(ii)]
the Gelfand transform gives an isomorphism from $A$ to $C_{F}(\mathcal{M}_{1}(A))$.
\end{enumerate}
\end{theorem}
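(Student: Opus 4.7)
The plan is to identify each point of $\mathcal{M}_{1}(A)$ with an $F$-valued character of $A$, set up the Gelfand transform $a\mapsto\hat{a}$ into $C_{F}(\mathcal{M}_{1}(A))$, deduce (i) from the fact that $F$ is totally disconnected with a base of clopen balls, and establish (ii) via the spectral identity $\|a\|_{A}=\|\hat{a}\|_{\infty}$ combined with Kaplansky's non-Archimedean Stone--Weierstrass theorem (Theorem~\ref{thr:UAKapl}).

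First I would set up the correspondence and topology. Under the hypothesis, each $|\cdot|_{x}\in\mathcal{M}_{1}(A)$ with kernel $x_{0}$ has $A/x_{0}$ embedding into $F$ by an $F$-algebra map that must fix $F$ pointwise, forcing $A/x_{0}=F$ and giving a unique character $\chi_{x}\colon A\to F$ with $|a|_{x}=|\chi_{x}(a)|_{F}$. The Gelfand topology makes $\mathcal{M}_{1}(A)$ compact Hausdorff, since it sits as a closed subset of the compact product $\prod_{a\in A}[0,\|a\|_{A}]$ cut out by the seminorm and multiplicativity axioms. Defining $\hat{a}(x):=\chi_{x}(a)$, the preimages $\hat{a}^{-1}(B_{r}(\alpha))=\{x:|a-\alpha|_{x}<r\}$ are open by the definition of the initial topology, and since open balls form a base of $F$ each $\hat{a}$ is continuous; thus $a\mapsto\hat{a}$ is an $F$-algebra homomorphism $A\to C_{F}(\mathcal{M}_{1}(A))$. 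For (i), distinct $x_{1},x_{2}\in\mathcal{M}_{1}(A)$ have distinct kernels hence distinct characters, so some $\hat{a}$ separates them, and Remark~\ref{rem:CVFTD} supplies a clopen ball in $F$ separating the two values, whose preimage is a clopen separator in $\mathcal{M}_{1}(A)$.

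For (ii) the crux is the identity $\|a\|_{A}=\sup_{x}|a|_{x}=\|\hat{a}\|_{\infty}$. The inequality $\|\hat{a}\|_{\infty}\le\|a\|_{A}$ is immediate from boundedness of seminorms. For the reverse, the square-preserving hypothesis yields $\|a^{2^{k}}\|_{A}=\|a\|_{A}^{2^{k}}$ by induction, so $\rho(a):=\lim_{n}\|a^{n}\|_{A}^{1/n}=\|a\|_{A}$. Fix $r$ with $r^{-1}>\|\hat{a}\|_{\infty}$ and work inside $A\langle r^{-1}T\rangle$ from Lemma~\ref{lem:RTRRT}. Any bounded multiplicative seminorm $|\cdot|_{y}$ on $A\langle r^{-1}T\rangle$ restricts to some $|\cdot|_{x}\in\mathcal{M}_{1}(A)$ and satisfies $|T|_{y}\le\|T\|_{A,r}=r$, so $|aT|_{y}\le r|a|_{x}\le r\|\hat{a}\|_{\infty}<r\cdot r^{-1}=1=|1|_{y}$; the strong triangle inequality (which holds for $|\cdot|_{y}$ because $|2|_{y}=|2|_{F}\le 1$ puts the induced valuation on $A/\ker|\cdot|_{y}$ into the non-Archimedean case via Theorem~\ref{thr:CVFCHAR}) then forces $|1-aT|_{y}=1\ne 0$. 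Lemma~\ref{lem:RTNMT}(ii) therefore gives invertibility of $1-aT$ in $A\langle r^{-1}T\rangle$, and Lemma~\ref{lem:RTRRT}(ii) translates this to $\sum_{n}\|a^{n}\|_{A}r^{n}<\infty$, hence $\rho(a)\le r^{-1}$; letting $r^{-1}\searrow\|\hat{a}\|_{\infty}$ delivers $\|a\|_{A}\le\|\hat{a}\|_{\infty}$. The Gelfand transform is then an isometric $F$-algebra embedding, its image is closed, contains the constants, and separates the points of $\mathcal{M}_{1}(A)$ by (i); Kaplansky's theorem concludes that the image is dense and therefore equals $C_{F}(\mathcal{M}_{1}(A))$.

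The main obstacle is the second half of the spectral identity: converting the pointwise inequality $|a|_{x}<r^{-1}$ on the Berkovich spectrum into the global summability $\sum\|a^{n}\|_{A}r^{n}<\infty$. The auxiliary Banach ring $A\langle r^{-1}T\rangle$ is precisely the arena in which this conversion works cleanly, with Lemma~\ref{lem:RTRRT}(ii) turning summability into invertibility of $1-aT$ and Lemma~\ref{lem:RTNMT}(ii) turning invertibility into the pointwise-nonvanishing statement that follows from the strong triangle inequality once one has controlled $|T|_{y}$ by the scaling parameter $r$.
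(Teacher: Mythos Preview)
The paper does not actually prove Theorem~\ref{thr:RTREPF}; it is quoted from Berkovich with a citation and no argument. That said, your proof is correct, and your key step---the isometry $\|a\|_{A}=\|\hat a\|_{\infty}$ via invertibility of $1-aT$ in the auxiliary Banach ring $A\langle r^{-1}T\rangle$ using Lemmas~\ref{lem:RTRRT} and~\ref{lem:RTNMT}---is precisely the mechanism the paper does spell out in detail in the proof of the closely related Theorem~\ref{thr:RTREPLF}. There the paper sets $r:=\|\hat a\|_{\infty}+\varepsilon$, works in $A\langle rT\rangle$ (your $r$ is the paper's $r^{-1}$), and uses the ordinary triangle inequality $|1-aT|\ge 1-|aT|>0$ rather than the strong one, but the argument is otherwise identical. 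Your surjectivity step via Kaplansky's Stone--Weierstrass theorem (Theorem~\ref{thr:UAKapl}) is the natural complement, needed here because Theorem~\ref{thr:RTREPF} asserts an isomorphism onto all of $C_{F}(\mathcal{M}_{1}(A))$ whereas Theorem~\ref{thr:RTREPLF} only claims an embedding into a basic function algebra.

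One small point worth tightening: when you assert that distinct $x_{1},x_{2}\in\mathcal{M}_{1}(A)$ have distinct kernels, this uses the hypothesis. In general two bounded multiplicative seminorms can share a kernel, but here $A/x_{0}\cong F$ and the induced valuation on $F$ is forced to be $|\cdot|_{F}$ by Lemma~\ref{lem:RTBMS}, so the seminorm is determined by its kernel and your claim follows.
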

As we move on to the next subsection it's worth pointing out that the Gelfand theory presented in \cite{Berkovich} does not make use of any definition such as that of $^{L}/_{L^{g}}$ function algebras.
\subsection{Motivation}
\label{subsec:RTRMOT}
For $A$ a commutative unital complex Banach algebra it is straightforward to confirm that there is a one-one correspondence between the elements of $\mbox{Car}(A)$ and the elements of the maximal ideal space. Since $A$ is unital the complex constants are elements of $A$ and for $\varphi\in\mbox{Car}(A)$, $\varphi$ restricts to the identity on $\mathbb{C}$. Hence by the first isomorphism theorem for rings we have
\begin{equation}
\label{equ:RTCONG}
A/\mbox{ker}(\varphi)\cong\varphi(A)=\mathbb{C}
\end{equation}
showing that $\mbox{ker}(\varphi)$ is a maximal ideal of $A$. Therefore, by also noting the prelude to Chapter \ref{cha:CG}, the set of maximal ideals of $A$ and the set of kernels of the elements of $\mbox{Car}(A)$ do indeed agree. It remains to show that no two characters on $A$ have the same kernel and this marks an important difference with the theory we are about to present. First though let $\varphi,\phi$ be elements of $\mbox{Car}(A)$ with $\mbox{ker}(\varphi)=\mbox{ker}(\phi)$. We note that for each $a\in A$ there is a unique $\alpha\in\mathbb{C}$ representing the quotient class $a+\mbox{ker}(\varphi)$ by (\ref{equ:RTCONG}). Hence for some $b\in\mbox{ker}(\varphi)$ we have $a+b=\alpha$ giving
\begin{equation*}
\varphi(a)=\varphi(a)+\varphi(b)=\varphi(a+b)=\varphi(\alpha)=\alpha=\phi(\alpha)=\phi(a+b)=\phi(a)+\phi(b)=\phi(a)
\end{equation*}
and so no two characters on $A$ have the same kernel.\\
Now let $F$ be a complete non-Archimedean field. We wish to identify sufficient conditions for a commutative unital Banach $F$-algebra to be represented by some $^{L}/_{F}$ function algebra. In this respect the following lemma is informative and motivates an appropriate choice of character space in Subsection \ref{subsec:RTRFBD}.
\begin{lemma}
\label{lem:RTMOTMA}
For $A$ an $^{L}/_{L^{g}}$ function algebra on $(X,\tau,g)$, where $L$ can be Archimedean or non-Archimedean and $A$ is not assumed to be basic, define a family of maps on $A$ by
\begin{equation*}
|f|_{A,x}:=|f(x)|_{L}\quad\mbox{for }x\in X\mbox{ and }f\in A.
\end{equation*}
Then for each $x\in X$:
\begin{enumerate}
\item[(i)]
the map $|\cdot|_{A,x}$ is a bounded multiplicative seminorm on $A$;
\item[(ii)]
the kernel $\mbox{\rm{ker}}(|\cdot|_{A,x})$, which is the same as $\mbox{\rm{ker}}(\hat{x})$ where $\hat{x}$ is the evaluation character $\hat{x}$(f):=f(x) on $A$, is not only a proper closed prime ideal of $A$ but it is also a maximal ideal;
\item[(iii)]
we have $\mbox{\rm{ker}}\left(\widehat{\tau(x)}\right)=\mbox{\rm{ker}}(\hat{x})$ even if $\tau$ is not the identity and in general different evaluation characters can have the same kernel.
\end{enumerate}
\end{lemma}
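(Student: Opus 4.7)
The plan is to dispatch the three parts in turn, with (i) and (iii) being essentially formal and (ii) being where the genuine content lies. Throughout I will use that, by Definition \ref{def:CGLGFA}, $A$ is an $L^{g}$-subalgebra of $C(X,\tau,g)$ containing the constants $L^{g}$ and separating the points of $X$, and that $L$ is a finite extension of $L^{g}$ (by the Galois-theoretic setup of Definition \ref{def:CGBFA}, combined with the fundamental theorem of Galois theory invoked in Theorem \ref{thr:CGBET}).

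For part (i) I would simply observe that since $|\cdot|_{L}$ is a valuation on $L$, the pointwise evaluation $f\mapsto|f(x)|_{L}$ immediately inherits non-negativity, multiplicativity, and the triangle inequality, so it is a seminorm. Boundedness is the trivial estimate $|f(x)|_{L}\leq\|f\|_{\infty}$, and the seminorm is not identically zero because $1\in L^{g}\subseteq A$ gives $|1|_{A,x}=1$. This meets Definition \ref{def:RTBMS}.

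For part (ii), the identification $\mbox{ker}(|\cdot|_{A,x})=\mbox{ker}(\hat{x})=\{f\in A:f(x)=0\}$ is immediate, properness follows from $\hat{x}(1)=1$, and closedness follows from Lemma \ref{lem:RTQRV} applied to the bounded multiplicative seminorm produced in (i). The key step is showing this kernel is \emph{maximal}. By the first isomorphism theorem, $A/\mbox{ker}(\hat{x})\cong\hat{x}(A)$, and $\hat{x}(A)$ is an $L^{g}$-subalgebra of $L$ containing $L^{g}$. Because $[L:L^{g}]$ is finite, $\hat{x}(A)$ is a finite-dimensional $L^{g}$-algebra that is simultaneously an integral domain (being a subring of the field $L$); the standard argument that multiplication by a nonzero element is an injective, hence surjective, $L^{g}$-linear endomorphism of such a ring shows $\hat{x}(A)$ is itself a field, so $\mbox{ker}(\hat{x})$ is maximal. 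Primeness is automatic. The main (mild) obstacle to watch for here is making sure one really uses the finiteness of $[L:L^{g}]$; without it, nothing forces $\hat{x}(A)$ to be a field.

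For part (iii), the containment $A\subseteq C(X,\tau,g)$ provides the functional equation $f\circ\tau=g\circ f$, and since $g$ is an automorphism of $L$ we have $f(\tau(x))=g(f(x))=0$ if and only if $f(x)=0$, giving $\mbox{ker}(\widehat{\tau(x)})=\mbox{ker}(\hat{x})$ at once. To exhibit distinct evaluation characters with the same kernel, I would note that in any case where $\tau(x)\neq x$, the point-separation property of $A$ produces some $f\in A$ with $f(x)\neq f(\tau(x))$, so $\hat{x}(f)\neq\widehat{\tau(x)}(f)$ even though the two kernels coincide; concrete instances arise whenever $g\neq\mbox{id}$ so that $\mbox{ord}(\tau)>1$ forces some nontrivial $\tau$-orbit. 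This last observation is the crucial motivating remark for the next subsection: in contrast to the complex case leading to the one-one correspondence in (\ref{equ:RTCONG}), characters on $^{L}/_{L^{g}}$ function algebras are in general grouped together by $\tau$-orbits rather than pinned down individually by their kernels.
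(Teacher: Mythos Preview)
Your proof is correct and follows essentially the same approach as the paper's own proof. The only cosmetic difference is in part (ii): where you invoke the finite-dimensional linear algebra argument (multiplication by a nonzero element is an injective, hence surjective, $L^{g}$-linear endomorphism), the paper instead cites Lemma~\ref{lem:CVFPOL} to get $L^{g}(a)=L^{g}[a]\subseteq\hat{x}(A)$ for each nonzero $a$; these are equivalent ways of exploiting the finiteness of $[L:L^{g}]$.
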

\begin{proof}
For (i), it is immediate that $|\cdot|_{A,x}$ is a bounded multiplicative seminorm on $A$ since the norm on $A$ is the sup norm and $|\cdot|_{L}$ is a valuation on $L$.\\
For (ii), it is immediate that $\mbox{ker}(|\cdot|_{A,x})$ is a proper ideal of $A$ noting that $|1|_{A,x}=|1|_{L}=1$. It remains to show that $\mbox{ker}(|\cdot|_{A,x})$ is a maximal ideal of $A$ noting Lemma \ref{lem:RTCBR}. To this end we show that the quotient ring $A/\mbox{ker}(\hat{x})$ is a field. We first note that $L^{g}\subseteq\hat{x}(A)\subseteq L$ and that $\hat{x}(A)$ is a ring and so an integral domain. Further by the first isomorphism theorem for rings we have $A/\mbox{ker}(\hat{x})\cong\hat{x}(A)$ and so $A/\mbox{ker}(\hat{x})$ contains an embedding of $L^{g}$ and each element $a\in A/\mbox{ker}(\hat{x})$ is an element of an algebraic extension of $L^{g}$ since $L$ is a finite extension of $L^{g}$. Therefore for $a\in A/\mbox{ker}(\hat{x})$ with $a\not=0$ we have by Lemma \ref{lem:CVFPOL} that $L^{g}(a)=L^{g}[a]$ where $L^{g}(a)$ is a simple extension of $L^{g}$ and $L^{g}[X]$ is the ring of polynomials over $L^{g}$. Hence, since $L^{g}[a]\subseteq A/\mbox{ker}(\hat{x})$, the inverse $a^{-1}$ is an element of $A/\mbox{ker}(\hat{x})$ which is therefore a field as required.\\
For (iii), we note that for all $f\in A$ and $x\in X$ we have $f(\tau(x))=g(f(x))$ since $f$ is an element of $C(X,\tau,g)$. Further since $g\in\mbox{Gal}(^{L}/_{F})$ we have $g(f(x))=0$ if and only if $f(x)=0$ and so $\mbox{ker}\left(\widehat{\tau(x)}\right)=\mbox{ker}(\hat{x})$. However in general $f(x)$ need not be equal to $g(f(x))$ and so different evaluation characters can have the same kernel.
\end{proof}
\subsection{Representation under finite basic dimension}
\label{subsec:RTRFBD}
This subsection will involve the use of Definition \ref{def:RTFISO}.
\begin{definition}
\label{def:RTFISO}
Suppose $F_{1}$ and $F_{2}$ are extensions of a field $F$ such that there exists an isomorphism $\varphi:F_{1}\rightarrow F_{2}$ with $\varphi(a)=a$ for all $a\in F$. Then $\varphi$ is called an $F$-{\em isomorphism} and $F_{1}$ and $F_{2}$ are called $F$-{\em isomorphic} or with the same meaning $F$-{\em conjugate}. Similarly if $F$ is complete then we can talk of $F$-{\em isomorphic} Banach $F$-algebras etc.
\end{definition}
The following definition and theorem will be the focus of attention for the rest of this chapter.
\begin{definition}
\label{def:RTFBD}
Let $F$ be a complete valued field and let $A$ be a commutative unital Banach $F$-algebra. We say that $A$ has {\em finite basic dimension} if there exists a finite extension $L$ of $F$ extending $F$ as a valued field such that:
\begin{enumerate}
\item[(i)]
for each proper closed prime ideal $J$ of $A$, that is the kernel of a bounded multiplicative seminorm on $A$, the field of fractions $\mbox{Frac}(A/J)$ is $F$-isomorphic to a subfield of $L$;
\item[(ii)]
there is $g\in\mbox{Gal}(^{L}/_{F})$ with $L^{g}=F$.
\end{enumerate}
Cases where $L=F$ are allowed.
\end{definition}
The purpose of Definition \ref{def:RTFBD} is to generalise to the non-Archimedean setting conditions that are innately present in the Archimedean case due to the Gelfand Mazur theorem. We will discuss this in Remark \ref{rem:RTFBD}.
\begin{theorem}
\label{thr:RTREPLF}
Let $F$ be a locally compact complete non-Archimedean field with nontrivial valuation. Let $A$ be a commutative unital Banach $F$-algebra with $\|a^{2}\|_{A}=\|a\|_{A}^{2}$ for all $a\in A$ and finite basic dimension. Then:
\begin{enumerate}
\item[(i)]
for some finite extension $L$ of $F$ extending $F$ as a valued field, a character space $\mathcal{M}(A)$ of $L$ valued, multiplicative $F$-linear functionals can be defined;
\item[(ii)]
the space $\mathcal{M}(A)$ is a totally disconnected compact Hausdorff space;
\item[(iii)]
$A$ is isometrically $F$-isomorphic to a $^{L}/_{F}$ function algebra on $(\mathcal{M}(A),g,g)$ for some $g\in\mbox{Gal}(^{L}/_{F})$.
\end{enumerate}
\end{theorem}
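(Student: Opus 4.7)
The plan is to construct a character space by combining the set $\mathcal{M}_{0}(A)$ of proper closed prime ideals arising as kernels of bounded multiplicative seminorms (Definition~\ref{def:RTMOA}) with the Galois action provided by finite basic dimension. By hypothesis, there is a finite extension $L/F$ of valued fields and $g\in\mbox{Gal}(^{L}/_{F})$ with $L^{g}=F$ such that each $\mbox{Frac}(A/x_{0})$ embeds in $L$ as an $F$-algebra. For each $x_{0}\in\mathcal{M}_{0}(A)$ I would fix such an $F$-embedding and compose with the natural map $A\to A/x_{0}\hookrightarrow\mbox{Frac}(A/x_{0})$ to obtain an $L$-valued, multiplicative, $F$-linear functional on $A$; the set of all such functionals (over all embeddings of all $x_{0}$) is $\mathcal{M}(A)$. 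Two functionals with the same kernel are related by an element of $\mbox{Gal}(^{L}/_{F})$, so in particular $\tau:\varphi\mapsto g\circ\varphi$ is a well-defined self-map of $\mathcal{M}(A)$ of order dividing $\mbox{ord}(g)$.

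Topologically, I would equip $\mathcal{M}(A)$ with the initial (Gelfand) topology making every evaluation $\hat{a}:\varphi\mapsto\varphi(a)$ continuous. Since $\|\cdot\|_{A}$ is square preserving, the proof of Theorem~\ref{thr:CVFCHAR} applies to $A$ (cf.~Lemma~\ref{lem:RTQN}(iv)), so every bounded multiplicative seminorm on $A$ is bounded by $\|\cdot\|_{A}$; consequently each $\hat{a}$ takes values in the closed ball $\bar{B}_{\|a\|_{A}}(0)\subseteq L$, which is compact because $F$ is locally compact and $L/F$ is a finite extension (Remark~\ref{rem:CVFEE}(iii) together with Theorem~\ref{thr:CVFHB}). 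Embedding $\mathcal{M}(A)\hookrightarrow\prod_{a\in A}\bar{B}_{\|a\|_{A}}(0)$, Tychonoff's theorem gives compactness of the ambient product, and a routine check that $\mathcal{M}(A)$ is cut out by the multiplicativity and $F$-linearity conditions (each a closed condition) shows it is closed, hence compact. Hausdorffness is automatic (distinct characters differ on some $a$), and total disconnectedness descends from the total disconnectedness of $L$ via the initial topology, exactly as in the proof of Theorem~\ref{thr:UAXtop}. Continuity of $\tau$ follows because $g$ acts isometrically on $L$ (Remark~\ref{rem:CVFUT}).

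For the representation itself, I would define the Gelfand transform $\Gamma:A\to C_{L}(\mathcal{M}(A))$ by $\Gamma(a)=\hat{a}$. By construction of $\tau$, $\hat{a}(\tau(\varphi))=g\circ\varphi(a)=g(\hat{a}(\varphi))$, so $\Gamma(A)\subseteq C(\mathcal{M}(A),\tau,g)$; moreover $\hat{A}$ contains the constants from $F$ and separates points of $\mathcal{M}(A)$ by definition of the topology. The crucial step is showing $\Gamma$ is an isometry. The bound $\|\hat{a}\|_{\infty}\leq\|a\|_{A}$ is immediate from boundedness of seminorms. For the reverse inequality I would use the square-preserving property to deduce $\|a^{2^{n}}\|_{A}=\|a\|_{A}^{2^{n}}$, combined with Lemma~\ref{lem:RTNMT}(ii): an element of $A$ is non-invertible iff it lies in the kernel of some element of $\mathcal{M}_{0}(A)$, hence iff $\hat{a}(\varphi)=0$ for some $\varphi\in\mathcal{M}(A)$. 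Applying this to $\lambda-a$ in conjunction with the minimisation argument used in the proof of Lemma~\ref{lem:RTNMT} (building a minimal bounded multiplicative seminorm via Zorn and the $R\langle r^{-1}T\rangle$ construction of Lemma~\ref{lem:RTRRT}) yields the spectral radius formula $\|a\|_{A}=\sup_{\varphi}|\hat{a}(\varphi)|_{L}$. Completeness of $A$ then forces $\hat{A}$ to be a closed subalgebra of $C(\mathcal{M}(A),\tau,g)$, so $A$ is isometrically $F$-isomorphic to an $^{L}/_{F}$ function algebra on $(\mathcal{M}(A),\tau,g)$, and relabelling $\tau=:g$ as a map on $\mathcal{M}(A)$ gives the stated form.

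The main obstacle I anticipate is the isometry step: adapting the minimal multiplicative seminorm machinery of Lemma~\ref{lem:RTNMT} from a single quotient field to the global sup over $\mathcal{M}(A)$, and verifying that the square-preserving hypothesis really does force equality rather than just $\sup_{\varphi}|\hat{a}(\varphi)|_{L}\leq\|a\|_{A}$. A secondary difficulty is checking that the choices of $F$-embeddings $\mbox{Frac}(A/x_{0})\hookrightarrow L$ can be made compatibly enough that $\mathcal{M}(A)$ is $\tau$-invariant and the topology behaves well; this is where the assumption $L^{g}=F$ (ensuring $\mbox{Gal}(^{L}/_{F})$ acts transitively on the $F$-conjugate embeddings of a given quotient) is doing the essential work.
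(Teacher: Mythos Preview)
Your proposal is correct and follows essentially the same route as the paper: build $\mathcal{M}(A)$ from pairs $(x_{0},\varphi)$ with $x_{0}\in\mathcal{M}_{0}(A)$ and $\varphi$ an $F$-embedding of $A/x_{0}$ into $L$, equip it with the Gelfand topology, prove compactness via Tychonoff in $\prod_{a}\bar{B}_{\|a\|_{A}}(0)$, let $g$ act by postcomposition, and show the Gelfand transform is an isometric $F$-isomorphism onto a subalgebra of $C(\mathcal{M}(A),g,g)$.

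One point in your isometry sketch deserves sharpening. The phrase ``applying this to $\lambda-a$'' is a red herring: over $F$ the spectrum of $a$ can be empty (cf.\ Subsection~\ref{subsec:FAASE}), so invertibility of $\lambda-a$ in $A$ gives no grip on $\|a\|_{A}$. The paper instead applies Lemma~\ref{lem:RTNMT}(ii) not in $A$ but in the auxiliary algebra $A\langle rT\rangle$ with $r:=\|\hat{a}\|_{\infty}+\varepsilon$: one checks that for every $y_{0}\in\mathcal{M}_{0}(A\langle rT\rangle)$ the restricted seminorm on $A$ has some kernel $m_{0}\in\mathcal{M}_{0}(A)$, whence $|aT|\leq|a+m_{0}|_{A/m_{0}}\,r^{-1}<1$ and so $1-aT\notin y_{0}$. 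By Lemma~\ref{lem:RTNMT}(ii) the element $1-aT$ is therefore invertible, and Lemma~\ref{lem:RTRRT}(ii) forces $\sum_{i}\|a^{i}\|_{A}r^{-i}<\infty$; square-preservation then gives $(\|a\|_{A}r^{-1})^{2^{n}}\to 0$, i.e.\ $\|a\|_{A}<r$. You already name the $R\langle r^{-1}T\rangle$ machinery, so this is a matter of pointing it at the right element in the right algebra rather than a missing idea.
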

\begin{remark}
\label{rem:RTFBD}
Concerning the condition of finite basic dimension.
\begin{enumerate}
\item[(i)]
We first note that all commutative unital complex Banach algebras and commutative unital real Banach algebras have finite basic dimension. To see this let $A$ be such an algebra and let $J$ be a proper closed prime ideal of $A$ such that $J$ is the kernel of a bounded multiplicative seminorm $|\cdot|$ on $A$. Then, by Lemma \ref{lem:RTBMS} and Lemma \ref{lem:RTQRV}, $|\cdot|$ extends the absolute valuation on $\mathbb{R}$ to a valuation on the integral domain $A/J$. Extending $|\cdot|$ to a valuation on $\mbox{Frac}(A/J)$ gives either $\mathbb{R}$ or $\mathbb{C}$ by the Gelfand Mazur theorem and noting Theorem \ref{thr:CVFCOM}. Finally with consideration of $\mbox{Gal}(^{\mathbb{C}}/_{\mathbb{R}})$ the result follows. Hence we note that with little modification Theorem \ref{thr:RTREPC}, Theorem \ref{thr:RTREPR} and Theorem \ref{thr:RTREPLF} could be combined into a single theorem.
\item[(ii)]
Now the argument in (i) was deliberately a little naive noting that for every commutative unital Banach $F$-algebra $A$ with finite basic dimension the kernel of every bounded multiplicative seminorm on $A$ is a maximal ideal of $A$. This follows easily from Lemma \ref{lem:CVFPOL} since such a kernel $J$ is a proper closed prime ideal of $A$ and the elements of the quotient ring $A/J$ are algebraic over $F$ and so $A/J$ is a field.
\item[(iii)]
Finally if $A$ is a commutative unital Banach $F$-algebra then in general the set of maximal ideals of $A$ is a subset of the set of kernels of bounded multiplicative seminorms on $A$ by Lemma \ref{lem:RTNMT}. Hence Theorem \ref{thr:RTREPLF} might be strengthened if we can find a proof that accepts changing (i) in Definition \ref{def:RTFBD} to the condition that for each maximal ideal $J$ of $A$ the field $A/J$ is $F$-isomorphic to a subfield of $L$. This is something for the future. The change only makes a difference for cases where there is a bounded multiplicative seminorm on $A$ with kernel $J$ such that $A/J$ has elements that are transcendental over $F$ since otherwise $J$ is a maximal ideal of $A$.
\end{enumerate}
\end{remark}
\begin{proof}[Proof of Theorem \ref{thr:RTREPLF}]
Let $\mathcal{M}_{0}(A)$ be as in Definition \ref{def:RTMOA}. Now $A$ has finite basic dimension so for each $x_{0}\in\mathcal{M}_{0}(A)$ the quotient ring $A/x_{0}$ is a field by Remark \ref{rem:RTFBD}. Further there is a finite extension $L$ of $F$ extending $F$ as a valued field such that for all $x_{0}\in\mathcal{M}_{0}(A)$ the field $A/x_{0}$ is $F$-isomorphic to a subfield of $L$. Moreover for $|\cdot|$ a bounded multiplicative seminorm on $A$ with kernel $x_{0}$ the map $|a+x_{0}|_{A/x_{0}}:=|a|$, for $a\in A$, defines a valuation on $A/x_{0}$ extending the valuation on $F$ by Lemma \ref{lem:RTQRV} and Lemma \ref{lem:RTBMS}. We note that since $L$ and $A/x_{0}$ are both finite extensions of $F$ they are complete valued fields. Further since $|\cdot|_{A/x_{0}}$ is defined by a bounded multiplicative seminorm on $A$ we have
\begin{equation}
\label{equ:RTBOU}
|a+x_{0}|_{A/x_{0}}\leq\|a\|_{A}\quad\mbox{for all }a\in A.
\end{equation}
We now progress towards defining the character space of $A$. Define $\mathcal{M}(A)$ as the set of all pairs $x:=(x_{0},\varphi)$ where $x_{0}\in\mathcal{M}_{0}(A)$ and $\varphi$ is an $F$-isomorphism from $A/x_{0}$ to a subfield of $L$ extending $F$. Then to each $x=(x_{0},\varphi)\in\mathcal{M}(A)$ we associated a map $\hat{x}:A\rightarrow L$ given by $\hat{x}(a):=\varphi(a+x_{0})$ for all $a\in A$. Note that for each element $x=(x_{0},\varphi)\in\mathcal{M}(A)$ we have
\begin{equation}
\label{equ:RTEQU}
|a+x_{0}|_{A/x_{0}}=|\hat{x}(a)|_{L}\quad\mbox{for all }a\in A
\end{equation}
by the uniqueness of the valuation on $A/x_{0}$ extending the valuation on $F$, see Theorem \ref{thr:CVFUT}. In particular each $F$-isomorphism from $A/x_{0}$ to a subfield of $L$ extending $F$ is an isometry and similarly we recall that each element of $\mbox{Gal}(^{L}/_{F})$ is isometric. Now for the element $g\in\mbox{Gal}(^{L}/_{F})$ with $L^{g}=F$, or indeed any other element of $\mbox{Gal}(^{L}/_{F})$, we note that $g$ can be considered as a map of finite order $g:\mathcal{M}(A)\rightarrow\mathcal{M}(A)$ given by $g((x_{0},\varphi)):=(x_{0},g\circ\varphi)$. In particular for $x=(x_{0},\varphi_{1})\in\mathcal{M}(A)$ we have $g\circ\hat{x}=\widehat{g(x)}$ and so there is $y=(y_{0},\varphi_{2})\in\mathcal{M}(A)$ with $y_{0}=x_{0}$ such that the diagram in Figure \ref{fig:RTCMUT} commutes.
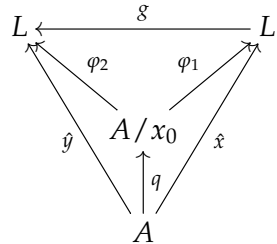
\begin{figure}[h]
\begin{equation*}
\xymatrix{
L&&L\ar@{->}[ll]_{g}\\
&A/x_{0}\ar@{->}[ru]^{\varphi_{1}}\ar@{->}[lu]_{\varphi_{2}}&\\
&A\ar@{->}[ruu]_{\hat{x}}\ar@{->}[u]_{q}\ar@{->}[luu]^{\hat{y}}&
}
\end{equation*}
\caption{Commutative diagram for the characters associated to {\it{x}} and {\it{y}}.}
\label{fig:RTCMUT}
\end{figure}
Note that in the case of Figure \ref{fig:RTCMUT} the fields $\hat{x}(A)$ and $\hat{y}(A)$ are $F$-conjugate and could actually be the same subfield of $L$ if the restriction $g|_{\hat{x}(A)}$ is an element of $\mbox{Gal}(^{\hat{x}(A)}/_{F})$. Now by construction for each $x\in\mathcal{M}(A)$ the map $\hat{x}$ is a non-zero, $L$-valued, multiplicative $F$-linear functional on $A$. Hence $\hat{x}$ is continuous since we have
\begin{equation}
\label{equ:RTXBOU}
|\hat{x}(a)|_{L}\leq\|a\|_{A}\quad\mbox{for all }a\in A
\end{equation}
by (\ref{equ:RTEQU}) and (\ref{equ:RTBOU}). We now set up the Gelfand transform in the usual manner by defining a map
\begin{equation*}
\widehat{\cdot}:A\rightarrow\hat{A},\quad a\mapsto\hat{a},
\end{equation*}
where the elements of $\hat{A}$ are the functions $\hat{a}:\mathcal{M}(A)\rightarrow L$ given by $\hat{a}(x):=\hat{x}(a)$. We equip $\hat{A}$ with the binary operations of pointwise addition and multiplication and put the sup norm
\begin{equation*}
\|\hat{a}\|_{\infty}:=\sup_{x\in\mathcal{M}(A)}|\hat{a}(x)|_{L}\quad\mbox{for all }\hat{a}\in\hat{A}
\end{equation*}
on $\hat{A}$ making $\hat{A}$ a commutative unital normed $F$-algebra. Note that with these binary operations it is immediate that the Gelfand transform is an $F$-homomorphism and so $\hat{A}$ is closed under addition and multiplication. Later we will show that $\widehat{\cdot}:A\rightarrow\hat{A}$ is an isometry and so it is also injective. It then follows that $\hat{A}$ is a Banach $F$-algebra since $A$ and $\hat{A}$ are isometrically $F$-isomorphic.\\
Now we equip $\mathcal{M}(A)$ with the Gelfand topology which is the initial topology of $\hat{A}$. Hence the elements of $\hat{A}$ are continuous $L$-valued functions on the space $\mathcal{M}(A)$. We show that $\hat{A}$ separates the points of $\mathcal{M}(A)$ and that $\mathcal{M}(A)$ is a compact Hausdorff space. Let $x$ and $y$ be elements of $\mathcal{M}(A)$ with $x=(x_{0},\varphi)$, $y=(y_{0},\phi)$ and $x\not=y$. If $x_{0}\not=y_{0}$ then there is $a\in x_{0}\cup y_{0}$ such that $a\not\in x_{0}\cap y_{0}$ for which precisely one of $\hat{a}(x)=\hat{x}(a)$ and $\hat{a}(y)=\hat{y}(a)$ is zero. If $x_{0}=y_{0}$ then $\varphi\not=\phi$ on $A/x_{0}$. Hence there is some $a\in A$ such that $\varphi(a+x_{0})\not=\phi(a+x_{0})$ giving
\begin{equation*}
\hat{a}(x)=\hat{x}(a)=\varphi(a+x_{0})\not=\phi(a+x_{0})=\hat{y}(a)=\hat{a}(y)
\end{equation*}
and so $\hat{A}$ separates the points of $\mathcal{M}(A)$. We now show that $\mathcal{M}(A)$ is Hausdorff, note in fact that the proof is standard. Let $x$ and $y$ be elements of $\mathcal{M}(A)$ with $x\not=y$. Since $\hat{A}$ separates the points of $\mathcal{M}(A)$ there is $\hat{a}\in\hat{A}$ such that $\hat{a}(x)\not=\hat{a}(y)$. Further $L$ is Hausdorff and so there are disjoint open subsets $U_{1}$ and $U_{2}$ of $L$ such that $\hat{a}(x)\in U_{1}$ and $\hat{a}(y)\in U_{2}$. Since the topology on $\mathcal{M}(A)$ is the initial topology of $\hat{A}$ the preimage $\hat{a}^{-1}(U_{1})$ is an open neighborhood of $x$ in $\mathcal{M}(A)$ and the preimage $\hat{a}^{-1}(U_{2})$ is an open neighborhood of $y$ in $\mathcal{M}(A)$. Moreover $\hat{a}^{-1}(U_{1})$ and $\hat{a}^{-1}(U_{2})$ are disjoint because $U_{1}$ and $U_{2}$ are, as required.\\
The following, showing that $\mathcal{M}(A)$ is compact, is an adaptation of part of the proof of Theorem \ref{thr:RTREPR} from \cite[p23]{Kulkarni-Limaye1992}. For each $a\in A$ define $L_{a}:=\{\alpha\in L:|\alpha|_{L}\leq\|a\|_{A}\}$ and $L_{A}:=\prod_{a\in A}L_{a}$ with the product topology. Each $L_{a}$ is compact by Theorem \ref{thr:CVFHB} noting that $L$ is locally compact by Remark \ref{rem:CVFEE}. Hence $L_{A}$ is compact by Tychonoff's Theorem. Now by (\ref{equ:RTXBOU}) we have $|\hat{x}(a)|_{L}\leq\|a\|_{A}$ for all $x\in\mathcal{M}(A)$ and $a\in A$. Therefore for each $x\in\mathcal{M}(A)$ we have $\hat{x}(a)\in L_{a}$ and so $\hat{x}$ is a point of $L_{A}$ and $\mathcal{M}(A)$ can be considered as a subset of $L_{A}$. Now the product topology on $L_{A}$ is the initial topology of the family of coordinate projections $P_{a}:L_{A}\rightarrow L_{a}$, $a\in A$. Since we have $P_{a}|_{\mathcal{M}(A)}=\hat{a}$ the Gelfand topology on $\mathcal{M}(A)$ is the initial topology of the family of coordinate projections restricted to $\mathcal{M}(A)$. Hence the topology on $\mathcal{M}(A)$ is the relative topology of $\mathcal{M}(A)$ as a subspace of $L_{A}$. Since $L_{A}$ is compact, any subspace of $L_{A}$ that is closed as a subset is also compact. Hence it remains to show that $\mathcal{M}(A)$ is a closed subset of $L_{A}$. Let $\varphi\in L_{A}$ be in the closure of $\mathcal{M}(A)$. Hence we have $|\varphi(a)|_{L}\leq\|a\|_{A}$ for all $a\in A$ and there is a net $(x_{\lambda})$ in $\mathcal{M}(A)$ converging to $\varphi$. Now since $L_{A}$ has the product topology, convergence in $L_{A}$ is coordinate-wise, see \cite[\S 8]{Willard}. Therefore for $a,b\in A$ we have
\begin{equation*}
\varphi(a+b)=\lim\hat{x}_{\lambda}(a+b)=\lim(\hat{x}_{\lambda}(a)+\hat{x}_{\lambda}(b))=\varphi(a)+\varphi(b).
\end{equation*}
Similarly, $\varphi(ab)=\varphi(a)\varphi(b)$ and $\varphi(\alpha)=\alpha$ for all $a,b\in A$ and $\alpha\in F$. Now since $\varphi$ takes values in $L$ and $L$ is a finite extension of $F$ we have that $\varphi(A)$ is a subfield of $L$ extending $F$ by Lemma \ref{lem:CVFPOL}. Hence since $A/\mbox{ker}(\varphi)\cong\varphi(A)$, by the first isomorphism theorem for rings, the kernel of $\varphi$ is a maximal ideal of $A$. Therefore $\mbox{ker}(\varphi)$ is an element of $\mathcal{M}_{0}(A)$. Further $\varphi$ defines an $F$-isomorphism from $A/\mbox{ker}(\varphi)$ to a subfield of $L$ extending $F$ by $\varphi'(a+\mbox{ker}(\varphi)):=\varphi(a)$. Hence we have obtained $y:=(\mbox{ker}(\varphi),\varphi')$ which is an element of $\mathcal{M}(A)$ with $\hat{y}=\varphi$ and so $\mathcal{M}(A)$ is closed as a subset on $L_{A}$.\\
We will now show that $g:\mathcal{M}(A)\rightarrow\mathcal{M}(A)$ is continuous. The set of preimages
\begin{equation*}
\mathcal{S}:=\{\hat{a}^{-1}(U):\hat{a}\in\hat{A}\mbox{ and }U\subseteq L\mbox{ is open}\}
\end{equation*}
is a sub-base for the Gelfand topology on $\mathcal{M}(A)$. To show that $g:\mathcal{M}(A)\rightarrow\mathcal{M}(A)$ is continuous it is enough to show that for each $V\in\mathcal{S}$ the preimage $g^{-1}(V)$ is also an element of $\mathcal{S}$. We note that $g:\mathcal{M}(A)\rightarrow\mathcal{M}(A)$ is a bijection since $g$ has finite order. So let $V=\hat{a}^{-1}(U)$ be an element of $\mathcal{S}$ for some $\hat{a}\in A$ and open $U\subseteq L$. We have $x=(x_{0},\varphi)\in\mathcal{M}(A)$ an element of $V$ if and only if $\hat{a}(x)=\hat{x}(a)=\varphi(a+x_{0})$ is an element of $U$. Now consider the elements of the preimage $g^{-1}(V)$ and note that they are the elements $y=(y_{0},\phi)\in\mathcal{M}(A)$ such that $g(y)=(y_{0},g\circ\phi)\in V$. These are precisely the elements of $\mathcal{M}(A)$ such that
\begin{equation*}
\hat{a}(y)=\hat{y}(a)=\phi(a+y_{0})\in g^{(\mbox{ord}(g)-1)}(U).
\end{equation*}
And so $g^{-1}(V)=\hat{a}^{-1}\left(g^{(\mbox{ord}(g)-1)}(U)\right)$ and since $g$ is an isometry on $L$ we note that $g^{(\mbox{ord}(g)-1)}(U)$ is an open subset of $L$. Hence $g^{-1}(V)$ is an element of $\mathcal{S}$ as required.\\
We now show that the Gelfand transform is an isometry. Note that the following adapts material that can be found in \cite[Ch1]{Berkovich}. Let $a$ be an element of $A$. By (\ref{equ:RTXBOU}) we have $|\hat{a}(x)|_{L}=|\hat{x}(a)|_{L}\leq\|a\|_{A}$ for all $x\in\mathcal{M}(A)$ and so $\|\hat{a}\|_{\infty}\leq\|a\|_{A}$. For the reverse inequality let $\varepsilon>0$ and set $r:=\|\hat{a}\|_{\infty}+\varepsilon$. Then for all $x_{0}\in\mathcal{M}_{0}(A)$ we have
\begin{equation}
\label{equ:RTLER}
|a+x_{0}|_{A/x_{0}}=|\hat{x}(a)|_{L}=|\hat{a}(x)|_{L}\leq\|\hat{a}\|_{\infty}<r
\end{equation}
for some $x=(x_{0},\varphi)\in\mathcal{M}(A)$ by $(\ref{equ:RTEQU})$ and noting that $A$ has finite basic dimension. Now consider the commutative unital Banach $F$-algebra $A\langle rT\rangle$. Let $\mathcal{M}_{0}(A\langle rT\rangle)$ be the set of all proper closed prime ideals of $A\langle rT\rangle$ that are the kernels of bounded multiplicative seminorms on $A\langle rT\rangle$. Note that $\mathcal{M}_{0}(A\langle rT\rangle)$ is non-empty by Lemma \ref{lem:RTNMT}. We recall that the elements of $A\langle rT\rangle$ are of the form $\sum_{i=0}^{\infty}a_{i}T^{i}$ with
\begin{equation*}
\left\|\sum_{i=0}^{\infty}a_{i}T^{i}\right\|_{A,r^{-1}}=\sum_{i=0}^{\infty}\|a_{i}\|_{A}(r^{-1})^{i}=\sum_{i=0}^{\infty}\|a_{i}\|_{A}r^{-i}<\infty
\end{equation*}
and $a_{i}\in A$ for all $i\in\mathbb{N}_{0}$. Hence $A$ is a subring of $A\langle rT\rangle$ since for each $b\in A$ we have $b=bT^{0}$ an element of $A\langle rT\rangle$. Now for $y_{0}\in\mathcal{M}_{0}(A\langle rT\rangle)$ let $|\cdot|$ be a bounded multiplicative seminorm on $A\langle rT\rangle$ with $y_{0}=\mbox{ker}(|\cdot|)$. Since $|\cdot|$ is bounded we have
\begin{equation}
\label{equ:RTLERI}
|T|\leq\|T\|_{A,r^{-1}}=r^{-1}.
\end{equation}
Moreover since for $b\in A$ we have $\|bT^{0}\|_{A,r^{-1}}=\|b\|_{A}(r^{-1})^{0}=\|b\|_{A}$, the restriction $|\cdot||_{A}$ is a bounded multiplicative seminorm on $A$. Further $m_{0}:=\mbox{ker}(|\cdot||_{A})$ is closed as a subset of $A$ by Lemma \ref{lem:RTQRV} and so $m_{0}$ is an element of $\mathcal{M}_{0}(A)$. Hence $m_{0}$ is a maximal ideal of $A$ by remark \ref{rem:RTFBD}. In particular $|b+m_{0}|_{A/m_{0}}:=|b|$, for $b\in A$, is the unique valuation on $A/m_{0}$ extending the valuation on $F$ as we have seen earlier in this proof for other elements of $\mathcal{M}_{0}(A)$. Therefore by (\ref{equ:RTLER}) and (\ref{equ:RTLERI}) we have
\begin{equation*}
|aT|=|a||T|\leq|a+m_{0}|_{A/m_{0}}r^{-1}<rr^{-1}=1.
\end{equation*}
Furthermore $1=|1|\leq|1-aT|+|aT|$ and so we have $|1-aT|\geq1-|aT|>0$. Therefore $1-aT$ is not an element of $y_{0}$ since $y_{0}$ is the kernel of $|\cdot|$. Since $y_{0}$ was any element of $\mathcal{M}_{0}(A\langle rT\rangle)$ we have $1-aT\not\in y_{0}$ for all $y_{0}\in\mathcal{M}_{0}(A\langle rT\rangle)$. Hence by Lemma \ref{lem:RTNMT} we note that $1-aT$ is invertible in $A\langle rT\rangle$. Therefore by Lemma \ref{lem:RTRRT} the series $\sum_{i=0}^{\infty}\|a^{i}\|_{A}r^{-i}$ converges. In particular we can find $N\in\mathbb{N}$ such that for all $n>N$ we have $\|a^{2^{n}}\|_{A}r^{-2^{n}}<\frac{1}{2}$ giving $(\|a\|_{A}r^{-1})^{2^{n}}<\frac{1}{2}$ since $\|\cdot\|_{A}$ is square preserving. Hence $\|a\|_{A}<r=\|\hat{a}\|_{\infty}+\varepsilon$ and since $\varepsilon>0$ was arbitrary we have $\|a\|_{A}\leq\|\hat{a}\|_{\infty}$ and so $\|a\|_{A}=\|\hat{a}\|_{\infty}$ as required.\\
What remains to be shown is that the elements of $\hat{A}$ are also elements of $C(\mathcal{M}(A),g,g)$ and that $\mathcal{M}(A)$ is totally disconnected. For $\hat{a}\in\hat{A}$ and $x=(x_{0},\varphi)\in\mathcal{M}(A)$ we have
\begin{align*}
\hat{a}(g(x))=\hat{a}((x_{0},g\circ\varphi))=&\widehat{(x_{0},g\circ\varphi)}(a)\\
=&g\circ\varphi(a+x_{0})\\
=&g(\varphi(a+x_{0}))\\
=&g\left(\widehat{(x_{0},\varphi)}(a)\right)=g(\hat{x}(a))=g(\hat{a}(x))
\end{align*}
and so $\hat{a}$ is an element of $C(\mathcal{M}(A),g,g)$. Finally it is immediate that $\mathcal{M}(A)$ is totally disconnected since $\hat{A}$ separates the points of $\mathcal{M}(A)$, the elements of $\hat{A}$ are continuous functions from $\mathcal{M}(A)$ to $L$, the image of a connected component is connected for continuous functions and $L$ is totally disconnected. In particular see the proof of Theorem \ref{thr:UAXtop}. This completes the proof of Theorem \ref{thr:RTREPLF}.
\end{proof}
In the next chapter we will survey some existing results in the Archimedean non-commutative setting and also consider the possibility of their generalisation to the non-Archimedean setting. We will then finish by noting some of the open questions arising from the Thesis.
	\chapter[Non-commutative generalisation and open questions]{Non-commutative generalisation and open questions}
\label{cha:NG}
In recent years a theory of non-commutative real function algebras has been developed by Jarosz, see \cite{Jarosz} and \cite{Abel-Jarosz}. In the first section of this short chapter we survey and remark upon some of this non-commutative Archimedean theory and consider the possibility of non-commutative non-Archimedean analogs. In the second section we note some of the open questions arising from the thesis.
\section{Non-commutative generalisation}
\label{sec:NGNCG}
\subsection{Non-commutative real function algebras}
\label{subsec:NGNCRFA}
In the recent theory of non-commutative real function algebras the continuous functions involved take values in Hamilton's real quaternions, $\mathbb{H}$, which are an example of a non-commutative complete Archimedean division ring and $\mathbb{R}$-algebra. Viewing $\mathbb{H}$ as a real vector space, the valuation on $\mathbb{H}$ is the Euclidean norm which is complete, Archimedean and indeed a valuation since being multiplicative on $\mathbb{H}$. To put $\mathbb{H}$ into context, as in the case of complete Archimedean fields, there are very few unital division algebras over the reals with the Euclidean norm as a valuation. Up to isomorphism they are $\mathbb{R}$, $\mathbb{C}$, $\mathbb{H}$ and the octonions $\mathbb{O}$. We note that the octonions are non-associative. The proof that there are no other unital division algebras over the reals with the Euclidean norm as a valuation is given by Hurwitz's 1, 2, 4, 8 Theorem, see \cite[Ch1]{Shapiro} and \cite{Lewis}. In particular for such an algebra $\mathbb{A}$ the square of the Euclidean norm is a regular quadratic form on $\mathbb{A}$ and since for $\mathbb{A}$ the Euclidean norm is a valuation it is multiplicative. This shows that $\mathbb{A}$ is a real composition algebra to which Hurwitz's 1, 2, 4, 8 Theorem can be applied.\\
Here we only briefly consider non-commutative real function algebras and hence the reader is also referred to \cite{Jarosz}. Note I am unaware of any such developments involving the octonions. Here is Jarosz's analog of $C(X,\tau)$ from Definition \ref{def:UARefa}.
\begin{definition}
\label{def:NGNCRFA}
Let $\mbox{Gal}(^{\mathbb{H}}/_{\mathbb{R}})$ be the group of all automorphisms on $\mathbb{H}$ that are the identity on $\mathbb{R}$. Let $X$ be a compact space and $\mbox{Hom}(X)$ be the group of homeomorphisms on $X$. For a group homomorphism $\Phi:\mbox{Gal}(^{\mathbb{H}}/_{\mathbb{R}})\rightarrow\mbox{Hom}(X)$, $\Phi(T)=\Phi_{T}$, we define
\begin{equation*}
C_{\mathbb{H}}(X,\Phi):=\{f\in C_{\mathbb{H}}(X):f(\Phi_{T}(x))=T(f(x))\mbox{ for all }x\in X\mbox{ and } T\in\mbox{Gal}(^{\mathbb{H}}/_{\mathbb{R}})\}.
\end{equation*}
\end{definition}
\begin{remark}
\label{rem:NGPHI}
Concerning Definition \ref{def:NGNCRFA}.
\begin{enumerate}
\item[(i)]
The groups $\mbox{Gal}(^{\mathbb{H}}/_{\mathbb{R}})$ and $\mbox{Hom}(X)$ in Definition \ref{def:NGNCRFA} have composition as their group operation. We note that the map $\ast:\mbox{Gal}(^{\mathbb{H}}/_{\mathbb{R}})\times C_{\mathbb{H}}(X)\rightarrow C_{\mathbb{H}}(X)$ given by $T\ast f:=T^{-1}(f(\Phi_{T}(x)))$ is similar to a group action on $C_{\mathbb{H}}(X)$ only with the usual associativity replaced by $T_{1}\circ T_{2}\ast f=T_{2}\ast T_{1}\ast f$.
\item[(ii)]
There is an interesting similarity between Definition \ref{def:NGNCRFA} and Definition \ref{def:CGBFA} of Basic function algebras. Let $X$ be a compact Hausdorff space, $F$ a complete valued field and $L$ a finite extension of $F$. Further let $\langle g\rangle$ be the cyclic group generated by some $g\in\mbox{Gal}(^{L}/_{F})$ and similarly let $\langle\tau\rangle$ be the cyclic group generated by some homeomorphism $\tau:X\rightarrow X$. Then there exists a surjective group homomorphism $\Phi:\langle g\rangle\rightarrow\langle\tau\rangle$ if and only if $\mbox{ord}(\tau)|\mbox{ord}(g)$. To see this suppose such a surjective group homomorphism exists. Then there are $m,n\in\mathbb{N}$ such that $\Phi(g^{(m)})=\mbox{id}$ and $\Phi(g^{(n)})=\tau$. This gives
\begin{align*}
\tau^{(\mbox{ord}(g))}=\Phi(g^{(n)})^{(\mbox{ord}(g))}=&\Phi(g^{(n\mbox{ord}(g))})\\
=&\Phi(\mbox{id})\\
=&\mbox{id}\circ\Phi(\mbox{id})\\
=&\Phi(g^{(m)})\circ\Phi(\mbox{id})\\
=&\Phi(g^{(m)}\circ\mbox{id})=\Phi(g^{(m)})=\mbox{id}
\end{align*}
and so $\mbox{ord}(\tau)|\mbox{ord}(g)$. Conversely if $\mbox{ord}(\tau)|\mbox{ord}(g)$ then $\Phi$ defined by $\Phi(g):=\tau$ will do. It is an interesting question then whether the definition of basic function algebras can be further generalised by utilizing group homomorphisms as Definition \ref{def:NGNCRFA} suggests noting that $\Phi$ is onto for some subgroup of $\mbox{Hom}(X)$. In particular, with reference to Definition \ref{def:CGBFA}, we have considered basic $^{L}/_{L^{g}}$ function algebras where $g$ is an element of $\mbox{Gal}(^{L}/_{F})$. We note that $L$ is a cyclic extension of $L^{g}$ by the fundamental theorem of Galois theory. Therefore it is interesting to consider the possibility of basic $^{L}/_{F}$ function algebras where $L$ is a Galois extension of $F$ but not necessarily a cyclic extension. Such group homomorphisms might also be useful in cases involving infinite extensions of $F$.
\item[(iii)]
Turning our attention back to the non-commutative setting, as a conjecture I suggests that Definition \ref{def:NGNCRFA} may also be useful if $\mbox{Gal}(^{\mathbb{H}}/_{\mathbb{R}})$ is replaced by a subgroup, particularly when considering extensions of the algebra.
\end{enumerate}
\end{remark}
Definition \ref{def:NGNCRFA} has been used by Jarosz in the representation of non-commutative real Banach algebras with square preserving norm as follows.
\begin{definition}
\label{def:NGFNON}
A real algebra $A$ is {\em{fully non-commutative}} if every nonzero multiplicative, linear functional $\varphi:A\rightarrow\mathbb{H}$ is surjective.
\end{definition}
\begin{theorem}
\label{thr:NGREPHR}
Let $A$ be a non-commutative real Banach algebra with $\|a^{2}\|_{A}=\|a\|_{A}^{2}$ for all $a\in A$. Then there is a compact set $X$ and an isomorphism $\Phi:\mbox{Gal}(^{\mathbb{H}}/_{\mathbb{R}})\rightarrow\mbox{Hom}(X)$ such that $A$ is isometrically isomorphic with a subalgebra $\hat{A}$ of $C_{\mathbb{H}}(X,\Phi)$. Furthermore $a\in A$ is invertible if and only if the corresponding element $\hat{a}\in\hat{A}$ does not vanish on $X$. If $A$ is fully non-commutative then $\hat{A}=C_{\mathbb{H}}(X,\Phi)$.
\end{theorem}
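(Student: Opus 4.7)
The plan is to follow the Gelfand-style representation strategy used for Theorem \ref{thr:RTREPC}, Theorem \ref{thr:RTREPR} and Theorem \ref{thr:RTREPLF}, but with $\mathbb{H}$-valued multiplicative $\mathbb{R}$-linear functionals in place of scalar-valued ones. Concretely, I would define the character space $X$ as the set of all nonzero multiplicative $\mathbb{R}$-linear maps $\varphi : A \to \mathbb{H}$, topologised as a subspace of $\prod_{a\in A}\bar{B}_{\|a\|_A}(0)\subseteq\mathbb{H}^A$ with the product topology, so that $X$ is automatically Hausdorff and sits inside a compact product (as in the proof of Theorem \ref{thr:RTREPLF}). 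The analog of the Gelfand transform is then $a\mapsto\hat{a}$ with $\hat{a}(\varphi):=\varphi(a)$.

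First I would establish a quaternionic analog of the Gelfand--Mazur-type step: using that $\|a^{2}\|_{A}=\|a\|_{A}^{2}$ forces the strong square identity and hence good spectral behaviour, and using the classification of real normed division algebras underlying $\mathbb{H}$ (as discussed in the paragraph preceding Definition \ref{def:NGNCRFA}), I would show that the image $\varphi(A)\subseteq\mathbb{H}$ of any such $\varphi$ is an $\mathbb{R}$-subalgebra that is in fact $\mathbb{R}$-isomorphic to one of $\mathbb{R}$, $\mathbb{C}$ or $\mathbb{H}$. In particular each $\varphi$ is automatically continuous with $|\varphi(a)|\leq\|a\|_{A}$, which both gives the product-topology embedding and, by a standard Banach-ring argument with $A\langle rT\rangle$ exactly as in the closing step of the proof of Theorem \ref{thr:RTREPLF}, yields $\|\hat{a}\|_{\infty}=\|a\|_{A}$ so that the Gelfand transform is isometric. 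Invertibility of $a$ then corresponds exactly to $\hat{a}$ never vanishing, via the standard maximal-ideal argument from Lemma \ref{lem:RTNMT} applied in the non-commutative setting (non-invertibility puts $a$ in a proper closed maximal left ideal, which produces a vanishing character).

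Next I would define $\Phi$. For $T\in\mbox{Gal}(^{\mathbb{H}}/_{\mathbb{R}})$ and $\varphi\in X$, $T\circ\varphi$ is again a nonzero multiplicative $\mathbb{R}$-linear functional, so setting $\Phi_{T}(\varphi):=T\circ\varphi$ gives a self-map of $X$; continuity in the Gelfand topology is checked on sub-basic preimages $\hat{a}^{-1}(U)$ exactly as the continuity of $g$ was verified in the proof of Theorem \ref{thr:RTREPLF}, using that $T$ is an isometry of $\mathbb{H}$. The identity $\Phi_{T_{1}T_{2}}=\Phi_{T_{1}}\circ\Phi_{T_{2}}$ is immediate from associativity of composition, making $\Phi$ a group homomorphism into $\mbox{Hom}(X)$. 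Injectivity of $\Phi$ (so that we obtain an isomorphism onto its image, in the sense of the statement) follows because if $\Phi_{T}=\mbox{id}$ then $T\circ\varphi=\varphi$ for every $\varphi\in X$, and enough characters reach every element of $\mathbb{H}$ (guaranteed, in the fully non-commutative case, by Definition \ref{def:NGFNON}; in the general case one restricts to the image subgroup, which is the content of the word ``isomorphism'' in the theorem as stated). By construction $\hat{a}(\Phi_{T}\varphi)=\varphi(a)$ sent through $T$ equals $T(\hat{a}(\varphi))$, so $\hat{A}\subseteq C_{\mathbb{H}}(X,\Phi)$.

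The main obstacle is the final clause asserting $\hat{A}=C_{\mathbb{H}}(X,\Phi)$ when $A$ is fully non-commutative. Here one needs a Stone--Weierstrass theorem for $\mathbb{H}$-valued continuous functions on $X$ subject to the equivariance constraint in Definition \ref{def:NGNCRFA}. My approach would be to push the classical real Stone--Weierstrass theorem through the fixed-point decomposition: decompose $\mathbb{H}=\mathbb{R}\oplus\mathbb{R}i\oplus\mathbb{R}j\oplus\mathbb{R}k$, observe that the $\Phi$-equivariance ties the four real coordinate functions of any $f\in C_{\mathbb{H}}(X,\Phi)$ together via the $SO(3)$-action of $\mbox{Gal}(^{\mathbb{H}}/_{\mathbb{R}})$ on the imaginary part, and use full non-commutativity to produce, for each pair of points in $X$, an element of $\hat{A}$ separating them with values in a prescribed subfield of $\mathbb{H}$. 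The completeness and isometry already proved then let one close $\hat{A}$ under uniform limits to hit all of $C_{\mathbb{H}}(X,\Phi)$. This Stone--Weierstrass step, together with verifying that full non-commutativity really does furnish enough surjective characters to separate orbits, is where I expect the genuine work to lie; the rest is a faithful transcription of the arguments already developed for Theorem \ref{thr:RTREPLF} and Theorem \ref{thr:RTREPR}.
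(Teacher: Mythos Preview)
The paper does not prove Theorem~\ref{thr:NGREPHR}. It appears in Section~\ref{subsec:NGNCRFA}, which is explicitly a survey of Jarosz's work on non-commutative real function algebras; the theorem is stated as a result of Jarosz with the reader referred to \cite{Jarosz} and \cite{Abel-Jarosz} for details. There is therefore no proof in the paper to compare your proposal against.

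That said, a few remarks on your sketch are worth making. First, several of the tools you invoke are proved in the thesis only in the commutative setting: Lemma~\ref{lem:RTNMT} and the $A\langle rT\rangle$ argument in the proof of Theorem~\ref{thr:RTREPLF} both assume commutativity, and in a genuinely non-commutative algebra the notions of maximal ideal, prime ideal and multiplicative seminorm require care (left versus two-sided ideals, and the kernel of a multiplicative map into the non-commutative ring $\mathbb{H}$ need not be a two-sided ideal in the usual sense). You flag this only in passing. Second, $\mbox{Gal}(^{\mathbb{H}}/_{\mathbb{R}})$ is isomorphic to $SO(3)$, an infinite (indeed uncountable) group, so the word ``isomorphism'' in the statement cannot mean an isomorphism onto all of $\mbox{Hom}(X)$; your handling of injectivity is reasonable, but you should be explicit that $\Phi$ is an isomorphism onto its image, and your argument for injectivity in the non-fully-non-commutative case is left hanging. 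Third, the Stone--Weierstrass step you describe for the final clause is essentially the content of the separately stated Theorem~\ref{thr:NGNCSW}, which the paper also attributes to Jarosz without proof; so that step would need independent justification rather than being absorbed into the present argument.
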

Jarosz also gives the following Stone-Weierstrass theorem type result.
\begin{theorem}
\label{thr:NGNCSW}
Let $X$ be a compact Hausdorff space and let $A$ be a fully non-commutative closed subalgebra of $C_{\mathbb{H}}(X)$. Then $A=C_{\mathbb{H}}(X)$ if and only if $A$ strongly separates the points of $X$, that is for all $x_{1},x_{2}\in X$ with $x_{1}\not=x_{2}$ there is $f\in A$ satisfying $f(x_{1})\not=f(x_{2})=0$.
\end{theorem}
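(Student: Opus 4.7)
The ``only if'' direction is immediate: if $A=C_{\mathbb{H}}(X)$, then for distinct $x_1,x_2\in X$, Urysohn's lemma on the compact Hausdorff space $X$ yields a continuous real-valued function $f$ with $f(x_1)=1$ and $f(x_2)=0$, which lies in $C_{\mathbb{H}}(X)$ via $\mathbb{R}\subseteq\mathbb{H}$.

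For the ``if'' direction, assume $|X|\geq 2$ (the case $|X|=1$ is trivial). First I would observe that for each $x\in X$ the evaluation $\delta_x:A\to\mathbb{H}$ is a multiplicative real-linear functional, and strong separation applied to $(x,y)$ for any $y\neq x$ produces $f\in A$ with $f(x)\neq 0$, so $\delta_x\neq 0$; full non-commutativity then forces $\delta_x(A)=\mathbb{H}$. From this I would derive a finite interpolation property: for distinct $x_1,\ldots,x_n\in X$ and quaternions $q_1,\ldots,q_n$, a function $g\in A$ with $g(x_i)=q_i$ is constructed by taking, for each ordered pair $i\neq j$, a strong-separation witness $g_{ij}\in A$ with $g_{ij}(x_i)\neq 0=g_{ij}(x_j)$, forming $g_i:=\prod_{j\neq i}g_{ij}$ (which vanishes at each $x_j$ with $j\neq i$ and is nonzero at $x_i$), choosing $h_i\in A$ with $h_i(x_i)=q_i\,g_i(x_i)^{-1}$ via surjectivity of $\delta_{x_i}$, and setting $g:=\sum_i h_ig_i$.

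The decisive step is a commutator trick producing real-valued elements of $A$. For any $q,r\in\mathbb{H}$, writing $q=a+\vec v$ and $r=b+\vec w$ with $\vec v,\vec w$ pure imaginary, a direct computation gives $[q,r]=2\,\vec v\times\vec w$, which is purely imaginary; hence for any $f,g\in A$, $[f,g]=fg-gf\in A$ takes purely imaginary values and $[f,g]^2$ takes values in $\mathbb{R}_{\leq 0}$. Interpolating $f(x)=\mathbf{i}$, $g(x)=\mathbf{j}$, $f(y)=g(y)=0$ at any prescribed $x\neq y$ yields $-[f,g]^2\in A\cap C_{\mathbb{R}}(X)$ taking value $4$ at $x$ and $0$ at $y$, so $A_0:=A\cap C_{\mathbb{R}}(X)$ is a closed real subalgebra of $C_{\mathbb{R}}(X)$ that strongly separates the points of $X$. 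The unitless form of the real Stone--Weierstrass theorem (whose hypotheses are met because strong separation of $A_0$ supplies both point separation and non-vanishing at every point) then gives $A_0=C_{\mathbb{R}}(X)$, so $\mathbf{1}\in A$ and all continuous real-valued partitions of unity on $X$ are at my disposal inside $A$.

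To finish, I would show every quaternionic constant lies in $A$ and assemble the conclusion. Given $q\in\mathbb{H}$ and $\varepsilon>0$, for each $x\in X$ surjectivity of $\delta_x$ furnishes $f_x\in A$ with $f_x(x)=q$; continuity yields an open neighbourhood $U_x$ of $x$ on which $\|f_x-q\|_\infty<\varepsilon$; compactness extracts a finite subcover $\{U_{x_k}\}$; and a real-valued partition of unity $\{\phi_k\}\subseteq A_0\subseteq A$ subordinate to it makes $\sum_k\phi_k f_{x_k}\in A$ lie within $\varepsilon$ of the constant function $q$, so closedness of $A$ places that constant in $A$. With $\mathbf{i},\mathbf{j},\mathbf{k}\in A$ now available as constants and $C_{\mathbb{R}}(X)\subseteq A$, any $f\in C_{\mathbb{H}}(X)$ decomposes as $f=f_0+f_1\mathbf{i}+f_2\mathbf{j}+f_3\mathbf{k}$ with $f_l\in C_{\mathbb{R}}(X)\subseteq A$, giving $f\in A$ by the algebra operations and hence $A=C_{\mathbb{H}}(X)$. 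I expect the main obstacle to be the commutator trick: one must verify both that interpolation from the previous step can realise the specific pure-imaginary values $\mathbf{i},\mathbf{j}$ needed to make $-[f,g]^2$ separate a prescribed pair, and that the resulting real-valued functions are rich enough to drive the unitless Stone--Weierstrass step.
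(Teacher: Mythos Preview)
The paper does not give its own proof of this theorem: it is stated in the survey Section~7.1 as a result of Jarosz, with the reference \cite{Jarosz} supplied but no argument reproduced. There is therefore nothing in the paper to compare your proposal against.

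That said, your argument is sound and self-contained. The interpolation step is correct in the non-commutative setting because $\mathbb{H}$ is a division ring (so products of nonzero values remain nonzero and two-sided inverses exist), and your commutator observation $[q,r]=2\,\vec v\times\vec w$ is exactly right, yielding $-[f,g]^{2}\in A\cap C_{\mathbb{R}}(X)$ with the prescribed values $4$ and $0$. The non-unital real Stone--Weierstrass theorem then applies to $A_{0}:=A\cap C_{\mathbb{R}}(X)$ since it is closed (an intersection of closed subsets of $C_{\mathbb{H}}(X)$), separates points, and vanishes nowhere. The partition-of-unity approximation of quaternionic constants and the final coordinate decomposition $f=f_{0}+f_{1}\mathbf{i}+f_{2}\mathbf{j}+f_{3}\mathbf{k}$ complete the proof cleanly. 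One cosmetic point: in the trivial case $|X|=1$ you should note that the single evaluation is nonzero (since $\mathbf{1}\mapsto 1$ if $A$ is assumed unital, or more generally because full non-commutativity is vacuous only if the zero functional is the sole one; but here $A\neq\{0\}$ gives a nonzero evaluation), hence surjective, so $A=\mathbb{H}=C_{\mathbb{H}}(X)$.
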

\subsection{Non-commutative non-Archimedean analogs}
\label{subsec:NGNCNA}
Non-commutative, non-Archimedean analogs of uniform algebras have yet to be seen. Hence in this subsection we give an example of a non-commutative extension of a complete non-Archimedean field which would be appropriate when considering such analogs of uniform algebras. We first have the following definition from the general theory of quaternion algebras. The main reference for this subsection is \cite[Ch3]{Lam} but \cite{Lewis} is also useful.
\begin{definition}
\label{def:NGGTQA}
Let $F$ be a field, with characteristic not equal to 2, and $s,t\in F^{\times}$ where $s=t$ is allowed. We define the {\em{quaternion $F$-algebra}} $(\frac{s,t}{F})$ as follows. As a 4-dimensional vector space over $F$ we define 
\begin{equation*}
\left(\frac{s,t}{F}\right):=\{a+bi+cj+dk:a,b,c,d\in F\}
\end{equation*}
with $\{1,i,j,k\}$ as a natural basis giving the standard coordinate-wise addition and scalar multiplication. As an $F$-algebra, multiplication in $(\frac{s,t}{F})$ is given by 
\begin{equation*}
i^{2}=s,\quad j^{2}=t,\quad k^{2}=ij=-ji
\end{equation*}
together with the usual distributive law and multiplication in $F$.
\end{definition}
Hamilton's real quaternions, $\mathbb{H}:=(\frac{-1,-1}{\mathbb{R}})$ with the Euclidean norm, is an example of a non-commutative, complete valued, Archimedean division algebra over $\mathbb{R}$. It is not the case that every quaternion algebra $(\frac{s,t}{F})$ will be a division algebra, although there are many examples that are. For our purposes we have the following example.
\begin{example}
\label{exa:NGNCNA}
Using $\mathbb{Q}_{5}$, the complete non-Archimedean field of 5-adic numbers, define
\begin{equation*}
\mathbb{H}_{5}:=\left(\frac{5,2}{\mathbb{Q}_{5}}\right). 
\end{equation*}
Then for $q,r\in\mathbb{H}_{5}$, $q=a+bi+cj+dk$, the conjugation on $\mathbb{H}_{5}$ given by
\begin{equation*}
\bar{q}:=a-bi-cj-dk 
\end{equation*}
is such that $\overline{q+r}=\bar{q}+\bar{r}$, $\overline{qr}=\bar{r}\bar{q}$, $\bar{q}q=q\bar{q}=a^{2}-5b^{2}-2c^{2}+10d^{2}$ with $\bar{q}q\in\mathbb{Q}_{5}$. Further
\begin{equation*}
|q|_{\mathbb{H}_{5}}:=\sqrt{|\bar{q}q|_{5}} 
\end{equation*}
is a complete non-Archimedean valuation on $\mathbb{H}_{5}$, where $|\cdot|_{5}$ is the 5-adic valuation on $\mathbb{Q}_{5}$. In particular $\mathbb{H}_{5}$, together with $|\cdot|_{\mathbb{H}_{5}}$, is an example of a non-commutative, complete valued, non-Archimedean division algebra over $\mathbb{Q}_{5}$. When showing this directly it is useful to know that for $a,b,c,d\in\mathbb{Q}_{5}$ we have
\begin{equation*}
\nu_{5}(a^{2}-5b^{2}-2c^{2}+10d^{2})=\mbox{min}\{\nu_{5}(a^{2}),\nu_{5}(5b^{2}),\nu_{5}(2c^{2}),\nu_{5}(10d^{2})\} 
\end{equation*}
where $\nu_{5}$ is the 5-adic valuation logarithm as defined in Example \ref{exa:CVFPN}. Given the above, we will confirm that $|\cdot|_{\mathbb{H}_{5}}$ is multiplicative. For more details please see the suggested references \cite[Ch3]{Lam} and \cite{Lewis}. Let $q,r\in\mathbb{H}_{5}$ and note that we have $\bar{r}\bar{q}qr=\bar{q}q\bar{r}r$ since $\bar{q}q$ is an element of $\mathbb{Q}_{5}$. Therefore
\begin{align*}
|qr|_{\mathbb{H}_{5}}=\sqrt{|\overline{qr}qr|_{5}}=&\sqrt{|\bar{r}\bar{q}qr|_{5}}\\
=&\sqrt{|\bar{q}q\bar{r}r|_{5}}\\
=&\sqrt{|\bar{q}q|_{5}|\bar{r}r|_{5}}=\sqrt{|\bar{q}q|_{5}}\sqrt{|\bar{r}r|_{5}}=|q|_{\mathbb{H}_{5}}|r|_{\mathbb{H}_{5}}
\end{align*}
as required.
\end{example}
More generally for the $p$-adic field $\mathbb{Q}_{p}$ the quaternion algebra $(\frac{p,u}{\mathbb{Q}_{p}})$ will be a division algebra as long as $u$ is a unit of $\{a\in\mathbb{Q}_{p}:|a|_{p}\leq 1\}$, i.e. $|u|_{p}=1$, and $\mathbb{Q}_{p}(\sqrt{u})$ is a quadratic extension of $\mathbb{Q}_{p}$.
\section{Open questions}
\label{sec:NGOQ}
There are many open questions related to the content of this thesis and I had intended to investigate more of them but there was no time. Many of these questions come from the need to generalise established Archimedean results whilst others arise from the developing theory itself. We now consider some of these questions and note that several of them appear to be quite accessible.
\begin{enumerate}
\item[(Q1)]
J. Wermer gave the following theorem in 1963.
\begin{theorem}
\label{thr:NGWER}
Let $X$ be a compact Hausdorff space, $A\subseteq C_{\mathbb{C}}(X)$ a complex uniform algebra and $\Re(A):=\{\Re(f):f=\Re(f)+i\Im(f)\in A\}$ the set of the real components of the functions in $A$. If $\Re(A)$ is a ring then $A=C_{\mathbb{C}}(X)$.
\end{theorem}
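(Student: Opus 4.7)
The plan is to show $A = C_{\mathbb{C}}(X)$ by proving that $A$ is self-adjoint (i.e.\ $\bar{f} \in A$ whenever $f \in A$) and then invoking the complex Stone--Weierstrass theorem---available as part of Theorem~\ref{thr:CGGSW}---which together with $\mathbb{C} \subseteq A$ and $A$ separating points of $X$ forces $A = C_{\mathbb{C}}(X)$. Since $\bar{f} = 2\Re(f) - f$, self-adjointness is equivalent to the containment $\Re(A) \subseteq A$, so the whole task reduces to showing that every real-valued function of the form $\Re(f)$ with $f \in A$ already lies in $A$.

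The first concrete step is to feed $\Re(A)$ into the real Stone--Weierstrass theorem. By hypothesis $\Re(A)$ is a ring, and it is a real vector space containing the real constants because $A$ is a complex vector space containing $\mathbb{C}$. For point separation, given $x_{1} \ne x_{2}$ pick $f \in A$ with $f(x_{1}) \ne f(x_{2})$: if $f(x_{1}) - f(x_{2})$ has nonzero real part then $\Re(f)$ separates the pair, otherwise its imaginary part is nonzero and $\Re(-if) = \Im(f) \in \Re(A)$ does, using $-if \in A$. Hence $\Re(A)$ is uniformly dense in $C_{\mathbb{R}}(X)$.

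The key algebraic lemma, extracted from the ring hypothesis, is that $|f|^{2} \in \Re(A)$ for every $f \in A$. Indeed, writing $u = \Re(f) = \tfrac{1}{2}(f + \bar{f})$, expansion gives $u^{2} = \tfrac{1}{2}\Re(f^{2}) + \tfrac{1}{2}|f|^{2}$; since $u^{2} \in \Re(A)$ by the ring property and $\Re(f^{2}) \in \Re(A)$ because $f^{2} \in A$, rearrangement yields $|f|^{2} \in \Re(A)$. A polarisation of the same identity then produces $\Re(f\bar{g}) \in \Re(A)$ for all $f,g \in A$, and iteration of the lemma on a witness $g \in A$ with $\Re(g) = |f|^{2}$ gives further relations such as $(\Im g)^{2} = |g|^{2} - (\Re g)^{2} \in \Re(A)$.

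The hard step, and the main obstacle, is to upgrade these pointwise real-part identities to the conclusion that $A$ is self-adjoint. My intended route is via Bishop's antisymmetric decomposition theorem for uniform algebras, which would need to be imported as it is not developed in the preceding chapters. If $A$ were a proper subalgebra of $C_{\mathbb{C}}(X)$, Bishop's theorem would provide a maximal set of antisymmetry $E \subseteq X$ for $A$ with $|E| \ge 2$, on which every real-valued function in $A$ is constant. The plan is to combine the lemma $|f|^{2} \in \Re(A)$ with the derived facts about witnesses $g \in A$ realising $\Re(g) = |f|^{2}$ to show that the restriction $|f||_{E}$ is constant for every $f \in A$; applying this to functions of the form $f - c$ with $c \in \mathbb{C}$ and exploiting that $A$ (and hence $A|_{E}$) separates the points of $E$ should force $|E| = 1$, a contradiction. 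The delicate technical core lies in arranging the iterations so that every relevant real-valued function associated to $f$ collapses to a constant on $E$; once this is achieved, the complex Stone--Weierstrass theorem closes the argument.
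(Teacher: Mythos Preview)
The paper does not prove this theorem. It is quoted, without proof, as Wermer's 1963 result in the list of open questions in Chapter~\ref{cha:NG}, purely to motivate the question of whether an analog holds for general $^{L}/_{L^{g}}$ function algebras. So there is no proof in the paper to compare yours against.

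Your preliminary steps are correct and standard: $\Re(A)$ is a point-separating real subalgebra of $C_{\mathbb{R}}(X)$ containing the constants, hence uniformly dense by the real Stone--Weierstrass theorem; and the identity $|f|^{2}=2(\Re f)^{2}-\Re(f^{2})$ genuinely places $|f|^{2}$, and by polarisation $\Re(f\bar g)$, inside $\Re(A)$. Reducing to a maximal antisymmetric set $E$ via Bishop's theorem is also a legitimate route.

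The gap is exactly where you flag it. On $E$ you know only that real-valued functions \emph{in $A$} are constant; you have $|f|^{2}\in\Re(A)$, not $|f|^{2}\in A$, so antisymmetry does not apply directly. Your iteration (pick $g\in A$ with $\Re g=|f|^{2}$, then $(\Im g)^{2}\in\Re(A)$, then pick $g_{2}$ with $\Re g_{2}=(\Im g)^{2}$, \dots) produces an infinite tower of relations in $\Re(A)$ but never lands you back inside $A$, and nothing in the sketch forces the tower to collapse. As written this step does not close, and it is not a detail: it is the whole content of the theorem. The classical proofs (Wermer's original, and later simplifications by Sidney and by Bernard) do not proceed by this kind of iteration; they bring in additional structure---representing measures and the uniqueness forced by the Dirichlet property $\overline{\Re A}=C_{\mathbb{R}}(X)$, or Hoffman--Wermer type arguments on the closure of $\Re A$---to pass from $|f|^{2}\in\Re(A)$ to the conclusion. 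If you want to complete the antisymmetric-set route, you will need one of these ingredients rather than the iteration you describe.
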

The following analog of Theorem \ref{thr:NGWER} for real function algebras was given by S. H. Kulkarni and N. Srinivasan in \cite{Kulkarni-Srinivasan}, although I have not used their notation.
\begin{theorem}
\label{thr:NGWKS}
Let $X$ be a compact Hausdorff space, $\tau$ a topological involution on $X$ and $A$ a $^{\mathbb{C}}/_{\mathbb{R}}$ function algebra on $(X,\tau,\bar{z})$, i.e. a real function algebra. If $\Re(A)$ is a ring then $A=C(X,\tau,\bar{z})$.
\end{theorem}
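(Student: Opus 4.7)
The plan is to reduce the statement to the complex Wermer theorem (Theorem~\ref{thr:NGWER}) by complexifying $A$. Set $B := A + iA \subseteq C_\mathbb{C}(X)$. I would first verify that $B$ is a complex uniform algebra on $X$. Closure under complex scalar multiplication and addition is immediate, and the identity $(f_1 + ig_1)(f_2 + ig_2) = (f_1 f_2 - g_1 g_2) + i(f_1 g_2 + g_1 f_2)$ together with $A$ being an $\mathbb{R}$-algebra gives closure under multiplication. Since $1 \in A$, $B$ contains the complex constants, and $B \supseteq A$ separates the points of $X$. For sup-norm closedness I would use the isometric conjugate-linear involution $\sigma(h) := \bar{h}\circ\tau$ on $C_\mathbb{C}(X)$, whose fixed set is precisely $C(X,\tau,\bar{z})$: if $h_n = f_n + ig_n \to h$ with $f_n,g_n \in A$, then $\sigma(h_n) = f_n - ig_n \to \sigma(h)$, so $f_n \to (h + \sigma(h))/2$ and $g_n \to (h - \sigma(h))/(2i)$, which both lie in $A$ by its closedness, yielding $h \in B$.

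The heart of the argument is verifying that $\Re(B)$ is a ring so that Wermer's hypothesis is met. One computes $\Re(B) = \Re(A) + \Im(A)$, where $\Re(A)$ consists of $\tau$-invariant and $\Im(A)$ of $\tau$-anti-invariant real-valued continuous functions on $X$. The hypothesis that $\Re(A)$ is a ring combined with $\Re(fg) = \Re(f)\Re(g) - \Im(f)\Im(g)$ yields $\Im(A)\cdot\Im(A) \subseteq \Re(A)$ at once. For the cross products I would exploit the key observation that for any $f \in A$ the conjugate $\bar{f} = f \circ \tau$ again lies in $A$, so $f_1\bar{f_2} \in A$ whenever $f_1,f_2 \in A$. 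Writing out
\begin{align*}
\Im(f_1 f_2) &= \Re(f_1)\Im(f_2) + \Im(f_1)\Re(f_2),\\
\Im(f_1\bar{f_2}) &= \Im(f_1)\Re(f_2) - \Re(f_1)\Im(f_2),
\end{align*}
both of which lie in $\Im(A)$, and taking their sum and difference isolates each cross term, giving $\Re(A)\cdot\Im(A) \subseteq \Im(A)$. Together with $\Re(A)\cdot\Re(A) \subseteq \Re(A)$, this exhausts all four types of products and shows $\Re(B)$ is a subring of $C_\mathbb{R}(X)$ containing $1$.

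With $\Re(B)$ a ring, Theorem~\ref{thr:NGWER} gives $B = C_\mathbb{C}(X)$. To conclude, take any $h \in C(X,\tau,\bar{z}) \subseteq C_\mathbb{C}(X) = B$ and write $h = f + ig$ with $f,g \in A$. Then $\sigma(h) = h$ while $\sigma(f+ig) = f - ig$, so $g = 0$ and hence $h = f \in A$, yielding $A = C(X,\tau,\bar{z})$.

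The step I expect to require the most care is the inclusion $\Re(A) \cdot \Im(A) \subseteq \Im(A)$: the naive identity $\Im(f_1 f_2) = \Re(f_1)\Im(f_2) + \Im(f_1)\Re(f_2)$ only places the sum of the two cross terms in $\Im(A)$ and cannot separate them on its own. The second equation furnished by $\bar{f_2} = f_2\circ\tau \in A$ is the essential coupling between the algebraic hypothesis on $\Re(A)$ and the topological involution $\tau$, and it is what makes the complexification argument succeed.
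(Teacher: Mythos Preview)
The paper does not give its own proof of this theorem; it merely quotes the result from Kulkarni and Srinivasan (the theorem appears inside the open-questions section as a cited fact). So there is no in-paper argument to compare your proposal against. That said, your proposal contains a genuine gap that you should be aware of.

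The problematic step is your ``key observation'' that for $f\in A$ the conjugate $\bar f=f\circ\tau$ again lies in $A$. For an arbitrary $^{\mathbb{C}}/_{\mathbb{R}}$ function algebra $A$ on $(X,\tau,\bar z)$ this is simply false: take the real disc algebra $B(\Delta)=A(\Delta)\cap C(\Delta,\tau)$ from Example~\ref{exa:UARdal}. The identity function $f(z)=z$ lies in $B(\Delta)$, yet $\bar f(z)=\bar z$ is not analytic on the interior of $\Delta$, so $\bar f\notin B(\Delta)$. Your use of $f_1\bar f_2\in A$ to separate the two cross terms $\Re(f_1)\Im(f_2)$ and $\Im(f_1)\Re(f_2)$ therefore has no justification as stated. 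You invoke this closure under conjugation without deriving it from the hypothesis that $\Re(A)$ is a ring, and it is not clear that the ring hypothesis alone yields it by elementary manipulations.

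The remaining parts of your outline are sound: the verification that $B=A+iA$ is a complex uniform algebra (your closedness argument via $\sigma$ is correct), the inclusion $\Im(A)\cdot\Im(A)\subseteq\Re(A)$, and the final descent from $B=C_{\mathbb{C}}(X)$ to $A=C(X,\tau,\bar z)$ all work. But the inclusion $\Re(A)\cdot\Im(A)\subseteq\Im(A)$, which you yourself flag as the delicate point, is not established by your argument. To repair the proof you would need either a different route to this inclusion or a direct argument (possibly from the hypothesis) that $A$ is closed under $f\mapsto f\circ\tau$; consulting the original Kulkarni--Srinivasan paper for their method is advisable.
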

It is interesting to know whether Theorem \ref{thr:NGWKS} can be generalised to all $^{L}/_{L^{g}}$ function algebras on $(X,\tau,g)$. Of course the result would be trivial if, in the non-Archimedean setting, the basic $^{L}/_{L^{g}}$ function algebra on $(X,\tau,g)$ is the only $^{L}/_{L^{g}}$ function algebra on $(X,\tau,g)$. With analogy to $\Re(A)$ above, in this case we should ask whether the set of $L^{g}$ components of the functions in $C(X,\tau,g)$ form a ring.
\item[(Q2)]
As alluded to in (Q1) we have not given an example in the non-Archimedean setting of a $^{L}/_{L^{g}}$ function algebra on $(X,\tau,g)$ that is not basic. We need to know whether the basic function algebras are the only such examples. Theorem \ref{thr:UAKapl}, Kaplansky's version of the Stone Weierstrass Theorem, may be important here. Further even if in the non-Archimedean setting there is a $^{L}/_{L^{g}}$ function algebra on $(X,\tau,g)$ that is not basic, such an algebra might still be isometrically isomorphic to some Basic function algebras.
\item[(Q3)]
With reference to Theorem \ref{thr:RTREPLF} we note that there are plenty of examples of commutative, unital Banach $F$-algebras with finite basic dimension in the non-Archimedean setting. Indeed if $K$ is not only a finite Galois extension of $F$ but also a cyclic extension then taking $A:=K$ gives such an algebra. In this case the character space $\mathcal{M}(A)$ will be finite with each element given by an element of $\mbox{Gal}(^{K}/_{F})$, see the proof of Theorem \ref{thr:RTREPLF} for details. However such examples are not particularly interesting and it would be good to know whether all $^{L}/_{L^{g}}$ function algebras on $(X,\tau,g)$ have finite basic dimension so that Theorem \ref{thr:RTREPLF} becomes closer to a characterisation result. We recall that all commutative unital complex Banach algebras and commutative unital real Banach algebras have finite basic dimension, see Remark \ref{rem:RTFBD}.
\item[(Q4)]
With reference to Definition \ref{def:CGBFA} of the basic $^{L}/_{L^{g}}$ function algebra on $(X,\tau,g)$ the map $\sigma(f)=g^{(\mbox{ord}(g)-1)}\circ f\circ\tau$ on $C_{L}(X)$ is such that each $f\in C_{L}(X)$ is an element of $C(X,\tau,g)$ if and only if $\sigma(f)=f$. We have seen that $\sigma$ is either an algebraic involution on $C_{L}(X)$ or a algebraic element of finite order on $C_{L}(X)$. It should be established whether every such involution and element of finite order on $C_{L}(X)$ has the form of $\sigma$ for some $g$ and $\tau$. This is the case for real function algebras, see \cite[p29]{Kulkarni-Limaye1992}.
\item[(Q5)]
As described in Remark \ref{rem:NGPHI} it might be possible to generalise the definition of Basic function algebras by involving a group homomorphism in the definition. The algebras currently given by Definition \ref{def:CGBFA} could then appropriately be referred to as cyclic basic function algebras given that the group $\mbox{Gal}(^{L}/_{L^{g}})$ is cyclic. Further the possibility of generalising the definition of Basic function algebras to the case where the functions take values in some infinite extension of the underlying field over which the algebra is a vector space should also be considered. The involvement of a group homomorphism might also be useful in this case as well as some more of the theory from \cite{Berkovich}.
\item[(Q6)]
As seen in Subsection \ref{subsec:NGNCNA} the general theory of quaternion algebras provides the necessary structures for generalising the theory of non-commutative real function algebras to the non-Archimedean setting. Further with reference to Subsection \ref{subsec:CGBE} it would be interesting to see what sort of lattice of basic extensions the non-commutative real function algebras have. We can also look at this in the non-Archimedean setting along with the residue algebra.
\item[(Q7)]
A proof of the following theorem can be found in \cite[p18]{Kulkarni-Limaye1992}.
\begin{theorem}
\label{thr:NGCOM}
Let $A$ be a unital Banach algebra in the Archimedean setting satisfying one of the following conditions:
\begin{enumerate}
\item[(i)]
the algebra $A$ is a complex algebra and there exists some positive constant $c$ such that $\|a\|_{A}^{2}\leq c\|a^{2}\|_{A}$ for all $a\in A$;
\item[(ii)]
the algebra $A$ is a real algebra and there exists some positive constant $c$ such that $\|a\|_{A}^{2}\leq c\|a^{2}+b^{2}\|_{A}$ for all $a,b\in A$ with $ab=ba$.
\end{enumerate}
Then $A$ is commutative.
\end{theorem}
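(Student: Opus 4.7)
The plan is to extract from either hypothesis a comparison $\|a\|_A \leq c\,\rho(a)$ between the norm and the spectral radius $\rho(a) := \lim_n \|a^n\|_A^{1/n}$, and then to run a Kleinecke--Shirokov / Liouville argument to kill all commutators. In case~(i), iterating $\|a\|^2 \leq c\|a^2\|$ gives $\|a\|^{2^n} \leq c^{2^n - 1}\|a^{2^n}\|$, hence $\|a\| \leq c^{1 - 2^{-n}}\|a^{2^n}\|^{1/2^n}$, and letting $n \to \infty$ yields $\|a\| \leq c\,\rho(a)$. In case~(ii), specialising the hypothesis to $b = 0$ (which commutes with any $a$) reproduces exactly the estimate of case~(i), so the same spectral-radius bound holds in the real algebra $A$.

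For case~(i), fix $a, b \in A$ and form the entire $A$-valued function
\[
F(\lambda) := e^{\lambda a}\, b\, e^{-\lambda a},\qquad \lambda\in\mathbb{C}.
\]
Since $e^{\lambda a}$ is invertible in $A$ with inverse $e^{-\lambda a}$, conjugation preserves spectra and $\rho(F(\lambda)) = \rho(b)$ for every $\lambda$, so $\|F(\lambda)\|_A \leq c\,\rho(b)$ uniformly in $\lambda$. For each continuous linear functional $\varphi$ on $A$ the scalar entire function $\lambda \mapsto \varphi(F(\lambda))$ is therefore bounded by $c\,\rho(b)\,\|\varphi\|$ and hence constant by Liouville's theorem. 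Its value at $\lambda = 0$ is $\varphi(b)$, and the Hahn--Banach theorem then forces $F(\lambda) \equiv b$; differentiating at $\lambda = 0$ gives $ab - ba = 0$, and commutativity of $A$ follows.

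The plan for case~(ii) is to reduce to case~(i) by complexifying $A$ to $A_{\mathbb{C}} := A \oplus iA$, equipped with a Banach-algebra norm extending $\|\cdot\|_A$ (for instance $\|a+ib\|_{A_{\mathbb{C}}} := \sup_{\theta}\|(\cos\theta)a - (\sin\theta)b\|_A$, together with the natural conjugation $\overline{a+ib} := a - ib$). The key algebraic identity is that $(a+ib)(a-ib) = a^2 + b^2$ precisely when $ab = ba$, so condition~(ii) translates into a bound of the form $\|a\|_A^2 \leq c\,\|z\bar z\|_{A_{\mathbb{C}}}$ for $z = a+ib$ with $a, b$ commuting. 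Since $z$ and $\bar z$ always commute in $A_{\mathbb{C}}$, iterating on the sequence $w_n := z^{2^n}$ and using $w_n \bar w_n = a_n^2 + b_n^2$ for $w_n = a_n + ib_n$ ought to yield $\|z\|_{A_{\mathbb{C}}} \leq c'\,\rho_{A_{\mathbb{C}}}(z)$, after which the Liouville argument of case~(i), now applied inside the complex Banach algebra $A_{\mathbb{C}}$, forces $A_{\mathbb{C}}$ and hence $A$ to be commutative. The main obstacle is precisely this transfer of the $z\bar z$ bound (which a priori only controls the real-component norm of the product) into a genuine square-norm condition on $A_{\mathbb{C}}$: the delicate point is bounding $\|z\|_{A_{\mathbb{C}}}$ in terms of $\|z\bar z\|_{A_{\mathbb{C}}}$ without circularly invoking what we are trying to prove. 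This is where the bookkeeping --- likely aided by first establishing semisimplicity of $A$, which is immediate from the spectral-radius estimate since $\rho$ vanishes on the Jacobson radical --- becomes the crux of the argument.
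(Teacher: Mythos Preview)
The paper does not actually prove this theorem: it appears in the open-questions section (Q7) with the sentence ``A proof of the following theorem can be found in \cite[p18]{Kulkarni-Limaye1992},'' and the paper only remarks afterwards that ``the proof of Theorem~\ref{thr:NGCOM} uses Liouville's theorem and some spectral theory in the Archimedean setting.'' So there is nothing in the paper itself to compare against beyond that one-line hint, which is consistent with your case~(i) approach.

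Your argument for case~(i) is the standard Le~Page/Hirschfeld--\.{Z}elazko proof and is correct as written: the iteration gives $\|a\|\le c\,\rho(a)$, and the Liouville argument on $\lambda\mapsto e^{\lambda a}be^{-\lambda a}$ then kills all commutators.

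For case~(ii), however, you have not given a proof --- you yourself flag the ``main obstacle'' and leave it unresolved. Setting $b=0$ in hypothesis~(ii) does give $\|a\|^{2}\le c\|a^{2}\|$ for all $a\in A$, and hence $\|a\|\le c\,\rho_{A}(a)$ in the \emph{real} algebra $A$; but the Liouville step requires an entire function of a complex variable, so you must pass to $A_{\mathbb{C}}$, and the spectral-radius bound in $A$ does not automatically transfer to one in $A_{\mathbb{C}}$. Your sketch via $z\bar z = a^{2}+b^{2}$ is on the right track but, as you say, the bookkeeping controlling $\|z\|_{A_{\mathbb{C}}}$ in terms of $\|z\bar z\|_{A_{\mathbb{C}}}$ is exactly the missing piece. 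Since the paper gives no argument here either, you would need to consult the cited reference to close this gap; the point of hypothesis~(ii) involving the full $a^{2}+b^{2}$ (rather than just $a^{2}$) is precisely to make that transfer work, and your proposal does not yet exploit it.
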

It would be interesting to establish whether there is such a theorem for all unital Banach $F$-algebras. If not then perhaps some special cases are possible in the non-Archimedean setting. The proof of theorem \ref{thr:NGCOM} uses Liouville's theorem and some spectral theory in the Archimedean setting. Both of these are different in the non-Archimedean setting, see Theorem \ref{thr:FAAULT} and Subsection \ref{subsec:FAASE}.
\item[(Q8)]
It might be interesting to investigate the isomorphism classes of basic function algebras. That is for a given basic function algebra $A$ are there other basic function algebras that are isometrically isomorphic to $A$.
\item[(Q9)]
It is interesting to consider whether the Kaplansky spectrum of Remark \ref{rem:FAAFC} can be used for some cases in the non-Archimedean setting and, if so, whether it is one such definition in some larger family of definitions of spectrum applicable in the non-Archimedean setting.
\item[(Q10)]
More broadly the established theory of Banach algebras provides a large supply of topics that can be considered for generalisation over complete valued fields. In addition to several of the other references included in this thesis \cite{Dales} will be of much interest when considering such possibilities. One obvious example is the generalisation of automatic continuity results. That is what conditions on a Banach $F$-algebra force homomorphisms from, or to, that algebra to be continuous. There is one such result in this thesis noting that in Theorem \ref{thr:RTREPLF} the elements of $\mathcal{M}(A)$ are automatically continuous. Further \cite{Bachman} may also be of interest concerning function algebras.
\item[(Q11)]
As mentioned in Remark \ref{rem:UASCS} there is a possible generalisation of the Swiss cheese classicalisation theorem to the Riemann sphere and possibly to a more general class of metric spaces.
\item[(Q12)]
It might be interesting to consider generalising over all complete valued fields the theory of algebraic extensions of commutative unital normed algebras. See the survey paper \cite{Dawson} for details.
\item[(Q13)]
The possibility of generalising $C^{*}$-Algebras over complete valued fields is interesting but perhaps not straightforward. The Levi-Civita field might be of interest here since it is totally ordered such that the order topology agrees with the valuation topology. Hence it might be possible to define positive elements in this case. Perhaps the algebraic elements of finite order mentioned in (Q4) are relevant. Also there is a monograph by Goodearl from 1982 that considers real $C^{*}$-Algebras that might be of use. The possibility of a non-Archimedean theory of Von Neumann algebras might also be a good place to start.
\end{enumerate}

	\phantomsection
	\addcontentsline{toc}{chapter}{References}
	\nocite{*}
	\bibliography{Thesisbibdata}
\end{document}